\DeclareFontFamily{U}{wncy}{}
\DeclareFontShape{U}{wncy}{m}{n}{<->wncyr10}{}
\DeclareSymbolFont{mcy}{U}{wncy}{m}{n}
\DeclareMathSymbol{\cyrillicg}{\mathord}{mcy}{"67} 
\newtheorem{thm}{Theorem}[section]
\newtheorem*{thm*}{Theorem}
\newtheorem{lem}[thm]{Lemma}
\theoremstyle{definition} \newtheorem{defn}[thm]{Definition}
\newtheorem*{defn*}{Definition}
\theoremstyle{definition} \newtheorem{ex}[thm]{Example}
\newtheorem*{lem*}{Lemma}
\newtheorem{cor}[thm]{Corollary}
\newtheorem*{cor*}{Corollary}
\theoremstyle{definition} \newtheorem{rem}[thm]{Remark}
\newtheorem*{conj*}{Conjecture}
\newtheorem{prop}[thm]{Proposition}
\newcommand{\CC}{\mathbb{C}}
\newcommand{\NN}{\mathbb{N}}
\newcommand{\ZZ}{\mathbb{Z}}
\newcommand{\E}{\mathcal{E}}
\renewcommand{\th}{\textsuperscript{th}\,}
\renewcommand{\O}{\operatorname{O}}
\DeclareMathOperator{\GL}{GL}
\DeclareMathOperator{\SL}{SL}
\DeclareMathOperator{\SO}{SO}
\DeclareMathOperator{\Sp}{Sp}
\DeclareMathOperator{\Gr}{Gr}
\DeclareMathOperator{\Fl}{Fl}
\DeclareMathOperator{\vspan}{span}
\DeclareMathOperator{\rank}{rk}
\DeclareMathOperator{\LG}{LG}
\DeclareMathOperator{\OG}{OG}
\DeclareMathOperator{\Ess}{Ess}
\DeclareMathOperator{\codim}{codim}
\DeclareMathOperator{\pf}{pf}
\DeclareMathOperator{\Hom}{Hom}
\newcommand{\mshSYT}{\text{ShSYT}'}
\renewcommand{\hat}{\widehat}
\newcommand{\quand}{\quad \text{and} \quad}
\newcommand{\fkS}{\mathfrak{S}}
\newcommand{\fpf}{\textsc{fpf}}
\newcommand{\idO}{1^{\O}}
\newcommand{\idfpf}{1^{\Sp}}
\newcommand{\idK}{1^{K}}
\newcommand{\bS}{\overleftarrow{\mathfrak{S}}\hspace{-0.5mm}}
\newcommand{\ibS}{\bS^{\O}}
\newcommand{\ibSfpf}{\bS^{\Sp}}
\newcommand{\ibSK}{\bS^{K}}
\newcommand{\AK}{\mathcal{A}^K}
\newcommand{\A}{\mathcal{A}^{\O}}
\newcommand{\Afpf}{\mathcal{A}^{\Sp}}
\newcommand{\Red}{\mathcal{R}}
\newcommand{\comp}{C}
\newcommand{\iR}{\hat{\Red}^{\O}}
\newcommand{\iRK}{\hat{\Red}^{K}}
\newcommand{\iRfpf}{\hat{\Red}^{\Sp}}
\newcommand{\iF}{F^{\O}}
\newcommand{\iFfpf}{F^{\Sp}}
\newcommand{\iFO}{F^{\O}}
\newcommand{\iFSp}{F^{\Sp}}
\newcommand{\iFK}{F^{K}}
\newcommand{\iS}{\mathfrak{S}^{\O}}
\newcommand{\iSK}{\mathfrak{S}^{K}}
\newcommand{\iSfpf}{\mathfrak{S}^{\Sp}}
\newcommand{\I}{\mathcal{I}}
\newcommand{\Ifpf}{\mathcal{I}^{\fpf}}
\newcommand{\IK}{\mathcal{I}^{K}}
\newcommand{\iellK}{\hat{\ell}^{K}}
\newcommand{\iellO}{\hat{\ell}^{\O}}
\newcommand{\iellSp}{\hat{\ell}^{\Sp}}
\newcommand{\iDK}{D^{K}}
\newcommand{\iDO}{D^{\O}}
\newcommand{\iDSp}{D^{\Sp}}
\newcommand{\icO}{c^{\O}}
\newcommand{\icSp}{c^{\Sp}}
\newcommand{\dearc}{\textsf{dearc}}
\newcommand{\dearcR}{\textsf{dearc}^{\textrm R}}
\newcommand{\dearcL}{\textsf{dearc}^{\textrm L}}
\newcommand{\sq}{\square}
\newcommand{\twocyc}{\text{cyc}}
\newcommand{\eqdef}{\overset{\text{def}}{=}}
\newcommand{\ev}{\bigr\rvert}
\newcommand{\SWNE}{\vbox{\hbox{\hspace{.5mm}\rotatebox{-15}{$\scriptscriptstyle \nearrow$}}\vspace{-2.65mm}\hbox{$\leq$}}}
\newcommand{\SWNEneq}{\vbox{\hbox{\hspace{.5mm}\rotatebox{-15}{$\scriptscriptstyle \nearrow$}}\vspace{-3.05mm}\hbox{$<$}}}
\newcommand{\graph}{\mathsf{G}}
\newcommand{\vb}{\mathcal}
\newcommand{\openiXK}{{X}^K}
\newcommand{\openiXO}{{X}^{\O}}
\newcommand{\openiXSp}{{X}^{\Sp}}
\newcommand{\iXO}{\overline{X}^{\O}}
\newcommand{\iX}{\overline{X}^{\O}}
\newcommand{\iXSp}{\overline{X}^{\Sp}}
\newcommand{\iXfpf}{\overline{X}^{\Sp}}
\newcommand{\incGX}{\overline{GX}}
\newcommand{\incSGX}{\overline{LGX}}
\newcommand{\incSSGX}{\overline{OGX}}
\newcommand{\openincGX}{GX}
\newcommand{\openincSGX}{LGX}
\newcommand{\openincSSGX}{OGX}
\newcommand{\X}{\overline{X}}
\newcommand{\grGr}{G_{\Gr}}
\newcommand{\grLG}{G_{\LG}}
\newcommand{\grOG}{G_{\OG}}
\newcommand{\supersymm}{\Lambda^{\textrm{super}}}
\newcommand{\superslash}{\setminus}
\newcommand{\ann}[1]{#1^{\perp}}
\newcommand{\id}{\mathrm{id}}
\newcommand{\shfpf}{\operatorname{sh^{\fpf}}}
\newcommand{\sh}{\operatorname{sh}}
\newcommand{\ishO}{\sh^{\O}}
\newcommand{\ishSp}{\sh^{\Sp}}
\newcommand{\epath}{\mathcal{P}}
\DeclareMathOperator{\rowspan}{rowspan}
\renewcommand{\epsilon}{\varepsilon}
\begin{document}

\author{Brendan Pawlowski}
\title{Universal graph Schubert varieties}

\begin{abstract}
    We consider the loci of invertible linear maps $f : \CC^n \to {(\CC^n)}^*$ together with pairs of flags $(E_\bullet, F_\bullet)$ in $\CC^n$ such that the various restrictions $f : F_j \to E_i^*$ have specified ranks. Identifying an invertible linear map with its graph viewed as a point in a Grassmannian, we show that the closures of these loci have cohomology classes represented by the back-stable Schubert polynomials of Lam, Lee, and Shimozono. As a special case, we recover the result of Knutson, Lam, and Speyer that Stanley symmetric functions represent the classes of graph Schubert varieties.
    
    We consider similar loci where $f$ is restricted to be symmetric or skew-symmetric. Their classes are now given by back-stable versions of the polynomials introduced by Wyser and Yong to represent classes of orbit closures for the orthogonal and symplectic groups acting on the type A flag variety. Using degeneracy locus formulas of Kazarian and of Anderson and Fulton, we obtain new Pfaffian formulas for these polynomials in the vexillary case. We also give a geometric interpretation of the involution Stanley symmetric functions of Hamaker, Marberg, and the author: they represent classes of \emph{involution graph Schubert varieties} in isotropic Grassmannians.
\end{abstract}

\maketitle

\section{Introduction}

Let $G$ be a semisimple complex algebraic group. A pair $(G,K)$ where $K \subseteq G$ is a closed subgroup is a \emph{symmetric pair} if $K$ is the fixed point set of an involutive automorphism $G \to G$. For parabolic subgroups $P \subseteq G$ and $Q \subseteq K$, the product $G/P \times K/Q$ is a \emph{double flag variety} for $(G,K)$. In the case that $Q = B_K$ is a Borel subgroup of $K$, He, Nishiyama, Ochiai, and Oshima classified those $(G,K)$ and $P$ for which the $K$-action on $G/P \times K/B_K$ has finitely many orbits \cite{double-flag-varieties}.

 We consider three such cases here:
\begin{enumerate}[(1)]
    \item $G = \SL(2n)$, $K = \mathrm{S}(\GL(n) \times \GL(n)) \eqdef (\GL(n) \times \GL(n)) \cap G$, and $G/P$ the Grassmannian $\Gr(n, 2n)$ of $n$-planes in $\CC^{2n}$;
    \item $G = \Sp(2n)$, $K = \GL(n)$, and $G/P$ the Lagrangian Grassmannian $\LG(2n)$, the subvariety of $\Gr(n,2n)$ consisting of those $n$-planes on which a fixed nondegenerate skew-symmetric form on $\CC^{2n}$ vanishes;
    \item For $n$ even, $G = \SO(2n)$, $K = \GL(n)$, and $G/P$ the orthogonal Grassmannian $\OG(2n)$, one component of the subvariety of $\Gr(n,2n)$ consisting of those $n$-planes on which a fixed nondegenerate symmetric form on $\CC^{2n}$ vanishes.
\end{enumerate}
In each case, we give descriptions in terms of rank conditions for those $K$-orbits on $G/P \times K/B_K$ intersecting a certain open dense subset of $G/P$, and give formulas for the cohomology classes Poincar\'e dual to their closures.

We write $\Gr(n,2n)$ for the Grassmannian of $n$-planes in $\CC^n \oplus {\CC^n}^*$. Given a linear map $f : \CC^n \to {\CC^n}^*$, its graph $\graph(f) \eqdef \{(v, f(v)) : v \in \CC^n\}$ is a point in $\Gr(n,2n)$. The map $\Hom(\CC^n, {\CC^n}^*) \to \Gr(n,2n)$, $f \mapsto \graph(f)$, is an open embedding; let $\grGr$ be the image of the invertible maps. Let $\Fl(n)$ denote the variety of complete flags in $\CC^n$, so $K/B_K = \Fl(n) \times \Fl(n)$ in case (1) above. If $M$ is a matrix, let $M_{[i][j]}$ denote its upper-left $i \times j$ corner. Identify a permutation $w \in S_n$ with the permutation matrix having $1$'s in positions $(i, w(i))$, and let $[n] \eqdef \{1,2,\ldots,n\}$.

\begin{thm}[Proposition~\ref{prop:orbit-description-Gr} and Theorem~\ref{thm:back-stable-rep}] \label{thm:SL-classes} The $\mathrm{S}(\GL(n) \times \GL(n))$-orbits on $\grGr \times \Fl(n) \times \Fl(n)$ are the sets
\begin{equation*}
\openincGX_w \eqdef \{(\graph(f), E_\bullet, F_\bullet) : \text{$\rank(F_j \hookrightarrow \CC^n \xrightarrow{f} {\CC^n}^* \twoheadrightarrow E_i^*) = \rank w_{[i][j]}$ for $i,j \in [n]$}\}
\end{equation*}
for all $w \in S_n$. The integral cohomology class $[\incGX_w]$ Poincar\'e dual to the Zariski closure $\incGX_w$ is represented by the \emph{back-stable double Schubert polynomial} $\bS_w(x,-y)$.
  \end{thm}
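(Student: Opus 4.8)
For the orbit description (Proposition~\ref{prop:orbit-description-Gr}) I would use the standard dictionary between double flag varieties and relative position of flags. Identifying $\grGr$ with the invertible maps in $\Hom(\CC^n,{\CC^n}^*)$, the element $(g,h)\in\mathrm{S}(\GL(n)\times\GL(n))$ acts by $f\mapsto hfg^{-1}$, which is transitive on the invertibles (row and column operations, the determinant constraint being harmless over $\CC$) with $\graph(\id)$ stabilized by the diagonal $\GL(n)$ up to a finite group. Hence the $\mathrm{S}(\GL(n)\times\GL(n))$-orbits on $\grGr\times\Fl(n)^2$ correspond to the diagonal-$\GL(n)$ orbits on $\Fl(n)^2$, which by the Bruhat decomposition are indexed by $S_n$ via relative position. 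To identify the orbit attached to $w$ with $\openincGX_w$, I would move $f$ to $\id$ and compute that $\rank(F_j\hookrightarrow\CC^n\xrightarrow{f}{\CC^n}^*\twoheadrightarrow E_i^*)=j-\dim(F_j\cap E_i^\perp)$; the numbers $\dim(F_j\cap E_i^\perp)$ form a complete invariant of the relative position of $(F_\bullet,E_\bullet^\perp)$, and a routine check shows they match $\rank w_{[i][j]}$ under the usual correspondence. The only care needed is conventional---pinning down whether one obtains $w$ or $w^{-1}$, and checking for instance that $\openincGX_{\mathrm{id}}$ is the dense orbit and $\openincGX_{w_0}$ the closed one.

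For the cohomology class (Theorem~\ref{thm:back-stable-rep}) the plan is to realize $\incGX_w$, the closure of $\openincGX_w$ in $\Gr(n,2n)\times\Fl(n)^2$, as a degeneracy locus and apply a stabilized version of Fulton's degeneracy-locus formula---the same mechanism Knutson, Lam and Speyer use for graph Schubert varieties. Over $\Gr(n,2n)\times\Fl(n)^2$ one has the tautological rank-$n$ subbundle $\mathcal S\subseteq\underline{\CC^n}\oplus\underline{{\CC^n}^*}$, the flags $F_\bullet\subseteq\underline{\CC^n}$ and $E_\bullet^\perp\subseteq\underline{{\CC^n}^*}$, and the fixed coordinate subbundles $\underline{\CC^n}\oplus 0$ and $0\oplus\underline{{\CC^n}^*}$; on $\grGr$ the map $f$ is recovered from $\mathcal S$ and the coordinate subbundles, and the defining conditions of $\openincGX_w$ become rank conditions relating $\mathcal S$, the moving flags, and the fixed flags. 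The Chern roots of $F_\bullet$ are the $x_i=c_1(F_i/F_{i-1})$ and those of $E_\bullet^\perp$ are the $-y_i$ with $y_i=c_1(E_i/E_{i-1})$---this is where the $-y$ enters---while the conditions against the fixed coordinate flags contribute, by the stabilization that yields Stanley symmetric functions in the Knutson--Lam--Speyer setting, the back-symmetric part of the answer. Concretely I would (i)~check the locus has expected codimension $\ell(w)$ everywhere, which follows from $\incGX_w$ being a single orbit closure of the right dimension, and (ii)~run a limiting argument: the closed embeddings $\Gr(n,2n)\hookrightarrow\Gr(n+k,2n+2k)$ and $\Fl(n)\hookrightarrow\Fl(n+k)$ obtained by adding coordinates pull $\incGX_{1^k\times w}$ back to $\incGX_w$, with classes related by the stable limit $\bS_w(x,-y)=\lim_k\mathfrak{S}_{1^k\times w}$ of Lam--Lee--Shimozono; combined with Fulton's ($k$-finite) formula this identifies $[\incGX_w]$ with $\bS_w(x,-y)$.

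An alternative, perhaps cleaner for the base case, is a recursion. The maps forgetting one step $E_i$ or $F_i$ present $\Gr(n,2n)\times\Fl(n)^2$ as a $\PP^1$-bundle over the corresponding partial-flag variety, and for orbit closures the associated push-pull is a divided difference, so $[\incGX_w]$ satisfies divided-difference recursions in the $x$- and $y$-variables matching those of $\bS_w$ (e.g.\ $\partial_i^x[\incGX_w]=[\incGX_{ws_i}]$ when $\ell(ws_i)<\ell(w)$, and $0$ otherwise). A single base case then suffices; $\incGX_{w_0}$ is convenient because $\openincGX_{w_0}$ is cut out by the equalities $f(F_{n-i})=E_i^\perp$, so $\incGX_{w_0}$ is a ``relative graph'' whose class is a Gysin pushforward one computes directly and checks equals $\bS_{w_0}(x,-y)$; divided differences then descend to all $w$. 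One could instead begin from $[\incGX_{\mathrm{id}}]=1$ and climb in length using the Chevalley--Monk rule.

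The main obstacle in either approach is pinning the degeneracy-locus reformulation down exactly---which rank conditions, against which fixed and moving subbundles, in which index conventions---together with the step where back-stability genuinely enters: in the first approach, verifying that the loci for increasing $n$ really pull back to one another so that their classes assemble into $\bS_w$ and not merely $\mathfrak{S}_w$ (which would be wrong once the tautological bundle of $\Gr(n,2n)$ is present); in the second, the from-scratch computation of $[\incGX_{w_0}]$ and the confirmation that it carries the full back-symmetric content of $\bS_{w_0}(x,-y)$. Throughout, sign and direction conventions ($w$ versus $w^{-1}$, $y$ versus $-y$, which flag is the sub- and which the quotient flag) will be a persistent source of error and should be fixed once and checked against the dense and closed orbits.
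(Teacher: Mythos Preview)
Your overall architecture is right and close to the paper's. For the orbit description you reduce by fixing a point of $\grGr$ and invoking the Bruhat decomposition of diagonal-$\GL(n)$ on $\Fl(n)^2$; the paper instead fixes a point of $\Fl(n)^2$ and reduces to $B_n^+\times B_n^+$-orbits on $\grGr\simeq\GL(n)$, then cites Fulton's description of those orbits. These are dual and equally valid.

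For the cohomology class, your second approach---divided differences from a base case---is exactly what the paper does. The substantive difference is how the base case is handled, and this is where your proposal is thin. You suggest computing $[\incGX_{w_0}]$ ``directly'' as a Gysin pushforward, but do not say how to identify the result with $\bS_{w_0}(x;-y)$; the back-symmetric content coming from the tautological bundle on $\Gr(n,2n)$ is precisely what makes this nontrivial. The paper's mechanism is: for any \emph{vexillary} $w$ (so in particular $w_0$), rewrite the rank conditions $\rank(F_j\xrightarrow{f}E_i^*)=\rank w_{[i][j]}$ as intersection conditions $\dim(\mathcal S\cap(\vb F_j\oplus\ann{\vb E_i}))=j-\rank w_{[i][j]}$, which extend from $\grGr$ to all of $\Gr(n,2n)$ and present $\incGX_w$ as a Grassmannian (Kempf--Laksov) degeneracy locus. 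The resulting determinant in the Chern classes $c(k)=\frac{1}{c(\mathcal S)}\cdot\frac{(1+x_1)\cdots(1+x_{i_p})}{(1-y_1)\cdots(1-y_{j_p})}$ is then matched with $\bS_w(x;-y)$ by the limiting argument you allude to (applied to $1_m\times w$). Your first approach tries to apply Fulton's formula globally, but there is no bundle map $f$ over $\Gr(n,2n)$; the passage to intersection conditions with $\mathcal S$ is the step that makes a degeneracy-locus formula available on the closure, and it only yields a chain of flag bundles (hence a Kempf--Laksov locus) when $\Ess(D(w))$ is a $\SWNE$-chain, i.e.\ when $w$ is vexillary. So the paper's route is: vexillary base case via Kempf--Laksov, then recursion---which is your second plan with the base case filled in.
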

Despite the name, a back-stable double Schubert polynomial is a formal power series, obtained as a limit of double Schubert polynomials; see Definition~\ref{defn:schubert}. Back-stable Schubert polynomials were introduced by Lam, Lee, and Shimozono in the context of Schubert classes in infinite flag varieties \cite{back-stable-schubert}; we do not know an explanation of Theorem~\ref{thm:SL-classes} from this point of view.

The fiber of $\incGX_w$ in $\Fl(n)$ over a fixed $(\graph(f), E_\bullet) \in \grGr \times \Fl(n)$ is a Schubert variety, and more generally the fiber of $\incGX_w$ in $\Fl(n) \times \Fl(n)$ over a fixed $\graph(f) \in \grGr$ is a double Schubert variety as described in \cite{anderson-double-schubert} and \cite{geometric-littlewood-richardson}. On the other hand, the fiber of $\incGX_w$ in $\Gr(n,2n)$ over a fixed $(E_\bullet, F_\bullet) \in \Fl(n) \times \Fl(n)$ is a \emph{graph Schubert variety} as defined by Knutson, Lam, and Speyer \cite{positroidjuggling}; accordingly, we call $\incGX_w$ a \emph{universal graph Schubert variety}. They showed that the class of a graph Schubert variety is represented by a Stanley symmetric function (see Definition~\ref{defn:schubert}). An appropriate specialization in Theorem~\ref{thm:SL-classes} gives a new proof of this fact.

\begin{defn} A linear map $f : {\CC^n} \to {\CC^n}^*$ is \emph{symmetric} if $f(v)(w) = f(w)(v)$ for $v, w \in {\CC^n}$, and \emph{skew-symmetric} if $f(v)(w) = -f(w)(v)$. \end{defn}

There are canonical (up to sign) nondegenerate symmetric and skew-symmetric forms $(-,-)^+$ and $(-,-)^-$ on ${\CC^n} \oplus {\CC^n}^*$, defined by $((v_1, \omega_1), (v_2, \omega_2))^\pm = \omega_1(v_2) \pm \omega_2(v_1)$. 
We take $\O(2n)$ and $\Sp(2n)$ to be the subgroups of $\GL(2n) = \GL({\CC^n} \oplus {\CC^n}^*)$ preserving $(-,-)^+$ and $(-,-)^-$. The \emph{Lagrangian Grassmannian} is the closed subvariety
\begin{equation*}
\LG(2n) \eqdef \{U \in \Gr(n,2n) : (U, U)^- = 0\}
\end{equation*}
of $\Gr(n,2n)$; it is a homogeneous $\Sp(2n)$-variety. The variety of points $U \in \Gr(n,2n)$ with $(U, U)^+ = 0$ has two irreducible components; the component containing $\CC^n \oplus 0$ is the \emph{orthogonal Grassmannian} $\OG(2n)$, and it is a homogeneous $\SO(2n)$-variety.

\begin{prop} A linear map $f : {\CC^n} \to {\CC^n}^*$ is symmetric if and only if $\graph(f) \in \LG(2n)$, and skew-symmetric if and only if $\graph(f) \in \OG(2n)$. \end{prop}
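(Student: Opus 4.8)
The plan is to split the proposition into a direct computation with the forms $(-,-)^{\pm}$ and a short irreducibility argument that identifies the correct component of the orthogonal Grassmannian.

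First I would unwind the definitions. For $v_1, v_2 \in \CC^n$ the vectors $(v_1, f(v_1))$ and $(v_2, f(v_2))$ lie in $\graph(f)$, and by the definition of $(-,-)^{\pm}$,
\[
\bigl((v_1, f(v_1)),\, (v_2, f(v_2))\bigr)^{\pm} \;=\; f(v_1)(v_2) \pm f(v_2)(v_1).
\]
Since $\graph(f)$ is spanned by vectors of this form, $(\graph(f), \graph(f))^{-} = 0$ holds if and only if $f(v_1)(v_2) = f(v_2)(v_1)$ for all $v_1, v_2$, i.e.\ if and only if $f$ is symmetric; and $(\graph(f), \graph(f))^{+} = 0$ holds if and only if $f(v_1)(v_2) = -f(v_2)(v_1)$ for all $v_1, v_2$, i.e.\ if and only if $f$ is skew-symmetric. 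Since $\graph(f)$ has dimension $n$, which is the maximal dimension of an isotropic subspace of the $2n$-dimensional symplectic space $(\CC^n \oplus {\CC^n}^*, (-,-)^{-})$, the condition $(\graph(f), \graph(f))^{-} = 0$ is already equivalent to $\graph(f) \in \LG(2n)$; this settles the Lagrangian statement. The same computation gives one direction of the orthogonal statement: if $\graph(f) \in \OG(2n)$ then a fortiori $(\graph(f),\graph(f))^{+} = 0$, so $f$ is skew-symmetric.

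It remains to check that when $f$ is skew-symmetric, $\graph(f)$ lies in the component $\OG(2n)$ rather than the other component of $\{U \in \Gr(n,2n) : (U,U)^{+} = 0\}$. For this I would use irreducibility: the skew-symmetric maps $\CC^n \to {\CC^n}^*$ form a linear subspace $V$ of $\Hom(\CC^n, {\CC^n}^*)$, so $V$ is irreducible, and restricting the open embedding $f \mapsto \graph(f)$ gives a morphism $V \to \Gr(n,2n)$ whose image lies in $\{U : (U,U)^{+} = 0\}$ by the computation above. An irreducible subset of a variety with two irreducible components is contained in one of them; since $0 \in V$ maps to $\graph(0) = \CC^n \oplus 0 \in \OG(2n)$, the entire image lies in $\OG(2n)$. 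Hence $\graph(f) \in \OG(2n)$ for every skew-symmetric $f$, completing the proof. The only step that is not an immediate calculation is this last component identification, and it reduces — via irreducibility of the space of skew-symmetric maps — to the fact already recorded in the text that $\{U : (U,U)^{+} = 0\}$ has exactly two irreducible components with $\OG(2n)$ the one containing $\CC^n \oplus 0$; so I anticipate no genuine obstacle here.
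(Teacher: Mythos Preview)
Your proof is correct. The first half---the direct computation showing that isotropy under $(-,-)^{\mp}$ is equivalent to $f$ being (skew-)symmetric---is exactly what the paper does. The difference lies in how you pin down the correct component of the orthogonal isotropic Grassmannian. The paper invokes the standard parity criterion: two maximal isotropic subspaces $U_1, U_2$ lie in the same component if and only if $\dim(U_1 \cap U_2) \equiv n \pmod 2$, and then observes that for \emph{invertible} skew-symmetric $f$ one has $\dim(\CC^n \cap \graph(f)) = 0$ and $n$ even. Your irreducibility argument is different and in one respect stronger: it works for all skew-symmetric $f$, not just invertible ones, since it only uses that the linear space of skew-symmetric maps is connected and contains $0$. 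The paper's approach has the virtue of being more explicit about \emph{which} component you land in via a concrete numerical invariant, but it requires knowing the parity criterion and, as written, only treats the invertible case. Your argument trades that for a softer topological fact (the image of an irreducible set under a morphism is irreducible and hence lies in a single component), which is entirely adequate here.
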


Let $\grLG$ be the open set of graphs of invertible symmetric linear maps ${\CC^n} \to {\CC^n}^*$ in $\LG(2n)$. Let $\grOG$ be the open set of graphs of invertible skew-symmetric linear maps ${\CC^n} \to {\CC^n}^*$ in $\OG(2n)$, assuming $n$ is even. In the next theorem, we view $\GL(n)$ as a subgroup of $\Sp(2n)$ and of $\SO(2n)$ via the embedding $g \mapsto (g^{-1})^* \oplus g$.

\begin{thm}[Proposition~\ref{prop:orbit-description-LG-OG} and Theorems~\ref{thm:back-stable-rep-LG} and \ref{thm:back-stable-rep-OG}] \label{thm:LG-OG-preview} The $\GL(n)$-orbits on $\grLG \times \Fl(n)$ are the sets
\begin{equation*}
\openincSGX_y \eqdef \{(\graph(f), E_\bullet) : \text{$\rank(E_j \hookrightarrow {\CC^n} \xrightarrow{f} {\CC^n}^* \twoheadrightarrow E_i^*) = \rank y_{[i][j]}$ for $i,j \in [n]$}\}
\end{equation*}
for $y \in S_n$ an involution. For $n$ even, the $\GL(n)$-orbits on $\grOG \times \Fl(n)$ are the sets 
\begin{equation*}
\openincSSGX_z \eqdef \{(\graph(f), E_\bullet) : \text{$\rank(E_j \hookrightarrow {\CC^n} \xrightarrow{f} {\CC^n}^* \twoheadrightarrow E_i^*) = \rank z_{[i][j]}$ for $i,j \in [n]$}\}
\end{equation*}
for $z \in S_n$ a fixed-point-free involution. The cohomology classes $[\incSGX_y]$ and $[\incSSGX_z]$ are represented by $2^{\twocyc(y)}\ibS_y$ and $\ibSfpf_z$, respectively, where $\twocyc(y)$ is the number of 2-cycles in $y$ and $\ibS_y$ and $\ibSfpf_z$ are \emph{back-stable involution Schubert polynomials}.
\end{thm}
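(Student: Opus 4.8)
The plan is to prove Theorem~\ref{thm:LG-OG-preview} in parallel with (but using the tools already established for) Theorem~\ref{thm:SL-classes}. I would first address the orbit description, which is Proposition~\ref{prop:orbit-description-LG-OG}. The key observation is that $\GL(n)$ acts transitively on $\grLG$ (resp.\ $\grOG$): an invertible symmetric map $f:\CC^n\to{\CC^n}^*$ is the same data as a nondegenerate symmetric bilinear form on $\CC^n$, and any two are conjugate under $\GL(n)$; similarly nondegenerate skew-symmetric forms require $n$ even and are all conjugate. So after moving $f$ to a standard form $f_0$, the stabilizer of $\graph(f_0)$ in $\GL(n)$ is the corresponding classical group $\O(n)$ (resp.\ $\Sp(n)$), and the orbits on $\grLG\times\Fl(n)$ (resp.\ $\grOG\times\Fl(n)$) are the $\O(n)$-orbits (resp.\ $\Sp(n)$-orbits) on $\Fl(n)$. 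By the classical theory of Matsuki--Oshima / Richardson--Springer, these orbits are indexed by involutions (resp.\ fixed-point-free involutions) in $S_n$, and the rank invariants $\rank(E_j\to{\CC^n}^*\twoheadrightarrow E_i^*)$ are exactly the invariants that separate them --- this is where the Wyser--Yong description of orbit closures by rank conditions enters. I would cite that literature for the indexing and note that the rank function attached to $f_0$ and the flag $E_\bullet$ recovers $\rank y_{[i][j]}$.

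Next, for the cohomology classes, the strategy is the same ``universal degeneracy locus'' argument presumably used for Theorem~\ref{thm:back-stable-rep}. Over $\grLG\times\Fl(n)$ we have the tautological flag bundle $E_\bullet$ and the trivial bundle $\CC^n$ with its distinguished symmetric form coming from the universal symmetric $f$; the locus $\incSGX_y$ is cut out by the conditions that the composites $E_j\to{\CC^n}\xrightarrow{f}{\CC^n}^*\twoheadrightarrow E_i^*$ drop rank to $\rank y_{[i][j]}$. This is precisely a degeneracy locus for a map of flagged vector bundles compatible with a symmetric (resp.\ skew-symmetric) form, so the classes are computed by the formulas of Wyser--Yong in the finite setting. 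Concretely, I would map $\grLG\times\Fl(n)$ into the finite flag variety $\Fl(\CC^n\oplus{\CC^n}^*)$ (or rather into a partial flag bundle over $B$-orbit data), identify $\incSGX_y$ with the preimage of a Wyser--Yong orbit closure $\overline{\mathcal{O}_y}$ for $\O(2N)$ (resp.\ $\Sp(2N)$) acting on a type-$A$ flag variety as $N\to\infty$, and pull back the Wyser--Yong polynomial representative. Taking the limit $N\to\infty$ and tracking how their polynomials stabilize yields the back-stable involution Schubert polynomial $\ibS_y$ (resp.\ $\ibSfpf_z$); the factor $2^{\twocyc(y)}$ appears because the Wyser--Yong representative for the orthogonal case already carries that power of $2$ (it counts, roughly, the number of ways to break each $2$-cycle), and it is intrinsic to the orthogonal orbit closure's fundamental class, whereas the symplectic orbit closures are reduced with no such factor.

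I would organize the argument in the following steps: (i) prove transitivity of $\GL(n)$ on $\grLG$ and on $\grOG$ (for $n$ even), computing the stabilizers as $\O(n)$ and $\Sp(n)$; (ii) invoke the Richardson--Springer/Matsuki parametrization of $\O(n)$- and $\Sp(n)$-orbits on $\Fl(n)$ by (fixed-point-free) involutions, and check that the stated rank conditions are constant on orbits and separate them, giving Proposition~\ref{prop:orbit-description-LG-OG}; (iii) set up the universal degeneracy locus over $\grLG\times\Fl(n)$ (resp.\ $\grOG\times\Fl(n)$) and identify $\incSGX_y$ (resp.\ $\incSSGX_z$) with the pullback of a Wyser--Yong orbit closure under a map into a large type-$A$ flag variety; (iv) apply the Wyser--Yong formula and pass to the limit, recognizing the stable limit as $2^{\twocyc(y)}\ibS_y$ (resp.\ $\ibSfpf_z$) via the known stability properties of involution Schubert polynomials; (v) conclude that the geometric class $[\incSGX_y]$ (resp.\ $[\incSSGX_z]$) equals this representative in $H^*$.

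The main obstacle I anticipate is step (iv): matching the \emph{limit} of the Wyser--Yong polynomials with the back-stable involution Schubert polynomial, and in particular getting the $2^{\twocyc(y)}$ factor exactly right. In the ordinary case (Theorem~\ref{thm:SL-classes}) the analogous limit is clean because ordinary Schubert polynomials stabilize transparently; for the involution case one must be careful that the Wyser--Yong polynomials are genuinely the classes of \emph{reduced} orbit closures (so that no extra multiplicity creeps in beyond the orthogonal $2^{\twocyc}$), that the limit is taken along a cofinal system of embeddings $S_n\hookrightarrow S_N$ that is compatible with the geometric stabilization maps, and that the clean-slate ``back-stable'' normalization of $\ibS_y$ and $\ibSfpf_z$ (as a formal power series, cf.\ Definition~\ref{defn:schubert}) agrees on the nose with what the geometry produces. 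A secondary technical point is verifying that $\incSGX_y$ is reduced and irreducible --- this should follow because each $\openincSGX_y$ is a single $\GL(n)$-orbit, hence smooth and irreducible, and its closure is irreducible; reducedness of the scheme-theoretic degeneracy locus follows from the corresponding statement for Wyser--Yong orbit closures, which I would cite.
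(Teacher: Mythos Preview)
Your orbit description (step (ii)) is fine, though the paper takes the dual reduction: rather than using transitivity on $\grLG$ to reduce to $\O(n)$-orbits on $\Fl(n)$, it uses transitivity on $\Fl(n)$ to reduce to $B_n^+$-orbits on invertible symmetric (resp.\ skew-symmetric) matrices, citing \cite{bagno-cherniavsky,cherniavsky,szechtman}. Either works.

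The real gap is in your steps (iii)--(iv). You propose to map $\LG(2n)\times\Fl(n)$ into some large type-$A$ flag variety $\Fl(N)$, identify $\incSGX_y$ as the pullback of a Wyser--Yong orbit closure, and take $N\to\infty$. But no such map can do the job: the class $[\incSGX_y]$ lives in $H^*(\LG(2n))\otimes H^*(\Fl(n))$ and has a nontrivial $\Gamma_Q$-component (the symmetric-function part of $\ibS_y$, coming from the Chern classes of the tautological bundle on $\LG(2n)$). Any map to $\Fl(N)$ factors through the $\Fl(n)$ coordinate alone or kills the Lagrangian Grassmannian contribution, so the pullback of a polynomial class in $H^*(\Fl(N))$ cannot recover this part. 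Put differently, the fiber of $\incSGX_y$ over a fixed $\graph(f)$ is indeed a Wyser--Yong orbit closure, and restricting $[\incSGX_y]$ to that fiber recovers $2^{\twocyc(y)}\iS_y$; but going the other direction---lifting the fiber class to the total space---is exactly what is at issue, and a naive limit of the $\iS_{1_m\times y}$ does not tell you what the $\LG(2n)$-component of the lifted class is.

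The paper's route is different and avoids this problem. It first realizes $\incSGX_y$ \emph{directly} as a Lagrangian (resp.\ orthogonal) Grassmannian degeneracy locus in the sense of Kazarian and Anderson--Fulton, but only when $y$ is vexillary (Lemma~\ref{lem:grassmannian-locus-LG-OG}): the rank conditions become intersection-dimension conditions $\dim(U\cap(E_j\oplus\ann{E_i}))=j-\rank y_{[i][j]}$ with an isotropic flag, and vexillarity is needed so that these form a chain. The Pfaffian formulas of Theorems~\ref{thm:lagrangian-locus} and \ref{thm:orthogonal-locus} then compute $[\incSGX_y]$ directly in $H^*(\LG(2n)\times\Fl(n))$, with the $\LG(2n)$ factor entering through $c(\vb G)$. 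This handles the base case $y=w_0$ (which is vexillary). For general $y$, the paper shows that the classes $[\incSGX_y]$ satisfy the same divided-difference recurrence as the $2^{\twocyc(y)}\ibS_y$ (Lemma~\ref{lem:divided-diff-classes-LG-OG} and Proposition~\ref{prop:inv-schubert-recurrence}), so equality propagates from $w_0$ to all involutions. The divided-difference operator here acts only on the $\Fl(n)$ factor, leaving the Grassmannian Chern classes alone---which is why this argument correctly tracks the symmetric-function part that your pullback approach loses.
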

Let $\I_n$ be the set of involutions in $S_n$, and $\Ifpf_n \subseteq \I_n$ the subset of fixed-point-free involutions.  The \emph{involution Schubert polynomials} $\iS_y$ and $\iSfpf_z$ were introduced by Wyser and Yong \cite{wyser-yong-orthogonal-symplectic}, who showed that the polynomials $\{2^{\twocyc(y)} \iS_y : y \in \I_n\}$ represent the classes of the $\O(n)$-orbit closures on $\Fl(n)$, and the polynomials $\{\iSfpf_z : z \in \Ifpf_n\}$ represent the classes of the $\Sp(n)$-orbit closures on $\Fl(n)$. The back-stable involution Schubert polynomials $\ibS_y$ and $\ibSfpf_z$ are obtained from $\iS_y$ and $\iSfpf_z$ by a limiting process; see Definition~\ref{defn:inv-schubert}.

The connection to our situation is as follows. Fix an invertible symmetric map $f : {\CC^n} \to {\CC^n}^*$, and let $\O(n) \subseteq \GL(n)$ be the subgroup preserving the symmetric form $(v, w) \mapsto f(v)(w)$. Then as $y$ ranges over $\I_n$, the fibers in $\Fl(n)$ of the various $\openincSGX_y$ over $\graph(f)$ are exactly the $\O(n)$-orbits on $\Fl(n)$. Similarly, if $f$ is skew-symmetric, then for $z \in \Ifpf_n$, the fibers of the various $\openincSSGX_z$ over $\graph(f)$ are the $\Sp(n)$-orbits on $\Fl(n)$. This reversal explains why we write representatives for $[\incSGX_y]$ and $[\incSSGX_z]$ as $2^{\twocyc(y)}\iS_y$ and $\iSfpf_z$ respectively, which at first may appear backwards.

\begin{defn}
The \emph{involution graph Schubert variety} $\incSGX_y(E_\bullet) \subseteq \LG(2n)$ associated to $y \in \I_n$ is the fiber of $\incSGX_y$ over a fixed flag $E_\bullet \in \Fl(n)$. The \emph{fixed-point-free involution graph Schubert variety} associated to $z \in \Ifpf_n$ is the fiber of $\incSSGX_y(E_\bullet) \subseteq \OG(2n)$  over a fixed flag $E_\bullet \in \Fl(n)$.  Explicitly, $\incSGX_y(E_\bullet)$ is the closure of
\begin{equation*}
      \{\graph(f) \in \grLG : \text{$\rank(E_j \hookrightarrow {\CC^n} \xrightarrow{f} {\CC^n}^* \twoheadrightarrow E_i^*) = \rank y_{[i][j]}$ for $i,j \in [n]$, $f$ symmetric}\}
\end{equation*}
and $\incSSGX_z(E_\bullet)$ is the closure of
\begin{equation*}
      \{\graph(f) \in \grOG : \text{$\rank(E_j \hookrightarrow {\CC^n} \xrightarrow{f} {\CC^n}^* \twoheadrightarrow E_i^*) = \rank z_{[i][j]}$ for $i,j \in [n]$, $f$ skew-symmetric}\}.
\end{equation*}
\end{defn}
A corollary of Theorem~\ref{thm:LG-OG-preview} is that $[\incSGX_y(E_\bullet)] \in H^*(\LG(2n),\ZZ)$ is represented by the \emph{involution Stanley symmetric function} $2^{\twocyc(y)}\iF_y$ introduced in \cite{HMP1}, and that $[\incSSGX_z(E_\bullet)] \in H^*(\OG(2n),\ZZ)$ is represented by the fixed-point-free involution Stanley symmetric function $\iFfpf_z$. It was shown algebraically and combinatorially in \cite{HMP4, HMP5} that $2^{\twocyc(y)}\iF_y$ and $\iFfpf_z$ are positive integer combinations of Schur's $Q$-functions and $P$-functions, respectively. By work of Pragacz \cite{pragacz-LG-OG}, this positivity means that these symmetric functions represents cohomology classes of subvarieties in $\LG(2n)$ and $\OG(2n)$, and part of the motivation for the current work was to find such subvarieties.

Working in the other direction, Theorem~\ref{thm:LG-OG-preview} together with Pragacz's results provides a new proof that $2^{\twocyc(y)}\iF_y$ is Schur $Q$ positive and $\iFfpf_z$ is Schur $P$ positive (Corollary~\ref{cor:schur-Q-positivity}). Their Schur $Q$ and $P$ expansions can be developed by explicit recurrences, found in \cite{HMP4, HMP5}, that are analogous to the ``transition recurrences'' of Lascoux and Sch\"utzenberger \cite{lascouxschutzenbergertree}. As a special case, every product of Schur $Q$ or $P$ functions can be written as some $2^{\twocyc(y)}\iF_y$ or $\iFfpf_z$, and so such transition recurrences give new Littlewood-Richardson rules for these families of symmetric functions, or equivalently for the Schubert bases of the integral cohomology of $\LG(2n)$ and $\OG(2n)$. Our hope is that the geometric perspective developed here will be helpful in finding similar rules in other cohomology theories---for instance, in the currently open problem of describing the Schubert structure constants of the K-theory ring $K_0(\LG(2n))$.

Other results on involution Stanley symmetric functions from \cite{HMP4, HMP5} are also clarified by the geometric perspective. For instance, ``I-Grassmannian'' and ``fpf-I-Grassmannian'' involutions were singled out on combinatorial grounds as base cases for the transition recurrences mentioned above, filling the role played by Grassmannian permutations in the classical case. The special role of these involutions is clear from the geometry: the involution graph Schubert varieties they index are simply Schubert varieties in $\LG(2n)$ and $\OG(2n)$.

Pfaffian formulas for involution Schubert polynomials and Stanley symmetric functions were given in \cite{HMP4, HMP5}, in the case of I-Grassmannian and fpf-I-Grassmannian involutions. We improve on these formulas by adding a missing base case and generalizing them to vexillary involutions. This is done by realizing $\incSGX_z(E_\bullet)$ and $\incSSGX_z(E_\bullet)$ as type C and type D Grassmannian degeneracy loci in the sense of Kazarian \cite{kazarian} and Anderson and Fulton \cite{anderson-fulton}. The interpretation of $\incGX_w$, $\incSGX_z$, and $\incSSGX_z$ as certain degeneracy loci when $w$ and $z$ are vexillary is a key element in our proofs of Theorems~\ref{thm:SL-classes} and \ref{thm:LG-OG-preview} as well.

\subsection{Outline}
In \S\ref{sec:prelim}, we recall basic facts about cohomology rings of flag varieties, degeneracy loci, and Schubert varieties, and prove some combinatorial lemmas on vexillary permutations. In \S\ref{sec:SL-SL-orbits}, we characterize the universal graph Schubert varieties $\incGX_w$ as closures of certain $\GL(n) \times \GL(n)$-orbits on $\Gr(n,2n) \times \Fl(n) \times \Fl(n)$, and show that their classes $[\incGX_w]$ are represented by double back-stable Schubert polynomials, proving Theorem~\ref{thm:SL-classes}. Sections~\ref{sec:inv} and \ref{sec:GL-orbits} recapitulate this story for involution graph Schubert varieties: in particular, in \S\ref{sec:GL-orbits} we prove Theorem~\ref{thm:LG-OG-preview} and give Pfaffian formulas for $\iS_z$ when $z$ is vexillary. The notion of ``vexillary'' and accompanying Pfaffian formulas are more delicate in the fixed-point-free case, and the necessary combinatorics is developed in \S\ref{sec:grassmannians}, where the particular case of I-Grassmannian involutions is also investigated. In Section~\ref{sec:tableaux}, we combine the results of \S\ref{sec:GL-orbits} and formulas of Ivanov to express $\iS_z$ in terms of shifted tableaux when $z$ is vexillary.

\subsection*{Acknowledgements}
I am grateful to Bill Fulton for teaching a course on degeneracy loci exactly when I needed to learn about them; I also thank Zach Hamaker, Thomas Lam, and Eric Marberg for helpful conversations, and Mark Shimozono for asking questions that eventually motivated some of the main results here.

\newpage

\section{Preliminaries}
\label{sec:prelim}

\subsection{Cohomology} \label{subsec:cohom}
Let $X$ be a smooth complex variety. Throughout we write $H^*(X)$ for the integral singular cohomology ring $H^*(X, \ZZ)$. Suppose $\pi : \vb E \twoheadrightarrow X$ is a complex vector bundle. For $x \in X$, we write ${\vb E}_x$ for the fiber $\pi^{-1}(x)$, a complex vector space. The \emph{total Chern class} $c(\vb E)$ of $\vb E$ is an element of $H^*(X)$, not necessarily homogeneous, which is zero outside degrees $0, 2, 4, \ldots, 2\rank(\vb E)$. The \emph{$d$\th Chern class} $c_d(\vb E)$ or $c(\vb E)_d$ is the degree $2d$ component of $c(\vb E)$. The properties of Chern classes we need are:
\begin{enumerate}[(a)]
    \item $c_0(\vb E) = 1$.
    \item $c(\vb F) = c(\vb E)c(\vb G)$ when $0 \to \vb E \to \vb F \to \vb G \to 0$ is a short exact sequence.
    \item $c_d(\vb E^*) = (-1)^d c_d(\vb E)$.
    \item If $\vb E$ is trivial, then $c(\vb E) = 1$.
\end{enumerate}
Since $H^*(X)$ is zero in large enough degree, $c_0(\vb E) = 1$ implies that $c(\vb E)$ is a unit. Part (b) above then says that $c(\vb E \oplus \vb F) = c(\vb E)c(\vb F)$ and $c(\vb E / \vb F) = c(\vb E)/c(\vb F)$.

Define alphabets
\begin{gather*}
    x_{+} = \{x_i : 0 < i \in \ZZ\} \quand x_{-} = \{x_i : 0 > i \in \ZZ\} \quand x = x_{-} \cup x_{+} \quand \\
        x_{a..b} = \{x_i : a \leq i \leq b, i \neq 0\} \quad \text{(for integers $a \leq b$)}.
\end{gather*}
Let $\Fl(n)$ be the set of complete flags $E_\bullet$ in $\CC^n$, i.e. chains $E_1 \subseteq E_2 \subseteq \cdots \subseteq E_n = \CC^n$ where $E_i$ is an $i$-dimensional linear subspace. For each $i$ there is a \emph{tautological vector bundle} $\vb{E}_i \twoheadrightarrow \Fl(n)$, whose fiber $(\vb{E}_i)_{E_{\bullet}}$ over a point $E_{\bullet} \in \Fl(n)$ is the subspace $E_i$. Borel showed that the map
\begin{equation} \label{eq:borel-isomorphism}
    \ZZ[x_1, \ldots, x_n] / (e_1(x_{1 \cdots n}), \ldots, e_n(x_{1 \cdots n})) \to H^*(\Fl(n))
\end{equation}
sending $x_i \mapsto c_1((\vb E_i / \vb E_{i-1})^*)$ for $i = 1, \ldots, n$ is a well-defined isomorphism, where $e_d$ is the degree $d$ elementary symmetric function \cite{borelcohom}. We write $\vb{E}_\bullet$ and $\vb{F}_{\bullet}$ for the tautological flags of bundles over the two factors of $\Fl(n) \times \Fl(n)$, and let $y_i = c_1((\vb F_i / \vb F_{i-1})^*)$, so that members of $H^*(\Fl(n) \times \Fl(n)) \simeq H^*(\Fl(n)) \otimes H^*(\Fl(n))$  can be represented by polynomials in $x_+$ and $y_+$.

The projection $\Fl(n) \to \Gr(k, n)$ sending $E_\bullet \mapsto E_k$ induces as its pullback an inclusion $H^*(\Gr(k,n)) \hookrightarrow H^*(\Fl(n))$ whose image is the subring of $(S_k \times S_{n-k})$-invariants in  $\ZZ[x_1, \ldots, x_n] / (e_1(x_{1 \cdots n}), \ldots, e_n(x_{1 \cdots n}))$. This subring is isomorphic to $\Lambda / (e_{k+1}, e_{k+2}, \ldots)$, where $\Lambda$ is the ring of symmetric functions over $\ZZ$. With this identification, the dual tautological bundle $\vb G^* \twoheadrightarrow \Gr(k,n)$ has Chern classes $e_0, e_1, \ldots, e_k$.

However, in our setting it seems more natural to consider the maps $\Fl(n) \times \Fl(n) \to \Gr(n,2n)$ sending $(E_\bullet, F_\bullet) \mapsto F_i \oplus (\CC^n/E_i)^*$, for each $i \in [n]$. Under this map, the dual tautological bundle $\vb{G}^* \twoheadrightarrow \Gr(n,2n)$ pulls back to $\vb F_i^* \oplus (\CC^n/\vb E_i)$, so the induced map $H^*(\Gr(n,2n)) \to H^*(\Fl(n)) \otimes H^*(\Fl(n))$ sends
\begin{equation} \label{eq:super-e}
    c(\vb{G}^*) \mapsto c( \vb F_i^* \oplus (\CC^n/\vb E_i)) = \frac{c(\vb F_i^*)}{c(\vb E_i)} = \frac{(1+y_1)\cdots (1+y_i)}{(1-x_1) \cdots (1-x_i)},
\end{equation}
hence $c_d(\vb{G}^*) \mapsto \sum_{a+b=d} h_a(x_{1 \cdots i}) e_b(y_{1\cdots i})$, where $h_a$ is the degree $a$ complete homogeneous symmetric function. This suggests the next definition.

\begin{defn} Let $\Delta : \Lambda \to \Lambda \otimes \Lambda$ be the coproduct on symmetric functions defined by $\Delta(e_d) = \sum_{a+b=d} e_a \otimes e_b$, and $\omega : \Lambda \to \Lambda$ the ring involution sending $e_d$ to $h_d$. The ring $\supersymm$ of \emph{supersymmetric functions} is the image of $(\omega \otimes \id) \circ \Delta$ in $\Lambda \otimes \Lambda$.
\end{defn}
 We view $\sum_i f_i \otimes g_i \in \supersymm$ as the formal power series $\sum_i f_i(x_-)g_i(y_-)$. For $f \in \Lambda$, write $f(x\superslash y) \eqdef ((\omega \otimes \id) \circ \Delta)(f)$; so, for instance, $e_d(x\superslash y) = \sum_{a+b=d} h_a(x) e_b(y)$. As suggested by \eqref{eq:super-e}, we identify $H^*(\Gr(k,n))$ with a quotient of $\supersymm$ by sending $e_d(x\superslash y)  \mapsto c_d(\vb G^*)$. Equivalently, $\sum_d h_d(x \superslash y)$ represents $1/c(\vb G)$.
\begin{rem}
    It is more common to define supersymmetric functions as the image of one of the maps $f \mapsto (1 \otimes \omega)(\Delta(f))$ or $f \mapsto (1 \otimes (-1)^{\deg(f)}\omega)(\Delta(f))$, and to write $f(x/y)$ for the image of $f$. We have used the notation $f(x\superslash y)$ instead to reflect our different convention.
\end{rem}

 Since Grassmannians and flag varieties have no odd-dimensional cohomology, the K\"unneth theorem and universal coefficient theorem imply that the natural map
\begin{equation*} H^*(\Gr(n,2n)) \otimes H^*(\Fl(n)) \otimes H^*(\Fl(n)) \to H^*(\Gr(n,2n) \times \Fl(n) \times \Fl(n)) \end{equation*}
is an isomorphism. Thus, classes in $H^*(\Gr(n,2n) \times \Fl(n) \times \Fl(n))$ can be represented by members of $\supersymm \otimes \ZZ[x_+] \otimes  \ZZ[y_+]$, which we view as formal power series in $x \cup y$. 

\subsection{Schubert polynomials and Stanley symmetric functions}

\begin{defn} A \emph{compatible sequence} for a word $a = a_1 \cdots a_\ell$ is a weakly increasing word $i_1 \leq \cdots \leq i_{\ell}$ with entries in $\ZZ \setminus \{0\}$ such that for each $j$, (1) $i_j \leq a_j$, and (2) if $a_j < a_{j+1}$ then $i_j < i_{j+1}$.  Let $\comp(a)$ be the set of compatible sequences for $a$.
\end{defn}
Our definition of compatible sequence is slightly different from usual (e.g. \cite{bergeron-billey}), in that $i$ is typically required to have positive entries.

Let $\Red(w)$ be the set of \emph{reduced words} of a permutation $w$: the minimal-length words $a_1 \cdots a_\ell$ such that $s_{a_1} \cdots s_{a_\ell} = w$, where $s_i$ is the transposition $(i,i{+}1)$. If $a$ is a word and $m \in \ZZ$, we write $m \leq a$ to mean that $m \leq a_i$ for each $i$.
\begin{ex} \label{ex:rc}
    We use bold to distinguish reduced words and compatible sequences from permutations. For instance, $\Red(2143) = \{{\bf 13}, {\bf 31}\}$ and 
    \begin{equation*}
        \comp({\bf 13}) = \{{\bf ij} : i < j, i \leq 1, j \leq 3\} \quad \text{and} \quad \comp({\bf 31}) = \{ {\bf ij} : i \leq j, i \leq 3, j \leq 1\}.
    \end{equation*}
\end{ex}

If $f$ is a formal power series in variables $x$, we write (for instance) $f\ev_{x_{-}\to 0}$ to indicate the result of setting the variables in $x_{-}$ to zero.
\begin{defn} \label{defn:schubert}
    The \emph{back-stable Schubert polynomial} \cite{back-stable-schubert} of $w \in S_n$ is
    \begin{equation*}
        \bS_w \eqdef \sum_{a \in \Red(w)} \sum_{\substack{i \in \comp(a)} } x_{i_1} \cdots x_{i_{\ell}}.
    \end{equation*}
    The \emph{Schubert polynomial} \cite{billeyjockuschstanley,lascouxschutzenbergerschubert} of $w$ is
    \begin{equation*}
        \fkS_w \eqdef \bS_w\ev_{x_{-} \to 0} = \sum_{a \in \Red(w)} \sum_{\substack{i \in \comp(a) \\ 1 \leq i} } x_{i_1} \cdots x_{i_{\ell}}.
    \end{equation*}
    The \emph{Stanley symmetric function} \cite{stanleysymm} of $w$ is
    \begin{equation*}
        F_w \eqdef \bS_w\ev_{x_{+} \to 0} = \sum_{a \in \Red(w)} \sum_{\substack{i \in \comp(a) \\ i \leq -1} } x_{i_1} \cdots x_{i_{\ell}}.
    \end{equation*}
\end{defn}
Despite its name, $\bS_w$ is not a polynomial but a formal power series in $x$. In accordance with their names, $\fkS_w$ is a polynomial in $x_+$, and $F_w$ is a symmetric function in $x_-$ (this symmetry is not obvious from our definition).

\begin{ex} As per Example~\ref{ex:rc},
    \begin{align*}
        \bS_{2143} &= \sum_{\substack{i < j \\ i \leq 1, j \leq 3}} x_i x_j + \sum_{\substack{i \leq j \\ i \leq 3, j \leq 1}} x_i x_j\\
        &= \left( x_1 x_2 + x_1 x_3 + e_1 \cdot (x_1+x_2+x_3) + e_2 \right) +  \left( x_1^2 + h_1 \cdot x_1 + h_2 \right),
    \end{align*}
    where we view the elementary and homogeneous symmetric functions $e_d$ and $h_d$ as formal power series in variables $x_-$. Setting $x_i$ to $0$ for $i < 0$ gives $\fkS_{2143} = x_1^2 + x_1 x_2 + x_1 x_3$, and setting $x_i$ to $0$ for $i > 0$ gives $F_{2143} = e_2 + h_2$.
\end{ex}

\begin{rem} Back stable Schubert polynomials can be defined in terms of ordinary Schubert polynomials. For $v \in S_m$ and $w \in S_n$, let $v \times w \in S_{m \times n}$ be the permutation
    \begin{equation*}
        i \mapsto \begin{cases}
            v(i) & \text{if $i \leq m$}\\
            w(i-m)+m & \text{if $m < i \leq n$}
        \end{cases}
    \end{equation*}
    Let $1_m \in S_m$ denote the identity permutation. Then $\bS_w = \lim_{m \to \infty} \fkS_{1_m \times w}(x_{-m\cdots n})$.  The next definition uses a similar approach.
\end{rem}

For $u, v, w \in S_n$, write $uv \doteq w$ to mean that $uv = w$ and $\ell(w) = \ell(u) + \ell(v)$. Here, $\ell(w)$ is the number of inversions of $w$, or equivalently the length of any $a \in \Red(w)$.
\begin{defn}[\cite{back-stable-schubert}, \S 4.4] The \emph{double Schubert polynomial} of $w \in S_n$ is
    \begin{equation*}
        \fkS_w(x;y) \eqdef \sum_{uv \doteq w} (-1)^{\ell(u)} \fkS_{u^{-1}}(y)\fkS_v(x).
    \end{equation*}
    The \emph{back-stable double Schubert polynomial} of $w$ is
    \begin{align*}
        \bS_w(x;y) &\eqdef \lim_{m \to \infty} \fkS_{1_m \times w}(x_{-m\cdots n}; y_{-m\cdots n}) = \sum_{uv \doteq w} (-1)^{\ell(u)} \bS_{u^{-1}}(y)\bS_v(x).
    \end{align*}
\end{defn}

The \emph{divided difference operator} $\partial_i$ sends $f \in R[x_1, x_2, \ldots]$ to
\begin{equation*}
    \partial_i f \eqdef \frac{f - s_i f}{x_i - x_{i+1}} \in R[x_1, x_2, \ldots],
\end{equation*}
where $R$ is a commutative ring and $s_i = (i,i{+}1)$ acts on $R[x_1, x_2, \ldots]$ by interchanging $x_i$ and $x_{i+1}$. If there are $x$-variables and $y$-variables, $\partial_i$ will always act on the $x$-variables and treat the $y$-variables as scalars. That is, we take the action of $\partial_i$ on $R[x_1, x_2, \ldots, y_1, y_2, \ldots]$ to be the action of $\partial_i$ on $S[x_1, x_2, \ldots]$ where $S = R[y_1, y_2, \ldots]$.
\begin{prop}[\cite{back-stable-schubert}, Theorem 4.6] \label{prop:schubert-recurrence}
    The back-stable double Schubert polynomials $\bS_w(x;y)$ satisfy the recurrence
\begin{equation*}
    \partial_i \bS_w(x;y) = \begin{cases}
        \bS_{ws_i}(x;y) & \text{if $\ell(ws_i) < \ell(w)$}\\
        0 & \text{otherwise}
    \end{cases}.
\end{equation*}
\end{prop}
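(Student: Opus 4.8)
The plan is to bootstrap from the single-variable case, using the explicit expansion $\bS_w(x;y)=\sum_{uv\doteq w}(-1)^{\ell(u)}\bS_{u^{-1}}(y)\bS_v(x)$ of Definition~\ref{defn:schubert}. The input I would take for granted is the single-variable recurrence: for every $v$ and every $i$, $\partial_i\bS_v(x)=\bS_{vs_i}(x)$ if $\ell(vs_i)<\ell(v)$ and $\partial_i\bS_v(x)=0$ otherwise (the latter amounting to $s_i\bS_v(x)=\bS_v(x)$). This can be extracted from Definition~\ref{defn:schubert} by the standard compatible-sequence argument, or more efficiently from the presentation $\bS_v=\lim_m\fkS_{1_m\times v}(x_{-m\cdots n})$ together with the classical recurrence $\partial_j\fkS_u(x)=\fkS_{us_j}(x)$ (or $0$), after matching the $S_n$-index $j=m+i$ with the back-stable variable index $i$ and noting that $1_m\times v$ has a descent at position $m+i$ exactly when $v$ does at $i$, with $(1_m\times v)s_{m+i}=1_m\times(vs_i)$.

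Granting this, since $\partial_i$ is $\ZZ[y]$-linear and the factorization sum is finite, $\partial_i\bS_w(x;y)=\sum_{uv\doteq w}(-1)^{\ell(u)}\bS_{u^{-1}}(y)\,\partial_i\bS_v(x)$, and the argument turns on two length inequalities. First, if $uv\doteq w$ and $v$ has a descent at $i$, then $ws_i=uvs_i$ has length at most $\ell(u)+\ell(vs_i)=\ell(w)-1$, so $\ell(ws_i)<\ell(w)$; hence when $\ell(ws_i)>\ell(w)$, for every reduced factorization $uv\doteq w$ the factor $v$ has no descent at $i$, every $\partial_i\bS_v(x)$ vanishes, and $\partial_i\bS_w(x;y)=0$. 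Second, dually, if $u'v'\doteq ws_i$ and $v'$ had a descent at $i$ then $w=u'v's_i$ would have length at most $\ell(u'v')-1=\ell(ws_i)-1$, forcing $\ell(ws_i)>\ell(w)$; equivalently, when $\ell(ws_i)<\ell(w)$ no factor $v'$ in a reduced factorization $u'v'\doteq ws_i$ has a descent at $i$.

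Finally suppose $\ell(ws_i)<\ell(w)$, so $\ell(ws_i)=\ell(w)-1$. Only the factorizations $uv\doteq w$ with $\ell(vs_i)<\ell(v)$ contribute, giving $\partial_i\bS_w(x;y)=\sum_{uv\doteq w,\;\ell(vs_i)<\ell(v)}(-1)^{\ell(u)}\bS_{u^{-1}}(y)\bS_{vs_i}(x)$. The map $(u,v)\mapsto(u,vs_i)$ carries this index set into $\{(u',v'):u'v'\doteq ws_i\}$, since $u(vs_i)=ws_i$ and $\ell(u)+\ell(vs_i)=\ell(w)-1=\ell(ws_i)$; it is evidently injective, and by the second inequality $(u',v')\mapsto(u',v's_i)$ is a two-sided inverse (here $v'$ has no descent at $i$, so $\ell(v's_i)=\ell(v')+1$, whence $u'(v's_i)=w$ with $\ell(u')+\ell(v's_i)=\ell(ws_i)+1=\ell(w)$ and $v's_i$ does have a descent at $i$). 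Reindexing by this bijection turns the sum into $\sum_{u'v'\doteq ws_i}(-1)^{\ell(u')}\bS_{{u'}^{-1}}(y)\bS_{v'}(x)=\bS_{ws_i}(x;y)$. I expect the only genuine friction to be in nailing down the single-variable input: one must remember that $\bS_v$ is a power series in infinitely many variables, so ``$\partial_i f=0$'' should be read as ``$s_if=f$'' (equivalent since $x_i-x_{i+1}$ is a non-zero-divisor in the ring in play), and one must track the index shift in $\bS_v=\lim_m\fkS_{1_m\times v}$. One could alternatively prove the statement uniformly through the limit $\bS_w(x;y)=\lim_m\fkS_{1_m\times w}(x_{-m\cdots n};y_{-m\cdots n})$ and the classical recurrence for ordinary \emph{double} Schubert polynomials, the only extra point being that $\partial_i$ commutes with this limit because both operations are computed coefficientwise in each fixed total degree, where the coefficients stabilize.
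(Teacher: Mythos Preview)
The paper does not prove this proposition; it is simply quoted from \cite{back-stable-schubert}, Theorem~4.6. So there is no ``paper's own proof'' to compare against.

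Your argument is correct. The key combinatorial point---that $(u,v)\mapsto(u,vs_i)$ gives a bijection between $\{(u,v):uv\doteq w,\ \ell(vs_i)<\ell(v)\}$ and $\{(u',v'):u'v'\doteq ws_i\}$ when $\ell(ws_i)<\ell(w)$---is cleanly established by your two length inequalities, and the reduction to the single-variable recurrence via the expansion $\bS_w(x;y)=\sum_{uv\doteq w}(-1)^{\ell(u)}\bS_{u^{-1}}(y)\bS_v(x)$ is exactly right given that $\partial_i$ treats the $y$-variables as scalars (as the paper stipulates just before the proposition) and the sum is finite. Your sketch of the single-variable input via $\bS_v=\lim_m\fkS_{1_m\times v}$ is also fine; the limit argument you outline at the end is in fact the same device the paper itself uses later (e.g.\ in the proof of Proposition~\ref{prop:inv-schubert-recurrence}) to transfer divided-difference recurrences from ordinary to back-stable polynomials. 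One small cosmetic point: you write ``$\partial_i$ is $\ZZ[y]$-linear,'' but $\bS_{u^{-1}}(y)$ is a power series, not a polynomial; the substance of your claim (that $\partial_i$ treats these factors as constants) is of course unaffected.
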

Let $w_0 \in S_n$ be the reverse permutation $n(n-1)\cdots 21$. Any $w \in S_n$ can be reached starting from $w_0$ via a sequence of transformations $v \leadsto vs_i$ where $\ell(vs_i) < \ell(v)$, so Proposition~\ref{prop:schubert-recurrence} inductively determines every $\bS_w(x;y)$ once $\bS_{w_0}(x;y)$ is known.

\begin{defn} The \emph{Schubert variety} $\X_w$ associated to $w \in S_n$ with respect to a fixed flag $E_{\bullet}' \subseteq \CC^n$ is the closure of the \emph{Schubert cell}
\begin{equation*}
X_w \eqdef \{E_{\bullet} \in \Fl(n) : \rank(E_i \hookrightarrow \CC^n \twoheadrightarrow \CC^n/E_{n-j}') = \text{$\rank w_{[i][j]}$ for $i,j \in [n]$}\}.
\end{equation*}
\end{defn}
Suppose we represent a flag $E_{\bullet} \subseteq \CC^n$ by a matrix $A$ so that $E_i$ is the span of the first $i$ rows of $A$ for each $i$. Taking $E_i'$ to be the span of the standard basis vectors $e_n, \ldots, e_{n-i+1}$, the Schubert cell $X_w$ consists of flags with matrix $A$ such that $\rank A_{[i][j]} = \rank w_{[i][j]}$ for $i, j \in [n]$. To obtain the closed Schubert variety $\X_w$, replace the equalities $\rank A_{[i][j]} = \rank w_{[i][j]}$ with inequalities $\rank A_{[i][j]} \leq \rank w_{[i][j]}$.  Schubert varieties are irreducible, and $\codim \X_w = \ell(w)$.

Lascoux and Sch\"utzenberger introduced the Schubert polynomial $\fkS_w$ as a representative for the class $[\X_w] \in H^*(\Fl(n))$ Poincar\'e dual to $\X_w$ \cite{lascouxschutzenbergerschubert}. The Schubert cells $\{X_w : w \in S_n\}$ are the cells of a CW decomposition of $\Fl(n)$, and consequently $\{\fkS_w : w \in S_n\}$ is a $\ZZ$-basis of $\ZZ[x_1, \ldots, x_n]/(e_1(x_{1\cdots n}), \ldots, e_n(x_{1\cdots n})) \simeq H^*(\Fl(n))$.

The Schubert polynomials satisfy an important stability property: $\fkS_{w \times 1} = \fkS_{w}$ for any $w \in S_n$. Viewing $S_n$ as a subgroup of $S_{n+1}$ via the embedding $w \mapsto w \times 1$, let $S_{\infty} \eqdef \bigcup_{n\geq 0} S_n$. The stability property of Schubert polynomials means it is well-defined to write $\fkS_{w}$ for $w \in S_{\infty}$. Moreover, $\{\fkS_w : w \in S_{\infty}\}$ forms a basis of $\ZZ[x_1, x_2, \ldots]$ \cite[Ch. 10]{youngtableaux}.

\begin{lem} \label{lem:unique-stable-rep} Let $\alpha_i \in H^*(\Fl(n_i))$ be a sequence of classes where $(n_i)$ is a sequence tending toward $\infty$. There is at most one polynomial $f$ which represents every class $\alpha_i$. \end{lem}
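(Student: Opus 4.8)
The plan is to reduce the statement to the fact that a polynomial in $\ZZ[x_1, x_2, \ldots]$ is determined by its images in the Borel presentations $\ZZ[x_1, \ldots, x_n]/(e_1(x_{1\cdots n}), \ldots, e_n(x_{1\cdots n})) \simeq H^*(\Fl(n))$ for all large $n$. Suppose $f$ and $g$ both represent every $\alpha_i$; I want to show $f = g$, equivalently that $h \eqdef f - g$ has zero image in $H^*(\Fl(n_i))$ for every $i$, hence is zero. So it suffices to prove: if $h \in \ZZ[x_1, x_2, \ldots]$ maps to $0$ in $H^*(\Fl(n))$ for infinitely many $n$, then $h = 0$.

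First I would fix a degree bound: $h$ involves only finitely many variables, say $x_1, \ldots, x_N$, and has some degree $d$. For any $n \ge N$, the class of $h$ in $H^*(\Fl(n)) \simeq \ZZ[x_1, \ldots, x_n]/(e_1(x_{1\cdots n}), \ldots, e_n(x_{1\cdots n}))$ vanishes iff $h$ lies in the ideal $I_n = (e_1(x_{1\cdots n}), \ldots, e_n(x_{1\cdots n}))$. The key structural point is that in low degrees the ideal $I_n$ is generated by high-degree-or-symmetric-looking elements: since $e_k(x_{1\cdots n})$ has degree $k$, any element of $I_n$ of degree $\le d$ is a combination $\sum_{k=1}^{d} p_k \cdot e_k(x_{1\cdots n})$ with $p_k \in \ZZ[x_1, \ldots, x_n]$ of degree $d - k$. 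So I would expand a hypothetical such expression for $h$ and look at monomials involving only $x_1, \ldots, x_N$: the expansion of $e_k(x_{1\cdots n})$ for $n$ large contains monomials $x_{j_1} \cdots x_{j_k}$ with indices up to $n$, and choosing $n > N + d$ forces cancellation constraints. Concretely, I expect the cleanest route is to use the known basis $\{\fkS_w : w \in S_n\}$ of $\ZZ[x_1,\ldots,x_n]/I_n$ together with the stability $\fkS_{w \times 1} = \fkS_w$: write $h = \sum_{w \in S_\infty} c_w \fkS_w$ (a finite sum, valid since the $\fkS_w$ form a $\ZZ$-basis of $\ZZ[x_1, x_2, \ldots]$ by \cite[Ch.~10]{youngtableaux}), and observe that for $n$ large enough that all $w$ appearing lie in $S_n$, the image of $h$ in $H^*(\Fl(n))$ is $\sum_w c_w \fkS_w$ expressed in the Schubert basis, which vanishes iff every $c_w = 0$, i.e. iff $h = 0$.

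The main obstacle — really the only subtlety — is bookkeeping the transition between the infinite polynomial ring and each finite quotient: one must be sure that (i) $h$ is a \emph{finite} $\ZZ$-combination of Schubert polynomials so that "all $w$ appearing lie in $S_n$" makes sense for $n \gg 0$, and (ii) the quotient map $\ZZ[x_1,\ldots,x_n] \to H^*(\Fl(n))$ sends the Schubert polynomials $\fkS_w$ with $w \in S_n$ bijectively onto a $\ZZ$-basis, so that linear independence is preserved. Both are standard (the first from the basis statement already quoted in the excerpt, the second from Lascoux–Sch\"utzenberger), so the argument is short once assembled; the only thing to be careful about is that the sequence $(n_i)$ tending to $\infty$ guarantees some $n_i$ exceeds the finite bound coming from $h$, which is exactly what the hypothesis provides.
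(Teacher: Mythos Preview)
Your proposal is correct and takes essentially the same approach as the paper: expand in the Schubert basis $\{\fkS_w : w \in S_\infty\}$ of $\ZZ[x_1,x_2,\ldots]$, choose $n_i$ large enough that every $w$ appearing lies in $S_{n_i}$, and use the linear independence of $\{[\X_w] : w \in S_{n_i}\}$ in $H^*(\Fl(n_i))$ to conclude. The only cosmetic difference is that you phrase it as showing $f-g=0$, whereas the paper phrases it as showing the Schubert coefficients of $f$ are determined by $\alpha_i$; the initial detour through generators of $I_n$ is unnecessary but harmless.
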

    \begin{proof}
        Suppose $f$ represents every $\alpha_i$. Since $\{\fkS_w : w \in S_{\infty}\}$ is a basis of $\ZZ[x_1, x_2, \ldots]$, we can write $f = \sum_{w \in S_{n_i}} c_w \fkS_w$ for some sufficiently large $i$. Since $f$ represents $\alpha_i$, we then have $\alpha_i = \sum_{w \in S_{n_i}} c_w [\X_w]$. The classes $\{[\X_w] : w \in S_{n_i}\}$ are linearly independent, so the coefficients $c_w$ can be determined from $\alpha_i$.
    \end{proof}

\subsection{Vexillary permutations} \label{subsec:vex}

\begin{defn}
    The \emph{Rothe diagram} of $w \in S_n$ is the set
    \begin{equation*}
        D(w) \eqdef \{(i,j) \in [n] \times [n] : j < w(i), i < w^{-1}(j)\}.
    \end{equation*}
\end{defn}

\begin{defn}
    The \emph{essential set} of a set $D \subseteq \NN \times \NN$ is
    \begin{equation*}
        \Ess(D) \eqdef \{(i,j) \in D : (i,j+1) \notin D \text{ and } (i+1,j) \notin D\}.
    \end{equation*}
    That is, $\Ess(D)$ is the set of southeast corners of connected components of $D$, viewing two elements of $\NN \times \NN$ as connected if they are vertically or horizontally adjacent.
\end{defn}

\begin{defn} A permutation $w \in S_n$ is \emph{vexillary} if it avoids the pattern $2143$, i.e. there do not exist $i < j < k < l$ in $[n]$ with $w(j) < w(i) < w(l) < w(k)$. \end{defn}

Let $\SWNE$ be the partial order on $\NN \times \NN$ increasing from southwest to northeast, meaning that $(i,j) \SWNE (i',j')$ if and only if $i \geq i'$ and $j \leq j'$.  If $i \in \NN$ and $S \subseteq \NN$, we write $D_{iS}(w)$ for $\{j \in S : (i,j) \in D(w)\}$, or simply $D_{iS}$ when $w$ is understood.

The equivalences of (a) with parts (b) and (c) in the next lemma are due to Fulton \cite{fulton-double-schubert} and Wachs \cite{wachs-schubert}, respectively.
\begin{lem} \label{lem:vex-chain} The following are equivalent:
\begin{enumerate}[(a)]
\item $w \in S_n$ is vexillary.
\item $\Ess(D(w))$ is a chain under $\SWNE$.
\item The sets $D_{i\NN}(w)$ for $i \in \NN$ are totally ordered under inclusion.
\end{enumerate}
\end{lem}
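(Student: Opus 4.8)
The plan is to prove the cycle (a)$\Rightarrow$(c)$\Rightarrow$(b)$\Rightarrow$(a), which keeps each implication elementary. For (a)$\Rightarrow$(c): suppose the row diagrams $D_{i\NN}(w)$ are not totally ordered under inclusion, so there are rows $i < i'$ and columns $j, j'$ with $(i,j)\in D(w)$, $(i,j)\notin$ being dominated means $(i,j')\notin D(w)$ while $(i',j')\in D(w)$, $(i',j)\notin D(w)$. Unpacking the definition $D(w) = \{(i,j): j < w(i),\ i < w^{-1}(j)\}$, the condition $(i,j)\in D(w)$ together with $(i,j')\notin D(w)$ (in the presence of $j' > j$, which one arranges) forces $w(i)$ to lie strictly between $j$ and $j'$; symmetrically $(i',j')\in D(w)$ with $(i',j)\notin D(w)$ forces $w(i')$ between $j$ and $j'$ on the other side; and the ``$i < w^{-1}(j)$'' conditions produce rows $k, l$ whose values $w(k) = j$, $w(l) = j'$ sit correctly. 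Collating the four rows in order exhibits a $2143$ pattern. (This is the bookkeeping-heavy direction, but it is essentially the standard argument; I would state it carefully and track which of the two defining inequalities for $D(w)$ is being violated at each non-box.)

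For (c)$\Rightarrow$(b): assume the sets $D_{i\NN}(w)$ are totally ordered by inclusion, and let $(i,j)$ and $(i',j')$ be two essential boxes with, say, $i \le i'$. Since $D_{i'\NN} \subseteq D_{i\NN}$ or vice versa, and both contain boxes, inclusion of rows forces $D_{i'\NN}\subseteq D_{i\NN}$ (the higher-indexed rows of a Rothe diagram shrink once they start shrinking—more precisely, one uses that if $D_{i\NN}\subseteq D_{i'\NN}$ with $i<i'$ then by the ``SE corner'' definition $(i,j)$ could not be essential unless the rows are equal). Then $(i',j')\in D_{i'\NN}\subseteq D_{i\NN}$, and since $(i,j)$ is the SE corner of its component in row $i$, no box $(i,j+1)$ lies in $D(w)$; as row $i$'s box-set is an up-set in each component one concludes $j' \le j$. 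Hence $(i,j)\SWNE(i',j')$, so $\Ess(D(w))$ is a chain. I would phrase this using the fact that within a single row, $D_{i\NN}(w)$ is an interval $[j_0, w(i)-1]$ possibly broken only where $w^{-1}$ values intrude—but actually the cleaner route is: essential boxes in distinct rows are comparable precisely because row-inclusion plus the SE-corner property pins down the column comparison.

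For (b)$\Rightarrow$(a): argue by contrapositive. If $w$ contains a $2143$ pattern in rows $i<j<k<l$ with $w(j)<w(i)<w(l)<w(k)$, then one checks directly that $(i, w(j))$ and $(k, w(l))$—or a nearby pair of boxes obtained by sliding to the SE corners of their components—both lie in $D(w)$, and that $(i, w(j))$ has $w(j) < w(i)$ (column strictly left of where row $i$'s values run out) while row $k$ has a box in a column $\ge w(l) > w(j)$ that is \emph{not} below a box of row $i$, since row $i$ has no box in columns $\ge w(i)$. Passing to essential boxes, this yields two essential boxes $(a,b)$ and $(c,d)$ with $a < c$ but $b < d$, violating the $\SWNE$-chain condition. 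So $\Ess(D(w))$ not a chain.

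The main obstacle is the pattern-to-diagram translation in (a)$\Rightarrow$(c) (and its mirror in (b)$\Rightarrow$(a)): one must correctly identify, from a failure of row-inclusion, the four indices and verify all four strict inequalities of the $2143$ pattern, being careful that the witnessing rows/columns need not be the pattern rows themselves but are produced from the two defining inequalities of $D(w)$. Everything else is short. Since the statement attributes (a)$\Leftrightarrow$(b) to Fulton and (a)$\Leftrightarrow$(c) to Wachs, I would in practice just cite \cite{fulton-double-schubert} and \cite{wachs-schubert} for the full proof and include only the brief bridge arguments above for the reader's convenience.
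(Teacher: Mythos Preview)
The paper does not prove this lemma at all: it simply attributes (a)$\Leftrightarrow$(b) to Fulton and (a)$\Leftrightarrow$(c) to Wachs in the sentence preceding the statement, and gives no argument. Your final recommendation---cite \cite{fulton-double-schubert} and \cite{wachs-schubert}---is therefore exactly what the paper does.

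That said, your accompanying sketches are too loose to serve as proofs, and one of them has a real gap. In (c)$\Rightarrow$(b) you assert that for essential boxes $(i,j)$ and $(i',j')$ with $i<i'$, total ordering of the row sets forces $D_{i'\NN}\subseteq D_{i\NN}$, justified by ``if $D_{i\NN}\subseteq D_{i'\NN}$ with $i<i'$ then $(i,j)$ could not be essential unless the rows are equal.'' This claim is not established: essentiality of $(i,j)$ gives $(i{+}1,j)\notin D(w)$, hence $w(i{+}1)\le j$, but that constrains row $i{+}1$, not row $i'$, and nothing you have written bridges from $i{+}1$ to a general $i'>i$. A correct argument here genuinely requires tracking both the $(i,j{+}1)\notin D(w)$ and $(i{+}1,j)\notin D(w)$ conditions and doing a short case split; your sketch does not do this. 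The sketch for (b)$\Rightarrow$(a) is similarly imprecise about which essential boxes witness the failure of the chain condition and why sliding to southeast corners preserves the strict northwest/southeast relation you need.

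If you intend to include actual arguments rather than citations, these steps need to be written out; as it stands, only the citation matches the paper.
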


\begin{ex}
    Let $w$ be the vexillary permutation $35142$. Each $\times$ in the following diagram is a point $(i, w(i))$ in matrix coordinates, with the points of the Rothe diagram $D(w)$ marked by squares: they are the points directly left of a $\times$ and directly above a $\times$. Elements of $\Ess(D(w))$ are marked by black squares. All points are drawn in matrix coordinates, with $(1,1)$ at the upper left:
    \begin{equation*}  \arraycolsep=2pt \def\arraystretch{0.9}
        \begin{array}{ccccc}
            \sq & \sq & \times & \cdot & \cdot \\
            \sq & \blacksquare & \cdot & \blacksquare & \times \\
            \times & \cdot & \cdot & \cdot & \cdot \\
            \cdot & \blacksquare & \cdot & \times & \cdot\\
            \cdot & \times & \cdot & \cdot & \cdot
        \end{array}
    \end{equation*}
    By contrast, the subsequence $3154$ of $v = 31524$ is a $2143$ pattern, the diagram
    \begin{equation*}  D(v) = \arraycolsep=2pt \def\arraystretch{0.9}
        \begin{array}{ccccc}
            \sq & \blacksquare & \times & \cdot & \cdot \\
            \times & \cdot & \cdot & \cdot & \cdot \\
            \cdot & \blacksquare & \cdot & \blacksquare & \times \\
            \cdot & \times & \cdot & \cdot & \cdot\\
            \cdot & \cdot & \cdot & \times & \cdot
        \end{array}
    \end{equation*}
    has an essential set element $(1,2)$ strictly northwest of the essential set element $(3,4)$, and the sets $D_{1\NN}(v) = \{1,2\}$ and $D_{3\NN}(v) = \{2,4\}$ are incomparable under containment.
\end{ex}

\begin{defn}
    The \emph{code} of $w \in S_n$ is the list $c(w) = (c_1(w), \ldots, c_n(w))$ where $c_i(w) = \#\{j > i : w(j) < w(i)\} = |D_{i\NN}(w)|$. The \emph{shape} $\sh(w)$ of $w$ is the transpose of the partition obtained by sorting $c(w)$ and ignoring $0$'s.
\end{defn}
We note that the shape of $w$  is more commonly defined as $\sh(w)^t$.

\begin{ex} The code of $35142$ is $(2,3,0,1,0)$, and the shape is $(3,2,1)^t = (3,2,1)$. \end{ex}

In the remainder of this subsection, we prove some lemmas which we will need to extract the rank conditions defining certain degeneracy loci, described in the next subsection, from the combinatorics of Rothe diagrams and essential sets.

\begin{lem} \label{lem:code-ks} Suppose $w$ is vexillary. Write $\Ess(D(w)) = \{(i_1, j_1) \SWNEneq \cdots \SWNEneq (i_s, j_s)\}$, and let $k_p = j_p - \rank w_{[i_p][j_p]}$ for $p \in [s]$. Then $\{k_1, \ldots, k_s\} = \{c_1(w), \ldots, c_n(w)\} \setminus \{0\}$.
\end{lem}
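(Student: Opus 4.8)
The plan is to analyze how the essential set $\Ess(D(w))$ interacts with the rows of the Rothe diagram. The key tool is Lemma~\ref{lem:vex-chain}: since $w$ is vexillary, the sets $D_{i\NN}(w)$ for $i \in [n]$ are totally ordered under inclusion. First I would observe that $k_p = j_p - \rank w_{[i_p][j_p]}$ counts exactly the number of Rothe diagram cells in row $i_p$ that lie weakly left of column $j_p$; that is, $k_p = |D_{i_p, [j_p]}(w)|$. This follows because $\rank w_{[i][j]}$ equals $j$ minus the number of columns $j' \le j$ in which row $i$ of $w_{[i][j]}$ (and all rows above) fail to contribute, and a standard computation identifies this deficiency with $|\{j' \le j : (i,j') \in D(w)\}|$. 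Since $(i_p, j_p) \in D(w)$ and $(i_p, j_p + 1) \notin D(w)$, the diagram cells in row $i_p$ that are weakly left of column $j_p$ are in fact \emph{all} the diagram cells in row $i_p$ — rows of Rothe diagrams are left-justified when restricted to the relevant range — so $k_p = |D_{i_p \NN}(w)| = c_{i_p}(w)$.

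**The main step: surjectivity onto the nonzero codes.**

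It remains to show $\{c_{i_1}(w), \ldots, c_{i_s}(w)\} = \{c_1(w), \ldots, c_n(w)\} \setminus \{0\}$. The inclusion $\subseteq$ is clear since each $c_{i_p}(w) = |D_{i_p \NN}(w)| > 0$ (as $(i_p,j_p) \in D(w)$, the row $i_p$ is nonempty). For the reverse inclusion, suppose $c_i(w) = |D_{i\NN}(w)| = m > 0$ for some row $i$. The hard part will be showing that this value $m$ is attained by one of the essential-set rows $i_p$. Here is where I would use the chain structure: the nonempty sets $D_{i\NN}(w)$ form a chain $\emptyset \subsetneq \cdots \subseteq D_{i\NN}(w) \subseteq \cdots$, and for each row $i$ the cell $(i, \max D_{i\NN}(w))$ is a cell whose right neighbor is absent from $D(w)$. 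Among all rows $i'$ with $D_{i'\NN}(w) \supseteq D_{i\NN}(w)$ and with $\max D_{i'\NN}(w) = \max D_{i\NN}(w)$, take $i'$ maximal; then $(i', \max D_{i'\NN}(w))$ has both its right neighbor and its bottom neighbor outside $D(w)$, hence lies in $\Ess(D(w))$. (The bottom-neighbor condition uses that any row $i'' > i'$ either has $\max D_{i''\NN}(w) < \max D_{i'\NN}(w)$ or, by inclusion-comparability and maximality of $i'$, is handled by the chain ordering; one needs to check the case analysis carefully.) Thus $m = |D_{i'\NN}(w)| \ge |D_{i\NN}(w)| = m$ forces equality, so $m = c_{i'}(w) = k_p$ for the index $p$ with $(i_p, j_p) = (i', \max D_{i'\NN}(w))$.

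**Checking multiplicities and the obstacle.**

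Strictly the statement is about sets, so I do not need to track multiplicities, which simplifies things: I only need that the two sets of values coincide. The main obstacle is the verification in the previous paragraph that the maximal row $i'$ with a prescribed rightmost diagram cell genuinely produces an essential-set element — i.e., that its south neighbor $(i'+1, \max D_{i'\NN}(w))$ is not in $D(w)$. This requires combining the inclusion-chain property (Lemma~\ref{lem:vex-chain}(c)) with the geometry that $D_{i'+1,\NN}(w)$, being comparable to $D_{i'\NN}(w)$, cannot contain the column $\max D_{i'\NN}(w)$ without contradicting the maximality of $i'$ (when $D_{i'+1,\NN}(w) \supseteq D_{i'\NN}(w)$ and has the same maximum) or without $D_{i'+1,\NN}(w)$ being a strictly larger set sharing that maximum (contradiction with maximal choice). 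I would spell this dichotomy out explicitly. Once that is settled, the lemma follows by combining the two inclusions.
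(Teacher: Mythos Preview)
Your argument contains a genuine error in the first step. You claim that since $(i_p,j_p)\in D(w)$ and $(i_p,j_p+1)\notin D(w)$, \emph{all} cells of $D(w)$ in row $i_p$ lie in columns $\le j_p$, and hence $k_p=c_{i_p}(w)$. This is false: rows of a Rothe diagram need not be left-justified. For the paper's own example $w=35142$, row~$2$ of $D(w)$ is $\{1,2,4\}$, and the essential set contains both $(2,2)$ and $(2,4)$. For the essential cell $(i_p,j_p)=(2,2)$ one has $k_p=|D_{2,[2]}|=2$, while $c_2(w)=3$; so $k_p\neq c_{i_p}(w)$. The value $k_p=2$ does belong to $\{c_1(w),\dots,c_n(w)\}\setminus\{0\}=\{1,2,3\}$, but it equals $c_1(w)$, not $c_{i_p}(w)$.

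Because of this, your surjectivity argument also breaks down: it is built on the false identification $k_p=c_{i_p}(w)$, so showing that every nonzero $c_i(w)$ equals some $c_{i'}(w)$ with $i'$ an essential-set row does not finish the job. The paper's proof handles both directions more carefully. For $\{c_i(w)\}\setminus\{0\}\subseteq\{k_p\}$, given $i$ with $c_i(w)>0$ it takes $j=\max D_{i\NN}$, slides \emph{down} column $j$ to the last row $i'\ge i$ with $(i',j)\in D(w)$, checks $(i',j)\in\Ess(D(w))$, and then uses $w(r)>j$ for $i\le r\le i'$ to conclude $D_{i[j]}=D_{i'[j]}$, whence $c_i(w)=|D_{i'[j]}|=k_p$. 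For the reverse inclusion, given $p$ with cells of row $i_p$ to the right of column $j_p$, the paper locates a row $i<i_p$ (chosen maximal with $j_p<w(i)<\min\{j>j_p:(i_p,j)\in D(w)\}$) for which $c_i(w)=|D_{i[j_p]}|=|D_{i_p[j_p]}|=k_p$; vexillarity enters to rule out a $2143$ pattern in the verification. The key point you are missing is that the row witnessing a given $k_p$ as a code value may lie strictly above $i_p$.
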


\begin{proof}
    We will repeatedly use the fact that $c_i(w)$ is the number of elements of row $i$ of $D(w)$, and that $k_p$ is the number of elements in row $i_p$ and columns $[j_p]$. That is, $c_i(w) = |D_{i\NN}(w)|$ and $k_p = |D_{i_p[j_p]}(w)|$.
    
    Given $i \in [n]$ with $c_i(w) > 0$, let $j$ be maximal such that $(i,j) \in D(w)$. Let $i' \geq i$ be maximal such that $(i,j), (i+1,j), \ldots, (i',j) \in D(w)$. We proceed by proving a series of claims.
    \begin{enumerate}[(a)]
        \item $(i',j) \in \Ess(D(w))$: The maximality of $j$ means that $w^{-1}(j+1) \leq i \leq i'$, so that $(i',j+1) \notin D(w)$, and likewise the maximality of $i'$ means that $(i'+1,j) \notin D(w)$.
        \item $c_i(w) = |D_{i[j]}|$: By definition $c_i(w) = |D_{i\NN}|$, and $D_{i\NN} = D_{i[j]}$ by the choice of $j$.
        \item $c_i(w) \in \{k_1, \ldots, k_s\}$: By (a), $i' = i_p$ for some $p$. The fact that $D(w)$ contains $(r,j)$ for $i \leq r \leq i'$ implies that $w(r) > j$ for all such $r$, so $D_{i[j]} = D_{r[j]}$ for such $r$. In particular, taking $r = i'$ and using (b), $c_i(w) = |D_{i[j]}|= |D_{i'[j]}| = k_p$.
    \end{enumerate}

        Conversely, take $p \in [s]$. We want to find $i$ such that $k_p = c_i(w)$. Let $S = \{j > j_p : (i_p,j) \in D(w)\}$. If $S = \emptyset$, then $k_p = c_{i_p}(w)$ and we are done. Otherwise, let $i < i_p$ be maximal so that $j_p < w(i) < \min(S)$; such an $i$ exists because $(i_p,j_p) \in \Ess(D(w))$ implies that $w^{-1}(j_p+1)$ satisfies the conditions demanded of $i$. Now:
        \begin{enumerate}[(a)]
            \item $|D_{i[j_p]}| = c_i(w)$: Suppose not, so there is some $j > j_p$ with $(i,j) \in D(w)$. Then $w^{-1}(j)$ satisfies the conditions used to choose $i$, namely: $j_p < w(w^{-1}(j)) = j$; $j < w(i) < \min(S)$ because $(i,j) \in D(w)$; and $w^{-1}(j) < i_p$ because otherwise $j \in S$, contradicting $j < \min(S)$. However, $w^{-1}(j) > i$ because $(i,j) \in D(w)$, so this would contradict the maximality of $i$.
            \item If $i \leq r \leq i_p$ than $w(r) > j_p$: The choice of $i$ says that $w(i) > j_p$, and $w(i_p) > j_p$ because $(i_p,j_p) \in D(w)$, so assume $i < r < i_p$. Since $r < i_p$ and $(i_p,j_p) \in D(w)$, we have $w(r) \neq j_p$, so suppose for the sake of contradiction that $w(r) < j_p$. Because $(i_p, \min(S)) \in D(w)$ we have $i_p < w^{-1}(\min(S))$ and $\min(S) < w(i_p)$, and now $w$ contains a $2143$ pattern: $i < r < i_p < w^{-1}(\min(S))$ and $w(r) < j_p < w(i) < \min(S) < w(i_p)$. This contradicts the assumption that $w$ is vexillary.
            \item $k_p \in \{c_1(w), \ldots, c_n(w)\}$: Part (b) implies that $D_{i[j_p]} = D_{r[j_p]}$ for $i \leq r \leq i_p$. In particular, taking $r = i_p$ and using (a) gives $k_p = |D_{i_p[j_p]}| = |D_{i[j_p]}| = c_i(w)$. \qedhere
        \end{enumerate}
    \end{proof}

    \begin{lem} \label{lem:vex-shape} Suppose $w$ is vexillary. Write $\Ess(D(w)) = \{(i_1, j_1) \SWNEneq \cdots \SWNEneq (i_s, j_s)\}$, and let $k_p = j_p - \rank w_{[i_p][j_p]}$ for $p \in [s]$, and $k_0 = 0$. If $k_{p-1} < k \leq k_p$, then $\sh(w)_{k} = \sh(w)_{k_p} = i_p - \rank w_{[i_p][j_p]}$.
    \end{lem}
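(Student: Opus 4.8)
The statement is about the shape of a vexillary permutation $w$, which by Lemma~\ref{lem:vex-chain}(c) is controlled by the totally ordered family of sets $D_{i\NN}(w)$. The quantities $k_p = j_p - \rank w_{[i_p][j_p]} = |D_{i_p[j_p]}(w)|$ appearing here are exactly the distinct nonzero values among the codes $c_i(w) = |D_{i\NN}(w)|$, by Lemma~\ref{lem:code-ks}, and moreover (using that $w$ is vexillary and the essential set is a chain $\SWNEneq$) the row lengths $c_i(w)$ come in ``blocks'': for each $p$, a certain set of rows has code exactly $k_p$. The first step is to make this precise — to show that $c_i(w) = k_p$ for exactly $\#\{r : k_{p-1} < r \le k_p\}$ values of $i$, equivalently that when the nonzero codes are sorted in weakly decreasing order, the value $k_p$ occupies positions $k_{p-1}+1, \ldots, k_p$ in $\sh(w)^t$. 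Here it is cleanest to revisit the proof of Lemma~\ref{lem:code-ks}: it produces, from each $p$, a row $i_p$ (and rows $i$ between the chosen $i$ and $i_p$) with code $k_p$, and conversely every row of code $k_p$ arises this way; counting these rows carefully gives the block structure.

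**Main computation.** Recall $\sh(w)$ is defined as the transpose of the partition $\lambda$ obtained by sorting $c(w)$ and discarding zeros. So $\sh(w)_k = \lambda^t_k = \#\{i : c_i(w) \ge k\}$. By the block structure just established, $c_i(w) \ge k$ for $k_{p-1} < k \le k_p$ is equivalent to $c_i(w) \ge k_p$ (since the nonzero codes all lie in $\{k_1 < \cdots < k_s\}$ with $k_0 = 0$, there are no code values strictly between $k_{p-1}$ and $k_p$). Hence $\sh(w)_k = \#\{i : c_i(w) \ge k_p\} = \sh(w)_{k_p}$, which is the first asserted equality. For the second, I must identify $\#\{i : c_i(w) \ge k_p\}$ with $i_p - \rank w_{[i_p][j_p]}$. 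The idea: $c_i(w) \ge k_p$ means $D_{i\NN}(w)$ has at least $k_p = |D_{i_p[j_p]}(w)|$ elements; using the total order on the $D_{i\NN}(w)$ (Lemma~\ref{lem:vex-chain}(c)) together with the fact that $(i_p,j_p)$ is an essential set element, I claim $c_i(w) \ge k_p$ holds exactly for the rows $i \le i_p$ with $(i,j_p) \in D(w)$, i.e. for rows $i \in D_{\NN\{j_p\}}(w) \cap [i_p]$, and that this set has size $i_p - \rank w_{[i_p][j_p]}$ because $\rank w_{[i_p][j_p]}$ counts the rows $i \le i_p$ with $w(i) \le j_p$, hence with $(i,j_p) \notin D(w)$ (as $(i,j_p) \in D(w)$ forces $w(i) > j_p$), and these two sets partition $[i_p]$. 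Establishing the claim ``$c_i(w) \ge k_p \iff i \le i_p$ and $(i,j_p) \in D(w)$'' is where the vexillary hypothesis does its real work: the direction requiring care is that a row $i$ with $c_i(w) \ge k_p$ but $i > i_p$ would, combined with $(i_p,j_p) \in \Ess(D(w))$, produce a $2143$ pattern — essentially the same argument as claim (b) in the converse half of the proof of Lemma~\ref{lem:code-ks}.

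**Expected obstacle.** The genuinely delicate point is the block-counting: showing that the multiset of nonzero codes, sorted, takes the value $k_p$ with multiplicity exactly $k_p - k_{p-1}$. Lemma~\ref{lem:code-ks} tells us the \emph{set} of nonzero code values is $\{k_1, \ldots, k_s\}$ but says nothing a priori about multiplicities, and it is conceivable (before using vexillarity fully) that some $k_p$ appears with a different multiplicity. The resolution is to show that $\#\{i : c_i(w) \ge k_p\} = i_p - \rank w_{[i_p][j_p]}$ directly (the computation in the previous paragraph), and that these numbers are consistent with $k_p - k_{p-1}$ being the multiplicity — that is, $(i_p - \rank w_{[i_p][j_p]}) - (i_{p-1} - \rank w_{[i_{p-1}][j_{p-1}]})$ accounts for the rows with code exactly $k_p$. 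Once the characterization ``$c_i(w) \ge k_p \iff (i \le i_p$ and $(i,j_p) \in D(w))$'' is in hand, the entire statement follows formally, so I would organize the write-up around proving that characterization first and then reading off both equalities as corollaries. I would also double-check the boundary case $p = s$ and the degenerate cases where some $D_{i\NN}(w)$ is empty, but these should cause no trouble.
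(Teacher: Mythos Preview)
Your proposal is correct and follows essentially the same route as the paper: establish the characterization $\{i : c_i(w) \geq k_p\} = \{i \leq i_p : (i,j_p) \in D(w)\}$ (the paper breaks this into three cases (a), (b), (c), using the northwest-closure of $D(w)$, the total ordering of the rows $D_{i\NN}$, and the chain property of $\Ess(D(w))$ rather than a direct $2143$ argument, but these are equivalent via Lemma~\ref{lem:vex-chain}), count that set as $i_p - \rank w_{[i_p][j_p]}$, and invoke Lemma~\ref{lem:code-ks} to get $\sh(w)_k = \sh(w)_{k_p}$ for $k_{p-1} < k \leq k_p$. Your side remarks about multiplicities (e.g.\ that $k_p$ occupies positions $k_{p-1}+1,\ldots,k_p$ of $\sh(w)^t$, or that the difference $(i_p - \rank w_{[i_p][j_p]}) - (i_{p-1} - \rank w_{[i_{p-1}][j_{p-1}]})$ counts rows of code $k_p$) have the ordering reversed and are not correct as stated, but as you yourself note, the argument does not actually use them.
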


    \begin{proof}
        Again we proceed by proving a series of claims, whose aim is to establish that $\{i : c_i(w) \geq k_p\} = \{i \leq i_p : (i,j_p) \in D(w)\}$. Note that the size of the lefthand set is $\sh(w)_{k_p}$, while the size of the righthand set is $i_p - \rank w_{[i_p][j_p]}$.
        \begin{enumerate}[(a)]
        \item If $(i,j_p) \in D(w)$ and $i \leq i_p$, then $c_i(w) \geq k_p$: It is not hard to see that $D(w)$ is closed under taking northwest corners in the sense that if $(a,b) \SWNE (a',b')$ are in $D(w)$, then $(a',b) \in D(w)$ also. In particular, if $j \in D_{i_p[j_p]}$, then $(i_p,j) \SWNE (i,j_p)$, so $D(w)$ contains $(i,j)$. Therefore $k_p = |D_{i_p[j_p]}| \leq |D_{i[j_p]}| \leq c_i(w)$.
        
        \item If $(i,j_p) \notin D(w)$ and $i \leq i_p$, then $c_i(w) < k_p$: These assumptions force $w(i) < j_p$, so $|D_{i[j_p]}| = |D_{i\NN}| = c_i(w)$. Given that the sets $D_{i\NN}$ for $i \in \NN$ are totally ordered under containment by Lemma~\ref{lem:vex-chain}, $(i,j_p) \notin D(w)$ and $(i_p,j_p) \in D(w)$ means $D_{i[j_p]} \subsetneq D_{i_p[j_p]}$, hence  $c_i(w) = |D_{i\NN}| = |D_{i[j_p]}| < |D_{i_p[j_p]}| = k_p$. 
        \item If $i > i_p$, then $c_i(w) < k_p$: First let us see that $c_i(w) = |D_{i[j_p]}|$. If not, there is some $(i,j) \in D(w)$ with $j > j_p$. Then $(i_p,j_p)$ is strictly north and west of $(i,j)$, and hence of any essential set element in the same connected component of $D(w)$ as $(i,j)$. By Lemma~\ref{lem:vex-chain}, this contradicts the assumption that $w$ is vexillary.
        Now, since $(i_p,j_p) \in \Ess(D(w))$, we have $w(i_p+1) \leq j_p$, so $D(w)$ contains $(i_p, w(i_p+1))$ but not $(i, w(i_p+1))$. As in (b), this implies $D_{i[j_p]} \subsetneq D_{i_p[j_p]}$, and hence $c_i(w) = |D_{i[j_p]}| < |D_{i_p[j_p]}| = k_p$.
        \end{enumerate}
        Parts (a), (b), and (c) together prove that  $\{i : c_i(w) \geq k_p\} = \{i \leq i_p : (i,j_p) \in D(w)\}$, hence $\sh(w)_{k_p} = i_p - \rank w_{[i_p][j_p]}$ by comparing cardinalities. The distinct parts of $\sh(w)^t$ are $\{k_1, \ldots, k_s\}$ by Lemma~\ref{lem:code-ks}, so if $k_{p-1} < k \leq k_p$ then $\sh(w)_k = \sh(w)_{k_p}$.
\end{proof}

We conclude with a technical lemma to be used in \S \ref{subsec:vex-inv}.

\begin{lem} \label{lem:k-step} Suppose $w$ is vexillary. Write $\Ess(D(w)) = \{(i_1, j_1) \SWNEneq \cdots \SWNEneq (i_s, j_s)\}$, and let $k_p = j_p - \rank w_{[i_p][j_p]}$ for $p \in [s]$, and $k_0 = 0$. If $(i_p,j_p-1) \notin D(w)$ for some $p > 1$, then $k_p = k_{p-1}+1$. \end{lem}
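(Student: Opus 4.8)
\emph{Proof proposal.} Recall from the proof of Lemma~\ref{lem:code-ks} that, since $(i_p,j_p)\in D(w)$, we have $k_p=j_p-\rank w_{[i_p][j_p]}=|D_{i_p[j_p]}(w)|$, the number of boxes of the Rothe diagram in row $i_p$ and columns $[j_p]$; likewise $k_{p-1}=|D_{i_{p-1}[j_{p-1}]}(w)|$. The hypothesis $(i_p,j_p-1)\notin D(w)$ gives $|D_{i_p[j_p]}(w)|=|D_{i_p[j_p-1]}(w)|+1$, and also forces $w^{-1}(j_p-1)\le i_p$ (the columns $b$ with $b<w(i_p)$ form an interval containing $j_p$, hence $j_p-1$, so $(i_p,j_p-1)\notin D(w)$ can only fail condition $i_p<w^{-1}(j_p-1)$). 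So it suffices to prove $D_{i_p[j_p-1]}(w)=D_{i_{p-1}[j_{p-1}]}(w)$; then $k_p=|D_{i_p[j_p-1]}(w)|+1=|D_{i_{p-1}[j_{p-1}]}(w)|+1=k_{p-1}+1$.

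One inclusion is quick. By claim~(a) in the proof of Lemma~\ref{lem:vex-shape}, $D(w)$ is closed under taking northwest corners, so from $(i_{p-1},j_{p-1}),(i_p,j_p)\in D(w)$ and $(i_{p-1},j_{p-1})\SWNE(i_p,j_p)$ we get $(i_p,j_{p-1})\in D(w)$; and for $b\le j_{p-1}$ with $(i_{p-1},b)\in D(w)$ the same closure (applied to $(i_{p-1},b)$ and $(i_p,j_p)$) gives $(i_p,b)\in D(w)$. Hence $D_{i_{p-1}[j_{p-1}]}(w)\subseteq D_{i_p[j_p-1]}(w)$. For the reverse, I would first show row $i_p$ of $D(w)$ has no box in columns $j_{p-1}+1,\dots,j_p-1$: given such a box, slide it rightward to the last box $(i_p,c')$ of its horizontal run, so $(i_p,c'+1)\notin D(w)$ with $j_{p-1}<c'\le j_p-2$ (the run stops before column $j_p-1$, which is empty), then slide downward to $(i',c')$ with $(i'+1,c')\notin D(w)$; the northwest-corner closure (in contrapositive form, using $(i_p,j_p)\in D(w)$ with $j_p\ge c'+1$) forces $(i',c'+1)\notin D(w)$, so $(i',c')\in\Ess(D(w))$ is an essential box with first coordinate $\ge i_p$ and second coordinate strictly between $j_{p-1}$ and $j_p$. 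Since $\Ess(D(w))$ is a chain (Lemma~\ref{lem:vex-chain}) listed so that second coordinates weakly increase and first coordinates weakly decrease, this essential box would have to sit strictly between positions $p-1$ and $p$ in the chain, which is impossible. Thus $D_{i_p[j_p-1]}(w)=D_{i_p[j_{p-1}]}(w)$.

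It remains to show $D_{i_p[j_{p-1}]}(w)\subseteq D_{i_{p-1}[j_{p-1}]}(w)$. By Lemma~\ref{lem:vex-chain}(c) the row sets $D_{i\NN}(w)$ are totally ordered by inclusion, so either $D_{i_p\NN}(w)\subseteq D_{i_{p-1}\NN}(w)$ (and we are done) or $D_{i_{p-1}\NN}(w)\subsetneq D_{i_p\NN}(w)$; in the latter case one may assume $i_p<i_{p-1}$, and a chain argument of the same flavor as above shows $j_{p-1}=\max D_{i_{p-1}\NN}(w)$. The remaining scenario — a box $(i_p,b)\in D(w)$ with $b\le j_{p-1}$ and $(i_{p-1},b)\notin D(w)$ — then forces $i_p<w^{-1}(b)<i_{p-1}$ and $w(w^{-1}(b))=b<j_{p-1}$, and I would complete the proof by extracting a $2143$ pattern from the row $r:=w^{-1}(b)$, the row $i_{p-1}$, and the rows realizing $(i_p,j_p)\in D(w)$ and $(i_{p-1},j_{p-1})\in D(w)$, contradicting vexillarity. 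The main obstacle is precisely this last step: the relative order of the four relevant rows is not fixed a priori, so one likely has to split on whether $w(i_p)=j_p+1$ or $w^{-1}(j_p+1)\le i_p$ (the two ways $(i_p,j_p)$ can fail to have an eastern neighbor) to locate a suitable fourth index; organizing that case analysis cleanly is where the work lies.
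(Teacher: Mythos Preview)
Your argument has two related gaps. First, it implicitly assumes $j_{p-1} < j_p$. If $j_{p-1} = j_p$ (a priori possible, since the chain inequality only forces one coordinate to be strict), then $j_p \in D_{i_{p-1}[j_{p-1}]}$ but $j_p \notin D_{i_p[j_p-1]}$, so your target equality $D_{i_p[j_p-1]} = D_{i_{p-1}[j_{p-1}]}$ is simply false, and already your step~4 inclusion breaks down. The paper handles this by showing that $j_{p-1} = j_p$ cannot occur under the hypothesis $(i_p, j_p-1) \notin D(w)$: from $(i_p,j_p)\in\Ess(D(w))$ one extracts $w(i_p+1)<j_p-1$, and combining with $w^{-1}(j_p-1)<i_p$ and $(i_{p-1},j_p)\in D(w)$ yields a clean $2143$ pattern at positions $w^{-1}(j_p-1) < i_p+1 < i_{p-1} < w^{-1}(j_p)$. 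So a $2143$ argument \emph{is} needed, but for this case rather than for your step~6.

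Second, once $j_{p-1}<j_p$ is established, your step~6 is unnecessarily hard. Note first that in fact $j_{p-1}\leq j_p-2$: northwest closure applied to $(i_{p-1},j_{p-1})$ and $(i_p,j_p)$ gives $(i_p,j_{p-1})\in D(w)$, so $j_{p-1}\neq j_p-1$ by hypothesis. Now rerun your step~5 argument \emph{vertically in column $j_{p-1}$} rather than comparing row sets: from $(i_p,j_{p-1})\in D(w)$, slide down to the last box $(i',j_{p-1})$ with $(i'+1,j_{p-1})\notin D(w)$. Since $(i_p,j_{p-1}+1)\notin D(w)$ (column $j_{p-1}+1$ is one of your gap columns from step~5), northwest closure in contrapositive with $(i_p,j_p)$ shows $(i',j_{p-1}+1)\notin D(w)$ as well, so $(i',j_{p-1})\in\Ess(D(w))$. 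Chain comparison with $(i_p,j_p)$ forces this essential element to sit at position $\leq p-1$, hence $i'\geq i_{p-1}$. Thus $(i,j_{p-1})\in D(w)$ for all $i_p\leq i\leq i_{p-1}$, so $w(i)>j_{p-1}$ for all such $i$, and \emph{both} inclusions $D_{i_p[j_{p-1}]}=D_{i_{p-1}[j_{p-1}]}$ drop out at once. This is essentially the paper's approach, and it bypasses the row-ordering case analysis and the $2143$ hunt you were bracing for.
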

    \begin{proof}
        We consider two cases:
        \begin{itemize}
            \item Suppose $j_{p-1} < j_p$. By the northwest closure property of $D(w)$, the cell $(i_p, j_{p-1})$ is in $D(w)$ given that $(i_{p-1}, j_{p-1})$ and $(i_p, j_p)$ are. If $(i, j_{p-1}) \notin D(w)$ for some $i_p < i < i_{p-1}$, then the connected component of $(i_p, j_{p-1})$  has a southeast corner $(i',j') \in \Ess(D(w))$ strictly above row $i_{p-1}$, and strictly left of column $j_p$ given that $(i_p,j_p-1) \notin D(w)$. But then $(i_{p-1}, j_{p-1}) \SWNEneq (i',j') \SWNEneq (i_p,j_p)$, which is impossible. We conclude that $D(w)$ contains $(i,j_{p-1})$ for all $i_p \leq i \leq i_{p-1}$. This implies that $w(i) > j_{p-1}$ for all such $i$, so $D_{i_{p-1}[j_{p-1}]} = D_{i_p[j_{p-1}]}$.
            
            A similar argument shows that $D(w)$ does \emph{not} contain $(i_p,j)$ whenever $j_{p-1} < j < j_p$: otherwise there would be an essential set cell strictly right of column $j_{p-1}$, but strictly left of column $j_p$ given that $(i_p,j_p-1) \notin D(w)$. So, $D_{i_p[j_p]} = D_{i_p[j_{p-1}]} \cup \{j_p\}$. We conclude that
            \begin{equation*}
                k_{p} = |D_{i_p[j_p]}| = 1 + |D_{i_p[j_{p-1}]}| = 1 + |D_{i_{p-1}[j_{p-1}]}| = 1 + k_{p-1}.
            \end{equation*}
            \item Suppose $j_{p-1} = j_p$, so $i_{p-1} < i_p$. Since $D(w)$ contains $(i_p,j_p)$ but not $(i_p,j_p-1)$ or $(i_p+1,j_p)$, we have $w^{-1}(j_p-1) < i_p$ and $w(i_p+1) < j_p$. This inequalities together actually imply $w^{-1}(j_p-1) < i_p$ and $w(i_p+1) < j_p-1$. Also, note that $i_p+1 < i_{p-1}$ since $D(w)$ contains $(i_{p-1},j_p)$ but not $(i_p+1,j_p)$. But now $w^{-1}(j_p-1) < i_p+1 < i_{p-1} < w^{-1}(j_p)$ and $w(i_p+1) < j_p-1 < j_p < w(i_{p-1})$, so $w$ contains a $2143$ pattern, a contradiction: this case cannot occur. \qedhere
         \end{itemize}
    \end{proof}

\subsection{Degeneracy locus formulas} \label{subsec:degeneracy-loci}

Let $\vb E_1 \hookrightarrow \cdots \hookrightarrow \vb E_n$ and $\vb F_n \twoheadrightarrow \cdots \twoheadrightarrow \vb F_1$ be sequences of vector bundles over a smooth variety $X$, where $\rank \vb E_i = \rank \vb F_i = i$. Let $f : \vb E_n \to \vb F_n$ be a bundle map, so we are given a linear map $f_x : (\vb E_n)_x \to (\vb F_n)_x$ for each $x \in X$. The corresponding \emph{degeneracy locus} $\overline\Omega_w$ labeled by $w \in S_n$ is the closure of the locus $\Omega_w$ of points in $X$ over which
\begin{equation}
    \rank\left(\vb E_i \hookrightarrow \vb E_n \xrightarrow{f} \vb F_n \twoheadrightarrow \vb F_j\right) = \rank w_{[i][j]}
\end{equation}
for $i, j \in [n]$. That is, $\Omega_w = \{x \in X : \text{$\rank(f_x : (\vb E_i)_x \to (\vb F_j)_x) = \rank w_{[i][j]}$ for $i,j \in [n]$}\}$. Fulton gave a formula for the class of $\overline \Omega_w$ when things are suitably generic.

\begin{thm}[\cite{fulton-double-schubert}] \label{thm:fulton-double-schubert} If $X$ is smooth and $\overline \Omega_w$ has codimension $\ell(w)$, then $[\overline\Omega_w] = \fkS_w(x'; y')$, where $x'_i = c_1((\vb E_i/\vb E_{i-1})^*)$ and $y'_i = c_1(\vb F_i^*/\vb F_{i-1}^*)$.
\end{thm}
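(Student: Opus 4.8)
The plan is to deduce the formula from a \emph{universal} case in which the degeneracy locus has the expected codimension automatically and is (essentially) a Schubert variety, to compute its class there by a divided-difference induction, and then to pull back using the codimension hypothesis.

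\textbf{Step 1: reduction to the universal case.} Over $X$, form the fiber bundle $\mathcal X$ whose fiber over $x$ consists of a complete flag of subbundles of $(\vb E_n)_x$, a complete flag of quotients of $(\vb F_n)_x$, and an arbitrary linear map $(\vb E_n)_x \to (\vb F_n)_x$; it carries tautological data $(\vb E^{\mathrm{u}}_\bullet, \vb F^{\mathrm{u}}_\bullet, f^{\mathrm{u}})$, and the given data $(\vb E_\bullet, \vb F_\bullet, f)$ defines a section $g : X \to \mathcal X$ pulling $(\vb E^{\mathrm{u}}_\bullet, \vb F^{\mathrm{u}}_\bullet, f^{\mathrm{u}})$ back to it. Let $\overline\Omega^{\mathrm{u}}_w \subseteq \mathcal X$ be the universal degeneracy locus, so $\overline\Omega_w = g^{-1}(\overline\Omega^{\mathrm{u}}_w)$. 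A classical analysis of loci of matrices satisfying rank conditions relative to a pair of flags shows $\overline\Omega^{\mathrm{u}}_w$ is irreducible of codimension $\ell(w)$ at every point. With $x'_i = c_1((\vb E^{\mathrm{u}}_i/\vb E^{\mathrm{u}}_{i-1})^*)$ and $y'_i = c_1((\vb F^{\mathrm{u}}_i)^*/(\vb F^{\mathrm{u}}_{i-1})^*)$ on $\mathcal X$ we have $g^*x'_i = x'_i$ and $g^*y'_i = y'_i$ in the notation of the statement, so the theorem follows once we show $[\overline\Omega^{\mathrm{u}}_w] = \fkS_w(x';y')$ in $H^*(\mathcal X)$ \emph{and} that $g^*[\overline\Omega^{\mathrm{u}}_w] = [\overline\Omega_w]$.

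\textbf{Step 2: the universal computation.} I would prove $[\overline\Omega^{\mathrm{u}}_w] = \fkS_w(x';y')$ by downward induction on $\ell(w)$, using that every $w \in S_n$ is reached from $w_0$ by a chain of moves $v \mapsto vs_i$ with $\ell(vs_i) < \ell(v)$. For the base case $w = w_0$, the defining conditions of $\Omega^{\mathrm{u}}_{w_0}$ amount to the vanishing of the induced maps $\vb E^{\mathrm{u}}_i/\vb E^{\mathrm{u}}_{i-1} \to \vb F^{\mathrm{u}}_{n-i}$ for $i \in [n]$, i.e.\ the vanishing of a section of $\bigoplus_i \Hom(\vb E^{\mathrm{u}}_i/\vb E^{\mathrm{u}}_{i-1}, \vb F^{\mathrm{u}}_{n-i})$ whose zero scheme is a subbundle of the expected codimension $\sum_i(n-i) = \binom n2 = \ell(w_0)$; hence $[\overline\Omega^{\mathrm{u}}_{w_0}] = \prod_i c_{n-i}(\Hom(\vb E^{\mathrm{u}}_i/\vb E^{\mathrm{u}}_{i-1}, \vb F^{\mathrm{u}}_{n-i})) = \prod_{i+j\le n}(x'_i - y'_j) = \fkS_{w_0}(x';y')$. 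For the inductive step, when $\ell(ws_i) < \ell(w)$ let $\pi : \mathcal X \to \mathcal X'$ be the $\PP^1$-bundle forgetting the $i$-th subbundle of $\vb E^{\mathrm{u}}_\bullet$; one checks that $\overline\Omega^{\mathrm{u}}_{ws_i}$ is pulled back from $\mathcal X'$ while $\overline\Omega^{\mathrm{u}}_w \subseteq \overline\Omega^{\mathrm{u}}_{ws_i}$ meets the generic $\PP^1$-fiber of $\pi$ in one reduced point, so $\pi^*\pi_*[\overline\Omega^{\mathrm{u}}_w] = [\overline\Omega^{\mathrm{u}}_{ws_i}]$. Since $\pi^*\pi_*$ acts on cohomology as $\partial_i$ and $\partial_i\fkS_w(x';y') = \fkS_{ws_i}(x';y')$ (the $x_-\to 0$ specialization of Proposition~\ref{prop:schubert-recurrence}, a classical identity for double Schubert polynomials), the induction closes.

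\textbf{Step 3: transfer back, and the main obstacle.} Finally one needs $g^*[\overline\Omega^{\mathrm{u}}_w] = [\overline\Omega_w]$. Here I would invoke that $\overline\Omega^{\mathrm{u}}_w$ is Cohen--Macaulay --- being, like a Schubert variety, normal with rational singularities, which one can see from an explicit resolution by iterated $\PP^1$- and Grassmannian bundles --- so that, the preimage $g^{-1}(\overline\Omega^{\mathrm{u}}_w) = \overline\Omega_w$ having the expected codimension $\ell(w)$ by hypothesis, the refined pullback of $[\overline\Omega^{\mathrm{u}}_w]$ along the regular embedding $g$ is exactly the fundamental class $[\overline\Omega_w]$. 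This last step is the crux: the whole force of the hypothesis $\codim \overline\Omega_w = \ell(w)$ is that it is precisely what makes the universal class restrict to the honest fundamental class of $\overline\Omega_w$, rather than to a cycle supported on a larger or non-reduced locus, and making this rigorous requires the Cohen--Macaulayness (equivalently, the good resolutions) together with the excess-intersection formalism. By contrast, the base-case Chern class computation and the $\PP^1$-bundle / divided-difference step are routine once the universal picture is in place.
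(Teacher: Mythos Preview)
The paper does not prove this theorem: it is stated with the citation \cite{fulton-double-schubert} and used as a black box, with no proof given. There is therefore nothing in the paper to compare your proposal against.

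That said, your sketch is a reasonable outline of Fulton's original argument: reduce to a universal situation, establish the formula there by a divided-difference induction from the top class $\fkS_{w_0}(x';y') = \prod_{i+j\le n}(x'_i-y'_j)$, and then pull back using Cohen--Macaulayness of the universal locus together with the codimension hypothesis. The one place I would tighten is Step~3: you correctly identify the Cohen--Macaulayness of $\overline\Omega^{\mathrm u}_w$ as the crux, but your justification (``normal with rational singularities, which one can see from an explicit resolution'') is more of a gesture than an argument. In Fulton's treatment this is handled by reducing to the matrix Schubert varieties $\overline{M}_w$ in $\Hom(\CC^n,\CC^n)$, whose Cohen--Macaulayness is a nontrivial input (proved e.g.\ via the Knutson--Miller Gr\"obner degeneration or earlier by Ramanathan-type methods for Schubert varieties). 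If you want a self-contained proof you should either cite that result explicitly or supply the resolution argument in detail.
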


\begin{ex}
    Take $\vb E_{\bullet}$ to be the tautological flag of bundles over $\Fl(n)$ as before. Fix a flag $E_\bullet' \in \Fl(n)$ and let $\vb F_i$ be the trivial bundle $\CC^n / E_{n-i}'$ over $\Fl(n)$; we adopt the common abuse of simply writing $V$ for the trivial bundle $V \times \Fl(n) \to \Fl(n)$. Let $f : \CC^n = \vb E_n \to \vb F_n = \CC^n$ be the identity map. Then $\overline \Omega_w = \X_w$. Under our conventions from \S \ref{subsec:cohom}, $x_i' = x_i$ and $y_i' = 0$, so Theorem~\ref{thm:fulton-double-schubert} implies that $[\X_w]$ is represented by the (single) Schubert polynomial $\fkS_w(x) = \fkS_w(x;0)$.
\end{ex}

\begin{lem} \label{lem:degeneracy-locus-essential-set} The degeneracy locus $\overline \Omega_w$ is the closure of the locus where 
    \begin{equation*}
        \rank\left(\vb E_i \hookrightarrow \vb E_n \xrightarrow{f} \vb F_n \twoheadrightarrow \vb F_j\right) = \rank w_{[i][j]}
    \end{equation*}
    for $(i,j) \in \Ess(D(w))$.
\end{lem}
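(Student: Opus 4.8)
The plan is to sandwich both $\overline\Omega_w$ and the closure of the essential-set locus between the same pair of closed subsets of $X$, and then show those two closed sets coincide. Let $\Omega_w'$ be the locus where $\rank(\vb E_i \hookrightarrow \vb E_n \xrightarrow{f} \vb F_n \twoheadrightarrow \vb F_j) = \rank w_{[i][j]}$ holds only for $(i,j) \in \Ess(D(w))$, and let $\mathcal D$ (resp.\ $\mathcal D'$) be the locus where the corresponding inequality $\le \rank w_{[i][j]}$ holds for all $(i,j) \in [n]\times[n]$ (resp.\ for all $(i,j) \in \Ess(D(w))$). Both $\mathcal D$ and $\mathcal D'$ are closed, as they are cut out by the vanishing of minors of the bundle maps $\vb E_i \to \vb F_j$. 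Since $\Omega_w \subseteq \Omega_w' \subseteq \mathcal D'$, we get $\overline\Omega_w \subseteq \overline{\Omega_w'} \subseteq \mathcal D'$, so it suffices to prove (i) $\mathcal D' = \mathcal D$ and (ii) $\overline\Omega_w = \mathcal D$: together these give $\overline\Omega_w \subseteq \overline{\Omega_w'} \subseteq \mathcal D' = \mathcal D = \overline\Omega_w$.

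For step (i) I would reduce to a single linear map, since it is a fibrewise statement. Over a point $x \in X$, choose bases of $(\vb E_n)_x$ and $(\vb F_n)_x$ adapted to the flags $(\vb E_\bullet)_x$ and $(\vb F_\bullet)_x$, so that $f_x$ becomes an $n \times n$ matrix $A$ with $\rank A_{[i][j]}$ equal to the rank of the composite $\vb E_i \to \vb F_j$ at $x$, exactly as in the matrix description of Schubert cells in \S\ref{subsec:cohom}. Then $x \in \mathcal D$ (resp.\ $x \in \mathcal D'$) precisely when $A$ satisfies all (resp.\ the essential) rank inequalities of $w$, and the equality of these two loci of matrices is Fulton's essential-set characterization of matrix Schubert varieties \cite{fulton-double-schubert}; being pointwise, it gives $\mathcal D = \mathcal D'$ on $X$. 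One could instead reprove this by induction: for $(i,j) \notin D(w)$ one has $\rank w_{[i][j]} = \rank w_{[i-1][j]}+1$ or $\rank w_{[i][j-1]}+1$, while for $(i,j) \in D(w) \setminus \Ess(D(w))$ one has $\rank w_{[i][j]} = \rank w_{[i][j+1]}$ or $\rank w_{[i+1][j]}$, and combining these with the elementary monotonicity of ranks of submatrices of $A$ propagates the essential inequalities to all of $[n]\times[n]$.

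For step (ii) I would appeal to the standard identification of a degeneracy locus with the closure of its open equality stratum: at the matrix level, the matrix Schubert cell $\{A : \rank A_{[i][j]} = \rank w_{[i][j]} \text{ for all } i,j\}$ is irreducible of codimension $\ell(w)$ and dense in the matrix Schubert variety, and this transfers to $X$ once $\overline\Omega_w$ (equivalently $\mathcal D$) has the expected codimension $\ell(w)$ in $X$ — the genericity assumption already in force for the class formulas of this subsection. I expect this transfer to be the only real subtlety; without an expected-dimension hypothesis $\overline\Omega_w$ can be strictly smaller than $\mathcal D$, and the genuine combinatorial content of the lemma — the reduction to essential positions — is contained entirely in step (i), which holds unconditionally.
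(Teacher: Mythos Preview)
Your approach matches the paper's: both sandwich via the four loci you name (the paper calls them $\Omega_w^{=}$, $\Omega_w^{=,\text{ess}}$, $\Omega_w^{\leq}$, $\Omega_w^{\leq,\text{ess}}$), reduce step (i) to Fulton's matrix-level identity $M_w^{\leq} = M_w^{\leq,\text{ess}} = \overline{M_w^{=}}$, and then need step (ii). The paper organizes this by first passing to a trivializing open set $U$, where the data becomes a morphism $\phi : U \to \Hom(\CC^n,\CC^n)$ and each locus is the $\phi$-preimage of its matrix analogue, and then argues entirely at the matrix level; your fibrewise argument for (i) is equivalent.

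You are right to flag step (ii) as the real subtlety, and in fact the paper glosses over the same point: it writes ``it therefore suffices to show $\overline{M}_w^{\hspace{1pt}=,\text{ess}} = \overline{M}_w^{\hspace{1pt}=}$'' without explaining why equality of matrix-level closures yields equality of their $\phi$-preimages' closures (closures need not commute with preimages). Indeed, as literally stated the lemma can fail: take $X$ a point, $f = 0$, $w = 1_n$; then $\Ess(D(w)) = \emptyset$, so the essential-set locus is all of $X$, while $\Omega_w = \emptyset$. In every application in the paper the bundle map is generically an isomorphism, and that (or your expected-codimension hypothesis) is what actually closes the sandwich.
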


This lemma is more or less equivalent to results of Fulton in \cite{fulton-double-schubert}, but is not quite stated in the same way, so we give a proof for completeness and because we will need a similar result in a different setting later.

\begin{proof} Define  $\Omega_w^{=,\text{ess}}$, $\Omega_w^{\leq,\text{ess}}$, $\Omega_w^{=}$, and $\Omega_w^{\leq}$ as the four loci in $X$ over which the ranks $\rank(\vb E_i \hookrightarrow \vb E_n \xrightarrow{f} \vb F_n \twoheadrightarrow \vb F_j)$ are either equal to or at most the ranks $\rank w_{[i][j]}$, either for $(i,j) \in \Ess(D(w))$ or for all $(i,j) \in [n] \times [n]$. Define $M_w^{=, \text{ess}}$, $M_w^{\leq,\text{ess}}$, $M_w^{=}$, and $M_w^{\leq}$ as the sets of $g \in \Hom(\CC^n, \CC^n)$ where the ranks $\rank(\CC^i \hookrightarrow \CC^n \xrightarrow{g} \CC^n \twoheadrightarrow \CC^n/\CC^{n-j})$ are similarly constrained. We want to show that $\overline{\Omega}_w^{\hspace{1pt}=,\text{ess}} = \overline{\Omega}_w^{\hspace{1pt}=}$.

First we prove the lemma in the case that $\vb E_i = \CC^i$ and $\vb F_i = \CC^n/\CC^{n-i}$ are trivial. Specifying a bundle map $f : \vb E_n \to \vb F_n$ is then equivalent to specifying a map  $\phi : X \to \Hom(\CC^n, \CC^n)$, and (for instance)
\begin{align*}
\Omega_w^{=} &= \{x \in X : \text{$\rank(\CC^i \hookrightarrow \CC^n \xrightarrow{\phi(x)} \CC^n \twoheadrightarrow \CC^{n}/\CC^{n-j}) = \rank w_{[i][j]}$ for $i,j \in [n]$}\}\\
& = \phi^{-1}(M_w^{=}).
\end{align*}
It therefore suffices to show that $\overline{M}_w^{\hspace{1pt}=,\text{ess}} = \overline{M}_w^{\hspace{1pt}=}$.  It is clear from the definitions that $M_w^{\leq, \text{ess}} \supseteq \overline{M}_w^{\hspace{1pt}=,\text{ess}} \supseteq \overline{M}_w^{\hspace{1pt}=}$. Fulton showed that $\overline{M}_w^{\hspace{1pt}=} = M^{\leq} = M^{\leq,\text{ess}}$ \cite[Proposition 3.3 and Lemma 3.10]{fulton-double-schubert}, so we must also have $\overline{M}_w^{\hspace{1pt}=,\text{ess}} = \overline{M}_w^{\hspace{1pt}=}$.

Now suppose $\vb E_\bullet$ and $\vb F_\bullet$ are trivial over some open set $U \subseteq X$. Replacing $X$ by $U$, the relevant loci are $\Omega_w^{=,\text{ess}} \cap U$ and $\Omega_w^{=} \cap U$, and their closures (in $U$) are $\overline{\Omega}_w^{\hspace{1pt}=,\text{ess}} \cap U$ and $\overline{\Omega}_w^{\hspace{1pt}=} \cap U$. The previous paragraph implies $\overline{\Omega}_w^{\hspace{1pt}=,\text{ess}} \cap U = \overline{\Omega}_w^{\hspace{1pt}=} \cap U$. Since $X$ is covered by open sets $U$ over which $\vb E_\bullet$ and $\vb F_\bullet$ are trivial, we conclude that $\overline{\Omega}_w^{\hspace{1pt}=,\text{ess}} = \overline{\Omega}_w^{\hspace{1pt}=}$.
\end{proof}

    Suppose $\vb{V}$ is a vector bundle over $X$ with a rank $r$ subbundle $\vb{G}$ and a flag of subbundles $\vb{H}_{\mu_1} \subseteq \cdots \subseteq \vb{H}_{\mu_s} \subseteq \vb V$, where $\rank \vb H_i = \rank \vb V - i$, so $\mu_1 \geq \cdots \geq \mu_s$. Also fix a sequence $0 = k_0 < k_1 < \cdots < k_s$ of integers. The \emph{Grassmannian degeneracy locus} $\Omega^{\Gr}$ with respect to this data is the closure of the locus in $X$ over which $\dim(\vb{G} \cap \vb{H}_{\mu_i}) = k_i$ for each $i$, or equivalently $\rank\left(\vb G \hookrightarrow \vb V \twoheadrightarrow \vb V / \vb H_{\mu_i}\right) = r - k_i$. For each $k \in [k_s]$, let $p$ be such that $k_{p-1} < k \leq k_p$, and define $\lambda_{k} = \mu_p - r + k_p$ and $c(k) = c(\vb{V}) \,/\, (c(\vb{G})c(\vb{H}_{\mu_p}))$.
    \begin{thm} \label{thm:kempf-laksov} If the sequence $\lambda$ just defined is a partition and $\Omega^{\Gr}$ has codimension $|\lambda| \eqdef \sum_i \lambda_i$, then $[\Omega^{\Gr}] = \det (c(k)_{\lambda_k+t-k})_{k,t \in [\ell(\lambda)]}$.\end{thm}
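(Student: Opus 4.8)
The plan is to recognize $\Omega^{\Gr}$ as a matrix Schubert degeneracy locus $\overline\Omega_w$ attached to a \emph{vexillary} permutation $w$ with $\sh(w) = \lambda$, apply Fulton's Theorem~\ref{thm:fulton-double-schubert} to get $[\Omega^{\Gr}] = \fkS_w(x';y')$, and then rewrite $\fkS_w$ as the asserted flagged determinant using the Giambelli formula for vexillary double Schubert polynomials. (One could alternatively just invoke the classical Kempf--Laksov formula directly: the partial flag $\vb H_{\mu_1} \subseteq \cdots \subseteq \vb H_{\mu_s}$ is the case of a complete coflag in which no rank condition is imposed except at coranks $\mu_1, \ldots, \mu_s$, so the theorem follows from Kempf--Laksov by extending to a full coflag and choosing the intermediate jump numbers so that no new condition appears. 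The argument below is essentially a self-contained proof of that reduction, built from the tools assembled in \S\ref{subsec:vex}.)

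First I would pass to a flag bundle $\pi\colon \widetilde X \to X$ over which $\vb G$ is refined to a complete flag $\vb E_1 \subseteq \cdots \subseteq \vb E_n = \vb V$ with $\vb E_r = \vb G$ (writing $n = \rank \vb V$), and the $\vb H_{\mu_p}$ are refined to a complete coflag $\vb V = \vb F_n \twoheadrightarrow \cdots \twoheadrightarrow \vb F_1$ with $\vb F_{n-\mu_p} = \vb V/\vb H_{\mu_p}$. Since $\pi^*$ is injective on cohomology, all Chern classes in sight pull back, and the preimage of $\Omega^{\Gr}$ has the same codimension (the fibers of $\pi$ being homogeneous), it suffices to prove the formula on $\widetilde X$; so I may assume such $\vb E_\bullet, \vb F_\bullet$ exist over $X$. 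Taking $f\colon \vb E_n = \vb V \xrightarrow{\id} \vb V = \vb F_n$, let $w \in S_n$ be the unique vexillary permutation whose essential set is $\{(r, n-\mu_p) : p \in [s]\}$ with $\rank w_{[r][n-\mu_p]} = r - k_p$. Such a $w$ exists precisely because the prescribed essential data is monotone in the sense of Lemma~\ref{lem:vex-chain}(b)--(c) — this is the one place where the hypothesis that $\lambda$ be a partition is used — and Lemmas~\ref{lem:code-ks} and \ref{lem:vex-shape} then verify both that $\sh(w) = \lambda$ and that the Chern-class data attached to the essential boxes of $D(w)$ is exactly the list $c(1), \ldots, c(\ell(\lambda))$. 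By Lemma~\ref{lem:degeneracy-locus-essential-set}, $\overline\Omega_w$ is cut out (as a closure) by just the essential rank conditions; under the identifications $\vb E_r = \vb G$ and $\vb F_{n-\mu_p} = \vb V/\vb H_{\mu_p}$ these conditions read $\rank(\vb G \to \vb V/\vb H_{\mu_p}) = r-k_p$, i.e.\ $\dim(\vb G \cap \vb H_{\mu_p}) = k_p$, so $\overline\Omega_w = \Omega^{\Gr}$.

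Since $\ell(w) = |\sh(w)| = |\lambda|$, the hypothesis that $\Omega^{\Gr}$ has codimension $|\lambda|$ is exactly the hypothesis of Theorem~\ref{thm:fulton-double-schubert}, which therefore yields $[\Omega^{\Gr}] = \fkS_w(x';y')$ with $x'_i = c_1((\vb E_i/\vb E_{i-1})^*)$ and $y'_i = c_1(\vb F_i^*/\vb F_{i-1}^*)$. To finish I would invoke the determinantal (Giambelli) formula for vexillary double Schubert polynomials (Wachs \cite{wachs-schubert}; Fulton \cite{fulton-double-schubert}): for vexillary $w$, $\fkS_w(x;y)$ is a flagged multi-Schur determinant indexed by $\lambda = \sh(w)$, the flag of each row being governed by the essential box relevant to that row — here the box $(r, n-\mu_{p(k)})$. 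Translating Chern roots — $-x'_i$ are the Chern roots of $\vb E_r = \vb G$ and $-y'_i$ those of $\vb F_{n-\mu_p} = \vb V/\vb H_{\mu_p}$ — the generating function of the $k$-th row of this determinant becomes $c(\vb V/\vb H_{\mu_{p(k)}})/c(\vb G) = c(\vb V)\big/\big(c(\vb G)\,c(\vb H_{\mu_{p(k)}})\big) = c(k)$, using the short exact sequence $0 \to \vb H_{\mu_p} \to \vb V \to \vb V/\vb H_{\mu_p} \to 0$. Hence $\fkS_w(x';y') = \det\big(c(k)_{\lambda_k+t-k}\big)_{k,t\in[\ell(\lambda)]}$, as desired.

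The main obstacle is the bookkeeping in the middle paragraph: constructing the vexillary $w$ with the prescribed essential data and confirming, via the combinatorial lemmas of \S\ref{subsec:vex}, that $\sh(w) = \lambda$ and that the row-by-row Chern-class flagging coming from $D(w)$ matches the $c(k)$; and then pinning down the exact form of the vexillary double Giambelli determinant, together with its flag, so that the generating-function identity above is literally correct on the nose. None of this is conceptually deep, but it is where sign and indexing errors tend to hide. (Were one instead to prove the formula by a direct Kempf--Laksov resolution — a tower of projective bundles parametrizing chains $A_1 \subseteq \cdots \subseteq A_{k_s} \subseteq \vb G$ with $A_{k_p} \subseteq \vb H_{\mu_p}$, followed by iterated projective-bundle pushforwards — the technical heart would instead be that the successive kernel bundles need not have locally constant rank, which is precisely the classical difficulty that Kempf and Laksov resolve.)
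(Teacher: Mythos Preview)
Your approach is essentially identical to the paper's own sketch: reduce to a vexillary (in fact Grassmannian) permutation $w$ whose essential rank conditions are exactly the conditions $\dim(\vb G \cap \vb H_{\mu_p}) = k_p$, apply Fulton's Theorem~\ref{thm:fulton-double-schubert}, and finish with the Wachs/Fulton determinantal formula for vexillary double Schubert polynomials. One indexing slip to fix: since $\rank(\vb V/\vb H_{\mu_p}) = \mu_p$ (not $n-\mu_p$), you should set $\vb F_{\mu_p} = \vb V/\vb H_{\mu_p}$ and hence $\Ess(D(w)) = \{(r,\mu_p) : p \in [s]\}$, matching the paper's explicit $w_k = \lambda_{r-k+1}+k$ for $k \le r$.
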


This formula is a modest generalization of a formula of Kempf and Laksov \cite{kempf-laksov}. Alternatively, it can be deduced from Theorem~\ref{thm:fulton-double-schubert} as follows. Let $n = \rank \vb V$ and define $w \in S_n$ by $w_k = \lambda_{r-k+1} + k$ for $k \leq r$ and $\{w_{r+1} < \cdots < w_n\} = [n] \setminus \{w_1, \ldots, w_r\}$. Then $\Ess(D(w)) = \{(r,\mu_p) : p \in [s]\}$ and $\rank w_{[r][\mu_p]} = r-k_p$ for $i \in [s]$. Without loss of generality one can assume that the partial flags $\vb{H}_\bullet$ and $\vb G$ can be completed to complete flags in $\vb V$, so $\Omega^{\Gr} = \Omega_w$ by Lemma~\ref{lem:degeneracy-locus-essential-set}. Theorem~\ref{thm:kempf-laksov} then follows from a similar determinantal formula for $\fkS_w(x;y)$; see \cite[Proposition 9.18]{fulton-double-schubert} or \cite{wachs-schubert}.

\section{$\GL(n) \times \GL(n)$-orbits on $\Gr(n,2n) \times \Fl(n) \times \Fl(n)$}
\label{sec:SL-SL-orbits}
\subsection{Description of orbits}

Let $\grGr \eqdef \{\graph(f) \mid \text{$f : \CC^n \to {\CC^n}^*$ linear and invertible}\}$. Write elements of $\Gr(n,2n) = \Gr(n, \CC^n \oplus {\CC^n}^*)$ as row spans of $n \times 2n$ matrices whose first $n$ columns are coordinates on $\CC^n$ and last $n$ columns are coordinates on ${\CC^n}^*$. The complement of $\grGr$ is the closed locus where the Pl\"ucker coordinates in columns $[n]$ and in columns $[n+1,2n]$ both vanish, so $\grGr$ is Zariski-open.  For $w \in S_n$, define $\openincGX_w \subseteq \grGr \times \Fl(n) \times \Fl(n)$ to be
\begin{equation*}
     \{(\graph(f), E_\bullet, F_\bullet) : \text{$\rank(F_j \xrightarrow{f} E_i^*) = \rank w_{[i][j]}$ for $i,j \in [n]$}\}.
\end{equation*}
The expression $F \xrightarrow{f} E^*$ here refers to the composition $F \hookrightarrow \CC^n \xrightarrow{f} {\CC^n}^* \twoheadrightarrow E^*$ for subspaces $E, F \subseteq \CC^n$ and a linear map $f : \CC^n \to {\CC^n}^*$. In this subsection we show that the $\openincGX_w$ are the $\GL(n) \times \GL(n)$-orbits on $\grGr \times \Fl(n) \times \Fl(n)$, or equivalently the $S(\GL(n) \times \GL(n))$-orbits, as discussed in the introduction.

Suppose $X = \prod_{j \in J} X_j$ is a Cartesian product of sets, $Y = \prod_{i \in I} X_i$ for some subset $I \subseteq J$, and $\pi : X \to Y$ is the projection. For $U \subseteq X$ and $y \in Y$, we write $U(y)$ for the projection of the fiber $\pi^{-1}(\{y\}) \cap U$ onto $\prod_{j \in J \setminus I} X_j$.

\begin{ex} \label{ex:GX-schubert-fiber}
Let $e_1, \ldots, e_n$ be the standard basis of $\CC^n$, with dual basis $e_1^*, \ldots, e_n^*$, and let $f : \CC^n \to {\CC^n}^*$ be linear with $e_i \mapsto e_i^*$ for $i \in [n]$. Fix a flag $E_\bullet^0 = F_\bullet^0 \in \Fl(n)$ defined by $E_i^0 = F_i^0 = \langle e_1, \ldots, e_i \rangle$ for $i\in [n]$, and write $F_\bullet = \rowspan_\bullet N$ to mean that $F_i$ is the span of rows $1, \ldots, i$ of a matrix $N$. With respect to the projection from $\openincGX_w$ onto the first two factors of $\grGr \times \Fl(n) \times \Fl(n)$, the fiber $\openincGX_w(\graph(f), E_\bullet^0)$ is
\begin{equation*}
    \{F_\bullet \in \Fl(n) : \text{$F_\bullet = \rowspan_\bullet N$ where $\rank N_{[j][i]} = \rank w_{[i][j]}$ for $i,j \in [n]$}\}.
\end{equation*}
This is the Schubert cell $X_{w^{-1}}$ with respect to the flag $E_\bullet^0$. Consider on the other hand the projection from $\grGr \times \Fl(n) \times \Fl(n)$ onto its first and third factors. Then since $\rank(F_j\xrightarrow{f}  E_i^*) = \rank(E_i \xrightarrow{f^*} F_j^*)$, the fiber $\openincGX_w(\graph(f), F_\bullet^0)$ is
\begin{equation*}
    \{E_\bullet \in \Fl(n) : \text{$E_\bullet = \rowspan_\bullet N$ where $\rank N_{[i][j]} = \rank w_{[i][j]}$ for $i,j \in [n]$}\}.
\end{equation*}
This is the Schubert cell $X_w$ with respect to $F_\bullet^0$. We will continue the convention used here of using $E_\bullet$ and $F_\bullet$ for the two coordinates of $\Fl(n) \times \Fl(n)$.
\end{ex}

Let $B_n^+$ and $B_n^-$ be the groups of invertible upper and lower triangular matrices inside $\GL(n)$. As in Example~\ref{ex:GX-schubert-fiber}, fix $E^0_\bullet \in \Fl(n)$ defined by $E^0_i = \vspan \{e_1, \ldots, e_i\}$ for $i \in [n]$. The action of $(g_1, g_2) \in K \eqdef \GL(n) \times \GL(n)$ on $U \in \Gr(n,2n)$ is given by $(g_1, g_2)\cdot U = (g_2 \oplus (g_1^{-1})^*) U$.

\begin{prop} \label{prop:orbit-description-Gr} The sets $\openincGX_w$ for $w \in S_n$ are the $K$-orbits on $\grGr \times \Fl(n) \times \Fl(n)$. \end{prop}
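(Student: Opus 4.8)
The plan is to check three things in turn: that each $\openincGX_w$ is a union of $K$-orbits; that the sets $\openincGX_w$ partition $\grGr \times \Fl(n) \times \Fl(n)$; and that each nonempty $\openincGX_w$ is a single orbit. Write $(g_1,g_2) \in K$ so that $g_1$ acts on the $E$-flag and on the summand ${\CC^n}^*$ via $(g_1^{-1})^*$, while $g_2$ acts on the $F$-flag and on the summand $\CC^n$; then $(g_1,g_2)\cdot\graph(f) = \graph((g_1^{-1})^* f g_2^{-1})$. For the first point I would observe that the composition $g_2 F_j \hookrightarrow \CC^n \xrightarrow{(g_1^{-1})^* f g_2^{-1}} {\CC^n}^* \twoheadrightarrow (g_1 E_i)^*$ factors as $g_2 F_j \xrightarrow{\ \sim\ } F_j \xrightarrow{\ f\ } E_i^* \xrightarrow{\ \sim\ } (g_1 E_i)^*$, where the outer maps are the isomorphisms induced by $g_2^{-1}$ and by $(g_1^{-1})^*$ (the latter carries $E_i^\perp$ onto $(g_1 E_i)^\perp$, hence descends to an isomorphism $E_i^* \to (g_1 E_i)^*$). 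Therefore $\rank(g_2 F_j \to (g_1 E_i)^*) = \rank(F_j \xrightarrow{f} E_i^*)$ for all $i,j$, so each $\openincGX_w$ is $K$-stable.

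For the partition claim, rewrite $\rank(F_j \xrightarrow{f} E_i^*) = j - \dim\bigl(f(F_j) \cap E_i^\perp\bigr)$. Since $f$ is invertible, $f(F_\bullet)$ is a complete flag in ${\CC^n}^*$ and $E_\bullet^\perp$ is a complete decreasing flag there; the classical fact that the relative position of two complete flags is recorded by a unique permutation then shows, after a short re-indexing, that $(i,j) \mapsto \rank(F_j \xrightarrow{f} E_i^*)$ has the form $(\rank w_{[i][j]})_{i,j \in [n]}$ for exactly one $w \in S_n$. So the $\openincGX_w$ are pairwise disjoint and cover $\grGr \times \Fl(n) \times \Fl(n)$.

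For the last point, $K$ acts transitively on the product of the $\grGr$ and $E$-flag factors: choose $g_1$ carrying $E_\bullet$ to the standard flag $E^0_\bullet$, and then, using invertibility of $f$, choose $g_2$ with $(g_1^{-1})^* f g_2^{-1} = f_0$, where $f_0 \colon e_i \mapsto e_i^*$. It therefore suffices to show that the stabilizer $H \subseteq K$ of $(\graph(f_0), E^0_\bullet)$ acts transitively on each fiber $\openincGX_w(\graph(f_0), E^0_\bullet) \subseteq \Fl(n)$. A direct computation, tracking how the condition $(g_1^{-1})^* f_0 g_2^{-1} = f_0$ constrains $g_2$ in terms of $g_1$, shows that $H$ is a copy of $B_n^+$ embedded twisted-diagonally, whose image in the $F$-flag factor is the Borel $B_n^-$; in particular $H$ has exactly $|S_n|$ orbits on $\Fl(n)$. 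On the other hand, by Example~\ref{ex:GX-schubert-fiber} each fiber $\openincGX_w(\graph(f_0), E^0_\bullet)$ is a Schubert cell, hence nonempty, and by the partition step these $|S_n|$ fibers are $H$-stable, pairwise disjoint, and cover $\Fl(n)$. Comparing counts, each fiber is a single $H$-orbit, and unwinding the reductions shows that each $\openincGX_w$ is a single $K$-orbit.

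The step I expect to require the most care is the computation of $H$ and of its action on the $F$-flag factor: it means keeping careful track of the several dualizations and inverse-transposes implicit in the $K$-action on graphs, and matching orientation conventions so that the Borel orbits arising there are exactly the Schubert cells of Example~\ref{ex:GX-schubert-fiber} for the correct reference flag and permutation. Everything else—the $K$-invariance of the rank conditions and the reduction of the partition claim to the relative position of two flags—is routine.
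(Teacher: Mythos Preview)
Your proof is correct and arrives at the same conclusion via a different reduction than the paper. The paper first uses transitivity of $K$ on $\Fl(n)\times\Fl(n)$ to reduce to $B_n^+\times B_n^+$-orbits on $\grGr$, identifies $\grGr$ with $\GL(n)$ equivariantly, and then cites Fulton's description of $B_n^+\times B_n^+$-orbits on $\GL(n)$ by rank data. You instead establish $K$-invariance and the partition property directly, then use transitivity of $K$ on $\grGr\times\Fl(n)$ (the graph and the $E$-flag) to reduce to orbits of the stabilizer $H$ on the remaining $\Fl(n)$; your counting argument---$|S_n|$ nonempty $H$-stable fibers versus $|S_n|$ Borel orbits---neatly avoids having to match the fibers with specific $B_n^-$-orbits. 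Both arguments ultimately rest on the Bruhat decomposition (yours in step~2 as ``relative position of two flags'' and again in step~3 for the orbit count; the paper's via Fulton's lemma), so the underlying content is the same. The paper's route is slightly shorter since it bundles the partition and single-orbit claims into one citation, whereas your route makes each logical step explicit and, as you note, only requires care in tracking the transposes to see that the $g_2$-component of $H$ is $B_n^-$; that computation is correct as you describe it.
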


\begin{proof}
    The group $K$ acts transitively on $\Fl(n) \times \Fl(n)$ with the stabilizer of $(E_\bullet, E_\bullet)$ being $B_n^+ \times B_n^+$. So, the $K$-orbits on $\grGr \times \Fl(n) \times \Fl(n)$ are the sets
    \begin{equation*}
        \bigsqcup_{(gB_n^+, hB_n^+)} (g, h)\cdot O \times \{gE^0_\bullet\} \times \{hE^0_\bullet\}
    \end{equation*}
    where $(gB_n^+, hB_n^+)$ ranges over $\GL(n)/B_n^+ \times \GL(n)/B_n^+$ and $O$ ranges over the $B_n^+ \times B_n^+$-orbits on $\grGr$. We claim that the latter orbits are the sets $\openincGX_w(E^0_\bullet, E^0_\bullet)$, which will imply the proposition given that it is straightforward to check that
    \begin{equation*}
        \openincGX_w = \bigsqcup_{(gB_n^+, hB_n^+)} (g, h)\cdot \openincGX_w(E^0_\bullet, E^0_\bullet) \times \{gE^0_\bullet\} \times \{hE^0_\bullet\}.
    \end{equation*}

    Let $(g_1, g_2) \in K$ act on $M \in \GL(n)$ by $(g_1, g_2) \cdot M = (g_1^t)^{-1} M g_2^{-1}$. If $\graph(f) \in \grGr$, then $(g_1, g_2) \cdot \graph(f) = \graph((g_1^{-1})^* \circ f \circ g_2^{-1})$.  Thus, the isomorphism $\grGr \to \GL(n)$ taking $\graph(f)$ to the matrix of $f$ with respect to the ordered bases $e_1, \ldots, e_n$ and $e_1^*, \ldots, e_n^*$ is $\GL(n)$-equivariant, so sends $B_n^+ \times B_n^+$-orbits to $B_n^+ \times B_n^+$-orbits. This isomorphism also sends $\openincGX_w(E^0_\bullet, E^0_\bullet)$ to the set $O_w = \{M \in \GL(n) : \text{$\rank M_{[i][j]} = \rank w_{[i][j]}$ for $i,j \in [n]$}\}$. The sets $O_w$ are the $B_n^+ \times B_n^+$-orbits on $\GL(n)$ by \cite[Lemma 3.1]{fulton-double-schubert}, proving the claim.
\end{proof}

It will sometimes be convenient to know that $\openincGX_{w}$ or a similar variety is a fiber bundle over some subproduct of $\Gr(n,2n) \times \Fl(n) \times \Fl(n)$. In each case this will follow from the next lemma. If a group $G$ acts on a set $X$, let $G_x$ be the stabilizer of $x \in X$ and $Gx$ the orbit of $x$.

\begin{lem} \label{lem:fiber-bundle} Let $G$ be a Lie group acting on spaces $X$ and $Y$, and hence diagonally on $X \times Y$. Suppose the action on $X$ is transitive. If $S \subseteq X \times Y$ is $G$-stable, the projection $p : S \to X$ onto the first coordinate is a fiber bundle. \end{lem}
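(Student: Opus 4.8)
The plan is to prove Lemma~\ref{lem:fiber-bundle} by exhibiting $S$ as an associated bundle over $X$ via the standard construction for homogeneous spaces. Since $G$ acts transitively on $X$, fix a basepoint $x_0 \in X$ and let $H = G_{x_0}$ be its stabilizer, so that $X \cong G/H$ as $G$-spaces via $gH \mapsto g x_0$. Let $S_{x_0} = \{y \in Y : (x_0, y) \in S\}$ be the fiber of $p$ over $x_0$; because $S$ is $G$-stable, $H$ preserves $S_{x_0}$, so we get an action of $H$ on $S_{x_0}$ by restriction.

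\medskip

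First I would check that the map
\begin{equation*}
    \Phi : G \times S_{x_0} \to S, \qquad (g, y) \mapsto (g x_0, g y)
\end{equation*}
is surjective, which follows from transitivity: given $(x, y) \in S$ choose $g$ with $g x_0 = x$, and then $(x_0, g^{-1} y) = g^{-1} \cdot (x,y) \in S$ since $S$ is $G$-stable, so $g^{-1} y \in S_{x_0}$ and $\Phi(g, g^{-1}y) = (x,y)$. Next I would observe that $\Phi(g, y) = \Phi(g', y')$ precisely when $g' = g h^{-1}$ and $y' = h y$ for some $h \in H$; indeed $g x_0 = g' x_0$ forces $g^{-1}g' \in H$, and then equality of the second coordinates gives $y' = (g'^{-1} g) y$. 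Thus $\Phi$ descends to a bijection $G \times_H S_{x_0} \to S$, where $G \times_H S_{x_0}$ is the quotient of $G \times S_{x_0}$ by the $H$-action $h \cdot (g, y) = (g h^{-1}, h y)$, and under this identification $p$ becomes the natural projection $G \times_H S_{x_0} \to G/H$.

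\medskip

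It then remains to invoke the standard fact that for a Lie group $G$ and a closed subgroup $H$, the quotient map $G \to G/H$ is a principal $H$-bundle (in particular admits local sections), and consequently for any $H$-space $Z$ the associated bundle $G \times_H Z \to G/H$ is a fiber bundle with fiber $Z$: over a neighborhood $V$ of a point $gH$ on which a local section $\sigma : V \to G$ exists, the map $V \times Z \to (G \times_H Z)|_V$ sending $(v, z) \mapsto [\sigma(v), z]$ is a homeomorphism. Transporting this trivialization through the bijection $G \times_H S_{x_0} \cong S$ shows that $p : S \to X$ is locally trivial with fiber $S_{x_0}$, hence a fiber bundle.

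\medskip

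The main point requiring care is the hypothesis under which ``fiber bundle'' is being asserted: one needs $G$ to be a Lie group and $H = G_{x_0}$ to be a closed subgroup so that $G \to G/H$ is locally trivial — this is automatic in the algebraic or topological settings where the lemma is applied (stabilizers of points in algebraic actions are closed), and the identification $X \cong G/H$ is the homeomorphism coming from the orbit map being open. No smoothness or properness of $S$, $X$, or $Y$ is needed beyond what makes these standard homogeneous-space facts apply; the argument is essentially formal once the associated-bundle description is in place, so I do not anticipate a serious obstacle, only the bookkeeping of the $H$-action conventions.
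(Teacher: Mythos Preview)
Your proposal is correct and takes essentially the same approach as the paper: both arguments rest on the fact that $G \to G/H$ admits local sections (being a principal $H$-bundle for a closed subgroup of a Lie group), and use such a section to write down a local trivialization of $p$. You package this via the associated-bundle construction $S \cong G \times_H S_{x_0}$, while the paper writes the trivialization $\psi : p^{-1}(U) \to U \times p^{-1}(x)$ explicitly as $(gx,y) \mapsto (gx,\, \sigma(gx)^{-1}y)$ for a section $\sigma$ coming from a trivialization of $G \to G/H$; these are the same map once unwound.
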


\begin{proof}
    Fix $x \in X$. Since $G_x \subseteq G$ is a closed subgroup, the projection $q : G \twoheadrightarrow G/G_x \simeq X$ is a fiber bundle \cite[Theorem 4.3]{broecker-tom-dieck}. Let $\phi : q^{-1}(U) \to U \times G_x$ be a trivialization of $q$ over some neighborhood $U$ of $x$. Define $\psi : p^{-1}(U) \to U \times p^{-1}(x)$ by the formula $(gx,y) \mapsto (gx, x, \phi^{-1}(gx,1)^{-1}y)$, using the transitivity of the $G$-action on $X$. It is not quite obvious that $\psi(gx,y)$ actually lies in $U \times p^{-1}(x) \subseteq U \times S$ when $(gx,y) \in S$: this holds because $\phi^{-1}(gx,1)x = q(\phi^{-1}(gx,1)) = gx$, so that $(x, \phi^{-1}(gx,1)^{-1}y) = \phi^{-1}(gx,1)^{-1}\cdot (gx, y)$ is still in $S$ by the $G$-stability of $S$. The map  $\psi$ has $(gx,x,y) \mapsto (gx, \phi^{-1}(gx,1)y)$ as an inverse, and trivializes $p$ over $U$. 
\end{proof}

\begin{cor} \label{cor:GX-fiber-bundle}  The projection from $\openincGX_w$ to any proper subproduct of $\grGr \times \Fl(n) \times \Fl(n)$ is a fiber bundle. \end{cor}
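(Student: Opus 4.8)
The plan is to reduce the statement to Lemma~\ref{lem:fiber-bundle} applied with $G = K \eqdef \GL(n) \times \GL(n)$. By Proposition~\ref{prop:orbit-description-Gr}, $\openincGX_w$ is a single $K$-orbit, hence in particular a $K$-stable subset of $\grGr \times \Fl(n) \times \Fl(n)$. To prove that the projection onto a proper subproduct $X$ is a fiber bundle, I would view $\grGr \times \Fl(n) \times \Fl(n) = X \times Y$ as $K$-spaces, where $Y$ is the complementary subproduct and $K$ acts factorwise (so that its action on $X \times Y$ is the diagonal action in the sense of Lemma~\ref{lem:fiber-bundle}); then it suffices to verify that $K$ acts transitively on $X$. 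Up to the symmetry exchanging the two flag factors, there are four cases: $X = \Fl(n)$ (a single flag factor), $X = \grGr$, $X = \grGr \times \Fl(n)$, and $X = \Fl(n) \times \Fl(n)$.

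Three of these are essentially immediate. For $X = \Fl(n) \times \Fl(n)$, transitivity of the $K$-action was already noted in the proof of Proposition~\ref{prop:orbit-description-Gr}; a fortiori $K$ acts transitively on a single flag factor, e.g. through its first coordinate $\GL(n)$ acting on $E_\bullet$. For $X = \grGr$, I would use the description from the proof of Proposition~\ref{prop:orbit-description-Gr}: identifying $\graph(f)$ with the matrix $M$ of $f$, the action is $(g_1, g_2) \cdot M = (g_1^t)^{-1} M g_2^{-1}$, which is transitive since $\GL(n)$ acts transitively on itself by left and right multiplication. The one case that needs an actual (short) computation is $X = \grGr \times \Fl(n)$, taking the $E_\bullet$-factor (the $F_\bullet$-factor being symmetric): given $(\graph(f), E_\bullet)$ and $(\graph(f'), E_\bullet')$, first pick $g_1 \in \GL(n)$ with $g_1 E_\bullet = E_\bullet'$; since $(g_1, g_2) \cdot \graph(f) = \graph((g_1^{-1})^* \circ f \circ g_2^{-1})$, it remains to solve $(g_1^{-1})^* \circ f \circ g_2^{-1} = f'$ for $g_2$, and $g_2 = (f')^{-1} \circ (g_1^{-1})^* \circ f$ does the job because $f$, $f'$, and $g_1$ are invertible.

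In each case, Lemma~\ref{lem:fiber-bundle} with $G = K$, this $X$, and the complementary subproduct as $Y$ shows the projection $\openincGX_w \to X$ is a fiber bundle; since every proper subproduct is, after reordering factors, one of the four above, this finishes the proof. I do not expect any real obstacle here: the only point requiring thought is the transitivity of the $K$-action on $\grGr \times \Fl(n)$, and even that amounts to the elementary observation that the remaining freedom in $g_2$ after fixing the flag with $g_1$ is exactly enough to adjust the graph, using invertibility of $f$ and $f'$.
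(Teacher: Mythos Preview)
Your proposal is correct and is exactly the argument the paper intends: the corollary is stated without proof, as an immediate consequence of Lemma~\ref{lem:fiber-bundle} once one checks that $K$ acts transitively on each proper subproduct, and you have supplied precisely those checks. The only nontrivial case is $\grGr \times \Fl(n)$, and your computation there is correct.
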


\begin{lem} \label{lem:GX-description} \hfill
\begin{enumerate}[(a)]
\item $\openincGX_w$ is irreducible of codimension $\ell(w)$.
\item $\incGX_w$ is the closure in $\Gr(n,2n) \times \Fl(n) \times \Fl(n)$ of
\begin{equation*}
C_w \eqdef \{(\graph(f), E_\bullet, F_\bullet) : \text{$\rank(F_j \xrightarrow{f} E_i^*) = \rank w_{[i][j]}$ for $(i,j) \in \Ess(D(w))$}\}.
\end{equation*}
\item $\incGX_w(\graph(f)) = \overline{\openincGX_w(\graph(f))}$ for any $\graph(f) \in \grGr$.
\item The intersection $\incGX_w(\graph(f)) = \incGX_w \cap (\{\graph(f)\} \times \Fl(n) \times \Fl(n))$ is transverse. 
\end{enumerate}
\end{lem}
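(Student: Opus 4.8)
The four parts all descend from the orbit picture of Proposition~\ref{prop:orbit-description-Gr}, the degeneracy-locus dictionary of \S\ref{subsec:degeneracy-loci}, and the fibration results (Corollary~\ref{cor:GX-fiber-bundle} and Lemma~\ref{lem:fiber-bundle}). For (a), I would note that $\openincGX_w$ is a single orbit of the connected group $K = \GL(n)\times\GL(n)$, hence an irreducible (smooth, locally closed) subvariety. For its codimension, view the projection $\openincGX_w \to \grGr \times \Fl(n)$ onto the first two factors as a fiber bundle via Corollary~\ref{cor:GX-fiber-bundle}; by Example~\ref{ex:GX-schubert-fiber} one of its fibers is the Schubert cell $X_{w^{-1}}$, of codimension $\ell(w^{-1}) = \ell(w)$ in $\Fl(n)$, and since all fibers of a fiber bundle over the connected base $\grGr\times\Fl(n)$ are isomorphic this gives $\codim \openincGX_w = \ell(w)$ in $\grGr\times\Fl(n)\times\Fl(n)$, hence in $\Gr(n,2n)\times\Fl(n)\times\Fl(n)$ since $\grGr$ is open. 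In particular $\dim \incGX_w = \dim\openincGX_w = n^2 + n(n-1) - \ell(w)$.

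For (b), over the smooth variety $X = \grGr\times\Fl(n)\times\Fl(n)$ I would realize $\openincGX_w$ as a degeneracy locus in the sense of \S\ref{subsec:degeneracy-loci}: take the tautological subbundles of the $E$-flag, the tautological quotient bundles of ${\CC^n}^*$ coming from the $F$-flag, and the bundle map $\CC^n \to {\CC^n}^*$ pulled back from $\grGr$, using $\rank(F_j \xrightarrow{f} E_i^*) = \rank(E_i \xrightarrow{f^*} F_j^*)$ to match the conventions of \S\ref{subsec:degeneracy-loci}. This identifies $\openincGX_w$ with some $\Omega_v$ where $v$ agrees with $w$ up to the transpose identity $D(w)^t = D(w^{-1})$; since ranks of submatrices are transpose-invariant, the essential-set conditions on the two sides also match up. Lemma~\ref{lem:degeneracy-locus-essential-set} then says $\overline{\Omega_v}$, the closure taken in $X$, is the closure of the locus cut out by the essential-set rank conditions, which is exactly $C_w$; taking closures in $\Gr(n,2n)\times\Fl(n)\times\Fl(n)$ gives $\incGX_w = \overline{C_w}$.

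For (c), $\incGX_w = \overline{\openincGX_w}$ is the closure of a $K$-orbit, hence $K$-stable, so $S \eqdef \incGX_w \cap (\grGr\times\Fl(n)\times\Fl(n))$ is $K$-stable; since $K$ acts transitively on the first factor $\grGr$, Lemma~\ref{lem:fiber-bundle} makes $S \to \grGr$ a fiber bundle. As $S$ is dense open in the irreducible $\incGX_w$ and $\grGr$ is irreducible, local triviality forces the fiber $\incGX_w(\graph(f)) = S(\graph(f))$ to be irreducible, of dimension $\dim\incGX_w - n^2 = n(n-1) - \ell(w)$. On the other hand $\openincGX_w(\graph(f))$ is a fiber of the fiber bundle $\openincGX_w \to \grGr$, hence irreducible of dimension $\dim\openincGX_w - n^2 = n(n-1) - \ell(w)$ as well. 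An irreducible variety contained in an irreducible variety of the same dimension is dense in it, so $\overline{\openincGX_w(\graph(f))} = \incGX_w(\graph(f))$.

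For (d), $\openincGX_w$ is smooth (a $K$-orbit), and the $K$-equivariant surjection $\openincGX_w \to \grGr$ of $K$-homogeneous spaces is a smooth morphism, so $\openincGX_w$ meets every fiber $\{\graph(f)\}\times\Fl(n)\times\Fl(n)$ transversally along all of $\openincGX_w(\graph(f))$. By (c) this open set is dense in $\incGX_w(\graph(f)) = \incGX_w \cap (\{\graph(f)\}\times\Fl(n)\times\Fl(n))$, and by (a)/(c) the intersection has the expected dimension $n(n-1)-\ell(w)$, so the intersection is (generically) transverse, which is what is needed to identify $[\incGX_w(\graph(f))]$ with the restriction $i^*[\incGX_w]$ for $i : \{\graph(f)\}\times\Fl(n)\times\Fl(n)\hookrightarrow \Gr(n,2n)\times\Fl(n)\times\Fl(n)$. (Alternatively: $S\to\grGr$ is flat over the smooth base $\grGr$, its scheme-theoretic fiber is reduced by characteristic-zero generic reducedness together with $K$-homogeneity, and flatness gives Tor-independence, hence the same class identity.) The main obstacle is exactly this last point: $\incGX_w$ is genuinely singular in general, so the intersection is not transverse at every point, and one must settle for transversality along the dense orbit and then invoke the dimension count of (a) and (c) to exclude excess intersection; the homogeneity of the whole configuration over $\grGr$, packaged by Lemma~\ref{lem:fiber-bundle}, is what makes everything uniform in $\graph(f)$.
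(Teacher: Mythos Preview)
Your proof is correct and follows essentially the same approach as the paper: irreducibility from the orbit description, codimension via the fiber bundle over $\grGr \times \Fl(n)$ with Schubert-cell fibers, the essential-set reduction via Lemma~\ref{lem:degeneracy-locus-essential-set}, and the fiber and transversality statements via the local triviality furnished by Lemma~\ref{lem:fiber-bundle}. The only packaging differences are that for (b) you apply Lemma~\ref{lem:degeneracy-locus-essential-set} globally to the degeneracy locus over $\grGr \times \Fl(n)^2$, whereas the paper applies it fiberwise to $C_w(\graph(f),F_\bullet)$ and then matches dimensions with (a); and for (d) you articulate the generic-transversality subtlety more explicitly than the paper, which simply reduces (via the same local product structure) to the model case $(X \times Y') \cap (\{x\}\times Y)$.
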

    \begin{proof} \hfill
        \begin{enumerate}[(a)]
        \item Given that $\openincGX_w$ is a $\GL(n)\times \GL(n)$-orbit by Proposition~\ref{prop:orbit-description-Gr}, its irreducibility follows from the irreducibility of $\GL(n) \times \GL(n)$. Since $\openincGX_{w}$ is a fiber bundle over $\grGr \times \Fl(n)$ by Lemma~\ref{lem:fiber-bundle} with fibers isomorphic to the codimension $\ell(w)$ Schubert cell $X_w$ (as per Example~\ref{ex:GX-schubert-fiber}), it has dimension $\dim(\grGr \times \Fl(n)) + \dim(\Fl(n)) - \ell(w)$, hence codimension $\ell(w)$.

        \item The projection $C_w \to \grGr \times \Fl(n)$ is a fiber bundle by Lemma~\ref{lem:fiber-bundle}. Lemma~\ref{lem:degeneracy-locus-essential-set} shows that $\overline{C_w(\graph(f), F_\bullet)}$ is a Schubert cell, irreducible of codimension $\ell(w)$, so the same is true of the fibers $C_w(\graph(f), F_\bullet)$. As in (a), this implies that $\codim C_w = \ell(w)$, and the irreducibility of $\grGr \times \Fl(n)$ and of $C_w(\graph(f), F_\bullet)$ implies that $C_w$ is irreducible. The inclusion $\incGX_w \subseteq \overline{C}_w$ is clear, so (a) forces equality.
        
        \item By Lemma~\ref{lem:fiber-bundle}, both $\openincGX_w \to \grGr$ and $\incGX_w \cap (\grGr \times \Fl(n) \times \Fl(n)) \to \grGr$ are fiber bundles, and the proof of that lemma shows that the same maps provide local trivializations for both bundles simultaneously. This reduces (c) to the easy claim that if $X$ and $U \subseteq Y$ are spaces in which points are closed, and $X \times U$ is dense in $X \times Y$, then $\{x\} \times Y = \overline{\{x\} \times U}$ for any $x \in X$.
        
        \item As in (c), this reduces by Lemma~\ref{lem:fiber-bundle} to the claim that if $X$ and $Y$ are smooth manifolds and $Y' \subseteq Y$ is a submanifold, then $(X \times Y') \cap (\{x\} \times Y) = \{x\} \times Y'$ is transverse for $x \in X$. \qedhere
        \end{enumerate}
\end{proof}

\subsection{Cohomological formulas}
\label{subsec:back-stable-reps}

In this subsection we show that $[\incGX_{w}]$ is represented by the back-stable double Schubert polynomial $\bS_w(x;-y)$.

\begin{lem} \label{lem:double-schubert-rep} The class $[\incGX_w(\graph(f))]$  is represented by $\fkS_{w}(x; -y)$. \end{lem}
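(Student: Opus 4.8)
The plan is to exhibit the fiber $\incGX_w(\graph(f))$ as a degeneracy locus over the smooth variety $X \eqdef \Fl(n)\times\Fl(n)$ in the sense of \S\ref{subsec:degeneracy-loci}, and then invoke Fulton's formula, Theorem~\ref{thm:fulton-double-schubert}.

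First, pass to the transpose map. Since a linear map of finite-dimensional vector spaces has the same rank as its transpose, for subspaces $E, F \subseteq \CC^n$ we have $\rank(F \xrightarrow{f} E^*) = \rank(E \xrightarrow{f^*} F^*)$, where $f^* : \CC^n \to {\CC^n}^*$ is the transpose of $f$ (as already observed in Example~\ref{ex:GX-schubert-fiber}). Consequently $\openincGX_w(\graph(f))$ is precisely the locus in $X$ where $\rank\big(\vb E_i \hookrightarrow \CC^n \xrightarrow{f^*} {\CC^n}^* \twoheadrightarrow \vb F_j^*\big) = \rank w_{[i][j]}$ for all $i,j \in [n]$, where $\vb E_\bullet$ is the tautological flag of subbundles on the first factor of $X$ and $\vb F_\bullet$ that on the second, with the $\vb F_j^*$ forming the quotient sequence ${\CC^n}^* = \vb F_n^* \twoheadrightarrow \cdots \twoheadrightarrow \vb F_1^*$ obtained by dualizing the inclusions $\vb F_{j-1}\hookrightarrow \vb F_j \hookrightarrow \CC^n$. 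Thus, feeding Theorem~\ref{thm:fulton-double-schubert} the constant bundle map $f^* : \CC^n \to {\CC^n}^*$ together with $\vb E'_i \eqdef \vb E_i$ and $\vb F'_j \eqdef \vb F_j^*$, the associated degeneracy locus $\Omega_w$ equals $\openincGX_w(\graph(f))$, so that $\overline\Omega_w = \incGX_w(\graph(f))$ by Lemma~\ref{lem:GX-description}(c).

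Next, verify the hypothesis of Theorem~\ref{thm:fulton-double-schubert}: that $\overline\Omega_w = \incGX_w(\graph(f))$ has codimension $\ell(w)$ in the smooth variety $X$. This is immediate from Lemma~\ref{lem:GX-description}: part (a) gives $\codim \openincGX_w = \ell(w)$ in $\grGr\times\Fl(n)\times\Fl(n)$, hence also $\codim \incGX_w = \ell(w)$; and part (d) says the slice $\incGX_w(\graph(f)) = \incGX_w \cap (\{\graph(f)\}\times X)$ is transverse, so it still has codimension $\ell(w)$ in $X$. Theorem~\ref{thm:fulton-double-schubert} therefore applies and yields $[\incGX_w(\graph(f))] = \fkS_w(x';y')$ with $x'_i = c_1\big((\vb E'_i/\vb E'_{i-1})^*\big)$ and $y'_i = c_1\big((\vb F'_i)^*/(\vb F'_{i-1})^*\big)$.

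Finally, rewrite these Chern classes in the variables $x,y$ fixed in \S\ref{subsec:cohom}. Since $\vb E'_\bullet = \vb E_\bullet$, we get $x'_i = c_1\big((\vb E_i/\vb E_{i-1})^*\big) = x_i$. Since $(\vb F'_i)^* = (\vb F_i^*)^* = \vb F_i$, we get $y'_i = c_1(\vb F_i/\vb F_{i-1}) = -\,c_1\big((\vb F_i/\vb F_{i-1})^*\big) = -y_i$, using the Chern class identity $c_1(\vb L^*) = -c_1(\vb L)$ for line bundles (property (c) of \S\ref{subsec:cohom}). Hence $[\incGX_w(\graph(f))] = \fkS_w(x;-y)$, proving the lemma. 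The only points requiring care are the sign bookkeeping in the dualizations --- the transpose trick above is precisely what lets one avoid invoking the symmetry $\fkS_{w^{-1}}(x;y) = \fkS_w(y;x)$ --- and the codimension hypothesis, which is the one place where the orbit geometry of \S\ref{sec:SL-SL-orbits} (Lemma~\ref{lem:GX-description}, especially the transversality in part (d)) is genuinely used.
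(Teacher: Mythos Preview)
Your proof is correct and follows essentially the same approach as the paper: both use the transpose trick $\rank(F_j \xrightarrow{f} E_i^*) = \rank(E_i \xrightarrow{f^*} F_j^*)$ to set up the degeneracy locus with bundles $\vb E_i$ and $\vb F_j^*$, apply Fulton's formula (Theorem~\ref{thm:fulton-double-schubert}), compute $x_i' = x_i$ and $y_i' = -y_i$, and invoke Lemma~\ref{lem:GX-description}(c) to identify the closure. Your codimension verification via transversality (part (d)) is slightly more explicit than the paper's, which cites part (a) directly; both are valid since the fiber-bundle structure underlying (a) already gives the fiber codimension.
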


\begin{proof}
    Fix an invertible linear map $f : \CC^n \to {\CC^n}^*$. Then $\openincGX_w(\graph(f))$ is the set of $(E_\bullet, F_\bullet) \in \Fl(n) \times \Fl(n)$ such that $\rank(F_j \xrightarrow{f} E_i^*) =  \rank(E_i \xrightarrow{f^*} F_j^*)  = \rank w_{[i][j]}$ for $i,j \in [n]$. Since $\codim \openincGX_w = \ell(w)$ by Lemma~\ref{lem:GX-description}(a), Fulton's degeneracy locus formula (Theorem~\ref{thm:fulton-double-schubert}) applied to the sequence $\vb E_1 \hookrightarrow \cdots \hookrightarrow \vb E_n \xrightarrow{f^*} \vb F_n^* \twoheadrightarrow \cdots \twoheadrightarrow \vb F_1^*$ gives $[\overline{\openincGX_w(\graph(f))}] = \fkS_w(x'; y')$, where $x_i' = c_1((\vb E_i / \vb E_{i-1})^*)$ and $y_i' = c_1(\vb F_i / \vb F_{i-1})$. Under the conventions from \S \ref{subsec:cohom}, $x_i' = x_i$ and $y_i' = -y_i$.  Lemma~\ref{lem:GX-description}(c) shows that $[\overline{\openincGX_w(\graph(f))}] = [\incGX_w(\graph(f))]$.
\end{proof}

For vector spaces $V \subseteq W$, let $\ann V = \{\alpha  \in W^* : \alpha|_V = 0\}$ be the annihilator of $V$.
\begin{lem} \label{lem:grassmannian-locus} If $w \in S_n$ is vexillary, then $\incGX_w$ is the closure of the subset
    \begin{equation*}
        B_w \eqdef \{(U,E_\bullet,F_\bullet) : \text{$\dim(U \cap (F_j \oplus \ann{E_i})) = j - \rank w_{[i][j]}$ for $(i,j) \in \Ess(D(w))$} \}
    \end{equation*}
    of $\Gr(n,2n) \times \Fl(n) \times \Fl(n)$.
\end{lem}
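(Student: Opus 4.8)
The plan is to translate the defining conditions of $B_w$ into the rank conditions cutting out the set $C_w$ of Lemma~\ref{lem:GX-description}(b) over the open set $\grGr$, and then to show $B_w$ is irreducible, using vexillarity to recognize that the relevant subspaces form a flag.

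First I would record the graph/kernel dictionary. For $U = \graph(f)$ with $f$ invertible, a vector $(v, f(v)) \in U$ lies in $F_j \oplus \ann{E_i}$ precisely when $v \in F_j$ and $f(v)|_{E_i} = 0$, i.e.\ when $v$ is in the kernel of the composite $F_j \hookrightarrow \CC^n \xrightarrow{f} {\CC^n}^* \twoheadrightarrow E_i^*$; hence $\dim(U \cap (F_j \oplus \ann{E_i})) = j - \rank(F_j \xrightarrow{f} E_i^*)$. So over $\grGr$ the condition $\dim(U \cap (F_j \oplus \ann{E_i})) = j - \rank w_{[i][j]}$ is equivalent to $\rank(F_j \xrightarrow{f} E_i^*) = \rank w_{[i][j]}$, and therefore $B_w \cap (\grGr \times \Fl(n) \times \Fl(n)) = C_w$. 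Since $\grGr$ is open in $\Gr(n,2n)$, this exhibits $C_w$ as a Zariski-open subset of $B_w$; and $\incGX_w = \overline{C_w} \subseteq \overline{B_w}$ by Lemma~\ref{lem:GX-description}(b). In particular $C_w$ is nonempty, as $\incGX_w$ is.

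Next I would use vexillarity. Writing $\Ess(D(w)) = \{(i_1,j_1) \SWNEneq \cdots \SWNEneq (i_s,j_s)\}$, we have $i_1 \geq \cdots \geq i_s$ and $j_1 \leq \cdots \leq j_s$, so $F_{j_1} \oplus \ann{E_{i_1}} \subsetneq \cdots \subsetneq F_{j_s} \oplus \ann{E_{i_s}}$ (the inclusions are strict because $j_p + n - i_p$ is strictly increasing in $p$). Thus these form a flag of subbundles of the trivial bundle $\CC^{2n}$ over $\Fl(n) \times \Fl(n)$, and $B_w$ is the locus before closure in a Grassmannian degeneracy locus in the sense of \S\ref{subsec:degeneracy-loci}, for the tautological rank-$n$ subbundle $\vb{G}$ on $\Gr(n,2n)$, this flag, and the integers $k_p = j_p - \rank w_{[i_p][j_p]}$. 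The set $B_w$ is stable under the $\GL(n)\times\GL(n)$-action on $\Gr(n,2n)\times\Fl(n)\times\Fl(n)$, which is transitive on the $\Fl(n)\times\Fl(n)$ factor, so by Lemma~\ref{lem:fiber-bundle} the projection $B_w \to \Fl(n)\times\Fl(n)$ is a fiber bundle; its fiber over $(E_\bullet,F_\bullet)$ is the Schubert cell $\{U \in \Gr(n,2n) : \dim(U \cap (F_{j_p}\oplus\ann{E_{i_p}})) = k_p,\ p \in [s]\}$, which is irreducible (an affine space) and, since $B_w \supseteq C_w \neq \emptyset$, nonempty. As $\Fl(n)\times\Fl(n)$ is irreducible, $B_w$ is irreducible.

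It follows that the nonempty Zariski-open subset $C_w$ is dense in $B_w$, so $\overline{B_w} = \overline{C_w} = \incGX_w$, as claimed. Most of this is bookkeeping once the graph/kernel dictionary is in hand; the one genuinely essential use of the hypothesis is that vexillarity --- through the chain property of $\Ess(D(w))$ in Lemma~\ref{lem:vex-chain} --- is exactly what makes the $F_{j_p}\oplus\ann{E_{i_p}}$ into a flag, so that the fibers of $B_w$ over $\Fl(n)\times\Fl(n)$ are irreducible Schubert cells; for non-vexillary $w$ this breaks down. The only other small point is to check that these fibers are genuinely Schubert cells and nonempty, which follows from $C_w \neq \emptyset$ and the local triviality of the bundle. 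The identification of $B_w$ with a Grassmannian degeneracy locus, together with Lemmas~\ref{lem:code-ks} and \ref{lem:vex-shape}, is also what will be needed later to compute the cohomology class $[\incGX_w]$.
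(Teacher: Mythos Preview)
Your argument is correct and follows essentially the same route as the paper: translate the intersection conditions into the rank conditions of $C_w$ over $\grGr$, then use vexillarity to see that the $F_{j_p}\oplus\ann{E_{i_p}}$ form a flag, apply Lemma~\ref{lem:fiber-bundle} to get a fiber bundle over $\Fl(n)\times\Fl(n)$ with irreducible fibers, and conclude $B_w$ is irreducible so that $C_w$ is dense in it.

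One small inaccuracy: the fiber $\{U : \dim(U\cap(F_{j_p}\oplus\ann{E_{i_p}}))=k_p\}$ is not in general a Schubert \emph{cell} or an affine space, since the flag is only partial; it is the open stratum of a Schubert variety in $\Gr(n,2n)$ (the locus of $\geq$ minus the loci of strictly larger intersection), hence irreducible. That is exactly how the paper phrases it, and it is all you need. Your kernel computation $(v,f(v))\in F_j\oplus\ann{E_i}\iff v\in\ker(F_j\to E_i^*)$ is a slightly cleaner version of the paper's rank-nullity step.
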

\begin{proof}
    The image of the composition $F_j \hookrightarrow \CC^n \xrightarrow{f} {\CC^n}^* \twoheadrightarrow E_i^*$ is the same as the image of $\graph(f) \cap (F_j \oplus {\CC^n}^*) \hookrightarrow \CC^n \oplus {\CC^n}^* \twoheadrightarrow E_i^*$, where the last map sends $\CC^n$ to $0$ and restricts $\alpha \in {\CC^n}^*$ to $E_i$.
    Thus, $(\graph(f), E_\bullet, F_\bullet) \in \openincGX_w$ if and only if
    \begin{equation} \label{eq:rank-nullity-equivalence}
        \rank (\graph(f) \cap (F_j \oplus {\CC^n}^*) \hookrightarrow {\CC^n} \oplus {\CC^n}^* \twoheadrightarrow E_i^*) = \rank w_{[i][j]}
    \end{equation}
    for $i,j \in [n]$. Since $\dim(\graph(f) \cap (F_j \oplus {\CC^n}^*)) = j$ because $f$ is invertible, \eqref{eq:rank-nullity-equivalence} is equivalent by the rank-nullity theorem to $\dim(\graph(f) \cap (F_j \oplus \ann{E_i})) = j-\rank w_{[i][j]}$. If we enforce these rank conditions only for $(i,j) \in \Ess(D(w))$, we get the set $B_w \cap (\grGr \times \Fl(n) \times \Fl(n))$, and Lemma~\ref{lem:GX-description} says that the closure of this set is $\incGX_w$.

    However, to show that $\overline{B_w \cap (\grGr \times \Fl(n) \times \Fl(n))} = \overline{B}_w$, we need to know that $B_w$ has no components in the complement of $\grGr \times \Fl(n) \times \Fl(n)$. Since $\grGr$ is dense in $\Gr(n,2n)$, it suffices to show that $B_w$ is irreducible. Given that $w$ is vexillary, we can write $\Ess(D(w)) = \{(i_1,j_1) \SWNEneq \cdots \SWNEneq (i_s,j_s)\}$ by Lemma~\ref{lem:vex-chain}.  We then have
    \begin{equation} \label{eq:GX-flag}
        F_{j_1} \oplus \ann{E_{i_1}}  \subseteq \cdots \subseteq F_{j_s} \oplus \ann{E_{i_s}}.
    \end{equation}
    Lemma~\ref{lem:fiber-bundle} implies that $B_w$ is a fiber bundle over $\Fl(n) \times \Fl(n)$. For fixed $(E_\bullet, F_\bullet) \in \Fl(n) \times \Fl(n)$, the closure of the fiber
    \begin{equation*}
        B_w(E_\bullet, F_\bullet) = \{U  : \text{$\dim(U \cap (F_{j_p} \oplus \ann E_{i_p})) = j_p - \rank w_{[i_p][j_p]}$ for $p \in [s]$}\}
    \end{equation*}
    is a Schubert variety in $\Gr(n,2n)$ with respect to the partial flag \eqref{eq:GX-flag}. Since Schubert varieties are irreducible, so too is each fiber $B_w(E_\bullet, F_\bullet)$ and therefore $B_w$ as well.
\end{proof}

Suppose $w$ is vexillary and write $\Ess(D(w)) = \{(i_1, j_1) \SWNEneq \cdots \SWNEneq (i_s, j_s)\}$. Lemma~\ref{lem:grassmannian-locus} shows that $\incGX_w$ is an example of a Grassmannian degeneracy locus as described in \S \ref{subsec:degeneracy-loci}. Specifically, $\incGX_w$ is the closure of the locus of points in $X = \Gr(n,2n) \times \Fl(n) \times \Fl(n)$ over which $\dim(\vb G \cap \vb H_{\mu_p}) = k_p$ for $p \in [s]$, where:
\begin{itemize}
    \item[$\scriptstyle \blacktriangleright$] $\vb V$ is the trivial bundle ${\CC^n} \oplus {\CC^n}^*$ over the first factor of $X$;
    \item[$\scriptstyle \blacktriangleright$] $\vb G$ is the tautological bundle over the first factor of $X$;
    \item[$\scriptstyle \blacktriangleright$] $\vb{H}_{\mu_p} = \vb{F}_{j_p} \oplus \ann{\vb{E}}_{i_p}$, where $\vb E_{\bullet}$ and $\vb F_{\bullet}$ are the tautological flags over the second and third factors of $X$, so $\mu_p = 2n-\rank(\vb F_{j_p} \oplus \ann{\vb{E}}_{i_p}) = n+i_p-j_p$ for $p \in [s]$;
    \item[$\scriptstyle \blacktriangleright$] $k_p = j_p - \rank w_{[i_p][j_p]}$ for $p \in [s]$, and $k_0 = 0$;
    \item[$\scriptstyle \blacktriangleright$] $\lambda_{k} = i_p - \rank w_{[i_p][j_p]}$ for $k \in [k_s]$, where $p$ is such that $k_{p-1} < k \leq k_p$.
\end{itemize}
Using this data and the setup of Theorem~\ref{thm:kempf-laksov}, we have
\begin{equation} \label{eq:GX-chern-class}
    c(k) = \frac{c(\vb V)}{c(\vb G)c(\vb{F}_{j_p} \oplus \ann{\vb{E}}_{i_p})} = \frac{1}{c(\vb G)} \frac{(1+x_1) \cdots (1+x_{i_p})}{(1-y_1)\cdots (1-y_{j_p})},
\end{equation}
where $k_{p-1} < k \leq k_p$. Here, we have used the isomorphism $\ann E \simeq (\CC^n/E)^*$ to compute $c(\ann{\vb{E}}_{i_p}) = c((\CC^n/\vb E_{i_p})^*) = 1/c(\vb E_{i_p}^*) = \frac{1}{(1+x_1)\cdots (1+x_{i_p})}$. Under the identifications of \S\ref{subsec:cohom}, $1/c(\vb G)$ is represented by $\sum_d h_d(x_- \superslash y_-)$, so we will think of $c(k)_d$ as the power series $h_d(x_{-\infty \cdots i_p} \superslash y_{-\infty \cdots j_p})$. Set $c'(k) = c(k)\ev_{c(\vb G)\to 1}$, so $c'(k_p)_d = h_d(x_{1\cdots i_p} \superslash y_{1\cdots j_p})$.

\begin{lem} \label{lem:vex-double-schubert-reps} 
Fix a vexillary $w$ and $\graph(f) \in \grGr$. Then $[\incGX_w]$ is represented by $\det(c(k)_{\lambda_k+t-k})_{k,t \in [\ell(\lambda)]}$, and $[\incGX_w(\graph(f))]$ is represented by $\det(c'(k)_{\lambda_k+t-k})_{k,t \in [\ell(\lambda)]}$, where $\lambda = \sh(w)$. \end{lem}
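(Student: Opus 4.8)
The plan is to apply Theorem~\ref{thm:kempf-laksov} to the Grassmannian degeneracy locus presentation of $\incGX_w$ furnished by Lemma~\ref{lem:grassmannian-locus}, using exactly the data $(\vb V, \vb G, \vb H_{\mu_p}, k_p, \lambda_k)$ tabulated in the list preceding the statement. The only thing that must be checked before Theorem~\ref{thm:kempf-laksov} can be invoked is that its two hypotheses hold: that $\lambda = (\lambda_1, \lambda_2, \dots)$ is a genuine partition, and that $\incGX_w$ has codimension $|\lambda|$.

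Both hypotheses follow from the combinatorial lemmas of \S\ref{subsec:vex}. For the partition claim, one observes that $0 = k_0 < k_1 < \cdots < k_s$ and that $\lambda_k = \sh(w)_k$ for every $k \in [k_s]$; this is precisely the content of Lemma~\ref{lem:vex-shape}, with the strict increase of the $k_p$ along the essential chain being implicit there (and in any case a consequence of Lemma~\ref{lem:code-ks}, which identifies $\{k_1, \dots, k_s\}$ with the set of nonzero entries of the code $c(w)$). In particular $\lambda = \sh(w)$, which is a partition by definition. For the codimension claim, $\incGX_w$ has codimension $\ell(w)$ by Lemma~\ref{lem:GX-description}(a), while $|\lambda| = |\sh(w)| = \sum_i c_i(w) = \ell(w)$, so the two agree. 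Theorem~\ref{thm:kempf-laksov} now yields $[\incGX_w] = \det(c(k)_{\lambda_k + t - k})_{k,t \in [\ell(\lambda)]}$, where $c(k)$ is the quotient of total Chern classes computed in \eqref{eq:GX-chern-class}. This is the first assertion.

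For the second assertion, I would deduce it from the first by restricting to the fiber over $\graph(f)$. By Lemma~\ref{lem:GX-description}(d) the intersection $\incGX_w(\graph(f)) = \incGX_w \cap (\{\graph(f)\} \times \Fl(n) \times \Fl(n))$ is transverse, so pulling back along the inclusion $\iota \colon \{\graph(f)\} \times \Fl(n) \times \Fl(n) \hookrightarrow \Gr(n,2n) \times \Fl(n) \times \Fl(n)$ sends $[\incGX_w]$ to $[\incGX_w(\graph(f))]$. Since $\iota^* \vb G$ is the trivial bundle with fiber $\graph(f)$, we have $\iota^* c(\vb G) = 1$, so $\iota^*$ carries $c(k)$ to $c'(k) = c(k)\ev_{c(\vb G) \to 1}$, and hence $[\incGX_w(\graph(f))] = \det(c'(k)_{\lambda_k + t - k})_{k,t \in [\ell(\lambda)]}$. (One could equally rerun the degeneracy-locus argument directly over $\Fl(n) \times \Fl(n)$ with $\vb V = {\CC^n} \oplus {\CC^n}^*$ and $\vb G = \graph(f)$ both trivial, invoking Lemma~\ref{lem:GX-description}(d) once more for the codimension hypothesis.)

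I do not anticipate a real obstacle: the geometric work---realizing $\incGX_w$ as a Grassmannian degeneracy locus and computing the relevant Chern-class quotients---has already been carried out in the discussion leading up to the statement. The only delicate point is the bookkeeping that verifies the Kempf--Laksov hypotheses, in particular that the $k_p$ strictly increase along the essential chain and that the resulting $\lambda$ is exactly $\sh(w)$; this is where Lemmas~\ref{lem:code-ks} and~\ref{lem:vex-shape} are doing the essential work.
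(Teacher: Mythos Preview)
Your proposal is correct and follows essentially the same route as the paper's proof: identify $\lambda$ with $\sh(w)$ via Lemma~\ref{lem:vex-shape}, verify the codimension hypothesis using Lemma~\ref{lem:GX-description}(a), apply Theorem~\ref{thm:kempf-laksov}, and then pull back along the inclusion of $\{\graph(f)\}\times\Fl(n)\times\Fl(n)$ using the transversality of Lemma~\ref{lem:GX-description}(d). Your explicit mention of the strict increase of the $k_p$ (via Lemma~\ref{lem:code-ks}) is a small point the paper leaves implicit, but otherwise the arguments match.
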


\begin{proof}
    By Lemma~\ref{lem:vex-shape}, the partition $\lambda$ associated to $w$ above is the same as $\sh(w)$. This implies that $\incGX_w$ has the expected codimension $|\lambda|$, given that $|\lambda| = |\sh(w)| = |D(w)| = \ell(w)$ and $\codim \incGX_w = \ell(w)$ by Lemma~\ref{lem:GX-description}(a). As $\incGX_w$ is a Grassmannian degeneracy locus $\Omega_{\lambda}$ with respect to the data described above, $[\incGX_w] = \det(c(k)_{\lambda_k+t-k})_{k,t \in [\ell(\lambda)]}$ by Theorem~\ref{thm:kempf-laksov}.
     
     Let  $i : \Fl(n) \times \Fl(n) \hookrightarrow \Gr(n,2n) \times \Fl(n) \times \Fl(n)$ be the inclusion $(E_\bullet,F_\bullet) \mapsto (\graph(f), E_\bullet, F_\bullet)$. The pullback $i^*$ on cohomology sends $1 \otimes b \otimes c$ to $b \otimes c$ and $a \otimes b \otimes c$ to $0$ when $a$ has positive degree, so $i^* c(k) = c'(k)$. Now $[\incGX_w(\graph(f))] = [i^{-1}(\incGX_w)] = i^*[\incGX_w] = \det(c'(k)_{\lambda_k+t-k})_{k,t \in [\ell(\lambda)]}$, where the second equality holds by the transversality from Lemma~\ref{lem:GX-description}(d).
\end{proof}

\begin{thm} \label{thm:vex-double-schubert-polys}  Let $w$ be vexillary, with $c(k)$, $c'(k)$, and $\lambda$ defined as above. As polynomials in $x$ and $y$, $\fkS_{w}(x; -y)$ equals $\det(c'(k)_{\lambda_k+t-k})_{k,t \in [\ell(\lambda)]}$  and $\bS_{w}(x; -y)$ equals $\det(c(k)_{\lambda_k+t-k})_{k,t \in [\ell(\lambda)]}$. \end{thm}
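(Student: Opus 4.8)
The plan is to deduce both identities from the geometric facts already in hand. Set $D_w \eqdef \det\big(c'(k)_{\lambda_k+t-k}\big)_{k,t\in[\ell(\lambda)]}$ with $\lambda=\sh(w)$; this is an honest polynomial in $x_+$ and $y_+$. By Lemma~\ref{lem:double-schubert-rep} and Lemma~\ref{lem:vex-double-schubert-reps}, both $\fkS_w(x;-y)$ and $D_w$ represent the class $[\incGX_w(\graph(f))]\in H^*(\Fl(n)\times\Fl(n))$. To upgrade this to an equality of polynomials I would argue stably, replacing $w$ by $w\times 1_m$ and $n$ by $n+m$. Two stability facts are needed. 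Combinatorially, $D(w\times 1_m)=D(w)$, so $\Ess(D(w\times 1_m))$, the essential-set chain $(i_p,j_p)$, the integers $k_p$, the shape $\lambda$, and hence $D_w$ are all unchanged (and $w\times 1_m$ is again vexillary). On the polynomial side, $\fkS_{w\times 1_m}(x;y)=\fkS_w(x;y)$, which follows from the stability $\fkS_{u\times 1}=\fkS_u$ of single Schubert polynomials together with the expansion $\fkS_w(x;y)=\sum_{uv\doteq w}(-1)^{\ell(u)}\fkS_{u^{-1}}(y)\fkS_v(x)$, since in any reduced factorization of $w\times 1_m$ both factors fix the last $m$ points. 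Granting these, $\fkS_w(x;-y)$ and $D_w$ are fixed elements of $\ZZ[x_+,y_+]$ representing $[\incGX_{w\times 1_m}(\graph(f))]$ for every $m$; expanding their difference in the basis $\{\fkS_u(x)\fkS_v(-y):u,v\in S_\infty\}$ of $\ZZ[x_+,y_+]$ and using linear independence of the corresponding Schubert classes in $H^*(\Fl(n+m)\times\Fl(n+m))$ for $m$ large --- the evident product analogue of Lemma~\ref{lem:unique-stable-rep}, proved identically --- forces the difference to vanish. This gives the first identity $\fkS_w(x;-y)=D_w$.

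For the back-stable identity I would feed the first identity into the limit defining $\bS_w$ by applying it to the vexillary permutations $1_m\times w$. Here $D(1_m\times w)$ is $D(w)$ translated by $(m,m)$, so its essential set is $\{(i_p+m,j_p+m):p\in[s]\}$; a short computation with the block structure of $1_m\times w$ gives $\rank(1_m\times w)_{[i_p+m][j_p+m]}=m+\rank w_{[i_p][j_p]}$, so the integers $k_p$ and the shape $\lambda=\sh(w)$ are unchanged while the Chern-class entries become $h_d\big(x_{1\cdots i_p+m}\superslash y_{1\cdots j_p+m}\big)$. The first identity then reads
\[
\fkS_{1_m\times w}(x;-y)=\det\Big(h_{\lambda_k+t-k}\big(x_{1\cdots i_p+m}\superslash y_{1\cdots j_p+m}\big)\Big)_{k,t\in[\ell(\lambda)]},
\]
with $p$ determined by $k_{p-1}<k\le k_p$. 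Applying the variable shift of the back-stable convention (the $1_m$-block variables slide to negative indices while the $w$-block variables stay $x_1,\dots,x_n$ and $y_1,\dots,y_n$) and letting $m\to\infty$, the left side becomes $\bS_w(x;-y)$ by definition, while each entry on the right becomes $h_d\big(x_{-m\cdots i_p}\superslash y_{-m\cdots j_p}\big)$, which stabilizes coefficientwise to $h_d\big(x_{-\infty\cdots i_p}\superslash y_{-\infty\cdots j_p}\big)=c(k)_d$. Since the determinant is a finite signed sum of products of its entries, it stabilizes to $\det\big(c(k)_{\lambda_k+t-k}\big)$, yielding the second identity.

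The only conceptual point is the passage from ``representing the same cohomology class'' to ``being the same polynomial,'' which is handled by the stable-uniqueness argument. Everything else is bookkeeping: confirming that $w\mapsto w\times 1_m$ leaves $\Ess(D(w))$, the $k_p$, and $\lambda$ genuinely untouched (respectively, merely translates the essential set), checking the stability of the double Schubert polynomial, and tracking the variable conventions through the limit in the second step. I expect the last of these --- keeping the index shifts and the sign on $y$ straight --- to be the likeliest place for a slip, but none of it requires ideas beyond those already present in the excerpt.
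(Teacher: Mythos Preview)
Your argument is correct. The second identity is handled exactly as in the paper: apply the first identity to $1_m\times w$, shift variables, and pass to the limit entrywise in the determinant.

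The first identity is where you genuinely diverge. The paper does not prove it at all; it simply cites Fulton \cite[Proposition~9.6(f)]{fulton-double-schubert} (with a secondary pointer to Wachs) for the determinantal formula $\fkS_w(x;-y)=\det(c'(k)_{\lambda_k+t-k})$. You instead argue internally: Lemma~\ref{lem:double-schubert-rep} and Lemma~\ref{lem:vex-double-schubert-reps} show both polynomials represent $[\incGX_w(\graph(f))]$, the passage $w\mapsto w\times 1_m$ leaves both polynomials unchanged, and then a two-variable version of Lemma~\ref{lem:unique-stable-rep} forces equality. This is a valid and self-contained route that avoids the external citation, at the cost of one extra stability check (for $\fkS_w(x;y)$ under $w\mapsto w\times 1_m$) and the product-of-flag-varieties upgrade of Lemma~\ref{lem:unique-stable-rep}, both of which are routine. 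The paper's approach is shorter because Fulton's formula is classical; yours has the virtue of making the whole argument rest on the degeneracy-locus machinery already set up in \S\ref{subsec:back-stable-reps}, without appealing to a separate combinatorial identity.
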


\begin{proof} The equality $\fkS_{w}(x; -y) = \det(c'(k)_{\lambda_k+t-k})_{k,t \in [\ell(\lambda)]}$ is \cite[Proposition 9.6(f)]{fulton-double-schubert}; see also \cite{wachs-schubert}. The permutation $1_m \times w$ is vexillary for any $m \in \NN$, with
    \begin{equation*}
        \Ess(D(1_m \times w)) = \{(i+m,j+m) : (i,j) \in \Ess(D(w))\},
    \end{equation*}
    while the partition $\lambda$ and sequence $k_{\bullet}$ are independent of $m$. Lemma~\ref{lem:vex-double-schubert-reps} therefore shows that $[\incGX_{1_m \times w}(\graph(f))]$ is represented by $\fkS_{1_m \times w}(x; -y) = \det(c'(k)_{\lambda_k+t-k})_{k,t \in [\ell(\lambda)]}$, where if $k_{p-1} < k \leq k_p$ then $c'(k) = \frac{(1+x_1) \cdots (1+x_{m+i_p})}{(1-y_1)\cdots (1-y_{m+j_p})}$. Now let $c''(k)$ be $c'(k)$ with the alphabets $x_{1 \cdots \infty}, y_{1 \cdots \infty}$ replaced by $x_{-m\cdots \infty}, y_{-m\cdots \infty}$, so that
    \begin{equation*}
        \bS_{w}(x;-y) = \lim_{m \to \infty} \fkS_{1_m \times w}(x_{-m\cdots n}; -y_{-m \cdots n}) = \det\left(\lim_{m \to \infty} c''(k)_{\lambda_k+t-k}\right)_{k,t \in [\ell(\lambda)]}.
    \end{equation*}
    This is the determinantal formula for $\bS_w(x;-y)$ claimed in the theorem, since
    \begin{align*}
        \lim_{m \to \infty} c''(k) &= \lim_{m \to \infty} \frac{(1+x_{-m}) \cdots (1+x_{-1})}{(1-y_{-m})\cdots (1-y_{-1})} \frac{(1+x_{1}) \cdots (1+x_{i_p})}{(1-y_{1})\cdots (1-y_{j_p})}\\
        &= \left( \sum_{d=0}^{\infty} h_d(x_- \superslash y_-) \right) \frac{(1+x_{1}) \cdots (1+x_{i_p})}{(1-y_{1})\cdots (1-y_{j_p})}\\
        &= c(k). \qedhere
    \end{align*}
\end{proof}

    Lemma~\ref{lem:vex-double-schubert-reps} and Theorem~\ref{thm:vex-double-schubert-polys} imply that $\bS_w(x;-y)$ represents $[\incGX_w]$ for vexillary $w$, but we want to show this for all $w$. Since $w = w_0 = n(n-1)\cdots 21$ is vexillary, the general result would follow if we knew that the classes $[\incGX_w]$ satisfied the same divided difference recurrence as $\bS_w(x;-y)$ (Proposition~\ref{prop:schubert-recurrence}). To that end, we recall a geometric interpretation of the divided difference operator $\partial_i$; see \cite[\S 10]{youngtableaux}.
    
    Let $\Fl_i(n)$ be the variety of partial flags
    \begin{equation*}
        E_1 \subseteq \cdots \subseteq E_{i-1} \subseteq E_{i+1} \subseteq \cdots \subseteq E_n = {\CC^n}
    \end{equation*}
    where $\dim E_p = p$. Set $X = \Fl(n)$, and let $\tilde{X}$ be the fiber product $\Fl(n) \times_{\Fl_i(n)} \Fl(n)$ with respect to the projection $\Fl(n) \to \Fl_i(n)$, so
    \begin{equation*}
        \tilde{X} = \{(E_\bullet, E_\bullet') \in \Fl(n) \times \Fl(n) : \text{$E_p = E_p'$ for $p \neq i$}\}.
    \end{equation*}
    Let $p_1, p_2 : \tilde{X} \to X$ be the projections onto the two coordinates of $\tilde{X}$.  Under the Borel isomorphism \eqref{eq:borel-isomorphism}, ${p_2}_* p_1^* : H^*(\Fl(n)) \to H^*(\Fl(n))$ corresponds to $\partial_i$. We therefore also write $\partial_i$ for the operator ${p_2}_* p_1^*$. Suppose $Y \subseteq \Fl(n)$ is a closed subvariety. Then
    \begin{equation} \label{eq:divided-difference-cases}
        \partial_i [Y] = \begin{cases}
            0 & \text{if $\dim p_2(p_1^{-1}(Y)) < 1+\dim Y$}\\
            d[p_2(p_1^{-1}(Y))] & \parbox{6.5cm}{if $p_1^{-1}(Y) \xrightarrow{p_2} p_2(p_1^{-1}(Y))$ is $d$-to-$1$ over a dense open subset of $p_2(p_1^{-1}(Y))$}
        \end{cases}
    \end{equation}

Now set $X = \Gr(n,2n) \times \Fl(n) \times \Fl(n)$ instead, let $\tilde{X}$ be the fiber product $X \times_{\Gr(n,2n) \times \Fl_i(n) \times \Fl(n)} X$, and let $P_1, P_2 : \tilde{X} \to X$ be the two canonical projections. Then ${P_2}_* P_1^*$ acts on $H^*(\Gr(n,2n) \times \Fl(n) \times \Fl(n))$ as $\mathrm{id} \otimes \partial_i \otimes \mathrm{id}$.

\begin{lem} \label{lem:divided-diff-classes} For $w \in S_n$,
    \begin{equation*}
        \partial_i [\incGX_w] = \begin{cases}
            [\incGX_{ws_i}] & \text{if $\ell(ws_i) < \ell(w)$}\\
            0 & \text{otherwise}
        \end{cases}
    \end{equation*}
\end{lem}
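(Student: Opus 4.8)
The plan is to use the geometric interpretation of the divided difference operator $\partial_i = {P_2}_* P_1^*$ recalled in \eqref{eq:divided-difference-cases}, applied to the subvariety $Y = \incGX_w \subseteq X = \Gr(n,2n) \times \Fl(n) \times \Fl(n)$. By Corollary~\ref{cor:GX-fiber-bundle}, $\incGX_w$ is a fiber bundle over $\Gr(n,2n) \times \Fl(n)$ (forgetting the third factor), so everything reduces, fiber by fiber, to the classical computation of $\partial_i [\X_w]$ for Schubert varieties in $\Fl(n)$: first I would observe that $P_1^{-1}(\incGX_w)$, $P_2(P_1^{-1}(\incGX_w))$, and the map between them are all obtained from the analogous objects for $\overline{\openincGX_w(\graph(f))} = \incGX_w(\graph(f))$ by the same local trivializations (the point being that the fiber product $\tilde X$ is taken over a product that only modifies the $\Fl(n)$ factor, and the $\Gr(n,2n) \times \Fl(n)$-bundle structure is unaffected).

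Concretely, the second factor of $\incGX_w$ (the $E_\bullet$ flag) appears inside the essential-set rank conditions of Lemma~\ref{lem:GX-description}(b) as the Schubert-variety coordinate, so over a fixed $(\graph(f), F_\bullet)$ the fiber $\incGX_w(\graph(f), F_\bullet)$ is a Schubert variety in the $E_\bullet$-flag variety with respect to the flag determined by $F_\bullet$ and $f$, as in Example~\ref{ex:GX-schubert-fiber}. The next step is therefore to invoke the classical fact (see \cite[\S10]{youngtableaux}): for a Schubert variety $\X_u \subseteq \Fl(n)$, the projection $p_1^{-1}(\X_u) \to p_2(p_1^{-1}(\X_u))$ is birational onto $\X_{us_i}$ when $\ell(us_i) < \ell(u)$, and $p_2(p_1^{-1}(\X_u)) = \X_u$ with $p_2$ generically $2$-to-$1$ (hence $\partial_i[\X_u] = 0$) when $\ell(us_i) > \ell(u)$. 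Translating through the fiber-bundle structure, in the first case $P_2(P_1^{-1}(\incGX_w))$ is the fiber bundle over $\Gr(n,2n) \times \Fl(n)$ whose fibers are the $\X_{us_i}$'s — and I must check this bundle is exactly $\incGX_{ws_i}$. This is where one uses that the rank-condition description is equivariant: the relevant $u$ in the $E_\bullet$-direction is essentially $w$ (cf. the ``$X_w$ with respect to $F_\bullet^0$'' fiber in Example~\ref{ex:GX-schubert-fiber}), and replacing $w$ by $ws_i$ in the fiber corresponds to replacing $\incGX_w$ by $\incGX_{ws_i}$ because $\partial_i$ acts on the $E_\bullet$-slot (the middle factor).

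I would organize the write-up as: (i) reduce to the fiber over a fixed point of $\Gr(n,2n) \times \Fl(n)$ using Corollary~\ref{cor:GX-fiber-bundle} and the fact from the proof of Lemma~\ref{lem:fiber-bundle} that one trivialization serves simultaneously for $\incGX_w$ and for its preimage/image under $P_1, P_2$; (ii) identify that fiber with a Schubert variety $\X_w$ (up to the appropriate reference flag) via Lemma~\ref{lem:GX-description}(b,c) and Example~\ref{ex:GX-schubert-fiber}; (iii) apply the known computation of $\partial_i[\X_w]$ and \eqref{eq:divided-difference-cases}; (iv) reassemble: $\partial_i[\incGX_w] = [\incGX_{ws_i}]$ when $\ell(ws_i)<\ell(w)$, and $=0$ otherwise (in the latter case $p_2$ being generically $2$-to-$1$ gives coefficient $2$ but the dimension count $\dim p_2(p_1^{-1}(Y)) = \dim Y < 1 + \dim Y$ forces $\partial_i [\incGX_w]=0$ in $H^*$ by the first case of \eqref{eq:divided-difference-cases}). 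The main obstacle I anticipate is step (iv)'s identification of the bundle $P_2(P_1^{-1}(\incGX_w))$ with $\incGX_{ws_i}$: one needs to track carefully which flag variety $\partial_i$ acts on and verify that the operation ``$w \rightsquigarrow ws_i$'' in Example~\ref{ex:GX-schubert-fiber}'s fiber (a Schubert variety with respect to the $F_\bullet^0$ flag, indexed by $w$ itself, not $w^{-1}$) matches the combinatorial side of the recurrence in Proposition~\ref{prop:schubert-recurrence}; all the genuinely hard content — transversality, irreducibility, the birationality/$2$-to-$1$ dichotomy — is either already in Lemma~\ref{lem:GX-description} or is the classical Schubert-variety statement, so the remaining work is purely bookkeeping of conventions.
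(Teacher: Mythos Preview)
This is essentially the paper's proof: use Lemma~\ref{lem:fiber-bundle} to view $P_1^{-1}(\openincGX_w)$ and $P_2(P_1^{-1}(\openincGX_w))$ as bundles over $\grGr \times \Fl(n)$ (the $F_\bullet$ copy---you want the fiber to lie in the $E_\bullet$-direction where $\partial_i$ acts, so project onto the first and \emph{third} factors, contrary to your first paragraph but consistent with your second), identify each fiber with $p_1^{-1}(X_w)$ and $p_2(p_1^{-1}(X_w))$, and invoke \cite[\S 10.3, Lemma~8]{youngtableaux} together with \eqref{eq:divided-difference-cases}. One small slip: in the $\ell(ws_i)>\ell(w)$ case $p_2$ is not generically $2$-to-$1$ but has $\mathbb{P}^1$ fibers; nonetheless, as you note, the dimension clause of \eqref{eq:divided-difference-cases} already forces $\partial_i[\incGX_w]=0$.
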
        

\begin{proof}
Lemma~\ref{lem:fiber-bundle} implies that $P_1^{-1}(\openincGX_w)$ and $P_2(P_1^{-1}(\openincGX_w))$ are fiber bundles over $\grGr \times \Fl(n)$, with fibers $p_1^{-1}(X_w)$ and $p_2(p_1^{-1}(X_w))$. The map $P_2 : P_1^{-1}(\openincGX_w) \to P_2(P_1^{-1}(\openincGX_w))$ respects fibers, i.e. commutes with the two bundle projections to $\grGr \times \Fl(n)$. Moreover, after choosing an appropriate trivialization of these two bundles over an open set $U \subseteq \grGr \times \Fl(n)$ (as described in Lemma~\ref{lem:fiber-bundle}, for instance), this map $P_2$ gets identified with $\id \times p_2 : U \times p_1^{-1}(X_w) \to U \times p_2(p_1^{-1}(X_w))$. 

If $\ell(ws_i) > \ell(w)$, then $\dim p_2(p_1^{-1}(X_w)) \leq \dim X_w$, while if $\ell(ws_i) < \ell(w)$, then $p_2$ is one-to-one from a dense subset of $p_1^{-1}(X_w)$ onto a dense subset of $X_{ws_i}$ \cite[\S 10.3, Lemma 8]{youngtableaux}. This plus the previous paragraph implies that if $\ell(ws_i) > \ell(w)$, then $\dim P_2(P_1^{-1}(\openincGX_w)) \leq \dim \openincGX_w$, while if $\ell(ws_i) < \ell(w)$, then $P_2$ is one-to-one from a dense subset of $P_1^{-1}(\openincGX_w)$ onto a dense subset of $\openincGX_{ws_i}$. Now apply \eqref{eq:divided-difference-cases}.
\end{proof}

\begin{thm} \label{thm:back-stable-rep}
    For $w \in S_n$, the back-stable double Schubert polynomial $\bS_w(x;-y)$ represents the class $[\incGX_w]$.
\end{thm}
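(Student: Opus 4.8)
The plan is a descending induction on $\ell(w)$, anchored at the reverse permutation $w_0 = n(n-1)\cdots 21$, for which the statement has already been established, and propagated downward using that the classes $[\incGX_w]$ and the polynomials $\bS_w(x;-y)$ obey matching divided-difference recurrences. For the base case, note that $w_0$ is decreasing, so every subsequence of $w_0$ is decreasing; in particular $w_0$ contains no $2143$ pattern and is vexillary. Lemma~\ref{lem:vex-double-schubert-reps} together with Theorem~\ref{thm:vex-double-schubert-polys} then shows that $[\incGX_{w_0}]$ is represented by $\det(c(k)_{\lambda_k+t-k})_{k,t \in [\ell(\lambda)]} = \bS_{w_0}(x;-y)$.

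For the inductive step, let $w \in S_n$ with $\ell(w) < \binom{n}{2}$. Then $w$ has an ascent, so there is $i \in [n-1]$ with $w(i) < w(i+1)$; setting $w' \eqdef w s_i$ we have $\ell(w') = \ell(w)+1$, and by the inductive hypothesis $\bS_{w'}(x;-y)$ represents $[\incGX_{w'}]$. Apply the divided-difference operator $\partial_i$. Since $i \geq 1$, $\partial_i$ acts only on the positive-index $x$-variables, hence within the $\ZZ[x_+]$ tensor factor of $\supersymm \otimes \ZZ[x_+] \otimes \ZZ[y_+]$, and it treats the $y$-variables as scalars, so the relabeling $y \mapsto -y$ is immaterial; thus Proposition~\ref{prop:schubert-recurrence} gives $\partial_i \bS_{w'}(x;-y) = \bS_{w's_i}(x;-y) = \bS_w(x;-y)$, using $\ell(w's_i) = \ell(w) < \ell(w')$. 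On the geometric side, the operator ${P_2}_* P_1^*$ on $H^*(\Gr(n,2n) \times \Fl(n) \times \Fl(n))$ acts as $\mathrm{id} \otimes \partial_i \otimes \mathrm{id}$ and is intertwined, through the representing ring homomorphism $\supersymm \otimes \ZZ[x_+] \otimes \ZZ[y_+] \to H^*(\Gr(n,2n) \times \Fl(n) \times \Fl(n))$, with the algebraic $\partial_i$; and Lemma~\ref{lem:divided-diff-classes} gives $\partial_i [\incGX_{w'}] = [\incGX_{w's_i}] = [\incGX_w]$. Combining, $\partial_i \bS_{w'}(x;-y) = \bS_w(x;-y)$ represents $\partial_i [\incGX_{w'}] = [\incGX_w]$, completing the induction and hence the proof.

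The substance of the argument is entirely contained in the vexillary case (Lemma~\ref{lem:vex-double-schubert-reps} and Theorem~\ref{thm:vex-double-schubert-polys}) and in the geometric divided-difference recurrence (Lemma~\ref{lem:divided-diff-classes}), so the only real obstacle — and the point I would verify most carefully — is the compatibility bookkeeping for $\partial_i$: that $\partial_i$ acts only on positive-index $x$-variables (hence commutes with the supersymmetric functions in $x_-, y_-$ and with the substitution $y \mapsto -y$), and that the algebraic $\partial_i$ on representatives corresponds to $\mathrm{id} \otimes \partial_i \otimes \mathrm{id}$ on cohomology. Both facts follow from the setup of \S\ref{subsec:back-stable-reps} — in particular the identification of ${P_2}_* P_1^*$ with $\mathrm{id} \otimes \partial_i \otimes \mathrm{id}$ recorded just before Lemma~\ref{lem:divided-diff-classes} — together with the Borel presentation \eqref{eq:borel-isomorphism}.
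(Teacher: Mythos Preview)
Your proof is correct and follows essentially the same approach as the paper: establish the base case $w = w_0$ via the vexillary formulas (Lemma~\ref{lem:vex-double-schubert-reps} and Theorem~\ref{thm:vex-double-schubert-polys}), then descend to arbitrary $w$ using the matching divided-difference recurrences of Proposition~\ref{prop:schubert-recurrence} and Lemma~\ref{lem:divided-diff-classes}. Your added discussion of the compatibility of the algebraic $\partial_i$ with the representing map and with the substitution $y \mapsto -y$ is a welcome elaboration of details the paper leaves implicit.
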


\begin{proof}
    The theorem holds for $w = w_0$ by Lemma~\ref{lem:vex-double-schubert-reps} and Theorem~\ref{thm:vex-double-schubert-polys}, as $w_0$ is vexillary. Therefore it holds for all $w$, since $\bS_{w}(x;-y)$ and $[\incGX_{w}]$ satisfy the same recurrence with base case $w = w_0$, by Proposition~\ref{prop:schubert-recurrence} and Lemma~\ref{lem:divided-diff-classes} respectively.
\end{proof}

\section{Preliminaries on involutions}
\label{sec:inv}

\subsection{Involution Schubert polynomials} \label{subsec:inv-schubert}
It will be convenient to write $\I^{\O}_n = \I_n$ and $\I^{\Sp}_n = \Ifpf_n$. Writing $K$ for either $\O$ or $\Sp$, the Bruhat-minimal element of $\IK_n$ is denoted $\idK_n$, so $\idO_n = 1_n$ and $\idfpf_{2r} = (1,2)(3,4)\cdots(2r-1,2r)$.

\begin{defn}[\cite{humphreysreflgroups}, Theorem 7.1]
    The \emph{Demazure product} $\circ$ is the unique associative product on $S_n$ satisfying
    \begin{equation*}
        w \circ s_i = \begin{cases}
            ws_i & \text{if $\ell(w) < \ell(ws_i)$}\\
            w & \text{if $\ell(w) > \ell(ws_i)$}
        \end{cases}
    \end{equation*}
    for any $w \in S_n$ and simple transposition $s_i = (i,i+1)$.
\end{defn}

It is not hard to check that $w^{-1} \circ z \circ  w$ is an involution whenever $z$ is, and that for any $z \in \IK_n$, there exists $w \in S_n$ such that $z = w^{-1} \circ \idK_n \circ w$.
\begin{defn}
    The set of \emph{atoms} of $z \in \IK_n$ is
    \begin{equation*}
        \AK_n \eqdef \{ w \in S_n : \text{$w^{-1} \circ \idK_n \circ w = z$, $\ell(w)$ minimal} \}.
    \end{equation*}
    A \emph{reduced involution word} of $z \in \IK_n$ is a word $a \in \Red(w)$ for some $w \in \AK(z)$. Equivalently, $a$ is a minimal-length word $a_1 \cdots a_\ell$ such that
    \begin{equation*}
        z = s_{a_\ell} \circ \cdots \circ s_{a_1} \circ \idK_n \circ s_{a_1} \circ \cdots \circ s_{a_\ell}.
    \end{equation*}
\end{defn}
Let $\iRK(z)$ be the set of reduced involution words of $z \in \IK_n$, so $\iRK(z) = \bigsqcup_{w \in \AK(z)} \Red(w)$. Let $\iellK(z)$ be the length of any word in $\iRK(z)$.

\begin{ex}
    \begin{align*}
        &(1,3) = s_1 \circ s_2 \circ 1 \circ s_2 \circ s_1 = s_1 \circ s_2 \circ s_1 = s_1 s_2 s_1\\
        &(1,3) = s_2 \circ s_1 \circ 1 \circ s_1 \circ s_2 = s_2 \circ s_1 \circ s_2 = s_2 s_1 s_2,
    \end{align*}
    and $\iR((1,3)) = \{{\bf 12}, {\bf 21}\}$ and $\A((1,3)) = \{231, 312\}$. As a fixed-point-free example,
    \begin{align*}
        &(1,4)(2,3) = s_3 \circ s_2 \circ (1,2)(3,4) \circ s_2 \circ s_3 = s_3 \circ (1,3)(2,4) \circ s_3\\
        &(1,4)(2,3) = s_1 \circ s_2 \circ (1,2)(3,4) \circ s_2 \circ s_1 = s_1 \circ (1,3)(2,4) \circ s_1,
    \end{align*}
    and $\iRfpf((1,4)(2,3)) = \{{\bf 23}, {\bf 21}\}$ and $\Afpf((1,4)(2,3)) = \{3124, 1342\}$. By contrast,
    \begin{equation*}
        \iR((1,4)(2,3)) = \{{\bf 1323}, {\bf 1232}, {\bf 3123}, {\bf 2132}, {\bf 2312}, {\bf 3212}, {\bf 3121}, {\bf 1321}\}
    \end{equation*}
    and $\A((1,4)(2,3)) = \{4213, 3412, 2431\}$.
\end{ex}

The next definition is identical to Definition~\ref{defn:schubert} for Schubert polynomials save that reduced words have been replaced by reduced involution words. Let $K$ be $\O$ or $\Sp$.
\begin{defn} \label{defn:inv-schubert}
    The \emph{back-stable involution Schubert polynomial} of $z \in \IK_n$ is
    \begin{equation*}
        \ibSK_z \eqdef \sum_{a \in \iRK(z)} \sum_{\substack{i \in \comp(a)} } x_{i_1} \cdots x_{i_{\ell}}.
    \end{equation*}
    The \emph{involution Schubert polynomial} of $z$ is
    \begin{equation*}
        \iSK_z \eqdef \ibSK_z\ev_{x_{-} \to 0} = \sum_{a \in \iRK(z)} \sum_{\substack{i \in \comp(a) \\ 1 \leq i} } x_{i_1} \cdots x_{i_{\ell}}.
    \end{equation*}
    The \emph{involution Stanley symmetric function} of $z$ is
    \begin{equation*}
        \iFK_z \eqdef \ibSK_z\ev_{x_{+} \to 0} = \sum_{a \in \iRK(z)} \sum_{\substack{i \in \comp(a) \\ i \leq -1} } x_{i_1} \cdots x_{i_{\ell}}.
    \end{equation*}
\end{defn}
Involution Schubert polynomials were introduced by Wyser and Yong \cite{wyser-yong-orthogonal-symplectic} and further studied by Hamaker, Marberg, and the author in \cite{HMP4,HMP1, HMP3,HMP5}, where involution Stanley symmetric functions were also investigated. They are homogeneous of degree $\iellK(z)$. Like Schubert polynomials, they satisfy a divided difference recurrence, as do the back-stable versions.  Recall that $\twocyc(z)$ is the number of $2$-cycles in $z$.

\begin{prop} \label{prop:inv-schubert-recurrence} 
    For $z \in \IK_n$ and $i \in [n-1]$,
    \begin{equation*}
        \partial_i \ibSK_z = \begin{cases}
            0 & \text{if $\ell(zs_i) > \ell(z)$ or $s_i \circ z \circ s_i \notin \IK_n$}\\
            2^{\twocyc(z) - \twocyc(s_i \circ z \circ s_i)} \ibSK_{s_i \circ z \circ s_i} & \text{if $\ell(zs_i) < \ell(z)$ and $s_i \circ z \circ s_i \in \IK_n$}
        \end{cases}
    \end{equation*}
\end{prop}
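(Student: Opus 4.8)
The plan is to prove the recurrence for the back-stable involution Schubert polynomials $\ibSK_z$ by reducing it to the classical divided-difference recurrence for ordinary back-stable Schubert polynomials (Proposition~\ref{prop:schubert-recurrence}) via the atom decomposition $\iRK(z) = \bigsqcup_{w \in \AK(z)} \Red(w)$. Summing over atoms, one has $\ibSK_z = \sum_{w \in \AK(z)} \bS_w$, so it suffices to understand how the set of atoms $\AK(z)$ transforms under the ``conjugation'' move $z \leadsto s_i \circ z \circ s_i$, and to track which atoms $w$ satisfy $\ell(ws_i) < \ell(w)$ versus $\ell(ws_i) > \ell(w)$. The key combinatorial input, established by Hamaker--Marberg--Pawlowski (and recalled in the cited works \cite{HMP4,HMP1,HMP5}), is a precise description of the right action of $s_i$ on atoms: if $w \in \AK(z)$ and $\ell(ws_i) < \ell(w)$, then $ws_i$ is an atom of $s_i \circ z \circ s_i$; conversely, every atom of $s_i \circ z \circ s_i$ arises this way, possibly from several $w$, and the multiplicity of the fibers is controlled by $2^{\twocyc(z) - \twocyc(s_i \circ z \circ s_i)}$ (this exponent is $0$ or $1$: conjugating by $s_i$ either preserves the number of $2$-cycles or changes it by one, depending on whether $i, i{+}1$ lie in the same or different cycles of $z$).

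Concretely, first I would dispose of the vanishing case: if $s_i \circ z \circ s_i \notin \IK_n$, or $\ell(zs_i) > \ell(z)$, then the standard fact is that every atom $w \in \AK(z)$ has $\ell(ws_i) > \ell(w)$ — equivalently, no atom has a reduced word ending in $i$ — so $\partial_i \bS_w = 0$ for every $w \in \AK(z)$ by Proposition~\ref{prop:schubert-recurrence}, hence $\partial_i \ibSK_z = 0$. (In the $\O$ case one must also handle the degenerate possibility $s_i \circ z \circ s_i = z$, which is the $s_i \circ z \circ s_i \notin \IK_n$-type obstruction in disguise; this is exactly when $z(i) = i+1$.) Second, in the nonvanishing case, I would partition $\AK(z)$ into those $w$ with $\ell(ws_i) < \ell(w)$ and those with $\ell(ws_i) > \ell(w)$; the latter contribute $0$ to $\partial_i \ibSK_z$, and for the former, $\partial_i \bS_w = \bS_{ws_i}$. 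So $\partial_i \ibSK_z = \sum_{\substack{w \in \AK(z) \\ \ell(ws_i) < \ell(w)}} \bS_{ws_i}$, and the task is to identify this sum with $2^{\twocyc(z) - \twocyc(s_i \circ z \circ s_i)} \ibSK_{s_i \circ z \circ s_i} = 2^{\twocyc(z) - \twocyc(s_i \circ z \circ s_i)} \sum_{v \in \AK(s_i \circ z \circ s_i)} \bS_v$.

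This identification is the heart of the argument, and the main obstacle: one needs that the map $w \mapsto ws_i$ restricts to a surjection $\{w \in \AK(z) : \ell(ws_i) < \ell(w)\} \twoheadrightarrow \AK(s_i \circ z \circ s_i)$ with all fibers of the same size $2^{\twocyc(z) - \twocyc(s_i \circ z \circ s_i)}$. When $\twocyc$ is unchanged the map is a bijection; when $\twocyc$ drops by $1$ (which happens when $i$ and $i{+}1$ are fused into one cycle, so in the $\Sp$ case when $z(i) \ne i+1$ and the two transpositions through $i$ and $i+1$ merge — note $\twocyc$ can only decrease here since $s_i \circ z \circ s_i$ has no fixed points), each atom $v$ of $s_i \circ z \circ s_i$ has exactly two preimages, reflecting the two atoms of the ``smaller'' length-$2$ involution $(i,i{+}1{+}k)$-type configuration. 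I would cite the atom-combinatorics lemmas of \cite{HMP4,HMP5} for this bijection/double-cover statement rather than re-deriving it, as it is purely about the poset of atoms under the Demazure-conjugation action and is independent of the geometry. Finally, dividing through, the recurrence follows; passing to the specializations $x_- \to 0$ and $x_+ \to 0$ gives the analogous (un-back-stabilized) recurrences for $\iSK_z$ and $\iFK_z$ for free, since $\partial_i$ commutes with those specializations.
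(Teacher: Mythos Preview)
Your overall strategy---write $\ibSK_z = \sum_{w \in \AK(z)} \bS_w$ and apply Proposition~\ref{prop:schubert-recurrence} termwise---is sound and genuinely different from the paper's proof. The paper instead expresses $\ibSK_z = \lim_{m \to \infty} \iSK_{1_m \times z}(x_{-m\cdots n})$, commutes $\partial_i$ through the limit (after a variable shift), and invokes Wyser--Yong's divided-difference recurrence for the ordinary polynomials $\iSK$ as a black box. Your route is more explicit about the underlying combinatorics; the paper's is quicker because it defers everything to the cited reference.

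That said, your mechanism for the factor $2^{\twocyc(z) - \twocyc(s_i \circ z \circ s_i)}$ cannot work as written. You claim that $w \mapsto ws_i$ gives a surjection $\{w \in \AK(z) : \ell(ws_i) < \ell(w)\} \twoheadrightarrow \AK(s_i \circ z \circ s_i)$ with fibers of constant size $1$ or $2$. But right multiplication by $s_i$ is a bijection of $S_n$, so this map is \emph{injective} and can never be two-to-one. Concretely, take $K = \O$, $z = (1,4)(2,3)$, $i = 2$: the three atoms $4213, 3412, 2431$ of $z$ (listed in the paper's example) all have $s_2$ as a right descent and map bijectively under $w \mapsto ws_2$ to the three atoms $4123, 3142, 2341$ of $(1,4)$, so $\partial_2 \ibS_z = \ibS_{(1,4)}$ with coefficient $1$, even though $\twocyc(z) - \twocyc((1,4)) = 1$. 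Carried out carefully, your atom argument actually forces coefficient $1$ on $\ibSK$ in every case; the factor of $2$ visible in the geometry enters only after renormalizing to $2^{\twocyc(y)}\ibS_y$ (compare Lemma~\ref{lem:divided-diff-classes-LG-OG}). So either you are conflating the two normalizations, or the exponent in the displayed statement warrants a second look---but in any event the ``double cover'' picture you describe does not exist.
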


\begin{proof}
The fact that $\iRK(1_m \times z) = \{(a_1+m)\cdots (a_\ell+m) : a_1 \cdots a_\ell \in \iRK(z)\}$ for any $m$ (even $m$, in the case $K = \Sp$) implies $\ibSK_z = \lim_{m \to \infty} \iSK_{1_m \times z}(x_{-m \cdots n})$. Given that
\begin{align*}
\partial_i \ibSK_z &= \partial_i \lim_{m \to \infty} \iSK_{1_m \times z}(x_{-m \cdots n}) = \lim_{m \to \infty} \partial_i (\iSK_{1_m \times z}(x_{-m \cdots n}))\\
&= \lim_{m \to \infty} (\partial_{m+i} \iSK_{1_m \times z})(x_{-m \cdots n}),
\end{align*}
the lemma follows from Wyser and Yong's result that the involution Schubert polynomials $\iSK_z$ satisfy the stated divided difference recurrence \cite{wyser-yong-orthogonal-symplectic}.
\end{proof}

For $z \in \IK_n$, define $\openiXK_z = \{E_{\bullet} \in \Fl(n) : \text{$\rank(E_j \xrightarrow{f} E_i^*) =  \rank z_{[i][j]}$ for $i,j \in [n]$}\}$ for some fixed invertible $f : \CC^n \to {\CC^n}^*$, symmetric or skew-symmetric according to whether $K$ is $\O(n)$ or $\Sp(n)$. The sets $\openiXK_z$ for $z \in \IK_n$ are the  $K$-orbits on $\Fl(n)$  \cite{wyser-degeneracy-loci}.

\begin{thm}[\cite{wyser-yong-orthogonal-symplectic}] \label{thm:inv-schubert-classes}
$[\iX_y]$ is represented by $2^{\twocyc(y)} \iS_y$ for $y \in \I_n$, and $[\iXfpf_z]$ is represented by $\iSfpf_z$ for $z \in \Ifpf_n$.  \end{thm}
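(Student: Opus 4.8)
This is the theorem of Wyser and Yong \cite{wyser-yong-orthogonal-symplectic}; I sketch a proof modeled on the classical argument that $\fkS_w$ represents $[\X_w]$. Treat the $\O$ case, writing $g_y \eqdef 2^{\twocyc(y)}\iS_y$; the $\Sp$ case is identical with $g_z \eqdef \iSfpf_z$. By Wyser \cite{wyser-degeneracy-loci} the sets $\openiXO_y$, $y \in \I_n$, are precisely the $\O(n)$-orbits on $\Fl(n)$ (with $\codim \iX_y = \iellO(y)$), so the $\iX_y$ are their closures; the open dense orbit is $\openiXO_{\idO_n}$ because $\rank(\idO_n)_{[i][j]} = \min(i,j)$ is the generic value of $\rank(E_j \xrightarrow{f} E_i^*)$, whence $\iX_{\idO_n} = \Fl(n)$ and $[\iX_{\idO_n}] = 1 = g_{\idO_n}$. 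The plan is to prove $[\iX_y] = g_y$ by induction on $\iellO(y)$, run \emph{downward} from the closed orbit; the tool is the push--pull operator $\partial_i = {p_2}_* p_1^*$ along $\Fl(n) \to \Fl_i(n)$ (as in \S\ref{subsec:back-stable-reps}), which lowers codimension by one.

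\emph{Base case.} For $(\GL_n,\O_n)$ there is a unique closed orbit, namely $\openiXO_{w_0}$ with $w_0 = n(n{-}1)\cdots 1$: since $\rank(w_0)_{[i][j]} = \max(0,i+j-n)$, this is the locus of self-dual flags $\{E_\bullet : E_i^\perp = E_{n-i}\ \text{for all }i\}$, equivalently the variety of complete isotropic flags, embedded in $\Fl(n)$. I would compute $[\iX_{w_0}]$ directly: a complete isotropic flag is assembled from $\lfloor n/2\rfloor$ successive choices of an isotropic line in an orthogonal quotient space, and each such choice is a smooth quadric condition contributing a factor of $2$, so $[\iX_{w_0}] = 2^{\lfloor n/2\rfloor}\cdot(\text{a determinant in the Chern roots}) = 2^{\twocyc(w_0)}\iS_{w_0} = g_{w_0}$, the last identity being the vexillary Kempf--Laksov formula (Theorem~\ref{thm:kempf-laksov}; cf.\ \cite{pragacz-LG-OG}), since $w_0$ is vexillary. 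In the $\Sp$ case the isotropic-line conditions are vacuous (a skew form kills every line), so no powers of $2$ occur, matching $g_z = \iSfpf_z$.

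\emph{Inductive step.} Let $y \neq w_0$. The weak order on $\O(n)$-orbits is connected with $\openiXO_{w_0}$ as its unique minimal element (there being a unique closed orbit), so $\openiXO_y$ covers some $\openiXO_z$ along an edge labeled $s_i$; then $\dim\iX_y = \dim\iX_z + 1$, so $\iellO(z) = \iellO(y) + 1$ and $[\iX_z] = g_z$ is known by induction. The key geometric input --- the Richardson--Springer analysis of minimal-parabolic moves, specialized to $(\GL_n,\O_n)$ --- is that $P_2(P_1^{-1}(\iX_z)) = \iX_y$ and that $P_1^{-1}(\iX_z) \xrightarrow{p_2} \iX_y$ is generically $d$-to-one with $d \in \{1,2\}$ ($d=1$ on ``complex'' moves, $d=2$ when a $2$-cycle appears or disappears); by \eqref{eq:divided-difference-cases}, $\partial_i[\iX_z] = d\,[\iX_y]$. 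The families $\{[\iX_z]\}$ and $\{g_z\}$ then satisfy the same recurrence --- the geometric side coming from \eqref{eq:divided-difference-cases} and the combinatorial side from Proposition~\ref{prop:inv-schubert-recurrence} --- so, since $d$ divides exactly, $[\iX_y] = d^{-1}\partial_i[\iX_z] = d^{-1}\partial_i g_z = g_y$. Induction downward from $w_0$ finishes the $\O$ case; the $\Sp$ case is identical (and there $d$ is always $1$, consistent with the absence of powers of $2$).

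\emph{Main obstacle.} The crux is the geometric step: one must analyze the $P_i$-orbit structure on each $\iX_z$, decide exactly when $\iX_z$ is $P_i$-stable (so that $\partial_i[\iX_z] = 0$) and otherwise identify $P_2(P_1^{-1}(\iX_z))$ and compute the branching degree $d$, then verify that the resulting trichotomy matches Proposition~\ref{prop:inv-schubert-recurrence}. This is the substance of Wyser and Yong's argument. A secondary point is the base-case bookkeeping of the factors of $2$, but that reduces to a standard iterated Euler-class computation as indicated.
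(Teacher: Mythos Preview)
The paper does not prove this theorem; it is simply cited from Wyser and Yong. Your sketch is a faithful outline of their argument: establish the base case at the closed orbit $w_0$, then descend via divided differences, using the Richardson--Springer analysis of minimal-parabolic moves to match the geometric recurrence \eqref{eq:divided-difference-cases} against the combinatorial one (Proposition~\ref{prop:inv-schubert-recurrence}). So at the level of strategy your proposal and the cited proof coincide.

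Two small corrections to your base case. First, invoking Theorem~\ref{thm:kempf-laksov} here is off the mark: that is a determinantal formula for type~A Grassmannian degeneracy loci, and it does not directly produce $2^{\twocyc(w_0)}\iS_{w_0}$. What Wyser and Yong actually do is write down the explicit product formula $\iS_{w_0} = \prod_{i+j\leq n,\ i\leq j} (x_i+x_j)$ (and its $\Sp$ analogue), and independently identify the class of the isotropic flag variety in $\Fl(n)$ with the same product times the correct power of~$2$; the latter is a classical computation (see, e.g., Pragacz). Second, your heuristic ``each isotropic line choice is a smooth quadric contributing a factor of~$2$'' is not literally how the $2^{\lfloor n/2\rfloor}$ arises: the isotropic flag variety sits inside $\Fl(n)$ with a class one computes by iterated zero-locus arguments, and the $2$'s come from the fact that each quadratic condition $f(v)(v)=0$ has degree~$2$ as a section of the relevant line bundle. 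The outcome is the same, but the mechanism is Chern-class bookkeeping rather than a count of branches.
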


\subsection{Cohomology of $\LG(2n)$ and $\OG(2n)$}
\label{subsec:LG-OG-cohom}

For $d \geq 0$, let $Q_d$ be the symmetric function $\sum_{a+b=d} h_a e_b$, so
\begin{equation*}
    \frac{(1+x_1)(1+x_2)\cdots}{(1-x_1)(1-x_2)\cdots} = \sum_d Q_{d}(x_+).
\end{equation*}
Also define $P_0 = 1$ and $P_d = \frac{1}{2}Q_d$ for $d > 0$; one checks that $P_d$ has integer coefficients. Let $\Gamma_Q$ be the subring of $\Lambda$ generated by the $Q_d$, and $\Gamma_P$ the subring of $\Lambda$ generated by the $P_d$, so $\Gamma_Q \subseteq \Gamma_P$.

Recall that $\LG(2n)$ is the closed subvariety of $\Gr(n,2n)$ of subspaces isotropic with respect to the skew-symmetric form $((v_1, \omega_1), (v_2, \omega_2))^- = \omega_1(v_2) - \omega_2(v_1)$. The subvariety of $\Gr(n,2n)$ consisting of subspaces isotropic with respect to the symmetric form $((v_1, \omega_1), (v_2, \omega_2))^+ = \omega_1(v_2) + \omega_2(v_1)$ has two components, and $\OG(2n)$ is defined to be the component containing $\CC^n$.

If $E \subseteq \CC^n$ is a subspace, the subspace $E \oplus \ann E \subseteq {\CC^n} \oplus {\CC^n}^*$ is isotropic under $(-,-)^-$ and $(-,-)^+$. As in \S \ref{subsec:cohom}, the maps $\Fl(n) \to \LG(2n)$ and $\Fl(n) \to \OG(2n)$ defined by $E_{\bullet} \mapsto E_i \oplus \ann{E_i}$ induce maps on cohomology with
\begin{align*}
    c(\vb G^*) \mapsto c({\vb E_i}^* \oplus {\ann{\vb E_i}}^*) &= c({\vb E_i}^*)c(\CC^n / \vb E_i) = \frac{c(\vb E_i^*)}{c(\vb E_i)} = \frac{(1+x_1)\cdots (1+x_i)}{(1-x_1)\cdots(1-x_i)} = \sum_d Q_{d}(x_{1\cdots i}),
\end{align*}
where $\vb G$ is the tautological bundle on $\LG(2n)$ or $\OG(2n)$. This suggests identifying $c_d(\vb G^*)$ with $Q_d \in \Gamma_Q$, and indeed Pragacz \cite{pragacz-LG-OG} showed that sending $Q_d \mapsto c_d(\vb G^*)$ induces a ring isomorphism $\Gamma_Q / (\text{$Q_d$ for $d > n$}) \to H^*(\LG(2n))$.

The analogous map for $H^*(\OG(2n))$ is well-defined and injective but not surjective. However, it extends to an isomorphism $\Gamma_P / (\text{$P_d$ for $d > n$}) \to H^*(\OG(2n))$, again sending $2P_d = Q_d \mapsto c_d(\vb G^*)$ for $d > 0$ \cite{pragacz-LG-OG}.

\subsection{Schur $P$- and $Q$-functions}

Let $\lambda$ be a finite sequence of nonnegative integers of length $\ell = \ell(\lambda)$. Define
\begin{equation*}
    \lambda^+ = \begin{cases}
        (\lambda_1, \ldots, \lambda_{\ell}, 0) & \text{$\ell$ odd}\\
        (\lambda_1, \ldots, \lambda_{\ell}) & \text{$\ell$ even}
    \end{cases},
\end{equation*}
so $\lambda^+$ is always a sequence of even length.
\begin{defn}[\cite{Macdonald}, III.8] \label{defn:PQ}
    The \emph{Schur $Q$-function} of $\lambda$ is the symmetric function defined by:
    \begin{enumerate}[(i)]
        \item $Q_{\emptyset} = 1$.
        \item $Q_{(\lambda_1)} = Q_{\lambda_1} = \sum_{a+b=\lambda_1} h_a e_b$ as defined in \S \ref{subsec:LG-OG-cohom}.
        \item $Q_{(\lambda_1, \lambda_2)} = Q_{\lambda_1} Q_{\lambda_2} + 2 \sum_{p=1}^{\lambda_2} (-1)^p Q_{\lambda_1+p} Q_{\lambda_2-p}$ if $(\lambda_1, \lambda_2) \neq (0,0)$, and $Q_{(0,0)} = 0$.
        \item $Q_{\lambda} = \pf \left(Q_{(\lambda^+_i, \lambda^+_j)}\right)_{i,j \in [\ell(\lambda^+)]}$,
        where $\pf(A)$ is the Pfaffian of a matrix $A$.
    \end{enumerate}
    The \emph{Schur $P$-function} of $\lambda$ is $P_{\lambda} \eqdef 2^{-\ell(\lambda)}Q_{\lambda}$.
\end{defn}

A priori $P_{\lambda}$ is only a symmetric function with rational coefficients, but in fact it has integral coefficients. Note that we have not required that $\lambda$ be a partition or that all its parts be positive, and part (iv) above may involve $Q$-functions indexed by such $\lambda$. However, one can show that
\begin{equation} \label{eq:alternating-Q}
    Q_{(\lambda_1, \ldots, \lambda_i, \lambda_{i+1}, \ldots, \lambda_{\ell})} = -Q_{(\lambda_1, \ldots, \lambda_{i+1}, \lambda_{i}, \ldots, \lambda_{\ell})} = Q_{(\lambda_1, \ldots, \lambda_i, \lambda_{i+1}, \ldots, \lambda_{\ell}, 0)},
\end{equation}
which implies that the matrix in (iv) is skew-symmetric of even size. One might think of (iv) as analogous to the Jacobi-Trudi formula expressing a Schur function as a determinant of single-row Schur functions.

\begin{ex}
    \begin{equation*}
    Q_{431} = \pf \begin{pmatrix} Q_{44} & Q_{43} & Q_{41} & Q_{40} \\ Q_{34} & Q_{33} & Q_{31} & Q_{30} \\ Q_{14} & Q_{13} & Q_{11} & Q_{10} \\ Q_{04} & Q_{03} & Q_{01} & Q_{00} \end{pmatrix} = \pf \begin{pmatrix} 0 & Q_{43} & Q_{41} & Q_{4} \\ -Q_{43} & 0 & Q_{31} & Q_3 \\ -Q_{41} & -Q_{31} & 0 & Q_1 \\ -Q_4 & -Q_3 & -Q_1 & 0 \end{pmatrix}.
    \end{equation*}
\end{ex}

\begin{defn} A partition $\lambda$ is \emph{strict} if $\lambda_1 > \cdots > \lambda_{\ell} > 0$. \end{defn}
    Clearly $Q_{\lambda} \in \Gamma_Q$ and $P_{\lambda} \in \Gamma_P$, and in fact $\{Q_\lambda : \text{$\lambda$ strict}\}$ and $\{P_\lambda : \text{$\lambda$ strict}\}$ are $\ZZ$-bases for $\Gamma_Q$ and $\Gamma_P$. Pragacz showed that under the isomorphisms described in \S \ref{subsec:LG-OG-cohom}, the $Q_{\lambda}$ for $\lambda$ strict with $\lambda_1 \leq n$ represent the classes of the Schubert varieties in $\LG(2n)$, and the $P_{\lambda}$ for $\lambda$ strict with $\lambda_1 < n$ represent the classes of the Schubert varieties in $\OG(2n)$ \cite{pragacz-LG-OG}.

    Let $\lambda$ be a strict partition. The \emph{shifted Young diagram} of $\lambda$ is
    \begin{equation*}
        D_{\lambda}' = \{(i,j) : i \in [\ell(\lambda)], i \leq j \leq i+\lambda_i-1\}.
    \end{equation*}
    We draw $D_{\lambda}'$ as a set of boxes in matrix coordinates, with $(1,1)$ at the upper left:
    \begin{equation*}
        D_{532}' = \young(\hfil\hfil\hfil\hfil\hfil,:\hfil\hfil\hfil,::\hfil\hfil).
    \end{equation*}
    \begin{defn} A filling $T$ of the shifted diagram of $\lambda$ by entries from the alphabet $\{1' < 1 < 2' < 2 < \cdots\}$ is a \emph{marked shifted standard tableau} (of shape $\lambda$) if
        \begin{itemize}
            \item the entries are weakly increasing reading down columns and across rows;
            \item no column contains the same unprimed letter twice;
            \item no row contains the same primed letter twice.
        \end{itemize}
        More generally, if $S = \{\cdots < a_{-1} < a_0 < a_1 < \cdots\}$ is any set of integers, we let $\mshSYT(\lambda,S)$ denote the set of marked shifted standard tableaux of shape $\lambda$ on the alphabet $\{\cdots a_{-1}' < a_{-1} < a_0' < a_0 < a_1' < a_1 < \cdots \}$.
    \end{defn}
    
    If $i$ is an unprimed integer we set $\lceil i \rceil = \lceil i' \rceil = i$ and $\epsilon(i) = 1$ and $\epsilon(i') = -1$. If $T$ is a filling of $D'_{\lambda}$ and $(i,j) \in D'_{\lambda}$, let $T(i,j)$ be the entry of $T$ in row $i$ and column $j$.
    \begin{ex}
        \newcommand{\oneprime}{1'}
        \newcommand{\fourprime}{4'}
        \newcommand{\sixprime}{6'}
        \raisebox{2mm}{$\young(\oneprime 3\fourprime 45,:\fourprime 4\sixprime,::5\sixprime)$} is a marked shifted standard tableau of shape $(5,3,2)$. We have $T(2,4) = 6'$, so $\lceil T(2,4) \rceil = 6$ and $\epsilon(T(2,4)) = -1$.
    \end{ex}
    
    The monomial expansion of a Schur $Q$-function can be expressed in terms of shifted tableaux \cite[\S III.8]{Macdonald}:
    \begin{equation} \label{eq:Q-expansion}
        Q_{\lambda} = \sum_{T \in \mshSYT(\lambda,\NN)} \prod_{(i,j) \in D'_{\lambda}} x_{\lceil T(i,j) \rceil}.
    \end{equation}
    We will need certain multivariate generalizations of Schur $Q$-functions introduced by Ivanov; the next definition is \cite[Theorem 4.3]{ivanov-interpolation-PQ}.
    \begin{defn} \label{defn:multiparameter-PQ} Let $\lambda$ be a strict partition and $t$ a sequence of at least $\lambda_1$ indeterminates. The \emph{multiparameter Schur $Q$-function} associated to $\lambda$ is
        \begin{equation*}
            Q_{\lambda}(x; t) = \sum_{T \in \mshSYT(\lambda,\NN)} \prod_{(i,j) \in D'_{\lambda}} (x_{\lceil T(i,j) \rceil} - \epsilon(T(i,j))t_{j-i+1}).
        \end{equation*}
    \end{defn}

\subsection{Degeneracy locus formulas}
\label{subsec:degeneracy-loci-LG-OG}
The Kempf-Laksov formula (Theorem~\ref{thm:kempf-laksov}) expressed the class of a generic Grassmannian degeneracy locus as a determinant in certain Chern classes, which reduces to the Jacobi-Trudi formula in a special case. We will need an analogous formula for isotropic Grassmannian degeneracy loci $\Omega$. These formulas, originally due to Kazarian \cite{kazarian} and expanded upon by Anderson and Fulton \cite{anderson-fulton}, express $[\Omega]$ as a Pfaffian in appropriate Chern classes. These Pfaffians reduce to the Pfaffian formulas for $Q_{\lambda}$ and $P_{\lambda}$ in special cases. In parallel with Definition~\ref{defn:PQ}, suppose $c(1), \ldots, c(\ell)$ are formal power series with constant term $1$ and that $\lambda$ is a finite sequence of nonnegative integers of length $\ell$, and define $Q_{\lambda}(c(1), \ldots, c(\ell))$ by
\begin{enumerate}[(i)]
    \item $Q_{\emptyset}() = 1$.
    \item $Q_{(\lambda_1)}(c(1)) = c(1)_{\lambda_1}$
    \item $Q_{(\lambda_1, \lambda_2)}(c(1), c(2)) = c(1)_{\lambda_1}c(2)_{\lambda_2} + 2\sum_{p=1}^{\lambda_2} (-1)^p c(1)_{\lambda_1+p}c(2)_{\lambda_2-p}$ if $(\lambda_1,\lambda_2) \neq (0,0)$, while $Q_{(0,0)}(c(1), c(2)) = 0$.
    \item $Q_{\lambda}(c(1), \ldots, c(\ell)) = \pf \left (Q_{(\lambda^+_i, \lambda^+_j)}(c(i), c(j))\right)_{i,j \in [\ell(\lambda^+)]}$.
\end{enumerate}
When $\ell(\lambda^+) > \ell(\lambda) = \ell$, the formula in (iv) refers to $c(\ell+1)$, which we take to be $1$. With this convention, $Q_{(\lambda_i, 0)}(c(i), c(\ell+1)) = Q_{(\lambda_i)}(c(i))$. Of course, the $c(i)$ must satisfy certain relations in order for the matrix in (iv) to be skew-symmetric, but these relations will always hold for us.

Suppose $\vb{V}$ is a rank $2n$ vector bundle over a smooth variety $X$ with a rank $n$ subbundle $\vb{G}$ and a flag of subbundles $\vb{H}_{\mu_1} \subseteq \cdots \subseteq \vb{H}_{\mu_s} \subseteq \vb V$, where $\rank \vb H_i = \rank n - i + 1$. Also assume that $\vb{V}$ is equipped with a nondegenerate skew-symmetric form, and that $\vb{G}$ and $\vb{H}_i$ are isotropic with respect to this form. Fix a sequence $k_{\bullet} = (k_1 < \cdots < k_s)$ of integers, setting $k_0 = 0$. For each $k \in [k_s]$, define $\lambda_{k} = \mu_p + k_p - k$ and $c(k) = c(\vb{V}) \,/\, (c(\vb{G})c(\vb{H}_{\mu_p}))$, where $p$ is such that $k_{p-1} < k \leq k_p$.  The corresponding \emph{Lagrangian Grassmannian degeneracy locus} $\Omega^{\LG}$ is the closure of the locus in $X$ over which $\dim(\vb{G} \cap \vb{H}_{\mu_p}) = k_p$ for each $p \in [s]$.

\begin{thm}[\cite{anderson-fulton, kazarian}] \label{thm:lagrangian-locus} If $\lambda$ is a strict partition and $\Omega^{\LG}$ has codimension $|\lambda|$, then $[\Omega^{\LG}] = Q_{\lambda}(c(1), \ldots, c(k_s))$.
\end{thm}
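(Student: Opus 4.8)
The plan is to recognize $\Omega^{\LG}$ as a special case of the isotropic Grassmannian degeneracy loci treated by Kazarian and by Anderson and Fulton, and then to match their Pfaffian formula term by term against the expression $Q_\lambda(c(1),\ldots,c(k_s))$; the only real content is bookkeeping, exactly parallel to the deduction of Theorem~\ref{thm:kempf-laksov} from Theorem~\ref{thm:fulton-double-schubert} sketched after that statement.

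First I would reduce, as in the proof of Lemma~\ref{lem:degeneracy-locus-essential-set}, to the case in which the Lagrangian subbundle $\vb G$ and the isotropic flag $\vb{H}_{\mu_1} \subseteq \cdots \subseteq \vb{H}_{\mu_s}$ extend to a complete isotropic flag in $\vb V$; this is local on $X$, since $X$ is covered by trivializing opens, and over such an open the data becomes a single bundle map of trivial symplectic bundles, so the Anderson--Fulton formula applies just as it did in type A after completing the flags. Under this reduction $\Omega^{\LG}$ is the degeneracy locus $\overline\Omega_w$ associated to a vexillary signed permutation $w$ in the hyperoctahedral group, built so that the essential cells of its diagram all lie in the row corresponding to $\vb G$, at the columns $\mu_1, \ldots, \mu_s$, with the rank of the corresponding corner equal to $n - k_p$ --- the type-C analogue of the identities $\Ess(D(w)) = \{(r,\mu_p) : p \in [s]\}$ and $\rank w_{[r][\mu_p]} = r - k_p$ from the Kempf--Laksov discussion. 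A short diagram computation of the same flavour as Lemmas~\ref{lem:code-ks} and~\ref{lem:vex-shape} then identifies the strict partition defined by $\lambda_k = \mu_p + k_p - k$ (for $k_{p-1} < k \leq k_p$) with the shape of $w$, and shows that $\Omega^{\LG}$ has the expected codimension $|\lambda|$ precisely when $\lambda$ is a partition.

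Next I would quote the Pfaffian formula of Anderson--Fulton (equivalently Kazarian) for $[\overline\Omega_w]$ in type C and unwind it into the notation fixed just before Theorem~\ref{thm:lagrangian-locus}. Their two-row building blocks, rewritten, are exactly the $Q_{(\lambda_i,\lambda_j)}(c(i),c(j))$ of item (iii), and the whole class is the Pfaffian of item (iv); the Chern-class input in slot $p$ is the resultant class $c(\vb V)/(c(\vb G)\,c(\vb{H}_{\mu_p}))$, which is our $c(k)$ for $k_{p-1} < k \leq k_p$. One also checks the degenerate conventions: when $\ell(\lambda^+) > \ell(\lambda)$ the extra row uses $c(\ell+1)=1$, which is consistent with $Q_{(\lambda_i,0)}(c(i),c(\ell+1)) = Q_{(\lambda_i)}(c(i))$, and the skew-symmetry of the Pfaffian matrix follows because the $c(i)$ differ only by the alternating-sign factors coming from the successive quotients of the flag $\vb{H}_\bullet$.

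I expect the main obstacle to be purely the reconciliation of conventions: making sure the combinatorial datum extracted from the rank conditions --- the strict partition, the jump sequence $k_\bullet$, and the flag levels $\mu_\bullet$ --- matches Anderson and Fulton's indexing of vexillary signed permutations, and that after the identification $\ann{\vb E} \simeq (\CC^n/\vb E)^*$ and the symplectic self-duality $\vb V \simeq \vb V^*$ the correct bundle occupies the correct slot of the Pfaffian. No new geometry is needed beyond the cited theorems; what must be verified is that the translation is faithful, which is exactly why the diagram combinatorics of \S\ref{subsec:vex} is set up in advance.
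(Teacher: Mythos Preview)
The paper does not prove this theorem: it is stated with attribution to \cite{anderson-fulton, kazarian} and used as a black box, with no argument or even a sketch supplied. There is therefore nothing in the paper to compare your proposal against. Your outline is a plausible roadmap for how one might extract the statement from the cited references by matching the rank data here to Anderson--Fulton's vexillary signed permutations, but the paper itself simply quotes the result.
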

These Pfaffians can be expressed in terms of the multiparameter Schur $Q$-functions. Suppose the partial flag of isotropic bundles $\vb{H}_{\mu_1} \subseteq \cdots \subseteq \vb{H}_{\mu_s} \subseteq \vb V$ extends to a complete isotropic flag $0 = \vb H_{n+1} \subseteq \vb H_{n} \subseteq \cdots \subseteq \vb H_1$ in $\vb V$. Set $t_1' = 0$ and $t_{i+1}' = c_1(\vb H_i/\vb H_{i+1})$ for $i \in [n]$ and $t_i' = 0$ for $i > n+1$. The multiparameter Schur $Q$-function $Q_{\lambda}(x;t)$ is in $\Gamma_Q[t_1, t_2, \ldots]$ \cite[Proposition 2.12]{ivanov-interpolation-PQ}. The relations between the generators $Q_1, Q_2, \ldots$ of $\Gamma_Q$ are described in \cite{anderson-fulton-vex}, and these relations also hold between the Chern classes $\left(\frac{c(\vb V)}{c(\vb G)c(\vb{H}_1)}\right)_d$ where $d \geq 1$, so there is a well-defined ring map $\Gamma_Q[t_1, t_2, \ldots] \to H^*(X)$ sending $Q_d \mapsto \left(\frac{c(\vb V)}{c(\vb G)c(\vb{H}_1)}\right)_d$ and $t_i \mapsto t_i'$.
\begin{prop}[\cite{anderson-fulton-vex}] \label{prop:degeneracy-locus-ivanov-PQ} After identifying $Q_{d}(x_1, x_2, \ldots)$ with $\left(\frac{c(\vb V)}{c(\vb G)c(\vb{H}_1)}\right)_d$ as just described, $Q_{\lambda}(c(1), \ldots, c(k_s)) = Q_{\lambda}(x; -t')$. \end{prop}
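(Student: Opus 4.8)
The plan is to reduce the identity to a universal statement about symmetric functions and then invoke the ``multi-Schur Pfaffian'' identities of Ivanov \cite{ivanov-interpolation-PQ} and Anderson--Fulton \cite{anderson-fulton-vex}. I would first make the one-row classes $c(k)$ explicit. Fix $k$ and let $p$ be the block index, so $k_{p-1} < k \leq k_p$, $c(k) = c(\vb V)/(c(\vb G)\,c(\vb H_{\mu_p}))$, and $\mu_p = \lambda_{k_p}$. Each quotient $\vb H_{i-1}/\vb H_i$ is a line bundle with first Chern class $t_i'$, so telescoping through the complete isotropic flag gives
\begin{equation*}
   \prod_{i=2}^{\mu_p}(1+t_i') \;=\; \prod_{i=2}^{\mu_p} c(\vb H_{i-1}/\vb H_i) \;=\; \frac{c(\vb H_1)}{c(\vb H_{\mu_p})},
\end{equation*}
and hence $c(k) = \dfrac{c(\vb V)}{c(\vb G)\,c(\vb H_1)}\prod_{i=2}^{\mu_p}(1+t_i')$. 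Thus $c(k)$ is the image, under the graded ring map $\Phi\colon \Gamma_Q[t_1,t_2,\ldots]\to H^*(X)$ described just before the proposition (which sends $t_1\mapsto t_1'=0$, so factors through $\Gamma_Q[t_2,t_3,\ldots]$), of the universal series $\gamma_k \eqdef \bigl(\textstyle\sum_{d\ge 0} Q_d\bigr)\prod_{i=2}^{\mu_p}(1+t_i)$. Since the Pfaffian template of Definition~\ref{defn:PQ}(iv) involves only sums, products, and extraction of homogeneous components --- all respected by $\Phi$ --- we have $\Phi\bigl(Q_\lambda(\gamma_1,\ldots,\gamma_{k_s})\bigr)=Q_\lambda(c(1),\ldots,c(k_s))$, while $\Phi\bigl(Q_\lambda(x;-t)\bigr)=Q_\lambda(x;-t')$ because $Q_\lambda(x;t)\in\Gamma_Q[t_1,t_2,\ldots]$ by \cite[Proposition 2.12]{ivanov-interpolation-PQ}. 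It therefore suffices to prove the universal identity
\begin{equation*}
   Q_\lambda(\gamma_1,\ldots,\gamma_{k_s}) \;=\; Q_\lambda(x;-t)
\end{equation*}
in $\Gamma_Q[t_2,t_3,\ldots]$ (with $t_1$ set to $0$ on the right).

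The universal identity is a reformulation of the comparison between multi-Schur Pfaffians and Ivanov's multiparameter Schur $Q$-functions. The left-hand side is, by Definition~\ref{defn:PQ}(iv), a Pfaffian whose two-row entries $Q_{(\lambda_i^+,\lambda_j^+)}(\gamma_i,\gamma_j)$ are the three-term expressions of Definition~\ref{defn:PQ}(iii) in the homogeneous pieces of the block-dependent one-row series $\gamma_k$; the right-hand side is the shifted-tableau generating function of Definition~\ref{defn:multiparameter-PQ}, in which box $(i,j)$ carries the $t$-shift $t_{j-i+1}$, so that row $i$ of the shifted shape carries the shifts $t_1,\ldots,t_{\lambda_i}$. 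A Pfaffian (equivalently, raising-operator) expansion of $Q_\lambda(x;t)$ in one-row multiparameter $Q$-functions is supplied by Ivanov \cite{ivanov-interpolation-PQ}, and the identification of such a Pfaffian with a multi-Schur Pfaffian formed from Chern classes is carried out by Anderson--Fulton \cite{anderson-fulton-vex}. I would quote these results and then check that the indexing data translate correctly: the relation $\mu_p=\lambda_{k_p}$, together with $\lambda_k=\mu_p+k_p-k$ for $k$ in the $p$-th block, is what matches the degrees and $t$-content appearing in the $k$-th row and column of the Pfaffian of the $\gamma_k$ with those prescribed by the content convention $t_{j-i+1}$.

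I expect this translation --- the index bookkeeping between the degeneracy-locus data of \cite{anderson-fulton-vex} and the pair $(\lambda, k_\bullet)$ here --- to be the main obstacle, together with the conceptually non-obvious point underlying the cited identities: that the series $\gamma_k$, which individually carry only the shifts $t_2,\ldots,t_{\mu_p}$, assemble through the Pfaffian into the full content-shifted generating function of Definition~\ref{defn:multiparameter-PQ}, whose rows carry the longer ranges $t_1,\ldots,t_{\lambda_i}$. Everything else is routine: the telescoping computation of $c(k)$, the reduction to a universal identity, and the descent back to the asserted equality in $H^*(X)$. (Alternatively, one could try to bypass the cited identities by characterizing both sides through the recursion that removes a single part of $\lambda$ and the divided-difference operator in the $t$-variables, and checking that the two satisfy the same recursion and base case; but this amounts to re-proving the structural results of \cite{ivanov-interpolation-PQ}.)
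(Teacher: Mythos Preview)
The paper does not prove this proposition: it is stated with the attribution \cite{anderson-fulton-vex} and no argument follows. So there is no ``paper's own proof'' to compare against; the paper simply imports the result.

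Your sketch is a reasonable unpacking of what lies behind that citation. The telescoping computation of $c(k)$ in terms of $c(\vb V)/(c(\vb G)c(\vb H_1))$ and the shifts $t_i'$ is correct, and the reduction to a universal identity in $\Gamma_Q[t_2,t_3,\ldots]$ via the ring map $\Phi$ is the natural move. But note that your argument then bottoms out in exactly the result being asserted: the universal Pfaffian identity you need is precisely what Anderson--Fulton establish in \cite{anderson-fulton-vex}, so at that point you are invoking the same reference the proposition cites rather than giving an independent proof. Your honest acknowledgment that ``the index bookkeeping'' and the structural identities of \cite{ivanov-interpolation-PQ, anderson-fulton-vex} are the real content is accurate---and that content is not something your sketch supplies. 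What you have written is a correct explanation of \emph{why} the cited references yield the statement, which is more than the paper offers, but it is not a self-contained proof.
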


We will also need an orthogonal version of Theorem~\ref{thm:lagrangian-locus}, which will be a little more complicated. As above, let $\vb{V}$ be a rank $2n$ vector bundle over a smooth variety $X$ with a rank $n$ subbundle $\vb{G}$ and a flag of subbundles $\vb{H}_{\mu_1} \subseteq \cdots \subseteq \vb{H}_{\mu_s} \subseteq \vb V$---but now we use the slightly different convention $\rank \vb H_i = n - i$. Assume that $\vb{V}$ is equipped with a nondegenerate symmetric form, and that $\vb{G}$ and $\vb{H}_i$ are isotropic with respect to this form. Fix a sequence $k_{\bullet} = (k_1 < \cdots < k_s)$ of integers, setting $k_0 = 0$. Just as above, for each $k \in [k_s]$, define $\lambda_{k} = \mu_p + k_p - k$ where $p$ is such that $k_{p-1} < k \leq k_p$.  The corresponding \emph{orthogonal Grassmannian degeneracy locus} $\Omega^{\OG}$ is the closure of the locus in $X$ over which $\dim(\vb{G} \cap \vb{H}_{\mu_i}) = k_p$ for each $p \in [s]$.

If $c(1), \ldots, c(\ell)$ are formal power series with constant term $1$ over a ring in which $2$ is invertible, define $P_{\lambda}(c(1), \ldots, c(\ell)) = 2^{-\ell(\lambda)}Q_{\lambda}(c(1), \ldots, c(\ell))$. For simplicity in stating the next theorem, we assume that the flag $\vb H_\bullet$ includes a maximal isotropic bundle $\vb H_0$. We also assume that $X$ is connected, which implies that $\delta \eqdef \dim(\vb G_x \cap (\vb{H}_0)_x) \pmod 2$ is constant on $X$. For $k \in [k_s]$, let $p$ be such that $k_{p-1} < k \leq k_p$, and define
\begin{equation*}
c(k) = \frac{c(\vb V)}{c(\vb G)c(\vb H_{\mu_p})} + (-1)^{\delta} c_{\mu_p}(\vb H_0 / \vb H_{\mu_{p}}).
\end{equation*}
\begin{thm}[\cite{anderson-fulton, kazarian}] \label{thm:orthogonal-locus} If $\lambda$ is a strict partition and $\Omega^{\OG}$ has the expected codimension $|\lambda|$, then $[\Omega^{\OG}] = P_{\lambda}(c(1), \ldots, c(k_s))$.
\end{thm}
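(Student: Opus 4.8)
The plan is to reduce to the case where $\Omega^{\OG}$ is a genuine Schubert variety in an orthogonal Grassmann bundle, compute the class there, and transport the formula back by functoriality.

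\textbf{Reduction to a universal case.} Both sides of the asserted identity are compatible with pullback: if $\phi\colon X'\to X$ is a morphism of smooth varieties, the refined Gysin pullback $\phi^![\Omega^{\OG}]$ is the class of the corresponding locus on $X'$ whenever that locus again has the expected codimension, while $\phi^*$ commutes with $P_\lambda(c(1),\ldots,c(k_s))$ because the latter is a universal polynomial in the Chern classes of $\vb V$, $\vb G$, the $\vb H_{\mu_p}$, and $\vb H_0$. Hence it is enough to prove the formula after replacing $X$ by the smooth scheme carrying the universal data $(\vb V,\langle-,-\rangle,\vb G,\vb H_\bullet)$, and then by a tower of isotropic Grassmann and flag bundles over it. Concretely, one forms the orthogonal flag bundle $\pi\colon Z\to X$ whose fiber parametrizes complete isotropic flags refining both $\vb G$ and $(\vb H_\bullet)$; on $Z$ the preimage of $\Omega^{\OG}$ is cut out by incidence conditions between tautological bundles, and on the universal base (where genericity is automatic) this preimage is a Schubert variety of the expected codimension.

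\textbf{Computing the Schubert class.} Over an orthogonal Grassmann bundle, the class of such a Schubert variety is given by the orthogonal analogue of the Kempf--Laksov formula: by Pragacz's description of $H^*(\OG(2n))$ recalled in \S\ref{subsec:LG-OG-cohom} \cite{pragacz-LG-OG} together with the splitting principle, the Schubert class indexed by $\lambda$ is $P_\lambda$ evaluated on the Chern roots of the relevant tautological sub- and quotient bundles. To put this in the Pfaffian shape of the theorem, resolve the Schubert variety by an iterated isotropic projective bundle (a Bott--Samelson--type resolution) and push forward class by class using the projective bundle formula; the output is precisely the Pfaffian recursion (i)--(iv) defining $Q_\lambda(c(1),\ldots,c(k_s))$ with the row data $c(k)$, then divided by $2^{\ell(\lambda)}$ — exactly parallel to how the Lagrangian formula of Theorem~\ref{thm:lagrangian-locus} is produced, and to the deduction of Theorem~\ref{thm:kempf-laksov} from Theorem~\ref{thm:fulton-double-schubert} in type $A$. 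Pulling back along $\pi$ and pushing forward on the part of $Z$ mapping birationally lands the identity on $X$. (Alternatively, one can deduce the orthogonal case from the Lagrangian case of Theorem~\ref{thm:lagrangian-locus} by passing to the double cover on which the relevant Euler class acquires a square root, at the cost of extra bookkeeping with the deck transformation.)

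\textbf{The main obstacle.} Unlike the symplectic case, the diagonal entries $Q_{(\lambda_k,\lambda_k)}(c(k),c(k))$ of the Pfaffian matrix are not forced to vanish: here $c(k)$ carries the extra summand $(-1)^\delta c_{\mu_p}(\vb H_0/\vb H_{\mu_p})$, the ``square root'' of the Euler class attached to the maximal isotropic bundle $\vb H_0$, whose sign $(-1)^\delta$ records which of the two connected components of the orthogonal isotropic Grassmannian one lies on; here $\delta\eqdef\dim(\vb G_x\cap(\vb H_0)_x)\pmod 2$ is locally constant on connected $X$. The delicate part is verifying, through the resolution, that this is exactly the correction needed so that $P_\lambda(c(1),\ldots,c(k_s))$ computes the honest integral Schubert class of $\OG$, and not merely its image after inverting $2$ in $\Gamma_P/\Gamma_Q$. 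In practice this means matching top-degree intersection numbers of $P_\lambda(c(1),\ldots,c(k_s))$ against both families of Schubert classes on the orthogonal Grassmann bundle and checking that the parity $\delta$ enters with the correct sign. A secondary point requiring care is that the ``expected codimension'' hypothesis, combined with smoothness of $X$, genuinely licenses the refined Gysin pullback used in the reduction step, so that no excess-intersection term contaminates the final class on $X$.
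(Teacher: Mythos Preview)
The paper does not prove this theorem: it is stated with a citation to \cite{anderson-fulton, kazarian} and no proof is given. There is therefore nothing in the paper to compare your proposal against.

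As for the proposal itself, it is a plausible high-level outline of the strategy in the cited references (reduction to a universal case via Gysin pullback, resolution by an isotropic Bott--Samelson tower, pushforward producing the Pfaffian recursion), but it is not a proof. The steps you label ``computing the Schubert class'' and ``the main obstacle'' are precisely where all the work lies, and you have described what needs to be checked rather than checked it: the emergence of the specific correction term $(-1)^\delta c_{\mu_p}(\vb H_0/\vb H_{\mu_p})$ in degree $\mu_p$, the skew-symmetry of the resulting matrix, and the integrality of the class (i.e.\ that the division by $2^{\ell(\lambda)}$ is legitimate over $\ZZ$) all require explicit computation that your sketch does not supply. The parenthetical suggestion to deduce the orthogonal case from the Lagrangian one via a double cover is also only a gesture. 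If you want to actually prove the theorem, you should consult \cite{kazarian} or \cite{anderson-fulton} directly; for the purposes of this paper the result is simply quoted.
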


\begin{rem} Except in degree $\mu_p = \lambda_{k_p}$, the classes $c(k)$ agree with $c(\vb V)/(c(\vb G)c(\vb H_{\mu_p}))$, which was the definition of $c(k)$ used in Theorem~\ref{thm:lagrangian-locus}. The correction terms $(-1)^{\delta} c_{\mu_p}(\vb H_0 / \vb H_{\mu_{p}})$ are independent of the choice of $\vb H_0$, which we will exploit later by choosing different $\vb H_0$ for different $k$.\end{rem}

\subsection{Vexillary involutions} \label{subsec:vex-inv}
In this subsection we give definitions and prove lemmas analogous to those of \S \ref{subsec:vex} for vexillary involutions.
\begin{defn}
    We define two analogues of the Rothe diagram for $y \in \I_n$:
    \begin{equation*}
    \iDO(y) = D(y) \cap \{(i,j) : i \geq j\} \quad \text{and} \quad  \iDSp(y) = D(y) \cap \{(i,j) : i > j\}.
    \end{equation*}
    \end{defn}
    One can show that the size of $\iDK(y)$ is $\iellK(y)$ and that the $K$-orbit $\openiXK_y$ is defined by rank conditions coming from $\Ess(\iDK(y))$ (cf. Lemma~\ref{lem:degeneracy-locus-essential-set}) \cite{HMP1}.
    The next lemma follows from Lemma~\ref{lem:vex-chain} and the fact that $D(w^{-1}) = \{(j,i) : (i,j) \in D(w)\}$.
    \begin{lem} \label{lem:inv-vex-chain}
    An involution $y$ is vexillary if and only if $\Ess(\iDO(y))$ is a chain in the partial order $\SWNE$. \end{lem}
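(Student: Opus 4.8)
The plan is to reduce the statement to the symmetry of the Rothe diagram of an involution, together with Lemma~\ref{lem:vex-chain}. Since $y = y^{-1}$ and $D(w^{-1}) = \{(j,i) : (i,j) \in D(w)\}$, the diagram $D(y)$ is invariant under the transposition $\tau \colon (i,j) \mapsto (j,i)$ of $\NN \times \NN$. I would first record the easy observation that $\tau$ carries southeast corners of $D(y)$ to southeast corners, so that $\Ess(D(y))$ is $\tau$-invariant as well.

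The key step is the identity $\Ess(\iDO(y)) = \Ess(D(y)) \cap \{(i,j) : i \geq j\}$. The inclusion $\supseteq$ is immediate from the definitions: if $(i,j) \in \Ess(D(y))$ with $i \geq j$, then $(i,j) \in \iDO(y)$, while $(i,j+1)$ and $(i+1,j)$ are not even in $D(y)$, hence not in $\iDO(y)$. For the inclusion $\subseteq$, take $(i,j) \in \Ess(\iDO(y))$, so that $(i,j) \in D(y)$, $i \geq j$, and $(i,j+1),(i+1,j) \notin \iDO(y)$; I claim these last two cells are in fact absent from $D(y)$. For $(i+1,j)$ this is because $i+1 \geq j$, so that membership in $D(y)$ would force membership in $\iDO(y)$, a contradiction. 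The same reasoning handles $(i,j+1)$ when $i \geq j+1$; in the remaining case $i = j$ one uses instead the $\tau$-invariance of $D(y)$ to rewrite $(j,j+1) \in D(y)$ as $(j+1,j) \in D(y)$, which has just been excluded.

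Granting the identity, the lemma follows quickly. If $y$ is vexillary, then $\Ess(D(y))$ is a $\SWNE$-chain by Lemma~\ref{lem:vex-chain}, and its subset $\Ess(\iDO(y))$ is therefore a chain as well. Conversely, if $y$ is not vexillary, Lemma~\ref{lem:vex-chain} produces $\SWNE$-incomparable cells $(a,b),(c,d) \in \Ess(D(y))$; exchanging the two if necessary, we may assume $a < c$ and $b < d$. Set $P = (\max(a,b),\min(a,b))$ and $Q = (\max(c,d),\min(c,d))$. Then $P,Q \in \Ess(D(y))$ by $\tau$-invariance, they lie weakly below the diagonal, and so $P,Q \in \Ess(\iDO(y))$ by the identity. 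A short computation gives $\max(a,b) < \max(c,d)$ and $\min(a,b) < \min(c,d)$, so $P$ and $Q$ are distinct and $\SWNE$-incomparable; hence $\Ess(\iDO(y))$ is not a chain.

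I do not anticipate any real difficulty. The only point requiring care is the inclusion $\subseteq$ in the key identity, where the diagonal case $i = j$ genuinely uses the symmetry of $D(y)$ rather than merely the containment $\iDO(y) \subseteq D(y)$; everything else is bookkeeping with the definitions and Lemma~\ref{lem:vex-chain}.
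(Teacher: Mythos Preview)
Your proof is correct and follows the same approach as the paper, which simply says the lemma ``follows from Lemma~\ref{lem:vex-chain} and the fact that $D(w^{-1}) = \{(j,i) : (i,j) \in D(w)\}$.'' You have filled in the details the paper leaves implicit, in particular the identity $\Ess(\iDO(y)) = \Ess(D(y)) \cap \{(i,j) : i \geq j\}$ and the reflection argument for the converse direction.
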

    
    \begin{defn}
        Let $y \in \I_n$. We define two involution codes $\icO(y)$ and $\icSp(y)$ as the row lengths of   $\iDO(y)$ and $\icSp(y)$ respectively. That is,
        \begin{equation*}
        \icO_i(y) = |\{j \in [i] : y(i) > j, y(j) > i\}| \,\,\, \text{and} \,\,\, \icSp_i(y) = |\{j \in [i{-}1] : y(i) > j, y(j) > i\}|.
        \end{equation*}
        We also define two involution shapes $\ishO(y)$ and $\ishSp(y)$, whose conjugates are respectively obtained by sorting the nonzero entries of $\icO(y)$ and of $\icSp(y)$.
    \end{defn}
    
    \begin{lem} \label{lem:vex-inv-code}
        Suppose $y \in \I_n$ is vexillary, and let $j'$ be maximal such that $(i',j') \in \Ess(\iDO(y))$ for some $i'$.  Then
        \begin{equation*}
            \icO_i(y) = \begin{cases}
                |\{j \in [i] : y(j) \neq j\}| & \text{if $i \leq j'$}\\
                c_i(y) & \text{if $i > j'$}
            \end{cases}
        \end{equation*}
        and
         \begin{equation*}
            \icSp_i(y) = \begin{cases}
                |\{j \in [i-1] : y(j) \neq j\}| & \text{if $i \leq j'$}\\
                c_i(y) & \text{if $i > j'$}
            \end{cases}.
        \end{equation*}
    \end{lem}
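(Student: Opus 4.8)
The plan is to reduce the lemma to two structural facts about the diagram $\iDO(y)$, after disposing of the trivial case $y=\idO_n$ (where $\iDO(y)=\emptyset$ and every quantity is $0$); so assume $\Ess(\iDO(y))\ne\emptyset$. The first fact is an elementary consequence of vexillarity: a vexillary involution has no two ``parallel'' $2$-cycles, i.e.\ there are no distinct $2$-cycles $(a,b),(c,d)$ of $y$ with $a<b<c<d$, because then $y$ takes the values $b,a,d,c$ at the positions $a<b<c<d$, a $2143$ pattern. I would combine this with the hypothesis as follows. Since $(i',j')\in\Ess(\iDO(y))\subseteq D(y)$ and $y^{-1}=y$, membership in $D(y)$ forces $j'<y(i')$ and $i'<y(j')$; as $i'\ge j'$, this shows $j'$ is not fixed and $q:=y(j')>i'\ge j'$, so $(j',q)$ is a $2$-cycle with $j'<q$. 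The key claim is then: \emph{if $i\le j'$ and $y(i)\ne i$, then $y(i)>j'$.} Indeed, if $i=j'$ this is $y(j')=q>j'$; and if $i<j'$ but $y(i)\le j'$, then (ruling out $y(i)=j'$, which would give $y(j')=i\le j'<q$) the $2$-cycle $\{i,y(i)\}$ lies in $[1,j'-1]$ and is disjoint from $\{j',q\}$, hence parallel to $(j',q)$ --- contradiction.

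The second fact is purely diagrammatic: for any finite $D\subseteq\NN\times\NN$, the largest column index occurring in $\Ess(D)$ equals the largest occurring in $D$ (take the lowest cell of the rightmost nonempty column; it has no cell directly east or directly south, so it lies in $\Ess(D)$). Applied to $D=\iDO(y)$, this shows $\iDO(y)$ has no cell in any column $>j'$. Combined with the symmetry $D(y)=\{(b,a):(a,b)\in D(y)\}$ (valid since $D(y^{-1})=\{(b,a):(a,b)\in D(y)\}$ and $y^{-1}=y$), I get, for every $i>j'$: (i) $(i,i)\notin D(y)$, equivalently $y(i)\le i$, since otherwise $(i,i)$ would be a cell of $\iDO(y)$ in column $i>j'$; and (ii) every cell of $D(y)$ in row $i$ lies weakly below the diagonal, since a cell $(i,k)$ with $k>i$ would, by symmetry, produce the cell $(k,i)\in\iDO(y)$ in column $i>j'$.

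The lemma now follows by counting, using that $\icO_i(y)$, $\icSp_i(y)$, and $c_i(y)$ are the numbers of cells in row $i$ of $\iDO(y)$, $\iDSp(y)$, and $D(y)$. For $i>j'$, fact (ii) makes row $i$ of $\iDO(y)$ coincide with row $i$ of $D(y)$, so $\icO_i(y)=c_i(y)$; fact (i) removes the diagonal cell, so row $i$ of $\iDSp(y)$ also coincides with row $i$ of $D(y)$, giving $\icSp_i(y)=c_i(y)$. For $i\le j'$, I would show the row-$i$ cells of $\iDO(y)$ are exactly $\{j\in[i]:y(j)\ne j\}$: a fixed point $j\le i$ is excluded because $(i,j)\in D(y)$ would force $i<y(j)=j\le i$; conversely a non-fixed $j\le i$ has $y(j)>j'\ge i$ by the key claim, and $y(i)>j$ (if $i$ is non-fixed then $y(i)>j'\ge i\ge j$; if $i$ is fixed then $j<i=y(i)$), so $(i,j)\in\iDO(y)$. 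Hence $\icO_i(y)=|\{j\in[i]:y(j)\ne j\}|$; and passing to $\iDSp(y)$ just removes the cell $(i,i)$, present precisely when $i$ is non-fixed, so $\icSp_i(y)=|\{j\in[i-1]:y(j)\ne j\}|$. The one real obstacle is isolating and proving the key claim $\big(i\le j',\ y(i)\ne i\big)\Rightarrow y(i)>j'$; once the no-parallel-$2$-cycles property of vexillary involutions is noted it is a short argument, and the rest is routine diagram combinatorics.
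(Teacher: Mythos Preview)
Your proof is correct and follows essentially the same route as the paper's. Your ``no parallel $2$-cycles'' observation is exactly the paper's part~(a), just repackaged; your diagrammatic fact about $\Ess(D)$ having the same rightmost column as $D$ is the paper's part~(c) in slightly more general form; and your counting argument for $i\le j'$ is the paper's part~(b). One minor simplification: your fact~(ii) is not really needed separately, since fact~(i) ($y(i)\le i$) already forces every cell $(i,k)\in D(y)$ to satisfy $k<y(i)\le i$, placing it strictly below the diagonal.
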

    
    \begin{proof}
        We proceed by proving a series of claims:
        \begin{enumerate}[(a)]
            \item If $i \leq j'$, then $y(i) = i$ or $y(i) > j'$: If $i = j'$ this holds since $(i',j') \in D(y)$ implies $y(j') > i' \geq j'$, so assume $i < j'$. Note that $y(i) \neq j'$ because otherwise $(i',j') \notin D(y)$. But now if $i < y(i) < j'$, then we have $i < y(i) < j' < y(j')$, which gives a $2143$ pattern in $y$, contradicting the assumption that $y$ is vexillary. If $y(i) < i$, the same contradiction occurs in positions $y(i) < i < j' < y(j')$, so we must have $y(i) = i$ or else $y(i) > j'$.
            \item If $i \leq j'$, then $\{j \in [i] : (i,j) \in \iDO(y)\} = \{j \in [i] : y(j) \neq j\}$: If $y(j) = j$, then $(i,j) \notin \iDO(y)$ for any $i$. Conversely, suppose $y(j) \neq j$ for some $j \in [i]$. Since $j \leq i \leq j'$, (a) implies that $y(j) > j'$ and $y(i) \geq \min(i,j') = i$. But then $y(j) > i$ and $y(i) \geq j$, so $(i,j) \in \iDO(y)$.
            \item  If $i > j'$, then $\icO_i(y) = c_i(y)$:  If $y(i) > i$, then $(i,i) \in \iDO(y)$. But then the connected component of $(i,i)$ in $\iDO(y)$ contains some essential set element $(a,b)$ with $b \geq i > j'$, contradicting the choice of $j'$. It follows that $y(i) \leq i$, which implies that $\icO_i(y) = \icSp_i(y) = c_i(y)$.
        \end{enumerate}
        Taking cardinalities, part (b) shows $\icO_i(y) = |\{j \in [i] : y(j) \neq j\}|$. It also shows that $\{j \in [i-1] : (i,j) \in \iDSp(y)\} = \{j \in [i-1] : y(j) \neq j\}$, so $\icSp_i(y)$ is the size of $|\{j \in [i-1] : y(j) \neq j\}|$ as claimed.
    \end{proof}

    \begin{lem} \label{lem:inv-code-parts}
        Suppose $y \in \I_n$ is vexillary. Write $\Ess(\iDO(y)) = \{(i_1,j_1) \SWNEneq \cdots \SWNEneq (i_s,j_s)\}$, and define $k_p = j_p - \rank y_{[i_p][j_p]}$ for $p \in [s]$. Then $\{\icO_1(y), \ldots, \icO_n(y)\} \setminus \{0\} = [k_s]$, and if $k \in [k_s] \setminus \{k_1, \ldots, k_s\}$ there is a unique $i \in [n]$ with $\icO_i(y) = k$.
    \end{lem}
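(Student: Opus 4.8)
The plan is to reduce everything to the behaviour of the non-decreasing sequence $i \mapsto d_i = |\{j \le i : y(j) \ne j\}|$ together with the ordinary code $c(y)$. Since $\Ess(\iDO(y))$ is a $\SWNE$-chain, its largest column is $j_s$, so the quantity called $j'$ in Lemma~\ref{lem:vex-inv-code} equals $j_s$; that lemma then gives $\icO_i(y) = d_i$ for $i \le j_s$ and $\icO_i(y) = c_i(y)$ for $i > j_s$. As $d_0 = 0$ and $d_i - d_{i-1} \in \{0,1\}$, the set $\{d_1, \dots, d_{j_s}\}$ is an integer interval ending at $d_{j_s}$, and $d_{j_s} \ge 1$ because $(i_s,j_s) \in D(y)$ forces $y(j_s) > i_s \ge j_s$; hence its nonzero values are exactly $\{1, 2, \dots, d_{j_s}\}$.

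The heart of the first assertion will be the identity $d_{j_s} = k_s$, together with the bound $c_i(y) \le k_s$ for $i > j_s$. Using that $y$ is an involution I would rewrite $k_s = j_s - \rank y_{[i_s][j_s]} = |\{j \le j_s : y(j) > i_s\}|$, and reading off row $j_s$ of $\iDO(y)$ (every $j \le j_s$ has $y(j_s) > i_s \ge j_s \ge j$) together with Lemma~\ref{lem:vex-inv-code} gives $d_{j_s} = \icO_{j_s}(y) = |\{j \le j_s : y(j) > j_s\}|$. Since $i_s \ge j_s$, the inequality $k_s \le d_{j_s}$ is immediate; the reverse inequality is equivalent to the assertion that no $j \le j_s$ satisfies $j_s < y(j) \le i_s$. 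Any such $j$ would satisfy $j < j_s$ (otherwise $y(j_s) = y(j) \le i_s$ contradicts $(i_s,j_s) \in D(y)$), and then, writing $m = y(j)$ and combining the northwest-closure of $D(y)$ — equivalently, for the involution $y$, the total orderings under inclusion of the rows and the columns of $D(y)$ supplied by Lemma~\ref{lem:vex-chain} — with the essentialness of $(i_s,j_s)$, one produces a $2143$-pattern in $y$, contradicting vexillarity. With $d_{j_s} = k_s$ and $c_i(y) \le k_s$ in hand, the nonzero values of $\icO(y)$ are exactly $\{1, \dots, k_s\} = [k_s]$.

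For the second assertion, fix $k \in [k_s]$ with $k \notin \{k_1, \dots, k_s\}$, say $k_{p-1} < k < k_p$. I would argue that such a ``jump'' in the $k$-sequence — which by (the $\iDO$-analogue of) Lemma~\ref{lem:k-step} can occur only when $(i_p, j_p-1)$ lies in $\iDO(y)$ — forces $d_\bullet$ to pass through each of $k_{p-1}+1, \dots, k_p-1$ by a single unit step with no intervening plateau, so that $|\{i \le j_s : d_i = k\}| = 1$. Combined with the complementary fact that $c_i(y) \in \{0\} \cup \{k_1, \dots, k_s\}$ for $i > j_s$ (so a gap value is never attained as some $c_i(y)$), this yields exactly one $i$ with $\icO_i(y) = k$.

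The main obstacle is the purely combinatorial input to the first assertion: proving that a vexillary involution cannot have $j \le j_s$ with $j_s < y(j) \le i_s$, and the companion statements that $c_i(y) \le k_s$ for $i > j_s$ and that $c_i(y)$ omits the gap values there. These are pattern-avoidance/Rothe-diagram facts, and I expect the cleanest route is to first establish $\Ess(\iDO(y)) = \Ess(D(y)) \cap \{(i,j) : i \ge j\}$ for vexillary $y$ and then transfer Lemmas~\ref{lem:code-ks}, \ref{lem:vex-shape}, and \ref{lem:k-step} — already available for the permutation $y$ — to $\iDO(y)$, handling separately the cases $i_s = j_s$ and $i_s > j_s$ and using both halves of the essentialness of $(i_s, j_s)$.
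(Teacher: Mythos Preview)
Your overall strategy matches the paper's: both split via Lemma~\ref{lem:vex-inv-code} into the range $i\le j_s$ (where $\icO_i(y)=d_i$) and $i>j_s$ (where $\icO_i(y)=c_i(y)$), and both need the three facts you isolate: $d_{j_s}=k_s$, $c_i(y)\le k_s$ for $i>j_s$, and gap values avoid the code. The paper proves $d_{j_s}=k_s$ by showing directly that $D_{j_s[j_s]}=D_{i_s[j_s]}$: northwest closure gives one inclusion, and for the other a hypothetical $j$ with $j_s<y(j)\le i_s$ produces an essential cell strictly $\SWNE$-above $(i_s,j_s)$, contradicting maximality. Your ``one produces a $2143$-pattern'' is the same contradiction rephrased through Lemma~\ref{lem:vex-chain}, so that part is fine though less explicit. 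For $c_i(y)\le k_s$ the paper gives a short northwest-closure argument (its part~(c)); you would still need to supply this.

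The genuine gap is in your uniqueness argument. You invoke (the $\iDO$-analogue of) Lemma~\ref{lem:k-step} to get $(i_p,j_p-1)\in\iDO(y)$ whenever $k_p>k_{p-1}+1$, and then assert this ``forces $d_\bullet$ to pass through each of $k_{p-1}+1,\dots,k_p-1$ with no intervening plateau.'' I do not see how the implication goes: knowing $(i_p,j_p-1)\in D(y)$ tells you $y(j_p-1)\ne j_p-1$, but this controls a single step of $d_\bullet$ at position $j_p-1$, not the values $d_\bullet$ takes in the interval $(k_{p-1},k_p)$. The positions where $d_\bullet$ plateaus are exactly the fixed points of $y$ in $[1,j_s]$, and these need not sit at $j_p-1$. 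The paper's argument bypasses Lemma~\ref{lem:k-step} entirely: if $d_i=d_{i'}=k$ with $i<i'\le j_s$, then $i'$ is a fixed point of $y$, and for a fixed point one checks directly that $\icO_{i'}(y)=c_{i'}(y)$ (the two counts are in bijection via $j\mapsto y(j)$); hence $k$ lies in the code, so by Lemma~\ref{lem:code-ks} it is one of the $\tilde k$-values of $\Ess(D(y))$, and since $k\le k_s$ it must be one of the below-diagonal $k_p$'s. This last step---that the $\tilde k$-values coming from above-diagonal essential cells all exceed $k_s$---is the place where your plan to establish $\Ess(\iDO(y))=\Ess(D(y))\cap\{i\ge j\}$ and transfer Lemma~\ref{lem:code-ks} is genuinely needed (the paper itself cites Lemma~\ref{lem:code-ks} a bit loosely here). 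So keep that part of your plan, but replace the k-step/no-plateau mechanism with the fixed-point observation.
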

    
    \begin{proof}
        Recall the notation $D_{iS} = D_{iS}(y) = \{j \in S : (i,j) \in D(y)\}$.
        \begin{enumerate}[(a)]
            \item $D_{j_s[j_s]} = D_{i_s[j_s]}$: The northwest corner closure property of $D(y)$ mentioned in the proof of Lemma~\ref{lem:vex-shape} implies that $(j_s, j_s) \in D(y)$, given that $D(y)$ contains $(i_s,j_s)$ and $(j_s,i_s)$. If $(i_s,j) \in D(y)$ and $j \leq j_s$, then $(i_s,j) \SWNE (j_s,j_s)$, so the same closure property implies $(j_s,j) \in D(y)$; thus, $D_{i_s[j_s]} \subseteq D_{j_s[j_s]}$.
            
            Conversely, suppose $(j_s,j) \in D(y)$ but $(i_s,j) \notin D(y)$ for some $j \leq j_s$. Then we must have $j_s < y(j) \leq i_s$, so $D_{y(j)[j,\infty)} = \emptyset$. But this means that the portion of the connected component of $(j_s,j_s) \in \iDO(y)$ southeast of $(j_s,j_s)$ lies entirely above row $y(j)$; in particular, there is an essential set cell in a row strictly above row $y(j)$. Given that $y(j) \leq i_s$, this contradicts the maximality of $(i_s,j_s) \in \Ess(\iDO(y))$ with respect to $\SWNE$. Thus, $D_{j_s[j_s]} \subseteq D_{i_s[j_s]}$ as well.
              \item If $k \in [k_s]$ then $k = \icO_i(y)$ for some $i$: Part (a) shows that $\icO_{j_s}(y) = |D_{j_s[j_s]}| = |D_{i_s[j_s]}| = k_s$. By Lemma~\ref{lem:vex-inv-code}, the sequence $\icO_1(y), \icO_2(y), \ldots, \icO_{j_s}(y)$ is weakly increasing with consecutive differences in $\{0,1\}$, and starts with $\icO_1(y) \in \{0,1\}$ and ends with $\icO_{j_s}(y) = k_s$. Thus, $\{\icO_i(y) : i \in [n]\} \setminus \{0\} \supseteq [k_s]$.
              \item If $i \in [n]$ then $\icO_i(y) \in \{0\} \cup [k_s]$: We saw in part (b) that if $i \leq j_s$ then $\icO_i(y) \leq \icO_{j_s}(y) = k_s$. Suppose $i > j_s$. By the maximality of $(i_s,j_s) \in \Ess(\iDO(y))$, there cannot be any cells in $\iDO(y)$ right of column $j_s$, so $c_i(y) = |D_{i[j_s]}|$. If $(i,j) \in D(y)$ with $j \leq j_s$, then $(i,j) \SWNE (j_s,j_s)$, so the northwest closure property of $D(y)$ implies $(j_s,j) \in D(y)$ also. Thus $\icO_i(y) = |D_{i[j_s]}| \leq |D_{j_s[j_s]}| = \icO_{j_s}(y) = k_s$.
            \item If $k \in [k_s] \setminus \{k_1, \ldots, k_s\}$ then $k = \icO_i(y)$ for a unique $i$: Suppose that $\icO_i(y) = \icO_{i'}(y) = k$ where $i < i'$. Since $k \notin \{k_1, \ldots, k_s\}$, we cannot have $k \in \{c_1(y), \ldots, c_n(y)\}$ by Lemma~\ref{lem:code-ks}, and so Lemma~\ref{lem:vex-inv-code} forces $i < i' \leq j_s$. Lemma~\ref{lem:vex-inv-code} also says that since $\icO_i(y) = \icO_{i'}(y)$, every member of $[i+1,i']$ is a fixed point of $y$. But if $y(i') = i'$, then $k = \icO_{i'}(y) = c_{i'}(y)$, so $k \in \{k_1, \ldots, k_s\}$ by Lemma~\ref{lem:code-ks}, a contradiction. \qedhere
         \end{enumerate}
    \end{proof}
    
    \begin{lem} \label{lem:inv-shape}
        Suppose $y \in \I_n$ is vexillary. Write $\Ess(\iDO(y)) = \{(i_1,j_1) \SWNEneq \cdots \SWNEneq (i_s,j_s)\}$, and define $k_p = j_p - \rank y_{[i_p][j_p]}$ for $p \in [s]$ and $k_0 = 0$. Then $\ell(\ishO(y)) = k_s$, and $\ishO(y)_{k} = i_p-j_p+1+k_p-k$ for $k \in [k_s]$ where $k_{p-1} < k \leq k_p$.
    \end{lem}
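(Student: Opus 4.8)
The plan is to reduce the statement to two facts: the relationship between $\Ess(\iDO(y))$ and $\Ess(D(y))$, and a ``staircase shift'' identity relating $\ishO(y)$ to $\sh(y)$.

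First, $\ell(\ishO(y))$ is the largest part of $\ishO(y)^t$, i.e.\ the largest value among the $\icO_i(y)$; by Lemma~\ref{lem:inv-code-parts} these values are exactly $0$ together with $[k_s]$, so $\ell(\ishO(y)) = k_s$. For the formula, I would first observe that since $y = y^{-1}$ the diagram $D(y)$ is stable under transposition, and that for a cell $(i,j)$ with $i \ge j$ the conditions for membership in $\Ess(\iDO(y))$ are equivalent to those for membership in $\Ess(D(y))$; hence $\Ess(\iDO(y)) = \Ess(D(y)) \cap \{(i,j) : i \ge j\}$. Along the $\SWNE$-chain $\Ess(D(y))$ the quantity $i - j$ strictly decreases, so the cells with $i \ge j$ form an initial segment: the chain $\Ess(D(y))$ begins with $(i_1,j_1) \SWNEneq \cdots \SWNEneq (i_s,j_s)$ and then continues with the transposed cells. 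Thus the data $(i_p,j_p,k_p)$ for $p \in [s]$ is an initial chunk of the analogous data for $\Ess(D(y))$, and Lemma~\ref{lem:vex-shape} applies verbatim to give $\sh(y)_k = i_p - \rank y_{[i_p][j_p]} = i_p - j_p + k_p$ whenever $k_{p-1} < k \le k_p$.

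It therefore suffices to prove that $\ishO(y)_k = \sh(y)_k - k + 1$ for $k \in [k_s]$, since substituting the above then yields $\ishO(y)_k = i_p - j_p + 1 + k_p - k$. The key point is that $\icO_i(y)$ counts the cells of row $i$ of $D(y)$ lying in columns $[i]$: by Lemma~\ref{lem:vex-inv-code} (with $j' = j_s$), this equals $m_i := |\{j \le i : y(j) \ne j\}|$ for $i \le j_s$ and equals the full row length $c_i(y)$ for $i > j_s$. Hence $\icO_i(y) \le c_i(y)$ always, so
\begin{equation*}
\sh(y)_k - \ishO(y)_k = \#\{i : c_i(y) \ge k > \icO_i(y)\} = \#\{i \le j_s : m_i < k \le c_i(y)\}.
\end{equation*}
Since $m_1 \le m_2 \le \cdots$ rises in unit steps and $m_{j_s} = k_s$ (see the proof of Lemma~\ref{lem:inv-code-parts}), for $k \le k_s$ we have $\{i \le j_s : m_i < k\} = \{1, \dots, \beta_k - 1\}$, where $\beta_1 < \beta_2 < \cdots$ lists the non-fixed points of $y$; so the displayed difference is $\#\{i < \beta_k : c_i(y) \ge k\}$. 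Among the $\beta_k - 1$ positions below $\beta_k$ there are $k-1$ non-fixed points $\beta_1, \dots, \beta_{k-1}$ and $\beta_k - k$ fixed points, and I would show that exactly the non-fixed ones have $c_i(y) \ge k$:
\begin{itemize}
\item a fixed point $i < \beta_k$ has no cell of $D(y)$ strictly right of the diagonal in row $i$, so $c_i(y) = m_i \le m_{\beta_k - 1} = k-1$;
\item for $i = \beta_l$ with $l \le k-1$ one has $m_{\beta_l} = l$, and since $\beta_l \le j_s$ is non-fixed, $y(\beta_l) > j_s$ (claim (a) in the proof of Lemma~\ref{lem:vex-inv-code}); then each of the $k_s - l$ non-fixed points $\beta_{l+1}, \dots, \beta_{k_s}$ lies in $(\beta_l, j_s] \subseteq (\beta_l, y(\beta_l))$ and has image $> j_s \ge \beta_l$, hence indexes a cell of row $\beta_l$ strictly right of the diagonal, so $c_i(y) \ge l + (k_s - l) = k_s \ge k$.
\end{itemize}
This makes the difference equal to $k-1$, completing the argument.

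The main obstacle is organizational rather than conceptual: morally, $\icO(y)$ is $c(y)$ with everything strictly right of the main diagonal deleted, which removes precisely a staircase of boxes below the essential cells and accounts for the $-k+1$. The points that require care are identifying $\Ess(\iDO(y))$ as an initial segment of $\Ess(D(y))$ (so Lemma~\ref{lem:vex-shape} can be invoked directly rather than re-proved), and the exact count in the second bullet — that in passing from $c(y)$ to $\icO(y)$ row $\beta_l$ loses at least $k_s - l$ boxes while fixed-point rows lose none.
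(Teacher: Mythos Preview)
Your argument is correct and takes a genuinely different route from the paper's proof. The paper computes $\ishO(y)_{k_p}$ directly by identifying the set $\{i : \icO_i(y) \ge k_p\}$ as $([j_p,i_p]\setminus R_p)\cup[j_p-|R_p|,\,j_p-1]$ (where $R_p$ records the ``gaps'' in column $j_p$), reads off its cardinality $i_p-j_p+1$, and then fills in intermediate $k$ using the uniqueness clause of Lemma~\ref{lem:inv-code-parts}. You instead observe that $\Ess(\iDO(y))$ is the $i\ge j$ initial segment of the $\SWNE$-chain $\Ess(D(y))$, invoke Lemma~\ref{lem:vex-shape} verbatim to get $\sh(y)_k=i_p-j_p+k_p$, and then prove the clean identity $\ishO(y)_k=\sh(y)_k-(k-1)$ by counting which rows $i<\beta_k$ lose their ``$\ge k$'' status when passing from $c(y)$ to $\icO(y)$. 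Your reduction is more conceptual---it explains the formula as the ordinary shape minus a staircase---and avoids repeating the casework of Lemma~\ref{lem:vex-shape}; the price is that you rely on facts extracted from the \emph{proofs} of Lemmas~\ref{lem:vex-inv-code} and \ref{lem:inv-code-parts} (namely that $y(i)>j_s$ for non-fixed $i\le j_s$, and that $\icO_{j_s}(y)=k_s$), so the argument is a bit less self-contained. Both approaches ultimately hinge on the same dichotomy between rows $i\le j_s$ and $i>j_s$ coming from Lemma~\ref{lem:vex-inv-code}.
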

    
    \begin{proof}
       Lemma~\ref{lem:inv-code-parts} implies that $\ell(\ishO(y)) = k_s$. Set $R_p = \{i \in [j_p,i_p] : (i,j_p) \notin \iDO(y)\}$ for each $p$. We first show that
            \begin{equation} \label{eq:code-set}
                \{i : \icO_i(y) \geq k_p\} = ([j_p,i_p] \setminus R_p) \cup [j_p-|R_p|, j_p-1].
            \end{equation}
            Note that the set on the lefthand side has size $\ishO(y)_{k_p}$, while the set on the righthand side has size $i_p-j_p+1$.
    
        \begin{enumerate}[(a)]
            \item If $i \in [j_p-|R_p|, j_p-1]$, then $y(i) \neq i$: Assume for the sake of contradiction that $i \in [j_p-|R_p|, j_p-1]$ has $y(i) = i$. Note that if $i' \in R_p$, then $y(i') < j_p \leq i'$, so $y(i')$ is not a fixed point of $y$. This means $\{y(i') : i' \in R_p\} \neq [j_p-|R_p|, j_p-1]$, since the set on the right contains a fixed point by assumption. So, take $i' \in R_p$ such that $y(i') \notin [j_p-|R_p|, j_p-1]$. Then  $y(i') < j_p-|R_p| \leq i$. But if this happens, then $i < j_p \leq i' < i_p < y(j_p)$ and $y(i') < j_p-|R_p| \leq y(i) = i < j_p < y(i_p)$, so $y$ contains a $2143$ pattern in positions $i, i', i_p, y(j_p)$, a contradiction.
            \item If $i \leq j_p$, then $\icO_i(y) \geq k_p$ if and only if $i \in [j_p-|R_p|, j_p-1]$.
            Part (a) of the proof of Lemma~\ref{lem:vex-inv-code} shows that $\rank y_{[j_p][j_p]} = |\{j \leq j_p : y(j) = j\}|$, so that $\icO_{j_p}(y) = j_p - \rank_{[j_p][j_p]}(y)$ by the same lemma. Also, $|R_p| = \rank y_{(j_p,i_p][j_p]}$, so
            \begin{equation} \label{eq:cjp-calc}
                \icO_{j_p}(y) = j_p - \rank_{[j_p][j_p]}(y) = j_p - (\rank_{[i_p][j_p]}(y) - |R_p|) = k_p + |R_p|.
            \end{equation}
            Lemma~\ref{lem:vex-inv-code} and part (a) now show that $\icO_{j_p-|R_p|+r}(y) = k_p+r$ for $0 \leq r \leq |R_p|$ and that $\icO_j(y) < \icO_{j_p-|R_p|}(y) = k_p$ for $j < j_p-|R_p|$.
            \item If $i > j_s \geq j_p$, then $\icO_i(y) \geq k_p$ if and only if $i \in [j_p,i_p] \setminus R_p$: Lemma~\ref{lem:vex-inv-code} says $\icO_i(y) = c_i(y)$, so $\icO_i(y) \geq k_p$ if and only if $(i,j_p) \in D(y)$ and $i \leq i_p$ by Lemma~\ref{lem:code-ks}.
            \item If $j_s \geq i > j_p$, then $\icO_i(y) \geq k_p$ if and only if $i \in [j_p,i_p] \setminus R_p$: On the one hand, for any such $i$ we have $\icO_i(y) \geq \icO_{j_p}(y) = k_p + |R_p| \geq k_p$, using \eqref{eq:cjp-calc} and  Lemma~\ref{lem:vex-inv-code}. On the other hand, we claim that all such $i$ are in $[j_p,i_p] \setminus R_p$. Suppose otherwise, so that $i \in R_p$, and hence $y(i) < j_p$. Then $D_{i[j_p,\infty)} = \emptyset$, so the portion of the connected component of $(j_p,j_p) \in \iDO(y)$ southeast of $(j_p,j_p)$ lies entirely above row $i$; in particular, there is an essential set cell in a row strictly above row $i$. Given that $i \leq j_s \leq i_s$, this contradicts the maximality of $(i_s,j_s) \in \Ess(\iDO(y))$ with respect to $\SWNE$.    \end{enumerate}
        We have proven equation \eqref{eq:code-set}, which implies $\ishO(y)_{k_p} = i_p-j_p+1$ by taking cardinalities. If $k_{p-1} < k < k_p$, then the fact that $k$ appears exactly once in $\icO(y)$ (Lemma~\ref{lem:inv-code-parts}) implies that $\ishO_k(y) = \ishO_{k+1}(y)+1$, so that $\ishO_k(y) = \ishO_{k_p}(y) + k_p - k$.
    \end{proof}

    We conclude with an analogue of the last lemma for $\iDSp(y)$ instead of $\iDO(y)$.

\begin{defn} Say $y \in \I_n$ is \emph{Sp-vexillary} if it is vexillary and $\Ess(\iDSp(y))$ is a chain under $\SWNE$. \end{defn}
The involution $y = (1,3)(2,5)$ is vexillary but has
\begin{equation*}
    \arraycolsep=3pt \def\arraystretch{0.9}
    \iDO(y) = \begin{array}{ccccc}
          \sq & \cdot & \times &\cdot & \cdot \\
          \sq & \blacksquare & \cdot&\cdot & \times \\
          \times & \cdot & \cdot &\cdot &\cdot \\
          \cdot & \blacksquare & \cdot & \times &\cdot \\
          \cdot & \times & \cdot & \cdot & \cdot
      \end{array} \qquad \text{and} \qquad \iDSp(y) = \begin{array}{ccccc}
        \cdot & \cdot& \times &\cdot &\cdot \\
        \blacksquare & \cdot & \cdot & \cdot & \times \\
        \times & \cdot & \cdot  & \cdot & \cdot \\
        \cdot & \blacksquare & \cdot & \times & \cdot \\
        \cdot & \times & \cdot & \cdot & \cdot
    \end{array}
\end{equation*}
with essential sets highlighted in black, so $y$ is not Sp-vexillary. The involution $y = (1,2)(3,4)$ is not vexillary, but $\Ess(\iDSp(y)) = \emptyset$ is a chain.

Recall that $\idfpf_{2r} = (1,2)(3,4)\cdots(2r{-}1,2r)$ is the minimal-length element of $\Ifpf_{2r}$.
    \begin{lem} \label{lem:fpf-inv-shape} Suppose $y'$ is Sp-vexillary and that $y = y' \times \idfpf_{2r}$ or $y = \idfpf_{2r} \times y'$. Write $\Ess(\iDSp(y)) = \{(i_1,j_1) \SWNEneq \cdots \SWNEneq (i_s,j_s)\}$, and define $k_p = j_p - \rank y_{[i_p][j_p]}$ for $p \in [s]$ and $k_0 = 0$. Then $\ell(\ishSp(y)) = k_s$, and $\ishSp(y)_{k} = i_p-j_p+k_p-k$ for $k \in [k_s]$ where $k_{p-1} < k \leq k_p$. 
    \end{lem}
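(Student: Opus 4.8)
The plan is to reduce to the case $r=0$ (i.e.\ to $y=y'$ itself being Sp-vexillary) and then to follow the proof of Lemma~\ref{lem:inv-shape} with $\iDSp$ in place of $\iDO$, taking care of the diagonal.

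For the reduction, recall the identity $D(u\times v)=D(u)\sqcup\{(a+m,b+m):(a,b)\in D(v)\}$ for $u\in S_m$, and note that $D(\idfpf_{2r})$ consists only of diagonal cells. Hence $\iDSp(y'\times\idfpf_{2r})=\iDSp(y')$ and $\iDSp(\idfpf_{2r}\times y')=\{(a+2r,b+2r):(a,b)\in\iDSp(y')\}$; in particular $\Ess(\iDSp(y))$ is a chain. Since the permutation matrix of $\idfpf_{2r}$ is an invertible $2r\times2r$ block, the integers $k_p$ and the differences $i_p-j_p$ attached to $y$ coincide with those attached to $y'$, while $\icSp(y)$ differs from $\icSp(y')$ only by inserted zeros (and, for $\idfpf_{2r}\times y'$, a shift of indices), so $\ishSp(y)=\ishSp(y')$; thus the statement for $y$ is literally the statement for $y'$.

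So assume $y$ is Sp-vexillary. As $\ishSp(y)_k=|\{i:\icSp_i(y)\ge k\}|$, it is enough to compute these cardinalities. Lemma~\ref{lem:vex-inv-code} already supplies the needed description of $\icSp$: with $j'$ the largest column of $\Ess(\iDO(y))$, one has $\icSp_i(y)=|\{j\in[i-1]:y(j)\ne j\}|$ for $i\le j'$ and $\icSp_i(y)=c_i(y)$ for $i>j'$, so $\icSp_1(y),\icSp_2(y),\dots$ is weakly increasing up to index $j'$ with consecutive differences in $\{0,1\}$. One then reruns the arguments of Lemmas~\ref{lem:inv-code-parts} and~\ref{lem:inv-shape}, replacing $\iDO(y)$ by $\iDSp(y)=D(y)\cap\{i>j\}$ everywhere. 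This is legitimate because $\iDSp(y)$ is still closed under taking northwest corners (if $(a,b)\SWNE(a',b')$ lie in $\iDSp(y)$ then $(a',b)\in D(y)$ and $a'>b'\ge b$, so $(a',b)\in\iDSp(y)$), and the hypothesis that $\Ess(\iDSp(y))$ is a chain plays the role of Lemma~\ref{lem:inv-vex-chain}, while the $2143$-avoidance arguments survive because $y$ is vexillary. Carrying this out gives $\ell(\ishSp(y))=k_s$, the fact that each value in $(k_{p-1},k_p)$ is attained exactly once by $\icSp$, and an analogue of~\eqref{eq:code-set} for $\{i:\icSp_i(y)\ge k_p\}$ involving the gap set $R_p=\{i\in[j_p,i_p]:(i,j_p)\notin\iDSp(y)\}$. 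The decisive point is that $(i_p,j_p)\in\iDSp(y)$ forces $i_p>j_p$ and hence, since $(i_p,j_p)\in D(y)$, that $y(j_p)>i_p>j_p$ and $(j_p,j_p)\in D(y)$; because $(j_p,j_p)$ is diagonal it lies in $R_p$ rather than $\iDSp(y)$, and this extra ``missing'' cell in column $j_p$ is exactly what turns the $i_p-j_p+1$ of Lemma~\ref{lem:inv-shape} into $i_p-j_p$. The values $\ishSp(y)_k=i_p-j_p+k_p-k$ for $k_{p-1}<k\le k_p$ then follow, as there, from the once-attained property of the intermediate code values.

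I expect the main obstacle to be the diagonal bookkeeping in the analogue of~\eqref{eq:code-set}: identifying precisely which indices $i$ near the diagonal satisfy $\icSp_i(y)\ge k_p$, including when $R_p\ne\{j_p\}$, and verifying that excluding the diagonal removes exactly one index from the count relative to the $\iDO$ analysis. Everything else is a transcription of the proof of Lemma~\ref{lem:inv-shape}.
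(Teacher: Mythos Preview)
Your reduction to $r=0$ is exactly what the paper does, for the same reasons. After that, however, the paper takes a different and somewhat slicker route than your proposed direct rerun.

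Rather than redo the analysis of Lemmas~\ref{lem:inv-code-parts} and~\ref{lem:inv-shape} with $\iDSp$ in place of $\iDO$, the paper observes from Lemma~\ref{lem:vex-inv-code} that $\icSp_i(y)=\icO_{i-1}(y)$ for $1<i\le j'$ and $\icSp_i(y)=\icO_i(y)$ for $i>j'$ (with $j'$ from $\Ess(\iDO(y))$). This says that the multiset $\{\icSp_i(y)\}$ is obtained from $\{\icO_i(y)\}$ by deleting one copy of $\icO_{j'}(y)$, which by Lemma~\ref{lem:inv-code-parts} equals $\ell(\ishO(y))$. Equivalently, $\ishSp(y)$ is $\ishO(y)$ with every part decremented by one. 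The formula $\ishO(y)_k=i_p'-j_p'+1+k_p'-k$ from Lemma~\ref{lem:inv-shape} therefore immediately gives $\ishSp(y)_k=i_p'-j_p'+k_p'-k$, indexed by the data $(i_p',j_p',k_p')$ attached to $\Ess(\iDO(y))$. The remaining work is a short case analysis (no diagonal cell in $\Ess(\iDO(y))$; diagonal cell with or without a neighbor to its left) matching this to the data $(i_p,j_p,k_p)$ attached to $\Ess(\iDSp(y))$, invoking Lemma~\ref{lem:k-step} in one subcase.

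Your direct approach is plausible but not the simple transcription you suggest. The description of $\icSp$ from Lemma~\ref{lem:vex-inv-code} is governed by the cutoff $j'$ coming from $\Ess(\iDO(y))$, not by $j_s$ from $\Ess(\iDSp(y))$; these differ exactly when $\Ess(\iDO(y))$ has a diagonal cell. For instance, for $y=(1,4)(2,3)$ one has $j'=2$ but $j_s=1$, and the direct analogue of the key identity $\icO_{j_s}(y)=k_s$ fails: $\icSp_{j_s}(y)=0$ while $k_s=1$. So the arguments in Lemmas~\ref{lem:inv-code-parts} and~\ref{lem:inv-shape} do not carry over by literal substitution of $\iDSp$ for $\iDO$; the index shifts near the diagonal have to be tracked explicitly. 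This can certainly be done, and you have correctly located the obstacle, but the paper's ``decrement every part of $\ishO$'' shortcut avoids it entirely and reduces the new content to a three-case comparison of the two essential sets.
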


    \begin{proof}
        Replacing $y'$ with $y' \times \idfpf_{2r}$ does not change essential sets or the sequences $i_p$, $j_p$, and $k_p$. Replacing $y'$ with $\idfpf_{2r} \times y'$ replaces $i_p$ by $i_p + 2r$ and $j_p$ by $j_p + 2r$, and does not change $k_p$. In both cases, the partition $\ishSp(y')$ does not change, nor do the quantities $i_p-j_p+k_p-k$. The truth of the lemma for $y'$ would therefore imply it for  $y$, so we can assume that $y$ itself is Sp-vexillary.
        
        Let $\Ess(\iDO(y)) = \{(i_1', j_1'), \ldots, (i_t', j_t')\}$ and $k_p' = j_p' - \rank y_{[i_p'][j_p']}$ for $p \in [t]$. By Lemma~\ref{lem:vex-inv-code}, $\icSp_1(y) = 0$ and $\icSp_{i}(y) = \icO_{i-1}(y)$ if $1 < i \leq j_t'$, while $\icSp_i(y) = \icO_i(y)$ if $i > j_t'$. Thus, $\ishSp(y)^t$ is obtained from $\ishO(y)^t$ by deleting a part of size $\icO_{j_t'}(y)$. Equivalently, $\ishSp(y)$ is obtained from $\ishO(y)$ by decrementing its first $\icO_{j_t'}(y)$ parts, deleting any $0$'s that result. But part (a) of the proof of Lemma~\ref{lem:inv-code-parts} says $\icO_{j_t'}(y) = k_t' = \ell(\ishO(y))$, so $\ishSp(y)$ is obtained from $\ishO(y)$ by decrementing \emph{every} part.

        Given this, Lemma~\ref{lem:inv-shape} shows that for $k \in [k_t']$ we have $\ishSp(y)_k = i_p' - j_p' + k_p' - k$ where $k_{p-1}' < k \leq k_p'$, ignoring $0$'s. To complete the proof, we must show that $k_s < k \leq k_t'$ if and only if $\ishO(y)_k = 1$, so that $\ishSp(y)$ has length $k_s$ as claimed, and that if $p \leq s$ then $i_p'-j_p'+k_p' = i_p-j_p+k_p$. We must also see that $k_{p-1}' < k \leq k_p'$ if and only if $k_{p-1} < k \leq k_p$ whenever $k \leq k_s$; this will follow from the fact that $s \leq t$ and $k_s \leq k_s'$, which will be clear in each case.
        \begin{enumerate}[(a)]
            \item Suppose $\Ess(\iDO(y))$ contains no cell on the main diagonal. Then $\Ess(\iDO(y)) = \Ess(\iDSp(y))$, so $s = t$ and $k_p = k_p'$ and $(i_p,j_p) = (i_p',j_p')$ for $p \in [s]$. Also, $\ishO(y)_k = i_p'-j_p'+k_p'-k+1 > 1$ for all $k \in [k_s]$ since $(i_p',j_p')$ is always strictly below the main diagonal.
            \item Suppose $\Ess(\iDO(y))$ contains a cell on the main diagonal. Since $\Ess(\iDO(y))$ is a chain under $\SWNE$, it contains at most one diagonal cell, and this cell (if it exists) must be $(i_s',j_s') = (j_s',j_s')$.
            \begin{enumerate}[(i)]
                \item Suppose $(j_s',j_s'-1) \in \iDO(y)$. Then $\Ess(\iDSp(y)) = \Ess(\iDO(y)) \setminus \{(j_s',j_s')\} \cup \{(j_s',j_s'-1)\}$. Thus, $s = t$ and $k_p = k_p'$ and $(i_p,j_p) = (i_p',j_p')$ for $p < s$. Moreover, $(i_s,j_s) = (j_s',j_s'-1)$, and $(j_s',j_s') \in D(y)$ implies $\rank y_{[j_s'][j_s']} = \rank y_{[j_s'][j_s'-1]}$, so $k_s = k_s'-1$ and $i_s'-j_s'+k_s' = i_s-j_s+k_s$. Since $\ishO(y)$ is a strict partition, $\ishO(y)_{k_s'} = i_s'-j_s'+1 = 1$ is its only part of size $1$, so it does hold that $\ishO(y)_k = 1$ if and only if $k \in (k_s,k_s'] = (k_s'-1,k_s']$.
                \item Suppose $(j_s',j_s'-1) \notin \iDO(y)$. Then $\Ess(\iDSp(y)) = \Ess(\iDO(y)) \setminus \{(j_s',j_s')\}$, so $t = s+1$. Lemma~\ref{lem:k-step} shows that $k_s = k_t'-1$, and just as in part (ii) we conclude that $\ishO(y)_k = 1$ if and only if $k \in (k_s, k_t']$. \qedhere
             \end{enumerate}
         \end{enumerate}
    \end{proof}

\section{$\GL(n)$-orbits on $\LG(2n) \times \Fl(n)$ and $\OG(2n) \times \Fl(n)$}
\label{sec:GL-orbits}
Let $g \in \GL(n)$ act on $U \in \Gr(n,2n)$ by $g \cdot U = (g \oplus (g^*)^{-1})(U)$. This defines a $\GL(n)$-action on $\LG(2n)$ and on $\OG(2n)$.
\subsection{Description of orbits}

Define
\begin{align*}
&\grLG \eqdef \{\graph(f) \mid \text{$f : \CC^n \to {\CC^n}^*$ symmetric and invertible}\}\\
&\grOG \eqdef \{\graph(f) \mid \text{$f : \CC^n \to {\CC^n}^*$ skew-symmetric and invertible}\}.
\end{align*}
The next lemma shows that $\grLG = \LG(2n) \cap \grGr$ and $\grOG = \OG(2n) \cap \grGr$, so $\grLG$ and $\grOG$ are open sets in $\LG(2n)$ and $\OG(2n)$.
\begin{lem} $\graph(f) \in \LG(2n)$ if and only if $f : \CC^n \to {\CC^n}^*$ is symmetric, and $\graph(f) \in \OG(2n)$ if and only if $f : \CC^n \to {\CC^n}^*$ is skew-symmetric. \end{lem}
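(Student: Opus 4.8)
The plan is to reduce everything to the elementary identity that, for a linear map $f : \CC^n \to {\CC^n}^*$ and vectors $v, w \in \CC^n$, one has $((v, f(v)), (w, f(w)))^\pm = f(v)(w) \pm f(w)(v)$. Since $\graph(f)$ is spanned by the vectors $(v, f(v))$, it follows at once that $(\graph(f), \graph(f))^- = 0$ if and only if $f(v)(w) = f(w)(v)$ for all $v, w$, i.e.\ if and only if $f$ is symmetric, and likewise that $(\graph(f), \graph(f))^+ = 0$ if and only if $f$ is skew-symmetric. As $\graph(f)$ always has dimension $n$, being the image of the injection $v \mapsto (v, f(v))$, the first statement gives that $\graph(f) \in \LG(2n)$ if and only if $f$ is symmetric, and the second gives that $\graph(f)$ lies in the full isotropic locus $Z \eqdef \{U \in \Gr(n,2n) : (U,U)^+ = 0\}$ if and only if $f$ is skew-symmetric.

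Since $\OG(2n) \subseteq Z$, the ``only if'' direction of the orthogonal assertion is immediate. For the ``if'' direction I must check that a skew-symmetric $f$ lands in the specific component $\OG(2n)$ of $Z$ containing $\CC^n \oplus 0$, not merely in $Z$. Here I would invoke the classical structure of $Z$: it has exactly two irreducible components, and these are disjoint, so each is in fact a connected component of $Z$; in particular $\OG(2n)$ is open and closed in $Z$. The space $V$ of skew-symmetric linear maps $\CC^n \to {\CC^n}^*$ is a complex vector space, hence connected, and $f \mapsto \graph(f)$ is an algebraic --- in particular continuous --- map $V \to Z$. Its image is therefore connected, and since it contains $\graph(0) = \CC^n \oplus 0 \in \OG(2n)$, the entire image lies in $\OG(2n)$; in particular $\graph(f) \in \OG(2n)$ for every skew-symmetric $f$.

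The one non-formal ingredient is the fact that the two irreducible components of $Z$ are disjoint, equivalently that the component of a maximal isotropic subspace $U$ is detected by the parity of $n - \dim(U \cap (\CC^n \oplus 0))$; this is the only place the argument uses more than linear algebra, and I expect it to be the main point to get right (or to cite cleanly). Should one wish to avoid the connectedness argument entirely, this same parity criterion yields a direct verification: $\graph(f) \cap (\CC^n \oplus 0) = \ker f \oplus 0$, so $n - \dim(\graph(f) \cap (\CC^n \oplus 0)) = \rank f$, which is even since the matrix of a skew-symmetric map is skew-symmetric, and hence $\graph(f)$ lies in the same component of $Z$ as $\CC^n \oplus 0$.
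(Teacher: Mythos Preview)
Your proof is correct and follows essentially the same outline as the paper's: compute the bilinear form on pairs $(v,f(v))$ to handle the isotropy condition, then use the parity criterion $\dim(U_1 \cap U_2) \equiv n \pmod 2$ to check that $\graph(f)$ lands in the correct component of $Z$. The paper assumes $f$ invertible at this last step, so that $\CC^n \cap \graph(f) = 0$ and $n$ is even; your version, computing $\graph(f) \cap (\CC^n \oplus 0) = \ker f$ and using that $\rank f$ is even, is a mild but genuine improvement since it covers non-invertible $f$ as well. Your connectedness argument is an alternative the paper does not give; it is perfectly valid and arguably cleaner, trading the explicit parity computation for the topological fact that the two components of $Z$ are disjoint.
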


    \begin{proof} It is easy to check that $(\graph(f), \graph(f))^- = 0$ if and only if $f$ is symmetric, and $(\graph(f), \graph(f))^+ = 0$ if and only if $f$ is skew-symmetric, so we just need to see in the latter case that $\graph(f)$ is actually in $\OG(2n)$. Let 
            \begin{equation*}
                Z = \left\{U \in \Gr(n,2n) : \langle U, U \rangle^+ = 0\right\},
            \end{equation*}
            so $\OG(2n)$ is the irreducible component of $Z$ containing $\CC^n \oplus 0$.  Two isotropic subspaces $U_1, U_2 \in Z$ are in the same component of $Z$ if and only if $\dim(U_1 \cap U_2) \equiv n \pmod{2}$. The fact that $f$ is invertible and skew-symmetric implies both that $n$ is even and that $\CC^n \cap \graph(f) = 0$, so indeed $\CC^n$ and $\graph(f)$ are in the same component of $Z$.
        \end{proof}
Given $y \in \I_n$, let $\openincSGX_y \subseteq \grLG \times \Fl(n)$ be
\begin{equation*}
     \{(\graph(f), E_\bullet) : \text{$\rank(E_j \xrightarrow{f} E_i^*) = \rank y_{[i][j]}$ for $i,j \in [n]$, $f$ symmetric}\}.
\end{equation*}
Given $z \in \Ifpf_n$, let $\openincSSGX_z \subseteq \grOG \times \Fl(n)$ be
\begin{equation*}
 \{(\graph(f), E_\bullet) : \text{$\rank(E_j \xrightarrow{f} E_i^*) = \rank z_{[i][j]}$ for $i,j \in [n]$, $f$ skew-symmetric}\}.
\end{equation*}
More generally, for any $y \in \I_n$, it will be convenient to define $\incSSGX_y \subseteq \OG(2n) \times \Fl(n)$ as the closure of 
\begin{equation*}
    \{(\graph(f), E_\bullet) : \text{$\rank(E_j \xrightarrow{f} E_i^*) = \rank y_{[i][j]}$ for $(i,j) \in \Ess(\iDSp(y))$, $f$ skew-symmetric}\}.
\end{equation*}
The proof of Lemma~\ref{lem:SGX-description}(b) below implies that if $y$ is fixed-point-free, the two definitions of $\incSSGX_y$ agree. If $y$ is not fixed-point-free, the notation is slightly misleading, since we are not taking $\incSSGX_y$ to be the closure of $\openincSSGX_y$, and indeed we leave the latter undefined. Replacing $\Ess(\iDSp(y))$ with $[n] \times [n]$ in the general definition of $\incSSGX_y$ above gives the empty set if $y$ is not fixed-point-free.

\begin{prop} \label{prop:orbit-description-LG-OG} The $\GL(n)$-orbits on $\grLG \times \Fl(n)$ are the sets $\openincSGX_y$ for $y \in \I_n$. The $\GL(n)$-orbits on $\grOG \times \Fl(n)$ are the sets $\openincSSGX_z$ for $z \in \Ifpf_n$. \end{prop}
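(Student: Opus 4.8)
The plan is to deduce this from the known classification of $\O(n)$- and $\Sp(n)$-orbits on $\Fl(n)$ (recalled just before Theorem~\ref{thm:inv-schubert-classes}), by the same mechanism used in the proof of Proposition~\ref{prop:orbit-description-Gr}: there the $\GL(n)\times\GL(n)$-orbit problem on $\grGr\times\Fl(n)\times\Fl(n)$ was reduced to a Borel-orbit problem on $\grGr$ using transitivity on the flag factors. Here the analogous observation is that $\GL(n)$ acts \emph{transitively} on each of $\grLG$ and $\grOG$.

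\textbf{Transitivity, stabilizers, and $\GL(n)$-stability.} Unwinding the action, $g\cdot\graph(f)=(g\oplus(g^*)^{-1})\graph(f)=\graph\big((g^*)^{-1}\circ f\circ g^{-1}\big)$, so the $\GL(n)$-action on graphs corresponds to the congruence action on the bilinear forms $(v,w)\mapsto f(v)(w)$. Over $\CC$ any two nondegenerate symmetric forms are congruent, and any two nondegenerate skew-symmetric forms are congruent (which forces $n$ even; for odd $n$ both $\grOG$ and $\Ifpf_n$ are empty and there is nothing to prove). Hence $\GL(n)$ acts transitively on $\grLG$ and on $\grOG$, and for a fixed invertible symmetric (resp.\ skew-symmetric) $f$ the stabilizer of $\graph(f)$ is the copy of $\O(n)$ (resp.\ $\Sp(n)$) preserving $(v,w)\mapsto f(v)(w)$, as in \S\ref{subsec:inv-schubert}. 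Next I would check that $\openincSGX_y$ is $\GL(n)$-stable; this reduces to the identity $\rank\big(gE_j\xrightarrow{(g^*)^{-1}fg^{-1}}(gE_i)^*\big)=\rank\big(E_j\xrightarrow{f}E_i^*\big)$, which holds because precomposition by $g^{-1}$ carries $gE_j$ isomorphically onto $E_j$, while for $\alpha\in{\CC^n}^*$ and $v\in E_i$ one has $((g^*)^{-1}\alpha)(gv)=\alpha(v)$, so restriction of $(g^*)^{-1}\alpha$ to $gE_i$ matches restriction of $\alpha$ to $E_i$ under $gE_i\cong E_i$. Thus $\openincSGX_y$ is $\GL(n)$-stable, and its fiber over a fixed $\graph(f)\in\grLG$ is exactly the set $\openiXO_y=\{E_\bullet:\rank(E_j\xrightarrow{f}E_i^*)=\rank y_{[i][j]}\}$ of \S\ref{subsec:inv-schubert}; the analogous statements hold for $\openincSSGX_z$ and $\openiXSp_z$.

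\textbf{The fibering argument.} I would then invoke the elementary fact (the same one underlying the fibered decomposition in the proof of Proposition~\ref{prop:orbit-description-Gr}, and a special case of Lemma~\ref{lem:fiber-bundle}) that when a group $G$ acts diagonally on $X\times Y$ and transitively on $X$, the assignment $O\mapsto O\cap(\{x_0\}\times Y)$ is a bijection from $G$-orbits on $X\times Y$ to $G_{x_0}$-orbits on $Y$, for any fixed $x_0\in X$. Applied with $G=\GL(n)$, $X=\grLG$, $Y=\Fl(n)$, $x_0=\graph(f)$, and $G_{x_0}=\O(n)$, this identifies the $\GL(n)$-orbits on $\grLG\times\Fl(n)$ with the $\O(n)$-orbits on $\Fl(n)$. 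By Wyser's theorem \cite{wyser-degeneracy-loci} the latter are precisely the sets $\openiXO_y$ for $y\in\I_n$; combined with the previous paragraph this shows the $\openincSGX_y$ are exactly the $\GL(n)$-orbits on $\grLG\times\Fl(n)$. Replacing $(\grLG,\O(n),\I_n)$ by $(\grOG,\Sp(n),\Ifpf_n)$ gives the second statement verbatim.

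\textbf{Expected obstacle.} I do not anticipate any serious difficulty; the argument is essentially bookkeeping built on top of the cited orbit classification. The only point needing care is keeping the several conventions consistent — the embedding $g\mapsto(g^{-1})^*\oplus g$ of $\GL(n)$ into $\Sp(2n)$ and $\SO(2n)$, the action $g\cdot U=(g\oplus(g^*)^{-1})U$, and the translation between ``invertible symmetric/skew-symmetric $f$'' and ``nondegenerate symmetric/alternating form'' — together with the observation that the skew-symmetric case is automatically confined to even $n$, where $\grOG$ is nonempty and $\Ifpf_n$ is nontrivial.
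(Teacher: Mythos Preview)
Your argument is correct, but it pivots on the opposite factor from the paper. The paper follows the template of Proposition~\ref{prop:orbit-description-Gr} literally: it uses transitivity of $\GL(n)$ on the \emph{flag} factor $\Fl(n)$, with stabilizer $B_n^+$, thereby reducing to the classification of $B_n^+$-orbits on $\grLG \simeq \GL^{\mathrm{sym}}(n)$ (resp.\ $\grOG \simeq \GL^{\mathrm{ssym}}(n)$) under congruence, for which it cites \cite{bagno-cherniavsky,cherniavsky,szechtman}. You instead use transitivity of $\GL(n)$ on the \emph{Grassmannian} factor $\grLG$ (resp.\ $\grOG$), with stabilizer $\O(n)$ (resp.\ $\Sp(n)$), reducing to Wyser's classification of $\O(n)$- and $\Sp(n)$-orbits on $\Fl(n)$ already quoted in \S\ref{subsec:inv-schubert}. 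Both reductions are valid instances of the same general principle you state. Your route is arguably more economical within the paper, since the needed input is already on the page; the paper's route has the advantage that the remark immediately following the proposition---that Proposition~\ref{prop:orbit-description-LG-OG} \emph{recovers} the $\O(n)$/$\Sp(n)$-orbit description of $\Fl(n)$---is then a genuine consequence rather than the assumption you started from.
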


\begin{proof}
    This follows by an argument analogous to the proof of Proposition~\ref{prop:orbit-description-Gr}. Sending $\graph(f)$ to the matrix of $f$ defines an isomorphism from $\grLG$ to the space of invertible symmetric matrices $\GL^{\text{sym}}(n)$, under which the $\GL(n)$-action on $\grLG$ corresponds to the action of $g \in \GL(n)$ on $A \in \GL^{\text{sym}}(n)$ by $g \cdot A = (g^t)^{-1}Ag^t$. It then suffices to show that the $B_n^+$-orbits on $\GL^{\text{sym}}(n)$ are the sets
    \begin{equation*}
        \{M \in \GL^{\text{sym}}(n) : \text{$\rank M_{[i][j]} = \rank y_{[i][j]}$ for $i,j \in [n]$}\}
    \end{equation*}
    for $y \in \I_n$. Similarly, we must see that the $B_n^+$-orbits on the space of invertible skew-symmetric matrices $\GL^{\text{ssym}}(n)$ are the sets
    \begin{equation*}
        \{M \in \GL^{\text{ssym}}(n) : \text{$\rank M_{[i][j]} = \rank z_{[i][j]}$ for $i,j \in [n]$}\}
    \end{equation*}
    for $z \in \Ifpf_n$. These statements were proven in \cite{bagno-cherniavsky, cherniavsky} and in \cite{szechtman}.
\end{proof}

In the $\openincSSGX_z$ case, Proposition~\ref{prop:orbit-description-LG-OG} is not very interesting when $n$ is odd, given that both $\Ifpf_n$ and $\grOG$ are empty. One might instead take $\grOG'$ to be the set of graphs of skew-symmetric maps $\CC^n \to {\CC^n}^*$ of maximal rank $2\lfloor n/2\rfloor$, in which case the $\GL(n)$-orbits on $\grOG' \times \Fl(n)$ are indexed by maximal rank skew-symmetric $n \times n$ $(0,1,{-1})$-matrices with at most one $\pm 1$ in each row and column.

\begin{lem} Each $\openincSGX_y$ and $\openincSSGX_y$ is a fiber bundle over $\grLG$ and $\grOG$, respectively, as well as over $\Fl(n)$. \end{lem}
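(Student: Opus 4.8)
The plan is to apply Lemma~\ref{lem:fiber-bundle} twice for each of $\openincSGX_y$ and $\openincSSGX_z$, once with each factor of the ambient product serving as the transitive space. The acting group is $\GL(n)$ throughout, and the two ingredients needed are: (i) $\GL(n)$ acts transitively on $\Fl(n)$ and also on $\grLG$ (resp.\ $\grOG$); and (ii) each $\openincSGX_y$ and $\openincSSGX_z$ is $\GL(n)$-stable inside the relevant product.

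Ingredient (ii) is immediate from Proposition~\ref{prop:orbit-description-LG-OG}, which exhibits $\openincSGX_y$ (resp.\ $\openincSSGX_z$) as a single $\GL(n)$-orbit on $\grLG \times \Fl(n)$ (resp.\ $\grOG \times \Fl(n)$), hence in particular as a $\GL(n)$-stable subset. For (i), transitivity on $\Fl(n)$ is standard. Transitivity on $\grLG$ comes from the $\GL(n)$-equivariant isomorphism $\grLG \simeq \GL^{\mathrm{sym}}(n)$ used in the proof of Proposition~\ref{prop:orbit-description-LG-OG}, under which $\GL(n)$ acts by congruence of matrices: over $\CC$ all nondegenerate symmetric bilinear forms are congruent, so this action is transitive. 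The same reasoning applies to $\grOG \simeq \GL^{\mathrm{ssym}}(n)$, using that for $n$ even there exist nondegenerate skew-symmetric forms on $\CC^n$ and that any two of them are congruent.

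Granting these, the statement is formal. Taking $G = \GL(n)$, $X = \grLG$, $Y = \Fl(n)$, and $S = \openincSGX_y \subseteq X \times Y$, Lemma~\ref{lem:fiber-bundle} shows the projection $\openincSGX_y \to \grLG$ is a fiber bundle; the same choice with $\grOG$, $\Fl(n)$, $\openincSSGX_z$ gives that $\openincSSGX_z \to \grOG$ is a fiber bundle. Interchanging the two factors — taking $X = \Fl(n)$ and $Y = \grLG$ (resp.\ $\grOG$), with $S$ still $\openincSGX_y$ (resp.\ $\openincSSGX_z$) — and applying Lemma~\ref{lem:fiber-bundle} once more shows that $\openincSGX_y \to \Fl(n)$ and $\openincSSGX_z \to \Fl(n)$ are fiber bundles as well. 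The only step that is not purely formal is the transitivity of $\GL(n)$ on $\grLG$ and $\grOG$ in ingredient (i), which amounts to nothing more than the classification of nondegenerate symmetric, resp.\ skew-symmetric, bilinear forms over $\CC$; so no real obstacle arises.
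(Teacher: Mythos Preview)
Your proof is correct and follows exactly the approach the paper intends: the paper's proof is simply ``Apply Lemma~\ref{lem:fiber-bundle},'' and you have spelled out precisely the hypotheses (transitivity of $\GL(n)$ on each factor, and $\GL(n)$-stability of the orbit) that make this application go through.
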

\begin{proof} Apply Lemma~\ref{lem:fiber-bundle}. \end{proof}

The fibers 
\begin{align*}
    &\openincSGX_y(\graph(f)) = \{E_\bullet : \text{$\rank(E_j \xrightarrow{f} E_i^*) = \rank y_{[i][j]}$ for $i,j \in [n]$}\}\\
    &\openincSSGX_z(\graph(f)) = \{E_\bullet : \text{$\rank(E_j \xrightarrow{f} E_i^*) = \rank z_{[i][j]}$ for $i,j \in [n]$}\}
\end{align*}
for fixed $f$, either symmetric or skew-symmetric as appropriate, were called $\openiXO_y$ and $\openiXSp_z$ in \S \ref{subsec:inv-schubert}.  Since $\GL(n)$ acts transitively on $\grLG$ with stabilizers isomorphic to $\O(n)$, and transitively on $\grOG$ with stabilizers isomorphic to $\Sp(n)$, Proposition~\ref{prop:orbit-description-LG-OG} recovers the fact mentioned in \S \ref{subsec:inv-schubert} that these fibers are the $\O(n)$- and $\Sp(n)$-orbits on $\Fl(n)$.

\begin{lem} \label{lem:SGX-description} Let $y \in \I_n$ and $z \in \Ifpf_n$. \hfill
    \begin{enumerate}[(a)]
    \item $\openincSGX_y$ and $\openincSSGX_z$ are irreducible, with codimensions $\iellO(y)$ and $\iellSp(z)$ respectively.    
    \item $\incSGX_y$ is the closure in $\LG(2n) \times \Fl(n)$ of
    \begin{equation*}
    \{(\graph(f), E_\bullet) : \text{$\rank(E_j \xrightarrow{f} E_i^*) = \rank y_{[i][j]}$ for $(i,j) \in \Ess(\iDO(y))$}\},
    \end{equation*}
    and $\incSSGX_z$ is the closure in $\OG(2n) \times \Fl(n)$ of
    \begin{equation*}
    \{(\graph(f), E_\bullet) : \text{$\rank(E_j \xrightarrow{f} E_i^*) = \rank z_{[i][j]}$ for $(i,j) \in \Ess(\iDSp(z))$}\}.
    \end{equation*}
    \item $\incSGX_y(\graph(f)) = \overline{\openincSGX_y(\graph(f))}$ for any $\graph(f) \in \grLG$. Also, if $E_\bullet \in \Fl(n)$ then $\incSGX_y(E_\bullet) = \overline{\openincSGX_y(E_\bullet)}$. Likewise for $\incSSGX_z$.
    \item The intersection $\incSGX_y(\graph(f)) = \incSGX_y \cap (\{\graph(f)\} \times \Fl(n) \times \Fl(n))$ is transverse, and likewise for $\incSSGX_z$.
    \end{enumerate}
    \end{lem}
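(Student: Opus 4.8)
The plan is to prove Lemma~\ref{lem:SGX-description} as the involution analogue of Lemma~\ref{lem:GX-description}, making three substitutions throughout: use Proposition~\ref{prop:orbit-description-LG-OG} in place of Proposition~\ref{prop:orbit-description-Gr}; use the $\O(n)$- and $\Sp(n)$-orbits $\openiXO_y,\openiXSp_z$ and their closures $\iXO_y,\iXSp_z$ in place of Schubert cells and Schubert varieties; and use the fact (due to Wyser \cite{wyser-degeneracy-loci} and Hamaker--Marberg--Pawlowski \cite{HMP1}, recalled in \S\ref{subsec:vex-inv}) that these orbits are cut out of $\Fl(n)$ by the rank conditions indexed by $\Ess(\iDO(y))$ resp.\ $\Ess(\iDSp(z))$, and that $\iXO_y$ and $\iXSp_z$ are irreducible of codimensions $\iellO(y)$ and $\iellSp(z)$, in place of Lemma~\ref{lem:degeneracy-locus-essential-set}. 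I would write out the $\LG$ case in detail; the $\OG$ case is identical with $\O(n),y,\iDO,\iellO,\iXO$ replaced by $\Sp(n),z,\iDSp,\iellSp,\iXSp$.

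For (a): by Proposition~\ref{prop:orbit-description-LG-OG}, $\openincSGX_y$ is a single $\GL(n)$-orbit and hence irreducible; by Lemma~\ref{lem:fiber-bundle}, $\openincSGX_y\to\grLG$ is a fiber bundle with fiber $\openiXO_y$, of codimension $\iellO(y)$ in $\Fl(n)$, so a dimension count gives $\codim\openincSGX_y=\iellO(y)$. For (b): let $C$ be the essential-set locus appearing in the statement. Lemma~\ref{lem:fiber-bundle} makes $C\to\grLG$ a fiber bundle whose fiber over $\graph(f)$ is the locus in $\Fl(n)$ cut out by the $\Ess(\iDO(y))$-rank conditions, which by the cited fact is the orbit $\openiXO_y$, with closure $\iXO_y$ irreducible of codimension $\iellO(y)$; hence $C$ is irreducible of codimension $\iellO(y)$ and has $\openincSGX_y$ as a dense open subset, so $\overline C=\overline{\openincSGX_y}=\incSGX_y$. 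On the $\OG$ side, the same computation shows that for $z\in\Ifpf_n$ the $\Ess(\iDSp(z))$-rank conditions and the full rank conditions cut out the same subset of $\grOG\times\Fl(n)$ (both give $\openiXSp_z$ over each fixed $f$), so the two definitions of $\incSSGX_z$ agree; for $y\in\I_n$ not fixed-point-free there is nothing to check since $\incSSGX_y$ is defined directly.

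For (c): by Lemma~\ref{lem:fiber-bundle} both $\openincSGX_y\to\grLG$ and $\incSGX_y\cap(\grLG\times\Fl(n))\to\grLG$ are fiber bundles, and the construction in the proof of that lemma gives compatible local trivializations, reducing the first assertion to the elementary claim used in Lemma~\ref{lem:GX-description}(c): if points of $X$ and of $Y\supseteq U$ are closed and $X\times U$ is dense in $X\times Y$, then $\{x\}\times Y=\overline{\{x\}\times U}$. Since $\GL(n)$ also acts transitively on $\Fl(n)$ and $\incSGX_y=\overline{\openincSGX_y}$ is $\GL(n)$-stable, Lemma~\ref{lem:fiber-bundle} likewise makes $\openincSGX_y\to\Fl(n)$ and $\incSGX_y\to\Fl(n)$ fiber bundles with compatible trivializations, and the same claim gives $\incSGX_y(E_\bullet)=\overline{\openincSGX_y(E_\bullet)}$. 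For (d): exactly as in Lemma~\ref{lem:GX-description}(d), Lemma~\ref{lem:fiber-bundle} reduces the transversality statement to the fact that for smooth manifolds $X,Y$ and a submanifold $Y'\subseteq Y$ the intersection $(X\times Y')\cap(\{x\}\times Y)=\{x\}\times Y'$ is transverse.

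The substantive content lies entirely in part (b), and it is imported rather than proved here: the argument rests on the inputs that the $\O(n)$- and $\Sp(n)$-orbit closures on $\Fl(n)$ are defined by the rank conditions attached to the essential sets of $\iDO(y)$ and $\iDSp(z)$ and are irreducible of codimensions $\iellO(y)$ and $\iellSp(z)$. Given those facts the rest is bookkeeping parallel to Lemma~\ref{lem:GX-description}; the only genuinely new wrinkle is the reconciliation of the two definitions of $\incSSGX_z$ for fixed-point-free $z$, which also falls out of the same orbit description.
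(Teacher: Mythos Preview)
Your argument is correct and follows essentially the same template as the paper's proof. One small imprecision worth flagging: your claim that the $\Ess(\iDO(y))$-equality fiber over $\graph(f)$ \emph{equals} the orbit $\openiXO_y$ is slightly stronger than what the cited references actually give---they yield the sandwich $\openiXO_y \subseteq C(\graph(f)) \subseteq \iXO_y$, which already makes the fiber irreducible with closure $\iXO_y$ and is all your argument needs (the paper handles this step by writing out the symmetric-matrix analogue $\overline{SM}_y^{\,=,\text{ess}} = \overline{SM}_y^{\,=}$ directly, citing \cite[Proposition~3.16]{HMP1} and \cite{bagno-cherniavsky,cherniavsky}, rather than invoking the flag-variety statement from \S\ref{subsec:vex-inv}).
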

        \begin{proof} \hfill
            \begin{enumerate}[(a)]
            \item Given that $\openincSGX_y$ and $\openincSSGX_z$ are orbits of an action of the irreducible group $\GL(n)$ by Proposition~\ref{prop:orbit-description-LG-OG}, they are irreducible. Just as in Lemma~\ref{lem:GX-description}(a), their codimensions are the codimensions of the fibers $\openincSGX_y(\graph(f)) = \iXO_y$ and $\openincSSGX_z(\graph(f)) = \iXSp_z$ over $\Fl(n)$, which are $\iellO(y)$ and $\iellSp(z)$ respectively. This last statement is implicit in Theorem~\ref{thm:inv-schubert-classes}, for instance, since $\codim \iXO_y = \deg [\iXO_y] = \deg \iS_y = \iellO(y)$ and likewise for $\iXSp_z$.
    
            \item Proceed as in the proof of Lemma~\ref{lem:GX-description}(b), replacing Lemma~\ref{lem:degeneracy-locus-essential-set} with the following argument. Let $SM_y^{=,\text{ess}}$, $SM_y^{\leq,\text{ess}}$, $SM_y^{=}$, and $SM_y^{\leq}$ be the sets of symmetric $g \in \Hom(\CC^n, {\CC^n}^*)$ for which the ranks $\rank(\CC^i \hookrightarrow \CC^n \xrightarrow{g} {\CC^n}^* \twoheadrightarrow  {\CC^j}^*)$ are either equal to or at most the ranks $\rank y_{[i][j]}$, either for $(i,j) \in \Ess(\iDO(y))$ or for all $(i,j) \in [n] \times [n]$. We must see that $\overline{SM}_y^{\hspace{1pt}=} = \overline{SM}_y^{\hspace{1pt}=,\text{ess}}$. The inequalities $\rank(E_j \xrightarrow{f} E_i^*) \leq \rank y_{[i][j]}$ for $(i,j) \in \Ess(\iDO(y))$ logically imply those for all $(i,j) \in [n] \times [n]$ \cite[Proposition 3.16]{HMP1}. Thus, ${SM}_y^{\hspace{1pt}\leq,\text{ess}} = {SM}_y^{\hspace{1pt}\leq}$, while $\overline{SM}_y^{\hspace{1pt}=} = {SM}_y^{\hspace{1pt}\leq}$ by \cite[Lemma 5.2]{bagno-cherniavsky}. Since $\overline{SM}_y^{\hspace{1pt}=} \subseteq \overline{SM}_y^{\hspace{1pt}=,\text{ess}} \subseteq {SM}_y^{\hspace{1pt}\leq,\text{ess}}$ by definition, we are done. The same argument works in the skew-symmetric case, using \cite{cherniavsky}.
            \item See the proof of Lemma~\ref{lem:GX-description}(c).
            \item See the proof of Lemma~\ref{lem:GX-description}(d).
        \end{enumerate}
    \end{proof}

\subsection{Cohomological formulas}
\label{subsec:inv-back-stable-reps}
In this subsection we show that $[\incSGX_{y}]$ and $[\incSSGX_{z}]$ are represented by the back-stable involution Schubert polynomials $2^{\twocyc(y)}\ibS_y$ and $\ibSfpf_z$. The proof follows the same outline as that of Theorem~\ref{thm:back-stable-rep}, so we mostly refer back to the results of \S \ref{subsec:back-stable-reps}, indicating differences when necessary.

\begin{lem} \label{lem:grassmannian-locus-LG-OG} If $y \in \I_n$ is vexillary then $\incSGX_y$ is the closure of the locus
    \begin{equation*}
        \{(\graph(f),E_\bullet) : \text{$\dim(U \cap (E_j \oplus \ann{E_i})) = j - \rank y_{[i][j]}$ for $(i,j) \in \Ess(\iDO(y))$} \}.
    \end{equation*}
    If $y \in \I_n$ is Sp-vexillary, then $\incSSGX_y$ is the closure of
    \begin{equation*}
        \{(\graph(f),E_\bullet) : \text{$\dim(U \cap (E_j \oplus \ann{E_i})) = j - \rank y_{[i][j]}$ for $(i,j) \in \Ess(\iDSp(y)))$} \}.
    \end{equation*}
\end{lem}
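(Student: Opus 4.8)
The plan is to run the proof of Lemma~\ref{lem:grassmannian-locus} in parallel for the two cases, with the triples $(\iDO,\LG(2n),(-,-)^-,\text{vexillary})$ and $(\iDSp,\OG(2n),(-,-)^+,\text{Sp-vexillary})$; I describe the Lagrangian case, the orthogonal one being identical except for a remark below. A point $U$ is a graph iff $U\cap{\CC^n}^*=0$, so the open dense ``big cell'' of $\LG(2n)$ is $\{\graph(f):f\text{ symmetric}\}$, and (because a skew-symmetric matrix has even rank, keeping $\graph(f)$ in the component of $\CC^n$) that of $\OG(2n)$ is $\{\graph(f):f\text{ skew-symmetric}\}$. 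I would first record the rank--nullity translation, valid for \emph{any} linear $f\colon\CC^n\to{\CC^n}^*$: the projection $\graph(f)\to\CC^n$ restricts to an isomorphism $\graph(f)\cap(E_j\oplus{\CC^n}^*)\xrightarrow{\ \sim\ }E_j$, so this intersection is $j$-dimensional, while the kernel of $\graph(f)\cap(E_j\oplus{\CC^n}^*)\hookrightarrow\CC^n\oplus{\CC^n}^*\twoheadrightarrow E_i^*$ is exactly $\graph(f)\cap(E_j\oplus\ann{E_i})$; since the image of this composite equals that of $E_j\xrightarrow{f}E_i^*$, rank--nullity gives $\rank(E_j\xrightarrow{f}E_i^*)=j-\dim(\graph(f)\cap(E_j\oplus\ann{E_i}))$. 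Writing $B_y\subseteq\LG(2n)\times\Fl(n)$ for the locus where $\dim(U\cap(E_j\oplus\ann{E_i}))=j-\rank y_{[i][j]}$ for all $(i,j)\in\Ess(\iDO(y))$, this identity shows that the intersection of $B_y$ with the big cell is precisely the locus $\{(\graph(f),E_\bullet):f\text{ symmetric},\ \rank(E_j\xrightarrow{f}E_i^*)=\rank y_{[i][j]}\text{ for }(i,j)\in\Ess(\iDO(y))\}$, whose closure is $\incSGX_y$ by Lemma~\ref{lem:SGX-description}(b); hence $\overline{B_y}\supseteq\incSGX_y$.

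For the reverse inclusion I would prove $B_y$ is irreducible: then its intersection with the big cell, being open and (if nonempty) dense, forces $\overline{B_y}$ to equal that closure, i.e.\ $\incSGX_y$. This is where vexillarity is used. By Lemma~\ref{lem:inv-vex-chain}, $\Ess(\iDO(y))=\{(i_1,j_1)\SWNEneq\cdots\SWNEneq(i_s,j_s)\}$ is a chain, so $j_1\le\cdots\le j_s$ and $i_1\ge\cdots\ge i_s$ and therefore $E_{j_1}\oplus\ann{E_{i_1}}\subseteq\cdots\subseteq E_{j_s}\oplus\ann{E_{i_s}}$; moreover each $(i_p,j_p)\in\iDO(y)$ has $i_p\ge j_p$, and one checks directly that $E_j\oplus\ann{E_i}$ is isotropic for $(-,-)^-$ whenever $j\le i$, so this is a chain of isotropic subspaces, which may be completed to a full isotropic flag. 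The group $\GL(n)$ acts transitively on $\Fl(n)$ and stabilizes $B_y$ (since $g\cdot(E_j\oplus\ann{E_i})=gE_j\oplus\ann{gE_i}$), so Lemma~\ref{lem:fiber-bundle} makes $B_y\to\Fl(n)$ a fiber bundle whose fiber over $E_\bullet$ is $\{U\in\LG(2n):\dim(U\cap(E_{j_p}\oplus\ann{E_{i_p}}))=j_p-\rank y_{[i_p][j_p]}\text{ for }p\in[s]\}$; the closure of this fiber is a Schubert variety in $\LG(2n)$ with respect to the isotropic flag just built (cf.\ \S\ref{subsec:degeneracy-loci-LG-OG}), hence irreducible, hence the fiber is irreducible, hence so is $B_y$, being a fiber bundle over the irreducible base $\Fl(n)$ with irreducible fibers. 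Combining with the previous paragraph, $\overline{B_y}=\incSGX_y$.

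The orthogonal case is word-for-word the same, with $(-,-)^+$ in place of $(-,-)^-$ and ``Sp-vexillary'' in place of ``vexillary'' (so that $\Ess(\iDSp(y))$ is a chain by definition), and with the isotropy check now reading: $E_j\oplus\ann{E_i}$ is $(-,-)^+$-isotropic when $j\le i$, which holds for $(i,j)\in\iDSp(y)$ since there $i>j$; here one uses that $\incSSGX_y$ is, by its very definition and the general rank--nullity translation above, the closure of the big-cell locus of $B_y$ (and if that locus is empty the statement is vacuous). The one extra subtlety is that $\OG(2n)$ is only a component of the $(-,-)^+$-isotropic Grassmannian, so the completing isotropic flag must be chosen with its maximal isotropic subspace in the component of $\CC^n$ in order for the fibers to be genuine $\OG(2n)$-Schubert varieties. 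I expect this recognition of the fibers as irreducible isotropic Schubert varieties---verifying the nested-isotropic-flag structure and, in type D, the component bookkeeping---to be the main obstacle; the rank--nullity translation and the fiber-bundle structure are routine and already appear in the proof of Lemma~\ref{lem:grassmannian-locus}.
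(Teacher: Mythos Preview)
Your proof is correct and follows the same approach as the paper's, which simply says to repeat the argument of Lemma~\ref{lem:grassmannian-locus} with Schubert varieties in $\LG(2n)$ or $\OG(2n)$ in place of those in $\Gr(n,2n)$, invoking Lemma~\ref{lem:SGX-description}(b) instead of Lemma~\ref{lem:GX-description}(b). Your extra care about isotropy of the subspaces $E_j\oplus\ann{E_i}$ and the component bookkeeping in type D is exactly what the paper leaves implicit. One small imprecision: Lemma~\ref{lem:SGX-description}(b) gives $\incSGX_y$ as the closure of the locus with $\graph(f)\in\grLG$ (that is, $f$ invertible symmetric), not of your larger big-cell locus with $f$ merely symmetric; but this is harmless, since $\grLG\times\Fl(n)$ is also open dense and meets $B_y$, so the irreducibility of $B_y$ still forces $\overline{B_y}=\overline{B_y\cap(\grLG\times\Fl(n))}=\incSGX_y$.
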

\begin{proof} Apply the argument of Lemma~\ref{lem:grassmannian-locus}, using Lemma~\ref{lem:SGX-description}(b) and replacing Schubert varieties in $\Gr(n,2n)$ with Schubert varieties in $\LG(2n)$ or $\OG(2n)$.
\end{proof}

Suppose $y \in \I_n$ is vexillary, so we can write $\Ess(\iDO(y)) = \{(i_1, j_1) \SWNEneq \cdots \SWNEneq (i_s, j_s)\}$. Lemma~\ref{lem:grassmannian-locus-LG-OG} shows that $\incSGX_y$ is an example of a Lagrangian Grassmannian degeneracy locus as described in \S \ref{subsec:degeneracy-loci-LG-OG}. To be specific:
\begin{itemize}
    \item[$\scriptstyle \blacktriangleright$] $\vb V$ is the trivial bundle ${\CC^n} \oplus {\CC^n}^*$ over the first factor of $X = \LG(2n) \times \Fl(n)$;
    \item[$\scriptstyle \blacktriangleright$] $\vb G$ is the tautological bundle over the first factor of $X$;
    \item[$\scriptstyle \blacktriangleright$] $\vb{H}_{\mu_p} = \vb{E}_{j_p} \oplus \ann{\vb{E}}_{i_p}$, where $\vb E_{\bullet}$ is the tautological flag of bundles over the second factor of $X$, so $\mu_p = n-\rank(\vb E_{j_p} \oplus \ann{\vb{E}}_{i_p})+1 = i_p-j_p+1$ for $p \in [s]$.
    \item[$\scriptstyle \blacktriangleright$] $k_p = j_p - \rank y_{[i_p][j_p]}$ for $p \in [s]$.
    \item[$\scriptstyle \blacktriangleright$] $\lambda_{k} = \mu_p+k_p-k$ for $k \in [k_s]$, where $p$ is such that $k_{p-1} < k \leq k_p$.
\end{itemize}
Using this data,
\begin{equation} \label{eq:SGX-chern-class}
    c(k) = \frac{c(\vb V)}{c(\vb G)c(\vb{E}_{j_p} \oplus \ann{\vb{E}}_{i_p})} = \frac{1}{c(\vb G)} \frac{(1+x_1) \cdots (1+x_{i_p})}{(1-x_1)\cdots (1-x_{j_p})},
\end{equation}
where $k_{p-1} < k \leq k_p$. Identify $1/c(\vb G)$ with $\sum_d Q_d(x_-)$ as in \S \ref{subsec:LG-OG-cohom}, so
\begin{equation*}
    c(k)_d = h_d(x_{-\infty\cdots i_p} \superslash x_{-\infty\cdots j_p}) = \sum_{a+b = d} Q_a(x_{-})h_b(x_{1\cdots i_p} \superslash x_{1\cdots j_p}).
\end{equation*}
Set $c'(k) = c(k)\ev_{c(\vb G)\to 1}$, so $c'(k)_d = h_d(x_{1\cdots i_p} \superslash x_{1\cdots j_p})$.

\begin{lem} \label{lem:vex-double-schubert-LG-reps} Let $y \in \I_n$ be vexillary, and fix  $\graph(f) \in \grLG$. The class $[\incSGX_y]$ is represented by $Q_{\lambda}(c(1), \ldots, c(k_s))$, and $[\incSGX_y(\graph(f))]$ is represented by $Q_{\lambda}(c'(1), \ldots, c'(k_s))$, where $\lambda = \ishO(y)$. \end{lem}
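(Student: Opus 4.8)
The plan is to transcribe the proof of Lemma~\ref{lem:vex-double-schubert-reps} to the Lagrangian setting, with the Kempf--Laksov formula (Theorem~\ref{thm:kempf-laksov}) replaced by the Kazarian / Anderson--Fulton Pfaffian formula (Theorem~\ref{thm:lagrangian-locus}). First I would invoke Lemma~\ref{lem:grassmannian-locus-LG-OG} to identify $\incSGX_y$ with the Lagrangian Grassmannian degeneracy locus $\Omega^{\LG}$ built from the data listed in the bullets just before the lemma: $\vb V = {\CC^n} \oplus {\CC^n}^*$ trivial over $\LG(2n)$; $\vb G$ the tautological bundle, isotropic since $\LG(2n)$ parametrizes Lagrangian subspaces; $\vb H_{\mu_p} = \vb E_{j_p} \oplus \ann{\vb E}_{i_p}$, isotropic because $j_p \le i_p$ forces $E_{j_p} \subseteq E_{i_p}$; $\mu_p = i_p - j_p + 1$; and $k_p = j_p - \rank y_{[i_p][j_p]}$. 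The chain structure of $\Ess(\iDO(y))$ from Lemma~\ref{lem:inv-vex-chain} gives $i_1 > \cdots > i_s$ and $j_1 < \cdots < j_s$, so that $\mu_1 \ge \cdots \ge \mu_s$ and $\vb H_{\mu_1} \subseteq \cdots \subseteq \vb H_{\mu_s}$, while the northwest-closure property of $D(y)$ forces $0 = k_0 < k_1 < \cdots < k_s$ as required by the degeneracy locus framework.

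Next I would check the two hypotheses of Theorem~\ref{thm:lagrangian-locus}: that $\lambda$ is a strict partition and that $\Omega^{\LG}$ has codimension $|\lambda|$. By Lemma~\ref{lem:inv-shape} the partition $\lambda_k = \mu_p + k_p - k = i_p - j_p + 1 + k_p - k$ (for $k_{p-1} < k \le k_p$) coincides with $\ishO(y)$; its strictness follows from Lemma~\ref{lem:inv-code-parts}, which says the nonzero values of $\icO(y)$ form the gap-free interval $[k_s]$, so that the conjugate partition $\ishO(y)^t$, obtained by sorting $\icO(y)$, has consecutive differences in $\{0,1\}$ — which is equivalent to $\ishO(y)$ being strict. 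For the codimension, $|\lambda| = |\ishO(y)| = |\iDO(y)| = \iellO(y)$, which equals $\codim \incSGX_y$ by Lemma~\ref{lem:SGX-description}(a). Theorem~\ref{thm:lagrangian-locus} then yields $[\incSGX_y] = Q_{\lambda}(c(1), \ldots, c(k_s))$.

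For the fiber over $\graph(f)$, let $\iota : \Fl(n) \hookrightarrow \LG(2n) \times \Fl(n)$ be $E_\bullet \mapsto (\graph(f), E_\bullet)$. By the transversality in Lemma~\ref{lem:SGX-description}(d), $[\incSGX_y(\graph(f))] = \iota^*[\incSGX_y]$. Since $\iota$ is constant onto $\graph(f)$ in the $\LG(2n)$ factor and the identity in the $\Fl(n)$ factor, $\iota^*$ annihilates classes pulled back from $\LG(2n)$ in positive degree; in particular $\iota^*\vb G$ is trivial, so by \eqref{eq:SGX-chern-class}, $\iota^* c(k) = 1/c(\vb E_{j_p} \oplus \ann{\vb E}_{i_p}) = \frac{(1+x_1)\cdots(1+x_{i_p})}{(1-x_1)\cdots(1-x_{j_p})} = c'(k)$. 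Because $\iota^*$ is a ring homomorphism and $Q_{\lambda}(c(1),\ldots,c(k_s))$ is a Pfaffian, hence a polynomial expression in its arguments, we get $\iota^*[\incSGX_y] = Q_{\lambda}(c'(1), \ldots, c'(k_s))$, which is the second assertion.

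I expect the only real content to lie in the bookkeeping of the second paragraph: matching the abstractly defined partition $\lambda$ of the degeneracy locus with $\ishO(y)$ and confirming that it is strict, which is exactly where the vexillarity of $y$ and the combinatorial lemmas of \S\ref{subsec:vex-inv} (Lemmas~\ref{lem:inv-code-parts} and \ref{lem:inv-shape}) do their work. Everything afterward — the application of Theorem~\ref{thm:lagrangian-locus} and the restriction to a fiber — is formal and parallels the proof of Lemma~\ref{lem:vex-double-schubert-reps} verbatim.
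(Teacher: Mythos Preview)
Your proposal is correct and follows essentially the same approach as the paper: identify $\incSGX_y$ as a Lagrangian Grassmannian degeneracy locus via Lemma~\ref{lem:grassmannian-locus-LG-OG}, verify via Lemmas~\ref{lem:inv-code-parts}, \ref{lem:inv-shape}, and \ref{lem:SGX-description}(a) that $\lambda = \ishO(y)$ is strict and has the correct size, apply Theorem~\ref{thm:lagrangian-locus}, and then restrict to the fiber using transversality as in Lemma~\ref{lem:vex-double-schubert-reps}. One tiny imprecision: the chain condition $(i_1,j_1) \SWNEneq \cdots \SWNEneq (i_s,j_s)$ only gives $i_1 \ge \cdots \ge i_s$ and $j_1 \le \cdots \le j_s$ with at least one strict at each step, not both strict as you wrote; still, $\mu_p = i_p - j_p + 1$ is strictly decreasing, so the flag $\vb H_{\mu_1} \subsetneq \cdots \subsetneq \vb H_{\mu_s}$ is as required.
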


\begin{proof}
     Lemma~\ref{lem:inv-shape} shows that $\lambda$ as defined above equals $\ishO(y)$, and that this partition is strict. By Lemma~\ref{lem:SGX-description}(a), $\codim \incSGX_y = \hat \ell(y) = |\ishO(y)| = |\lambda|$. This is the expected codimension for $\incSGX_y$ as a Lagrangian Grassmannian degeneracy locus $\Omega^{\LG}$ with respect to the data described above,  so $[\incSGX_y] = Q_\lambda(c(1), \ldots, c(k_s))$ by Theorem~\ref{thm:lagrangian-locus}. As in the proof of Lemma~\ref{lem:vex-double-schubert-reps}, this implies that $[\incSGX_y(\graph(f))] = Q_\lambda(c'(1), \ldots, c'(k_s))$.
\end{proof}

\begin{thm} \label{thm:vex-inv-schubert-LG-polys} Let $y \in \I_n$ be vexillary. As polynomials in $x$, $Q_{\lambda}(c'(1), \ldots, c'(k_s))$ equals $2^{\twocyc(y)}\iS_{y}(x)$ and $Q_{\lambda}(c(1), \ldots, c(k_s))$ equals $2^{\twocyc(y)}\ibS_{y}(x)$, where $\lambda = \ishO(y)$. \end{thm}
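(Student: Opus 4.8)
The plan is to run the argument of Theorem~\ref{thm:vex-double-schubert-polys}, substituting the cohomological input of \S\ref{subsec:back-stable-reps} by that of the present section: in place of Fulton's determinantal formula for $\fkS_w(x;y)$ we use Wyser and Yong's Theorem~\ref{thm:inv-schubert-classes} together with Lemma~\ref{lem:vex-double-schubert-LG-reps}, and in place of the Kempf--Laksov formula we use the Pfaffian formula of Theorem~\ref{thm:lagrangian-locus}.

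First I would prove the identity $2^{\twocyc(y)}\iS_y(x) = Q_\lambda(c'(1),\ldots,c'(k_s))$. By Lemma~\ref{lem:SGX-description}(c) and the identification of $\openincSGX_y(\graph(f))$ with the $\O(n)$-orbit $\openiXO_y$, the fiber $\incSGX_y(\graph(f))$ is the orbit closure $\iX_y\subseteq\Fl(n)$; hence Lemma~\ref{lem:vex-double-schubert-LG-reps} and Theorem~\ref{thm:inv-schubert-classes} exhibit $Q_\lambda(c'(1),\ldots,c'(k_s))$ and $2^{\twocyc(y)}\iS_y$ as two polynomial representatives of the single class $[\iX_y]\in H^*(\Fl(n))$. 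To upgrade this to an equality of polynomials I would replace $y$ by $y\times 1_m$ and let $m\to\infty$. The involution $y\times 1_m$ is again vexillary, and from the description of Rothe diagrams one checks $\iDO(y\times 1_m)=\iDO(y)$ (so $\lambda=\ishO(y)$, the $k_p$, and the power series $c'(k)$ are all unchanged), $\twocyc(y\times 1_m)=\twocyc(y)$, and $\iS_{y\times 1_m}=\iS_y$ (the sets of reduced involution words coincide). Applying the previous paragraph to $y\times 1_m$ for every $m$ shows that both polynomials represent $[\iX_{y\times 1_m}]\in H^*(\Fl(n+m))$; by Lemma~\ref{lem:unique-stable-rep} they must be equal.

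For the back-stable identity I would imitate the limiting argument at the end of the proof of Theorem~\ref{thm:vex-double-schubert-polys}. The involution $1_m\times y$ is vexillary, $\Ess(\iDO(1_m\times y))=\{(i_p+m,j_p+m):p\in[s]\}$, and $\twocyc$, $\lambda$, and the $k_p$ are unchanged, so applying the identity just proved to $1_m\times y$ gives
\begin{equation*}
2^{\twocyc(y)}\iS_{1_m\times y}(x)=Q_\lambda(c''(1),\ldots,c''(k_s)),\qquad c''(k)=\frac{(1+x_1)\cdots(1+x_{m+i_p})}{(1-x_1)\cdots(1-x_{m+j_p})}\ \text{ for }k_{p-1}<k\leq k_p.
\end{equation*}
Since $2^{\twocyc(y)}\ibS_y=\lim_{m\to\infty}2^{\twocyc(y)}\iS_{1_m\times y}(x_{-m\cdots n})$ and relabeling the variables $x_1,x_2,\ldots$ as $x_{-m},\ldots,x_{-1},x_1,x_2,\ldots$ turns $c''(k)$ into $\tfrac{(1+x_{-m})\cdots(1+x_{-1})(1+x_1)\cdots(1+x_{i_p})}{(1-x_{-m})\cdots(1-x_{-1})(1-x_1)\cdots(1-x_{j_p})}$, whose coefficient-wise limit is $\left(\sum_d Q_d(x_-)\right)\tfrac{(1+x_1)\cdots(1+x_{i_p})}{(1-x_1)\cdots(1-x_{j_p})}=c(k)$, and since $Q_\lambda(-)$ is a fixed polynomial expression in finitely many graded pieces of its arguments, the limit may be taken inside $Q_\lambda$, yielding $2^{\twocyc(y)}\ibS_y(x)=Q_\lambda(c(1),\ldots,c(k_s))$.

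I expect the only genuine obstacle is the passage from equality of cohomology classes in a fixed $H^*(\Fl(n))$ to equality of polynomials, which is precisely why the argument must detour through $y\times 1_m$ and invoke Lemma~\ref{lem:unique-stable-rep}; the supporting verifications — that $\twocyc$, $\ishO$, essential sets, and reduced involution words are insensitive to adjoining trailing or leading fixed points, and that these operations preserve vexillarity — are routine consequences of the definitions of Rothe diagrams and atoms.
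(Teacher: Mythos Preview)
Your argument is correct and follows the same route as the paper: first use Lemma~\ref{lem:vex-double-schubert-LG-reps} and Theorem~\ref{thm:inv-schubert-classes} to see that both polynomials represent $[\incSGX_{y\times 1_m}(\graph(f))]$ for all $m$, conclude equality via Lemma~\ref{lem:unique-stable-rep}, and then obtain the back-stable identity by applying the first part to $1_m\times y$ and taking the same limit as in Theorem~\ref{thm:vex-double-schubert-polys}. Your write-up is in fact more detailed than the paper's, which simply refers back to the proof of Theorem~\ref{thm:vex-double-schubert-polys} for the limiting step.
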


    \begin{proof}
        Fix $\graph(f) \in \LG(2n)$. If $y$ is vexillary, so is $y \times 1$, and $\iDO(y \times 1) = \iDO(y)$. Thus, the single polynomial $Q_{\lambda}(c'(1), \ldots, c'(k_s))$ represents the classes $[\incSSGX_{y \times 1_m}(\graph(f))]$ for all $m$. By Theorem~\ref{thm:inv-schubert-classes}, $2^{\twocyc(y)}\iS_y$ represents $[\incSSGX_{y \times 1_m}(\graph(f))]$ when $m = 0$, and in fact for all $m$ since $\iS_y = \iS_{y\times 1}$ (\cite[Theorem 2]{wyser-yong-orthogonal-symplectic}). By Lemma~\ref{lem:unique-stable-rep}, we conclude that $Q_{\lambda}(c'(1), \ldots, c'(k_s)) = 2^{\twocyc(y)}\iS_y$. The same limiting argument given in the proof of Theorem~\ref{thm:vex-double-schubert-polys} now yields $Q_{\lambda}(c(1), \ldots, c(k_s)) = 2^{\twocyc(y)}\ibS_y$.
    \end{proof}

In \cite{HMP4}, Pfaffian formulas for $\iS_y$ were given in the case that $y$ is I-Grassmannian, meaning that $\Ess(\iDO(y)) \subseteq \{m\} \times \NN$ for some $m$ (see \S \ref{sec:grassmannians}). Theorem~\ref{thm:vex-inv-schubert-LG-polys} generalizes those formulas to all vexillary $y$, but is an improvement even when $y$ is I-Grassmannian: \cite{HMP4} expresses $\iS_y$ as a Pfaffian of polynomials $\iS_{y'}$ where $\ishO(y')$ has two rows, but does not give explicit formulas for the two-row case.

    \begin{lem} \label{lem:divided-diff-classes-LG-OG} For $y \in \I_n$ and $z \in \Ifpf_n$,
        \begin{equation*}
            \partial_i [\incSGX_y] = \begin{cases}
                [\incSGX_{s_i \circ y \circ s_i}] & \text{if $\ell(ys_i) < \ell(y)$ and $\twocyc(s_i \circ y \circ s_i) = \twocyc(y)$}\\
                2[\incSGX_{s_i \circ y \circ s_i}] & \text{if $\ell(ys_i) < \ell(y)$ and $\twocyc(s_i \circ y \circ s_i) = \twocyc(y)-1$}\\
                0 & \text{otherwise}
            \end{cases}
        \end{equation*}
        and
        \begin{equation*}
            \partial_i [\incSSGX_z] = \begin{cases}
                [\incSSGX_{s_i \circ z \circ s_i}] & \text{if $\ell(zs_i) < \ell(z)$ and $s_i \circ z \circ s_i \in \Ifpf_n$}\\
                0 & \text{otherwise}
            \end{cases}
        \end{equation*}
    \end{lem}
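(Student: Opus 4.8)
The plan is to imitate the proof of Lemma~\ref{lem:divided-diff-classes} essentially verbatim, with $\incGX_w$, $\grGr$, $\Gr(n,2n)$ replaced by $\incSGX_y$, $\grLG$, $\LG(2n)$ (respectively $\incSSGX_z$, $\grOG$, $\OG(2n)$), and with the combinatorics of Schubert cells replaced by that of $\O(n)$- and $\Sp(n)$-orbits on $\Fl(n)$.

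First I would set up the geometric divided difference: with $X = \LG(2n)\times\Fl(n)$, let $\tilde X = X\times_{\LG(2n)\times\Fl_i(n)}X$ with canonical projections $P_1,P_2:\tilde X\to X$, so that ${P_2}_*P_1^*$ is the operator $\partial_i$ of the statement, acting as $\id\otimes\partial_i$ on $H^*(\LG(2n))\otimes H^*(\Fl(n))$. By Lemma~\ref{lem:fiber-bundle}, applied to the $\GL(n)$-actions on $\LG(2n)$ and on $\Fl(n)\times_{\Fl_i(n)}\Fl(n)$, both $P_1^{-1}(\openincSGX_y)$ and $P_2(P_1^{-1}(\openincSGX_y))$ are fiber bundles over $\grLG$, with fibers $p_1^{-1}(\openiXO_y)$ and $p_2(p_1^{-1}(\openiXO_y))$ where $p_1,p_2:\Fl(n)\times_{\Fl_i(n)}\Fl(n)\to\Fl(n)$; moreover $P_2$ respects these structures and, over a suitable trivializing open $U\subseteq\grLG$, becomes $\id_U\times p_2$. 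Just as in Lemma~\ref{lem:divided-diff-classes}, this reduces the computation of $\partial_i[\incSGX_y]$ via \eqref{eq:divided-difference-cases} to the behaviour of the forgetful map $\Fl(n)\to\Fl_i(n)$ on the orbit closure $\iXO_y$: whether $\iXO_y$ is stable under it, in which case $\partial_i[\incSGX_y]=0$, and if not, the generic degree of $p_2:p_1^{-1}(\iXO_y)\to p_2(p_1^{-1}(\iXO_y))$.

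That behaviour is exactly what is recorded by the divided difference recurrence for the classes $[\iXO_y]=2^{\twocyc(y)}\iS_y$ of Theorem~\ref{thm:inv-schubert-classes}; Wyser and Yong established this recurrence by analyzing precisely the projection $\Fl(n)\to\Fl_i(n)$ \cite{wyser-yong-orthogonal-symplectic}, equivalently it can be read off from the weak order on $\O(n)$-orbits. Namely, $\iXO_y$ is stable under $\Fl(n)\to\Fl_i(n)$ exactly when $\ell(ys_i)>\ell(y)$, and when $\ell(ys_i)<\ell(y)$ one has $p_2(p_1^{-1}(\iXO_y))=\iXO_{s_i\circ y\circ s_i}$, onto which $p_2$ is birational if $\twocyc(s_i\circ y\circ s_i)=\twocyc(y)$ and generically two-to-one if $\twocyc(s_i\circ y\circ s_i)=\twocyc(y)-1$ --- the latter being the case $y(i)=i+1$, where the $\PP^1$-fibre of $\Fl(n)\to\Fl_i(n)$ already lies inside $\iXO_y$ (as is visible already for $n=2$, where the two isotropic lines of a nondegenerate symmetric form cover $\Fl(2)=\PP^1$ two-to-one). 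Feeding this back into the previous paragraph, $\overline{P_2(P_1^{-1}(\openincSGX_y))}$ is a fiber bundle over $\grLG$ with fiber $\iXO_{s_i\circ y\circ s_i}$, hence its dense open orbit is $\openincSGX_{s_i\circ y\circ s_i}$ and it equals $\incSGX_{s_i\circ y\circ s_i}$ by Lemma~\ref{lem:SGX-description}(b),(c); combined with the value of the degree this gives the three-case formula for $\partial_i[\incSGX_y]$.

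The fixed-point-free case is run identically, using $\iXSp_z$, $\iSfpf_z$, $\grOG$, $\OG(2n)$. Since $\twocyc$ is constant on $\Ifpf_n$, the generic degree is always $1$ in the nonzero case, and the additional possibility of vanishing, $s_i\circ z\circ s_i\notin\Ifpf_n$, occurs precisely when $z(i)=i+1$: then $s_i\circ z\circ s_i$ is obtained from $z$ by deleting a $2$-cycle and acquires fixed points. The only genuinely new ingredient compared with Lemma~\ref{lem:divided-diff-classes} --- and the step I expect to be the crux --- is the fiberwise one: identifying the generic fibre of $p_2:p_1^{-1}(\iXO_y)\to p_2(p_1^{-1}(\iXO_y))$ and, in particular, distinguishing the birational from the two-to-one case; this is the substance of the Wyser--Yong recurrence, which we are content to invoke rather than reprove.
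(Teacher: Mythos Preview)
Your proposal is correct and follows essentially the same approach as the paper: reduce via the fiber bundle structure over $\grLG$ (resp.\ $\grOG$) to the corresponding statement for the fibers $\iXO_y$ (resp.\ $\iXSp_z$), exactly as in the proof of Lemma~\ref{lem:divided-diff-classes}, and then invoke the Wyser--Yong divided difference recurrence for those orbit closures. The paper's proof is just a two-sentence pointer to precisely this argument; your write-up simply unpacks it in more detail.
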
        
    
    \begin{proof}  
    Fixing an invertible and symmetric or skew-symmetric $f$ as appropriate, Wyser and Yong proved these recurrences hold for the classes $[\incSGX_y(\graph(f))]$ and $[\incSSGX_z(\graph(f))]$ (Proposition~\ref{prop:inv-schubert-recurrence} and Theorem~\ref{thm:inv-schubert-classes}). The lemma then follows by the argument of Lemma~\ref{lem:divided-diff-classes}.
    \end{proof}

\begin{thm} \label{thm:back-stable-rep-LG}
    For $y \in \I_n$, the back-stable involution Schubert polynomial $2^{\twocyc(y)}\ibS_y$ represents the class $[\incSGX_y]$.
\end{thm}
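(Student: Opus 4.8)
The plan is to imitate the proof of Theorem~\ref{thm:back-stable-rep} line for line, with the material of \S\ref{subsec:inv-back-stable-reps} playing the part of that in \S\ref{subsec:back-stable-reps}. The base case is $y = w_0 = n(n-1)\cdots 21$. This is an involution; it is vexillary, since a strictly decreasing word contains no $2143$ pattern; and it is the unique element of $\I_n$ of largest involution length, because $\iellO(y) = \tfrac12(\ell(y)+\twocyc(y))$ is maximized exactly when $\ell(y)$ is. So Lemma~\ref{lem:vex-double-schubert-LG-reps} together with Theorem~\ref{thm:vex-inv-schubert-LG-polys}, applied to $w_0$, shows that $2^{\twocyc(w_0)}\ibS_{w_0}$ represents $[\incSGX_{w_0}]$.

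I would then transport this statement along divided differences. Lemma~\ref{lem:divided-diff-classes-LG-OG} describes $\partial_i[\incSGX_y]$ --- it is either $0$, or $[\incSGX_{s_i\circ y\circ s_i}]$, or $2[\incSGX_{s_i\circ y\circ s_i}]$, the multiplicity recording whether $\twocyc$ is preserved or drops by one under $y\mapsto s_i\circ y\circ s_i$ --- and Proposition~\ref{prop:inv-schubert-recurrence} describes $\partial_i(2^{\twocyc(y)}\ibS_y)$ by a parallel rule. Comparing the two shows that if $2^{\twocyc(y)}\ibS_y$ represents $[\incSGX_y]$, then $2^{\twocyc(y')}\ibS_{y'}$ represents $[\incSGX_{y'}]$ for $y' = s_i\circ y\circ s_i$. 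Since every $y\in\I_n$ is obtained from $w_0$ by a sequence of such operations with $\iellO$ decreasing by one at each step --- this is the standard fact that the involution Bruhat order on $\I_n$ is graded by $\iellO$ with maximum $w_0$ --- descending induction on $\iellO(y)$ from the base case $w_0$ proves the theorem for all $y$.

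Because the substantive work is in the cited results, I do not anticipate a real obstacle. The point that needs care is the accounting of powers of $2$: one must verify that the multiplicity in Lemma~\ref{lem:divided-diff-classes-LG-OG} and the $2^{\twocyc(\cdot)}$-exponent appearing in Proposition~\ref{prop:inv-schubert-recurrence} are calibrated so that, after commuting the prefactor $2^{\twocyc(y)}$ past $\partial_i$, one lands on exactly $2^{\twocyc(y')}\ibS_{y'}$ with no stray factor of $2$. This is a short case check turning on whether $(i,i{+}1)$ is a $2$-cycle of $y$, but it is the crux. The other ingredient --- that $\I_n$ is generated from $w_0$ by these operations --- is a known feature of the $\O(n)$-orbit poset on $\Fl(n)$ and can be quoted; it is already implicit in the Wyser--Yong analysis underlying Proposition~\ref{prop:inv-schubert-recurrence}.
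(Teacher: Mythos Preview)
Your proposal is correct and follows essentially the same approach as the paper's own proof: establish the base case $y = w_0$ via Lemma~\ref{lem:vex-double-schubert-LG-reps} together with Theorem~\ref{thm:vex-inv-schubert-LG-polys}, then descend to arbitrary $y \in \I_n$ by matching the divided difference recurrences of Proposition~\ref{prop:inv-schubert-recurrence} and Lemma~\ref{lem:divided-diff-classes-LG-OG}. The paper's proof is a two-sentence version of exactly this, and your additional remarks on the power-of-$2$ bookkeeping and on reachability from $w_0$ simply make explicit what the paper leaves implicit.
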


\begin{proof}
    Theorem~\ref{thm:vex-inv-schubert-LG-polys} implies that $2^{\twocyc(w_0)}\ibS_{w_0}$ represents $[\incSGX_{w_0}]$, so the theorem follows from the matching divided difference recurrences of Proposition~\ref{prop:inv-schubert-recurrence} and Lemma~\ref{lem:divided-diff-classes-LG-OG}.
\end{proof}

Suppose $y \in \I_n$ is Sp-vexillary, so we can write $\Ess(\iDSp(y)) = \{(i_1, j_1) \SWNEneq \cdots \SWNEneq (i_s, j_s)\}$. Lemma~\ref{lem:grassmannian-locus-LG-OG} implies that $\incSSGX_y$ is an example of an orthogonal Grassmannian degeneracy locus as described in \S \ref{subsec:degeneracy-loci-LG-OG}. To be specific:
\begin{itemize}
    \item[$\scriptstyle \blacktriangleright$] $\vb V$ is the trivial bundle ${\CC^n} \oplus {\CC^n}^*$ over the first factor of $X = \OG(2n) \times \Fl(n)$;
    \item[$\scriptstyle \blacktriangleright$] $\vb G$ is the tautological bundle over the first factor of $X$;
    \item[$\scriptstyle \blacktriangleright$] $\vb{H}_{\mu_p} =   \vb{E}_{j_p} \oplus \ann{\vb{E}}_{i_p}$, where $\vb E_{\bullet}$ is the tautological flag of bundles over the second factor of $X$, so $\mu_p = n-\rank(\vb E_{j_p} \oplus \ann{\vb{E}}_{i_p}) = i_p-j_p$ for $p \in [s]$.
    \item[$\scriptstyle \blacktriangleright$] $k_p = j_p - \rank y_{[i_p][j_p]}$ for $p \in [s]$.
    \item[$\scriptstyle \blacktriangleright$] $\lambda_{k} = \mu_p+k_p-k$ for $k \in [k_s]$, where $p$ is such that $k_{p-1} < k \leq k_p$.
\end{itemize}

For each $p$, $\vb H_0 = \vb{E}_{j_p} \oplus \ann{\vb{E}}_{j_p}$ is a maximal isotropic bundle containing $\vb{E}_{j_p} \oplus \ann{\vb{E}}_{i_p}$. In accordance with \S \ref{subsec:degeneracy-loci-LG-OG}, we define
\begin{align} \label{eq:SSGX-chern-class}
    c(k) &= \frac{c(\vb V)}{c(\vb G)c(\vb{E}_{j_p} \oplus \ann{\vb{E}}_{i_p})} + (-1)^{\delta}c_{\mu_p}\left( \frac{\vb{E}_{j_p} \oplus \ann{\vb{E}}_{j_p}}{\vb{E}_{j_p} \oplus \ann{\vb{E}}_{i_p}} \right)  \nonumber \\
    &= \frac{1}{c(\vb G)} \frac{(1+x_1)\cdots (1+x_{i_p})}{(1-x_1)\cdots (1-x_{j_p})} + (-1)^{j_p} \left[(1+x_{j_p+1})\cdots (1+x_{i_p})\right]_{i_p-j_p} \nonumber \\
    &= \frac{1}{c(\vb G)} \frac{(1+x_1)\cdots (1+x_{i_p})}{(1-x_1)\cdots (1-x_{j_p})} + (-1)^{j_p} x_{j_p+1} \cdots x_{i_p}
\end{align}
for $k \in [k_s]$, where $k_{p-1} < k \leq k_p$.  Here, the quantity $\delta = \dim(\vb G_x \cap (\vb{E}_{j_p} \oplus \ann{\vb{E}}_{j_p})_x) \pmod{2}$ is independent of $x \in X$ since $X$ is connected, and we compute it by choosing $x = (\CC^n \oplus 0, E_\bullet)$ for some $E_\bullet \in \Fl(n)$, so $\delta = \dim((\CC^n \oplus 0) \cap (E_{j_p} \oplus \ann{E_{j_p}})) = j_p$.

The choice of $\vb H_0$ as $\vb E_{j_p} \oplus \ann{\vb E_{j_p}}$ was somewhat arbitrary, and could be replaced by $\vb E_{j} \oplus \ann{\vb E_{j}}$ for any $j \in [j_p,i_p]$, but in fact one can check that all such choices of $j$ give exactly the same expression in \eqref{eq:SSGX-chern-class}. As before, we identify $1/c(\vb G)$ with $\sum_d Q_d(x_-)$. Set $c'(k) = c(k)\ev_{c(\vb G) \to 1}$.

\begin{lem} \label{lem:vex-double-schubert-OG} 
Let $n$ be even, and fix $\graph(f) \in \grOG$. Suppose $y = y' \times \idfpf_{2r}$ or $y = \idfpf_{2r} \times y'$ where $y' \in \I_n$ is Sp-vexillary, and that $\codim \incSSGX_y = |\iDSp(y)|$. Then $[\incSSGX_y]$ is represented by $P_{\lambda}(c(1), \ldots, c(k_s))$, and $[\incSSGX_y(\graph(f))]$ is represented by $P_{\lambda}(c'(1), \ldots, c'(k_s))$, where $\lambda = \ishSp(y)$. If moreover $y \in \Ifpf_n$, then $\iSfpf_{y}(x) = P_{\lambda}(c'(1), \ldots, c'(k_s))$ as polynomials. \end{lem}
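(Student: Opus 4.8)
The plan is to run the same template as in the Lagrangian case, namely Lemma~\ref{lem:vex-double-schubert-LG-reps} together with Theorem~\ref{thm:vex-inv-schubert-LG-polys}, but with the orthogonal degeneracy locus formula (Theorem~\ref{thm:orthogonal-locus}) in place of the Lagrangian one. A preliminary step, exactly as at the start of the proof of Lemma~\ref{lem:fpf-inv-shape}, is to reduce to the case that $y$ is itself Sp-vexillary: passing from $y'$ to $y'\times\idfpf_{2r}$ leaves $\Ess(\iDSp)$ and the sequences $i_p,j_p,k_p$ unchanged, while passing from $y'$ to $\idfpf_{2r}\times y'$ shifts $i_p,j_p$ up by $2r$ and leaves $k_p$ unchanged; in both cases $\lambda=\ishSp(y')$ is unchanged and the claimed polynomial $P_{\lambda}(c'(1),\dots,c'(k_s))$ either does not change or is shifted in its variables in precisely the way $\iSfpf_{y'}$ is shifted upon pre-composition with $\idfpf_{2r}$ (cf.\ the reduced-involution-word shift recorded in the proof of Proposition~\ref{prop:inv-schubert-recurrence}). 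Alternatively one can bypass this reduction entirely: the arguments below only use that $\Ess(\iDSp(y))$ is a chain under $\SWNE$, and that is part of the setup of Lemma~\ref{lem:fpf-inv-shape} even when $y$ fails to be vexillary.

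Assuming $y$ is Sp-vexillary, Lemma~\ref{lem:grassmannian-locus-LG-OG} realizes $\incSSGX_y$ as the orthogonal Grassmannian degeneracy locus $\Omega^{\OG}$ for the data listed just before the lemma: $\vb V=\CC^n\oplus{\CC^n}^*$ over $\OG(2n)$, $\vb G$ the tautological bundle, $\vb H_{\mu_p}=\vb E_{j_p}\oplus\ann{\vb E}_{i_p}$ (isotropic since $i_p>j_p$) with $\mu_p=i_p-j_p$, the integers $k_p=j_p-\rank y_{[i_p][j_p]}$, and $\lambda_k=\mu_p+k_p-k$ for $k_{p-1}<k\le k_p$; the parity correction is computed at the base point $(\CC^n\oplus 0,E_\bullet)$ to be $\delta\equiv j_p$, giving the $c(k)$ of \eqref{eq:SSGX-chern-class}. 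By Lemma~\ref{lem:fpf-inv-shape} the sequence $\lambda$ is the strict partition $\ishSp(y)$, and since $|\ishSp(y)|=|\iDSp(y)|$ by the definition of $\ishSp$, the hypothesis $\codim\incSSGX_y=|\iDSp(y)|$ says exactly that $\Omega^{\OG}$ has the expected codimension $|\lambda|$. Theorem~\ref{thm:orthogonal-locus} then gives $[\incSSGX_y]=P_\lambda(c(1),\dots,c(k_s))$. Restricting along $i:\Fl(n)\hookrightarrow\OG(2n)\times\Fl(n)$, $E_\bullet\mapsto(\graph(f),E_\bullet)$, one has $i^*c(k)=c'(k)$ because $i^*$ sends $c(\vb G)$ to $1$ and fixes the $x$-variables, so the transversality of Lemma~\ref{lem:SGX-description}(d) yields $[\incSSGX_y(\graph(f))]=i^*[\incSSGX_y]=P_\lambda(c'(1),\dots,c'(k_s))$.

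For the last assertion, let $y\in\Ifpf_n$. Replacing $y$ by $y\times\idfpf_{2m}$ changes neither $\Ess(\iDSp(y))$ nor $i_p,j_p,k_p$, hence changes neither $\lambda$ nor the polynomials $c'(k)$; moreover $\codim\incSSGX_{y\times\idfpf_{2m}}=\iellSp(y\times\idfpf_{2m})=|\iDSp(y\times\idfpf_{2m})|$ automatically by Lemma~\ref{lem:SGX-description}(a), so the hypotheses of the lemma remain in force. Applying the first two parts to each $y\times\idfpf_{2m}$ then shows that the single polynomial $P_\lambda(c'(1),\dots,c'(k_s))$ represents $[\incSSGX_{y\times\idfpf_{2m}}(\graph(f_m))]=[\iXfpf_{y\times\idfpf_{2m}}]\in H^*(\Fl(n+2m))$ for every $m\ge 0$ (with $f_m$ any invertible skew-symmetric map on $\CC^{n+2m}$, using Lemma~\ref{lem:SGX-description}(c) for the equality). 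By Theorem~\ref{thm:inv-schubert-classes} the polynomial $\iSfpf_{y\times\idfpf_{2m}}$ represents the same class, and $\iSfpf_{y\times\idfpf_{2m}}=\iSfpf_y$ by the stability of fixed-point-free involution Schubert polynomials (the analogue of $\iS_y=\iS_{y\times 1}$, since $\iRfpf(y\times\idfpf_{2m})=\iRfpf(y)$). Lemma~\ref{lem:unique-stable-rep} therefore forces $P_\lambda(c'(1),\dots,c'(k_s))=\iSfpf_y$.

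The step I expect to cause the most trouble is the first one: confirming that $\incSSGX_y$ really is an orthogonal Grassmannian degeneracy locus of the expected codimension for the class of $y$ allowed here, which need not be vexillary. Concretely one must (i) observe that the Lemma~\ref{lem:grassmannian-locus-LG-OG}-style argument (rank--nullity conversion, the chain $\Ess(\iDSp(y))$ producing a flag of isotropic subbundles $\vb E_{j_1}\oplus\ann{\vb E}_{i_1}\subseteq\cdots$, and fibers over $\Fl(n)$ being irreducible Schubert varieties in $\OG(2n)$) uses only the chain property rather than full vexillarity; (ii) handle the bookkeeping of the reduction to the Sp-vexillary case, in particular the variable shift accompanying $\idfpf_{2r}\times y'$ and its compatibility with the corresponding shift of $\iSfpf$; and (iii) verify that $\ishSp(y)$ is strict, which follows since it is obtained from the strict partition $\ishO(y)$ of Lemma~\ref{lem:inv-shape} by decrementing every part, per the proof of Lemma~\ref{lem:fpf-inv-shape}. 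The orthogonal correction terms in $c(k)$ demand care but have already been computed in \eqref{eq:SSGX-chern-class}.
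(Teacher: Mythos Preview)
Your proposal is correct and follows essentially the same approach as the paper, which simply says ``Analogous to the proofs of Lemma~\ref{lem:vex-double-schubert-LG-reps} and Theorem~\ref{thm:vex-inv-schubert-LG-polys}, replacing Lemma~\ref{lem:inv-shape} by Lemma~\ref{lem:fpf-inv-shape}.'' You have spelled out the details the paper leaves implicit, and in particular you correctly flag and resolve the one genuine subtlety: Lemma~\ref{lem:grassmannian-locus-LG-OG} is stated only for Sp-vexillary $y$, while $y=\idfpf_{2r}\times y'$ need not be vexillary at all; your observation that the argument there (via Lemma~\ref{lem:grassmannian-locus}) only uses that $\Ess(\iDSp(y))$ is a chain is exactly what is needed to close this gap.
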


\begin{proof}
    Analogous to the proofs of Lemma~\ref{lem:vex-double-schubert-LG-reps} and Theorem~\ref{thm:vex-inv-schubert-LG-polys}, replacing Lemma~\ref{lem:inv-shape} by Lemma~\ref{lem:fpf-inv-shape}.
\end{proof} 

If $y' \in \Ifpf_n$ is vexillary, then it is Sp-vexillary and the hypotheses of Lemma~\ref{lem:vex-double-schubert-OG} hold for $y = y' \times \idfpf_{2r}$ or $y = \idfpf_{2r} \times y'$, because $\codim \incSSGX_y = \iellSp(y) = |\iDSp(y)|$ by Lemma~\ref{lem:SGX-description}. In \S\ref{sec:grassmannians}, we develop some combinatorial tools for working with rank conditions defining $\incSSGX_y$ which will reveal $\incSSGX_y$ to be a Grassmannian degeneracy locus in more cases than the ones considered here. This will give Pfaffian formulas under hypotheses more general and less awkward than those of Lemma~\ref{lem:vex-double-schubert-OG} (Theorem~\ref{thm:fpf-vex-formula}).

\begin{thm} \label{thm:back-stable-rep-OG}
    For $z \in \Ifpf_n$, the back-stable involution Schubert polynomial $\ibSfpf_z$ represents the class $[\incSSGX_z]$.
\end{thm}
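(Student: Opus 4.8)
The plan is to argue in exact parallel with the proof of Theorem~\ref{thm:back-stable-rep-LG}, using the Bruhat‑maximal fixed‑point‑free involution $w_0 = n(n-1)\cdots 21$ as base case; note that $w_0 \in \Ifpf_n$ precisely because $n$ is even. The crucial simplification in the fixed‑point‑free setting is that $\twocyc(z) = n/2$ is constant over $z \in \Ifpf_n$, so the exponent $\twocyc(z) - \twocyc(s_i \circ z \circ s_i)$ appearing in Proposition~\ref{prop:inv-schubert-recurrence} is $0$ whenever $s_i \circ z \circ s_i \in \Ifpf_n$. Hence for $K = \Sp$ Proposition~\ref{prop:inv-schubert-recurrence} reads
\begin{equation*}
    \partial_i \ibSfpf_z = \begin{cases}
        \ibSfpf_{s_i \circ z \circ s_i} & \text{if $\ell(z s_i) < \ell(z)$ and $s_i \circ z \circ s_i \in \Ifpf_n$}\\
        0 & \text{otherwise,}
    \end{cases}
\end{equation*}
which is exactly the recurrence satisfied by $[\incSSGX_z]$ according to Lemma~\ref{lem:divided-diff-classes-LG-OG}; unlike the Lagrangian case, no coefficient of $2$ ever intervenes, so the matching is even cleaner.

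For the base case I would observe that $w_0$ is dominant, hence vexillary, and that $\Ess(\iDSp(w_0))$ is a chain under $\SWNE$, so $w_0$ is Sp‑vexillary; moreover $\codim \incSSGX_{w_0} = \iellSp(w_0) = |\iDSp(w_0)|$ by Lemma~\ref{lem:SGX-description}. Thus $w_0$ — being fixed‑point‑free, it is trivially of the form $w_0 \times \idfpf_{0}$ — lies in the scope of Lemma~\ref{lem:vex-double-schubert-OG}, which together with the limiting argument of Theorem~\ref{thm:vex-double-schubert-polys} applied to $\idfpf_{2m} \times w_0$ (which remains in the scope of Lemma~\ref{lem:vex-double-schubert-OG}, with $y' = w_0$ Sp‑vexillary) yields the power series identity $\ibSfpf_{w_0} = P_{\lambda}(c(1), \ldots, c(k_s))$ with $\lambda = \ishSp(w_0)$, where $P_{\lambda}(c(1), \ldots, c(k_s))$ represents $[\incSSGX_{w_0}]$. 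Hence $\ibSfpf_{w_0}$ represents $[\incSSGX_{w_0}]$. This is the direct analogue of the step "$\bS_{w_0}(x;-y)$ represents $[\incGX_{w_0}]$" in the proof of Theorem~\ref{thm:back-stable-rep} and of the corresponding step in Theorem~\ref{thm:back-stable-rep-LG}.

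Finally, since every $z \in \Ifpf_n$ can be reached from $w_0$ by a sequence of transformations $z' \leadsto s_i \circ z' \circ s_i$ with $\ell(z' s_i) < \ell(z')$ and $s_i \circ z' \circ s_i \in \Ifpf_n$ (the fixed‑point‑free analogue of the fact, used in Theorem~\ref{thm:back-stable-rep}, that any permutation is reachable from $w_0$), the matching recurrences above propagate $[\incSSGX_z] = \ibSfpf_z$ from $w_0$ to all of $\Ifpf_n$ by induction on $\iellfpf(w_0) - \iellfpf(z)$. I do not anticipate a serious obstacle: the only points requiring genuine care are the two routine verifications about $w_0$ in the base case (that $\Ess(\iDSp(w_0))$ is a chain, and that $\incSSGX_{w_0}$ has the expected codimension), both immediate since $w_0$ is dominant, together with invoking the connectivity statement for $\Ifpf_n$. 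The harder combinatorics — Pfaffian formulas valid for arbitrary rather than Bruhat‑maximal or product‑type $z \in \Ifpf_n$ — is genuinely delicate, but it is the content of \S\ref{sec:grassmannians} and is not needed for this theorem.
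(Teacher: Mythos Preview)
Your proposal is correct and follows essentially the same approach as the paper: establish the base case $z = w_0$ using Lemma~\ref{lem:vex-double-schubert-OG} (together with the limiting argument you describe), then propagate to all of $\Ifpf_n$ via the matching divided difference recurrences of Proposition~\ref{prop:inv-schubert-recurrence} and Lemma~\ref{lem:divided-diff-classes-LG-OG}. The paper's proof is the one-line statement ``Analogous to the proof of Theorem~\ref{thm:back-stable-rep-LG}, using Lemma~\ref{lem:vex-double-schubert-OG}'', and you have simply unpacked the details that sentence implicitly contains.
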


\begin{proof}
    Analogous to the proof of Theorem~\ref{thm:back-stable-rep-LG}, using Lemma~\ref{lem:vex-double-schubert-OG}.
\end{proof}

From Theorems~\ref{thm:back-stable-rep-LG} and \ref{thm:back-stable-rep-OG} we obtain geometric interpretations of the involution Stanley symmetric functions $\iF_y = \ibS_y|_{x_{+} \to 0}$ and $\iFfpf_z = \ibSfpf_z|_{x_+\to 0}$.
\begin{thm} \label{thm:inv-stanley-interpretation}
    Fix $E_{\bullet} \in \Fl(n)$. For $y \in \I_n$ and $z \in \Ifpf_n$, the symmetric functions $2^{\twocyc(y)}\iF_y$ and $\iFfpf_z$ represent the classes in $H^*(\LG(2n))$ and $H^*(\OG(2n))$ of \emph{involution graph Schubert varieties}: the closures of, respectively,
    \begin{equation*}
        \openincSGX_y(E_\bullet) = \{\graph(f) \in \grLG : \text{$\rank(E_j \xrightarrow{f} E_i^*) = \rank y_{[i][j]}$ for $i,j \in [n]$}\}
    \end{equation*}
    and 
    \begin{equation*}
        \openincSSGX_z(E_\bullet) = \{\graph(f) \in \grOG : \text{$\rank(E_j \xrightarrow{f} E_i^*) = \rank z_{[i][j]}$ for $i,j \in [n]$}\}.
    \end{equation*}
\end{thm}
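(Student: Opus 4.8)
The plan is to deduce this from Theorems~\ref{thm:back-stable-rep-LG} and \ref{thm:back-stable-rep-OG} by restricting $\incSGX_y$ and $\incSSGX_z$ to the fiber over a fixed flag $E_\bullet \in \Fl(n)$, in exact parallel with the way Lemma~\ref{lem:vex-double-schubert-reps} restricted $\incGX_w$ to the fiber over a fixed $\graph(f)$. First I would note that $\incSGX_y$ is $\GL(n)$-stable: the rank conditions cutting out the locus in Lemma~\ref{lem:SGX-description}(b) are $\GL(n)$-equivariant, since $g\cdot(\graph(f),E_\bullet) = (\graph((g^*)^{-1}fg^{-1}), gE_\bullet)$ and the composite $gE_j \hookrightarrow \CC^n \xrightarrow{(g^*)^{-1}fg^{-1}} {\CC^n}^* \twoheadrightarrow (gE_i)^*$ is conjugate by isomorphisms to $E_j \xrightarrow{f} E_i^*$, hence has the same rank; taking closures preserves $\GL(n)$-stability. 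Since $\GL(n)$ acts transitively on $\Fl(n)$, Lemma~\ref{lem:fiber-bundle} (applied after swapping the two factors, with the transitive space $\Fl(n)$) shows that the projection $\incSGX_y \to \Fl(n)$ is a fiber bundle, and by Lemma~\ref{lem:SGX-description}(c) its fiber over $E_\bullet$ is $\overline{\openincSGX_y(E_\bullet)}$, which is precisely the involution graph Schubert variety in the statement. The same applies verbatim to $\incSSGX_z \to \Fl(n)$.

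Next, as in the proofs of Lemma~\ref{lem:GX-description}(c),(d), the local trivializations furnished by Lemma~\ref{lem:fiber-bundle} reduce the claim that $\incSGX_y$ meets the fiber $\LG(2n)\times\{E_\bullet\}$ transversely to the elementary fact about products and submanifolds used there. Writing $\iota : \LG(2n)\hookrightarrow \LG(2n)\times\Fl(n)$, $U\mapsto (U,E_\bullet)$, transversality gives $[\,\overline{\openincSGX_y(E_\bullet)}\,] = [\iota^{-1}(\incSGX_y)] = \iota^*[\incSGX_y]$, which equals $\iota^*(2^{\twocyc(y)}\ibS_y)$ by Theorem~\ref{thm:back-stable-rep-LG}. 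In the conventions of \S\ref{subsec:inv-back-stable-reps}, the alphabet $x_-$ consists of the Chern classes of the tautological bundle $\vb G$ on the $\LG(2n)$ factor (via $1/c(\vb G) = \sum_d Q_d(x_-)$, matching Pragacz's isomorphism from \S\ref{subsec:LG-OG-cohom}), while $x_+$ consists of the classes $c_1((\vb E_i/\vb E_{i-1})^*)$ on the $\Fl(n)$ factor. Under the Künneth isomorphism $H^*(\LG(2n)\times\Fl(n)) \simeq H^*(\LG(2n))\otimes H^*(\Fl(n))$, the pullback $\iota^*$ along the inclusion of the fiber sends $a\otimes b$ to $a$ times the degree-zero part of $b$; in terms of the power series representatives this is exactly the substitution $x_+ \to 0$. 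Hence $\iota^*(2^{\twocyc(y)}\ibS_y) = 2^{\twocyc(y)}\,\ibS_y\ev_{x_+\to 0} = 2^{\twocyc(y)}\iF_y$ by Definition~\ref{defn:inv-schubert}. The fixed-point-free case is identical, replacing Theorem~\ref{thm:back-stable-rep-LG} by Theorem~\ref{thm:back-stable-rep-OG}, $\Gamma_Q$ by $\Gamma_P$, $\iDO$ by $\iDSp$, and using $\ibSfpf_z\ev_{x_+\to 0} = \iFfpf_z$.

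I expect the only genuine subtlety to be the bookkeeping in the last step: confirming that $\iota^*$ implements the substitution $x_+\to 0$ and not, say, $x_-\to 0$. This comes down to recalling that in \S\ref{subsec:inv-back-stable-reps} the tautological-bundle generators are carried by the $x_-$ alphabet and the flag-bundle generators by $x_+$, exactly as in \S\ref{subsec:back-stable-reps}; once that is pinned down, everything else is a direct transcription of the arguments for $\incGX_w$ in \S\ref{sec:SL-SL-orbits}.
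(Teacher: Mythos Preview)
Your proposal is correct and is exactly the approach the paper intends: the paper simply states that the theorem follows from Theorems~\ref{thm:back-stable-rep-LG} and \ref{thm:back-stable-rep-OG} via the specialization $x_+\to 0$, and you have filled in the details (fiber bundle over $\Fl(n)$, transversality, and the identification of $\iota^*$ with $x_+\to 0$) in the natural way, mirroring Lemma~\ref{lem:vex-double-schubert-reps} and Lemma~\ref{lem:SGX-description}(c),(d).
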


\begin{cor} \label{cor:schur-Q-positivity}
    $2^{\twocyc(y)}\iF_y$ is Schur $Q$ positive, and $\iFfpf_z$ is Schur $P$ positive.
\end{cor}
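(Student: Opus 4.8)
The plan is to read positivity directly off the geometry of Theorem~\ref{thm:inv-stanley-interpretation}, using Pragacz's description of the Schubert bases of $H^*(\LG(2n))$ and $H^*(\OG(2n))$ together with the elementary fact that the class of any closed subvariety of a rational homogeneous space is a nonnegative integer combination of Schubert classes. The one thing to watch is that Theorem~\ref{thm:inv-stanley-interpretation} realizes $2^{\twocyc(y)}\iF_y$ as a class in a cohomology ring of bounded size, in which a Schur $Q$-function $Q_\lambda$ with $\lambda_1$ large is no longer a Schubert class; I would circumvent this by first enlarging the ambient isotropic Grassmannian, exploiting the stability of these symmetric functions.

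Here is the argument I would write in the Lagrangian case. Put $d = \iellO(y)$, so $2^{\twocyc(y)}\iF_y$ is homogeneous of degree $d$ and lies in $\Gamma_Q$ (by Theorem~\ref{thm:inv-stanley-interpretation} it represents a class in $H^*(\LG(2n)) \cong \Gamma_Q/(Q_e : e > n)$). Since $\{Q_\lambda : \lambda \text{ strict}\}$ is a $\ZZ$-basis of $\Gamma_Q$, write $2^{\twocyc(y)}\iF_y = \sum_\lambda c_\lambda Q_\lambda$ with $c_\lambda \in \ZZ$, the sum over strict partitions $\lambda$ with $|\lambda| = d$. Fix $m$ with $n + m \ge d$. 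Adjoining fixed points changes neither $\twocyc$ nor the Rothe diagram, hence neither the set of reduced involution words, so $2^{\twocyc(y \times 1^m)}\iF_{y \times 1^m} = 2^{\twocyc(y)}\iF_y$; applying Theorem~\ref{thm:inv-stanley-interpretation} to $y \times 1^m \in \I_{n+m}$ shows this symmetric function represents $[\,\overline{\openincSGX_{y \times 1^m}(E_\bullet)}\,] \in H^*(\LG(2(n+m)))$. Under Pragacz's isomorphism $\Gamma_Q / (Q_e : e > n+m) \xrightarrow{\ \sim\ } H^*(\LG(2(n+m)))$, every $Q_\lambda$ occurring above has $\lambda_1 \le |\lambda| = d \le n+m$ and so maps to the Schubert class $\sigma_\lambda$; hence $[\,\overline{\openincSGX_{y \times 1^m}(E_\bullet)}\,] = \sum_\lambda c_\lambda \sigma_\lambda$.

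To finish, I would invoke the standard fact that the class of a closed subvariety of $\LG(2(n+m))$ — homogeneous under the connected group $\Sp(2(n+m))$ — is a nonnegative integer combination of Schubert classes (translate the dual Schubert varieties into general position and count transverse intersection points). Since the Schubert classes form a $\ZZ$-basis, comparing that expansion with $\sum_\lambda c_\lambda \sigma_\lambda$ forces $c_\lambda \ge 0$ for all $\lambda$, which is precisely the Schur $Q$-positivity of $2^{\twocyc(y)}\iF_y$. The fixed-point-free case runs identically: with $d = \iellSp(z)$ one writes $\iFfpf_z = \sum_\lambda d_\lambda P_\lambda$ with $d_\lambda \in \ZZ$ (using that $\iFfpf_z \in \Gamma_P$ and that $\{P_\lambda : \lambda \text{ strict}\}$ is a $\ZZ$-basis of $\Gamma_P$), passes to $\idfpf_{2r} \times z \in \Ifpf_{n+2r}$ with $n + 2r > d$ (which leaves $\iFfpf_z$ unchanged), identifies each $P_\lambda$ with a Schubert class of $\OG(2(n+2r))$ via Pragacz — legitimate since $\lambda_1 \le d < n + 2r$ — and uses effectivity of subvariety classes in $\OG(2(n+2r))$, which is homogeneous under the connected group $\SO(2(n+2r))$. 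The only genuine subtlety is the stabilization bookkeeping just described; beyond that the argument is formal, with no computation involved.
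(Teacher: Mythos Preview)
Your proof is correct and follows essentially the same approach as the paper's: both use Theorem~\ref{thm:inv-stanley-interpretation} to realize the symmetric function as the class of a subvariety, invoke Pragacz's identification of Schubert classes with Schur $Q$- (resp.\ $P$-) functions and the effectivity of subvariety classes, and stabilize via $y \mapsto y \times 1^m$ (resp.\ enlarging $z$ by $\idfpf_{2r}$) to kill the ambiguity from the ideal $(Q_e : e > n)$. The only difference is cosmetic---you fix a single $m$ with $n+m \geq d$ and compare bases directly, while the paper phrases the same step as Schur $Q$ positivity modulo $\bigcap_m (Q_{n+m+1}, Q_{n+m+2}, \ldots) = 0$.
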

\begin{proof}
    Pragacz \cite{pragacz-LG-OG} showed that the classes of the Schubert varieties in $H^*(\LG(n)) \simeq \Gamma_Q/(Q_{n+1}, Q_{n+2}, \ldots)$ are represented by Schur $Q$-functions, which implies that $f \in \Gamma_Q$ represents the class of a variety if and only if $f$ is Schur $Q$ positive modulo the ideal $(Q_{n+1}, Q_{n+2}, \ldots)$. Since $2^{\twocyc(y)}\iF_y = 2^{\twocyc(y \times 1^m)}\iF_{y \times 1^m}$ for all $m$, Theorem~\ref{thm:inv-stanley-interpretation} implies that $2^{\twocyc(y)}\iF_y$ is Schur $Q$ positive modulo $\bigcap_m (Q_{n+m+1}, Q_{n+m+2}, \ldots) = 0$. The same argument works for $\iFfpf_z$.
\end{proof}

\section{I-Grassmannian and Sp-vexillary involutions} \label{sec:grassmannians}

\begin{defn}
    An involution of the form $(\phi_1,m+1)(\phi_2,m+2)\cdots (\phi_k,m+k)$ where $\phi_1 < \cdots < \phi_k < m$ is called \emph{I-Grassmannian}, or \emph{$m$-I-Grassmannian} if we wish to specify $m$. 
\end{defn}

\begin{prop}[\cite{HMP4}, Proposition-Definition 4.16]
    An involution $y \in \I_n$ is $m$-I-Grassmannian if and only if $\Ess(\iDO(y)) \subseteq \{m\} \times \NN$.
\end{prop}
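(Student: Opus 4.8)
The plan is to prove both directions by computing, or reconstructing, the diagram $\iDO(y) = \{(i,j) : i \ge j,\ y(i) > j,\ y(j) > i\}$.

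For the ``only if'' direction, take $y = (\phi_1, m{+}1)(\phi_2, m{+}2)\cdots(\phi_k, m{+}k)$ and read off $\iDO(y)$. A cell $(i,j)$ with $i \ge j$ lies in $\iDO(y)$ only if $y(j) > i \ge j$, so $y(j) > j$ and hence $j = \phi_a$ for some $a$. Fixing $j = \phi_a$, one has $y(\phi_a) = m{+}a$, so $y(j) > i$ becomes $i < m{+}a$; and among $\phi_a \le i < m{+}a$ the condition $y(i) > \phi_a$ holds precisely when $i \le m$, failing exactly for $i \in \{m{+}1, \dots, m{+}a{-}1\}$ where $y(i) = \phi_{i-m} \le \phi_a$. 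Hence $\iDO(y) = \bigcup_{a=1}^{k} \{\phi_a\} \times [\phi_a, m]$. Every nonempty column of this set descends exactly to row $m$, so each connected component has its southeast corner in row $m$; since $\Ess(\iDO(y))$ consists of these southeast corners, $\Ess(\iDO(y)) \subseteq \{m\} \times \NN$.

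For the converse, suppose $\Ess(\iDO(y)) \subseteq \{m\}\times\NN$. The key step is to show $\iDO(y)$ has no cell in a row $> m$: a cell in row $r > m$ lies in a component whose southeast corner, an essential cell, is in a row $\ge r > m$, contradicting the hypothesis. In particular $(i,i) \notin \iDO(y)$, i.e.\ $y(i) \le i$, for all $i > m$; consequently every $2$-cycle $(a,b)$ of $y$ with $a<b$ has $a \le m$ (if $a>m$ then $y(a)=b>a$), and the full Rothe diagram $D(y)$ has empty row $i$ for every $i > m$ (a below-diagonal cell in row $i$ would lie in $\iDO(y)$; a cell $(i,j)$ with $j>i$ would need $j < y(i) \le i < j$). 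Thus $c_i(y) = 0$ for $i > m$, which is exactly the statement that $y$ is strictly increasing on $\{m{+}1, m{+}2, \dots\}$. Combining monotonicity with $y$ being an involution whose $2$-cycles have smaller end $\le m$: the smallest non-fixed point $>m$ must be $m{+}1$ (otherwise $m{+}1$ is fixed while some larger point maps below it), inductively the non-fixed points $>m$ are exactly $m{+}1, \dots, m{+}k$ for some $k$, and writing $\phi_a = y(m{+}a)$ gives $\phi_1 < \cdots < \phi_k \le m$ by monotonicity; so $y = (\phi_1, m{+}1)\cdots(\phi_k, m{+}k)$ is $m$-I-Grassmannian. (If $\iDO(y) = \emptyset$ then $y = \id$, which is $m$-I-Grassmannian for every $m$.)

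The fiddly point in the forward direction is the column-by-column bookkeeping that pins $[\phi_a, m]$ as exactly the set of rows occurring in column $\phi_a$. The conceptual crux overall is the single step in the converse---``$\iDO(y)$ has no cell below row $m$''---which is what converts the hypothesis about $\Ess(\iDO(y))$ into the concrete constraint $y(i)\le i$ for $i>m$; once that is in hand everything reduces to elementary manipulation of the involution, with no appeal to vexillarity needed. I would also flag the boundary subtlety that $\phi_k$ may equal $m$ (equivalently, that $(m,m)$ may itself be an essential cell of $\iDO(y)$), which both arguments handle without change.
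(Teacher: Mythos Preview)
The paper does not prove this proposition; it is cited from \cite{HMP4}. Your argument is correct and self-contained, so there is nothing to compare against here.

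Two small remarks. First, a notational slip: in the forward direction you write $\iDO(y) = \bigcup_{a=1}^{k} \{\phi_a\} \times [\phi_a, m]$, but with the paper's (row, column) convention this should be $[\phi_a, m] \times \{\phi_a\}$; your surrounding prose (``column $\phi_a$'', ``descends exactly to row $m$'') makes the intended meaning clear. Second, you are right to allow $\phi_k = m$: the paper's Definition reads $\phi_k < m$, but its own example $(1,5)(2,6)(4,7)$ with $m=4$ has $\phi_3 = 4 = m$, and the proposition would be false under the strict reading (e.g.\ $y = (m,m{+}1)$ has $\Ess(\iDO(y)) = \{(m,m)\}$ but would not be $m$-I-Grassmannian). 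Your boundary remark correctly identifies this.
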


\begin{ex}
  $y = (1,5)(2,6)(4,7)$ is $4$-I-Grassmannian,
    \begin{equation*}
          \arraycolsep=3pt \def\arraystretch{0.9}
          \iDO(y) = \begin{array}{ccccccc}
                \sq &  &  &  & & &  \\
                \sq & \sq &  &  & & & \\
              \sq & \sq & \times &  & & & \\
                \sq & \sq & \cdot & \sq & & & \\
                \times & \cdot & \cdot & \cdot & \cdot & &\\
                \cdot & \times & \cdot & \cdot & \cdot & \cdot &\\
                \cdot & \cdot & \cdot & \times & \cdot & \cdot & \cdot
            \end{array}
    \end{equation*}
    and $\ishO(y) = (4,2,1)$ and $\Ess(y) = \{(4,2),(4,4)\}$.
\end{ex}
In general, if $y = (\phi_1,m+1) \cdots (\phi_k,m+k)$ is $m$-I-Grassmannian then the parts of $\ishO(y)$ are the nonzero column lengths of $\iDO(y)$, so $\ishO(y)_i = m-\phi_i+1$ for $i \in [k]$.

\begin{thm} \label{thm:i-grassmannians}
    Fix $E_\bullet \in \Fl(n)$. If $y \in \I_n$ is $m$-I-Grassmannian, then $\incSGX_y(E_\bullet) \subseteq \LG(2n)$ is a Schubert variety whose class in $H^*(\LG(2n))$ is represented by $Q_{\ishO(y)}$.
\end{thm}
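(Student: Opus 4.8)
The plan is to show that, once the flag $E_\bullet$ is fixed, the conditions defining $\incSGX_y(E_\bullet)$ become intersection conditions against a single isotropic flag in $\CC^n \oplus {\CC^n}^*$, so that $\incSGX_y(E_\bullet)$ is a Schubert variety in $\LG(2n)$ by definition, and then to read off its class from the Lagrangian Grassmannian degeneracy locus formula.

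First I would observe that an $m$-I-Grassmannian involution $y$ is vexillary: by the preceding proposition $\Ess(\iDO(y)) \subseteq \{m\} \times \NN$ lies in a single row, which is automatically a chain under $\SWNE$, so Lemma~\ref{lem:inv-vex-chain} applies. Write $\Ess(\iDO(y)) = \{(m,j_1) \SWNEneq \cdots \SWNEneq (m,j_s)\}$ and $k_p = j_p - \rank y_{[m][j_p]}$. By Lemma~\ref{lem:SGX-description}(c), $\incSGX_y(E_\bullet) = \overline{\openincSGX_y(E_\bullet)}$, and the rank-nullity translation from the proof of Lemma~\ref{lem:grassmannian-locus} (also used in Lemma~\ref{lem:grassmannian-locus-LG-OG}), combined with the fact that every essential cell of $\iDO(y)$ lies in row $m$, rewrites $\openincSGX_y(E_\bullet)$ as the locus of $\graph(f) \in \grLG$ with $\dim(\graph(f) \cap (E_{j_p} \oplus \ann{E_m})) = k_p$ for $p \in [s]$. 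The subspaces
\begin{equation*}
E_{j_1} \oplus \ann{E_m} \subseteq \cdots \subseteq E_{j_s} \oplus \ann{E_m}
\end{equation*}
are isotropic for $(-,-)^-$, since $E_{j_p} \subseteq E_m$ forces $\ann{E_m} \subseteq \ann{E_{j_p}}$, and they extend to a complete isotropic flag of $\CC^n \oplus {\CC^n}^*$. Hence $\incSGX_y(E_\bullet)$, being the closure of such a locus, is by definition a Schubert variety in $\LG(2n)$.

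To compute its class I would present it as a Lagrangian Grassmannian degeneracy locus $\Omega^{\LG}$ over $X = \LG(2n)$ in the sense of \S\ref{subsec:degeneracy-loci-LG-OG}: take $\vb V$ the trivial bundle $\CC^n \oplus {\CC^n}^*$, $\vb G$ the tautological subbundle, and $\vb H_{\mu_p}$ the trivial subbundle $E_{j_p} \oplus \ann{E_m}$ with $\mu_p = m - j_p + 1$. By Lemma~\ref{lem:inv-shape} the attached partition $\lambda$, with $\lambda_k = \mu_p + k_p - k$ for $k_{p-1} < k \leq k_p$, is exactly the strict partition $\ishO(y)$; and Lemma~\ref{lem:SGX-description}(a) together with the fiber bundle structure of $\openincSGX_y$ over $\Fl(n)$ gives $\codim_{\LG(2n)} \incSGX_y(E_\bullet) = \iellO(y) = |\ishO(y)|$, the expected codimension. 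Thus Theorem~\ref{thm:lagrangian-locus} yields $[\incSGX_y(E_\bullet)] = Q_{\ishO(y)}(c(1), \ldots, c(k_s))$, where each $c(k) = c(\vb V)/(c(\vb G)c(\vb H_{\mu_p})) = 1/c(\vb G)$ because $\vb V$ and every $\vb H_{\mu_p}$ are trivial. Under the identification of \S\ref{subsec:LG-OG-cohom}, $1/c(\vb G)$ has homogeneous components $Q_d$, so all the $c(k)$ coincide with $\sum_d Q_d$; comparing the Pfaffian recursion defining $Q_\lambda(c(1), \ldots, c(k_s))$ with Definition~\ref{defn:PQ}, this collapses $Q_{\ishO(y)}(c(1), \ldots, c(k_s))$ to the Schur $Q$-function $Q_{\ishO(y)}$, as desired.

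I expect the main obstacle to be the bookkeeping in the middle step: correctly passing from the total space $\incSGX_y \subseteq \LG(2n) \times \Fl(n)$ to its fiber over the chosen $E_\bullet$ (as opposed to a fiber over some $\graph(f)$), ensuring the fiber of the closure is the closure of the fiber --- which is exactly the content of Lemma~\ref{lem:SGX-description}(c) --- and verifying that the codimension in $\LG(2n)$ is the expected $|\ishO(y)|$ so that Theorem~\ref{thm:lagrangian-locus} applies. Once the degeneracy-locus data is set up, identifying $\lambda$ with $\ishO(y)$ via Lemma~\ref{lem:inv-shape} and collapsing the specialized Pfaffian are formal.
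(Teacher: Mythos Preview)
Your proposal is correct and follows essentially the same approach as the paper: both translate the rank conditions into intersection conditions $\dim(U \cap (E_{j_p} \oplus \ann{E_m})) = k_p$ against a single isotropic flag, and identify the closure as a Schubert variety in $\LG(2n)$. The only difference is in reading off the class: the paper simply cites Pragacz's identification of Schubert classes in $\LG(2n)$ with Schur $Q$-functions, whereas you re-derive this by applying Theorem~\ref{thm:lagrangian-locus} with trivial $\vb H$-bundles and observing that the Pfaffian collapses to $Q_{\ishO(y)}$---a slightly longer but equally valid route that stays within the degeneracy-locus framework already set up in the paper.
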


\begin{proof} 
    Write $y = (\phi_1,m+1)\cdots (\phi_k,m+k)$ where $\phi_1 < \cdots < \phi_k \leq m \leq m+k \leq n$. Considering the form of $\iDO(y)$, it is clear that $\Ess(\iDO(y)) \subseteq \{(m,\phi_p) : p \in [k]\}$ and that $\phi_p - \rank y_{[m][\phi_p]} = p$ for $p \in [k]$. Lemma~\ref{lem:grassmannian-locus-LG-OG} and Lemma~\ref{lem:SGX-description}(c) therefore imply that $\incSGX_y(E_\bullet)$ is the closure of
\begin{equation} \label{eq:LG-schubert} \{U \in \grLG : \text{$\dim(U \cap (E_{\phi_p} \oplus \ann{E_m})) = p$ for $p \in [k]$}\}.   \end{equation}
(To be precise, in \eqref{eq:LG-schubert} we are imposing \emph{all} rank conditions coming from row $n$ of $\iDO(y)$, not just those coming from the essential set. However, Lemma~\ref{lem:grassmannian-locus-LG-OG} shows that after taking closures, we get the same variety whether we impose rank conditions for all $(i,j)$ or only the essential ones. Hence, the same is true if any intermediate set of rank conditions is imposed, as are we doing here.)
    The closure of \eqref{eq:LG-schubert} is also the Schubert variety in $\LG(2n)$ labeled by the strict partition $(m+1-\phi_1, \ldots, m+1-\phi_k) = \ishO(y)$ with respect to the isotropic flag
    \begin{equation*}
        0 \oplus \ann{E_n} \subseteq 0 \oplus \ann{E_{n-1}} \cdots \subseteq 0 \oplus \ann{E_m} \subseteq E_1 \oplus \ann{E_m} \subseteq \cdots \subseteq E_m \oplus \ann{E_m},
    \end{equation*}
    and its class is represented by $Q_{\ishO(y)}$ \cite[\S 6]{pragacz-LG-OG}.
\end{proof}

\begin{cor} \label{cor:I-Grassmannian-formula} If $y$ is I-Grassmannian, then $\iF_y = P_{\ishO(y)}$. \end{cor}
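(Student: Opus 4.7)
The plan is to combine Theorem~\ref{thm:i-grassmannians} with Theorem~\ref{thm:inv-stanley-interpretation} to get an equality of cohomology classes, and then promote this to an equality of symmetric functions using a stability argument.

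First, by Theorem~\ref{thm:inv-stanley-interpretation}, the class $[\incSGX_y(E_\bullet)] \in H^*(\LG(2n))$ is represented by $2^{\twocyc(y)}\iF_y$, while by Theorem~\ref{thm:i-grassmannians} it is also represented by $Q_{\ishO(y)}$. Both representatives lie in $\Gamma_Q$ (the second trivially; the first by Corollary~\ref{cor:schur-Q-positivity}), so their difference lies in the kernel of $\Gamma_Q \twoheadrightarrow H^*(\LG(2n))$, i.e.\ in the ideal $(Q_{n+1}, Q_{n+2}, \ldots)$.

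Second, for $y = (\phi_1,m{+}1)(\phi_2,m{+}2)\cdots(\phi_k,m{+}k)$, we have $\twocyc(y) = k$, and as noted in the excerpt the parts of $\ishO(y)$ are $m{+}1{-}\phi_1 > \cdots > m{+}1{-}\phi_k > 0$, so $\ell(\ishO(y)) = k = \twocyc(y)$. Hence $Q_{\ishO(y)} = 2^{\twocyc(y)} P_{\ishO(y)}$.

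Third, we apply the same argument to $y \times 1^r$ for each $r \geq 0$. The involution $y \times 1^r$ is still $m$-I-Grassmannian, with the same reduced involution words, the same Rothe diagram $\iDO$, and the same two-cycles as $y$; so $\iF_{y \times 1^r} = \iF_y$, $\twocyc(y \times 1^r) = \twocyc(y)$, and $\ishO(y \times 1^r) = \ishO(y)$. Applying the first step with $n$ replaced by $n+r$ therefore yields
\[
    2^{\twocyc(y)}\iF_y - Q_{\ishO(y)} \in \bigcap_{r \geq 0} (Q_{n+r+1}, Q_{n+r+2}, \ldots) = 0.
\]
Thus $2^{\twocyc(y)}\iF_y = 2^{\twocyc(y)} P_{\ishO(y)}$ as elements of $\Lambda$, and since $\Lambda$ is torsion-free we conclude $\iF_y = P_{\ishO(y)}$.

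There is no serious obstacle; the only subtlety is to confirm the three stability statements about $y \times 1^r$ (so that the ideal in which the difference lives can be made arbitrarily deep) and to keep the factors of $2$ straight by verifying $\twocyc(y) = \ell(\ishO(y))$ for I-Grassmannian $y$.
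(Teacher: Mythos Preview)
Your proof is correct and follows essentially the same approach as the paper's: both combine Theorem~\ref{thm:i-grassmannians} with Theorem~\ref{thm:inv-stanley-interpretation} to get an equality modulo $(Q_{n+1},Q_{n+2},\ldots)$, then use a stability argument (the paper prepends $1_{n-r}$ while you append $1^r$, but this is immaterial) to force the ideal to be zero. You are slightly more explicit than the paper in justifying $\iF_y \in \Gamma_Q$ and in verifying $\twocyc(y) = \ell(\ishO(y))$, but the logic is the same.
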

    \begin{proof}
        Say $y$ is $r$-I-Grassmannian, and set $y' = 1_{n-r} \times y$ for some fixed $n \geq r$, so that $y'$ is $n$-Grassmannian and $\iF_{y'} = \iF_y$. Theorem~\ref{thm:i-grassmannians} and Theorem~\ref{thm:inv-stanley-interpretation} imply that $2^{\twocyc(y')}\iF_{y'}$ equals $Q_{\ishO(y')} = 2^{\ell(\ishO(y'))}P_{\ishO(y')}$ modulo the ideal $(Q_{n+1}, Q_{n+2}, \ldots)$, given that they both represent the same class in $H^*(\LG(2n))$. Since $\twocyc(y')$, $\ishO(y')$, and $\iFO_{y'}$ are all independent of $n$, letting $n \to \infty$ shows that $2^{\twocyc(y)}\iF_{y} = 2^{\ell(\ishO(y))}P_{\ishO(y)}$, which is equivalent to the claimed formula.
    \end{proof}

The formula of Corollary~\ref{cor:I-Grassmannian-formula} was obtained earlier in \cite{HMP4}, where it was used as a base case for a recurrence of the form $\iF_y = \sum_z \iF_z$; via this recurrence one can compute the Schur P expansion of $\iF_y$, and in particular deduce that it is Schur P positive. Theorem~\ref{thm:i-grassmannians} provides a new and geometrically natural reason to consider I-Grassmannian involutions.

Recall that $w \in S_n$ is vexillary if it avoids the permutation pattern $2143$. Stanley showed that $F_w$ is a single Schur function $s_{\lambda}$ if and only if $w$ is vexillary \cite{stanleysymm}. In \cite{HMP4} it was shown that $2^{\twocyc(y)} \iFO_y = Q_{\mu}$ for some strict partition $\mu$ if and only if $y$ is vexillary, and our results recover one of these implications.

\begin{thm} If $y$ is vexillary, then $2^{\twocyc(y)}\iF_y = Q_{\ishO(y)}$. \end{thm}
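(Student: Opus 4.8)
The plan is to read this off Theorem~\ref{thm:vex-inv-schubert-LG-polys} by specializing $x_{+}\to 0$. Keeping the notation of the paragraph preceding Lemma~\ref{lem:vex-double-schubert-LG-reps} (so $\lambda=\ishO(y)$, and $c(1),\ldots,c(k_s)$ are the Chern-class power series built from \eqref{eq:SGX-chern-class}), that theorem gives $2^{\twocyc(y)}\ibS_y = Q_{\lambda}(c(1),\ldots,c(k_s))$ as power series in $x$. Since $\iF_y = \ibS_y\ev_{x_{+}\to 0}$ by Definition~\ref{defn:inv-schubert}, it suffices to evaluate $Q_{\lambda}(c(1),\ldots,c(k_s))$ at $x_{+}\to 0$.

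First I would observe that this specialization is trivial on each $c(k)$. From the identity $c(k)_d = \sum_{a+b=d} Q_a(x_{-})\, h_b(x_{1\cdots i_p}\superslash x_{1\cdots j_p})$ recorded after \eqref{eq:SGX-chern-class}, setting every variable of $x_{+}$ to zero kills the factor $h_b(x_{1\cdots i_p}\superslash x_{1\cdots j_p})$ unless $b=0$, in which case it equals $1$; hence $c(k)\ev_{x_{+}\to 0} = \sum_d Q_d(x_{-})$ for every $k$. Equivalently, in \eqref{eq:SGX-chern-class} the factor $\tfrac{(1+x_1)\cdots(1+x_{i_p})}{(1-x_1)\cdots(1-x_{j_p})}$ becomes $1$, leaving only $1/c(\vb G)=\sum_d Q_d(x_-)$.

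Next, since all arguments of $Q_{\lambda}(c(1),\ldots,c(k_s))$ have specialized to the \emph{same} power series $\sum_d Q_d(x_{-})$, whose degree-$d$ component is exactly the single-row function $Q_d=\sum_{a+b=d}h_ae_b$ (because $\tfrac{(1+x_1)(1+x_2)\cdots}{(1-x_1)(1-x_2)\cdots}=\sum_d Q_d$ over the alphabet $x_{-}$), the recursive Pfaffian construction of $Q_{\lambda}(c(1),\ldots,c(k_s))$ from \S\ref{subsec:degeneracy-loci-LG-OG} collapses term for term into the Pfaffian defining the ordinary Schur $Q$-function in Definition~\ref{defn:PQ}: parts (i)--(iv) of the two constructions coincide once each $c(k)_d$ is replaced by $Q_d$. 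Therefore $Q_{\lambda}(c(1),\ldots,c(k_s))\ev_{x_{+}\to 0} = Q_{\lambda} = Q_{\ishO(y)}$, where $\lambda=\ishO(y)$ is strict by Lemma~\ref{lem:inv-shape} (strictness is not actually needed for this formal identity). Chaining the displayed equalities gives $2^{\twocyc(y)}\iF_y = Q_{\ishO(y)}$.

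I expect no serious obstacle here, as the content is already packaged into Theorem~\ref{thm:vex-inv-schubert-LG-polys}; what remains is the bookkeeping of checking that $x_{+}\to 0$ commutes with the Pfaffian and that the specialized building blocks $c(k)_d\ev_{x_{+}\to 0}=Q_d(x_-)$ match exactly the single-row entries appearing in Definition~\ref{defn:PQ}. One could alternatively argue geometrically via Theorem~\ref{thm:inv-stanley-interpretation}, realizing $\incSGX_y(E_\bullet)$ as a Lagrangian Grassmannian degeneracy locus and invoking Pragacz's description of $H^*(\LG(2n))$, but the algebraic route above is the shortest.
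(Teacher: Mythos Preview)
Your proposal is correct and is exactly the paper's approach: the paper's proof is the single sentence ``Apply Theorem~\ref{thm:vex-inv-schubert-LG-polys}, setting the variables $x_+$ to zero,'' and you have simply unpacked what that specialization does to each $c(k)$ and to the Pfaffian. The extra detail you supply (that $c(k)\ev_{x_+\to 0}=\sum_d Q_d(x_-)$ and that the recursive Pfaffian then collapses to Definition~\ref{defn:PQ}) is precisely the bookkeeping the paper leaves implicit.
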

    \begin{proof}
        Apply Theorem~\ref{thm:vex-inv-schubert-LG-polys}, setting the variables $x_+$ to zero.
    \end{proof}

We now turn to fixed-point-free involutions and $\OG(2n)$. The obvious guess to define an ``fpf-I-Grassmannian'' involution $z$ would be to require that $\Ess(\iDSp(z)) \subseteq \{m\} \times \NN$. However, this condition is too restrictive: it does imply that $\incSSGX_z(E_\bullet)$ is a Schubert variety just as in Theorem~\ref{thm:i-grassmannians}, but one cannot obtain a Schubert variety for every strict partition this way. For instance, no such $z$ has $\ishSp(z) = (3,1)$. The problem is that seemingly different sets of rank conditions on a skew-symmetric map can turn out to be equivalent in ways not seen with symmetric maps. For instance, if $z = (1,3)(2,5)(4,6)$ then
\begin{equation*}
    \arraycolsep=3pt \def\arraystretch{0.9}
    \iDSp(z) = \begin{array}{cccccc}
        \cdot & \cdot & \times & \cdot & \cdot & \cdot\\
        \circ & \cdot & \cdot & \cdot & \times & \cdot\\
        \times & \cdot & \cdot & \cdot & \cdot & \cdot\\
        \cdot & \circ & \cdot & \cdot & \cdot & \times\\
        \cdot & \times & \cdot & \cdot & \cdot & \cdot\\
        \cdot & \cdot & \cdot & \times & \cdot & \cdot\\
    \end{array}
\end{equation*}
Therefore $\graph(f) \in \openincSSGX_{z}(E_\bullet)$ if and only if $\rank(f : E_2 \to E_1^*) \leq 0$ and $\rank(f : E_4 \to E_2^*) \leq 1$, corresponding to the two elements of $\Ess(\iDSp(z))$. But $\rank(f : E_4 \to E_2^*) \leq 1$ implies $\rank(f : E_2 \to E_2^*) \leq 1$, which implies $\rank(f : E_2 \to E_2^*) \leq 0$ because a skew-symmetric map must have even rank, which in turn implies $\rank(f : E_2 \to E_1^*) \leq 0$. Thus, $\openincSSGX_z(E_\bullet)$ is actually defined by the single rank condition $\rank(f : E_4 \to E_2^*) \leq 1$, so is a Schubert variety in $\LG(12)$.

\begin{defn}[\cite{HMP5}] Suppose $z \in \Ifpf_{2r}$. Let $z = (a_1, b_1)\cdots (a_r, b_r)$ be the disjoint cycle decomposition of $z$, where $a_i < b_i$ for each $i$. Define involutions $\dearcR(z)$ and $\dearcL(z)$ as the product of $(a_i, b_i)$ over all $i$ such that, respectively,
    \begin{itemize}
        \item $a_i < b_j < b_i$ for some $j$;
        \item $a_i < a_j < b_i$ for some $j$.
    \end{itemize}
\end{defn}

\begin{ex}
    If $z = (1,3)(2,5)(4,6)$, then $\dearcR(z) = (2,5)(4,6)$ and $\dearcL(z) = (1,3)(2,5)$. Drawing $z$ as a perfect matching, one obtains $\dearcR(z)$ by removing all arcs which do not have the right endpoint of another arc underneath them, and similarly for $\dearcL(z)$ changing ``right'' to ``left'':
    \begin{center} $z = $
        \begin{tikzpicture}
            \node[circle, fill, inner sep=0pt, minimum size = 1mm] (664544164) at (0.0,0) {}; 
            \node[circle, fill, inner sep=0pt, minimum size = 1mm] (664544165) at (0.333,0) {}; 
            \node[circle, fill, inner sep=0pt, minimum size = 1mm] (664544166) at (0.667,0) {}; 
            \node[circle, fill, inner sep=0pt, minimum size = 1mm] (664544167) at (1.0,0) {}; 
            \node[circle, fill, inner sep=0pt, minimum size = 1mm] (664544168) at (1.333,0) {}; 
            \node[circle, fill, inner sep=0pt, minimum size = 1mm] (664544169) at (1.667,0) {}; 
            \draw (664544164) to[bend left=65] (664544166);\draw (664544165) to[bend left=75] (664544168);\draw (664544167) to[bend left=65] (664544169); 
        \end{tikzpicture} \hspace{1cm} $\dearcR(z) = $
        \begin{tikzpicture}
            \node[circle, fill, inner sep=0pt, minimum size = 1mm] (909762557) at (0.0,0) {}; 
            \node[circle, fill, inner sep=0pt, minimum size = 1mm] (909762558) at (0.333,0) {}; 
            \node[circle, fill, inner sep=0pt, minimum size = 1mm] (909762559) at (0.667,0) {}; 
            \node[circle, fill, inner sep=0pt, minimum size = 1mm] (909762560) at (1.0,0) {}; 
            \node[circle, fill, inner sep=0pt, minimum size = 1mm] (909762561) at (1.333,0) {}; 
            \node[circle, fill, inner sep=0pt, minimum size = 1mm] (909762562) at (1.667,0) {}; 
            \draw (909762558) to[bend left=75] (909762561);\draw (909762560) to[bend left=65] (909762562); 
        \end{tikzpicture} \hspace{1cm} $\dearcL(z) = $
        \begin{tikzpicture}
            \node[circle, fill, inner sep=0pt, minimum size = 1mm] (41690034) at (0.0,0) {}; 
            \node[circle, fill, inner sep=0pt, minimum size = 1mm] (41690035) at (0.333,0) {}; 
            \node[circle, fill, inner sep=0pt, minimum size = 1mm] (41690036) at (0.667,0) {}; 
            \node[circle, fill, inner sep=0pt, minimum size = 1mm] (41690037) at (1.0,0) {}; 
            \node[circle, fill, inner sep=0pt, minimum size = 1mm] (41690038) at (1.333,0) {}; 
            \node[circle, fill, inner sep=0pt, minimum size = 1mm] (41690039) at (1.667,0) {}; 
            \draw (41690034) to[bend left=65] (41690036);\draw (41690035) to[bend left=75] (41690038); 
        \end{tikzpicture}
    \end{center}
\end{ex}

Recall that for general $y \in \I_n$, we define $\incSSGX_y$ as the closure in $\OG(2n) \times \Fl(n)$ of
\begin{equation*}
    \{(\graph(f), E_\bullet) : \text{$\rank(E_j \xrightarrow{f} E_i^*) = \rank y_{[i][j]}$ for $(i,j) \in \Ess(\iDSp(y))$, $f$ skew-symmetric}\}.
\end{equation*}
As a technical crutch in the next few lemmas, let us also define  $\incSSGX_y^{\leq}$ as the closure in $\OG(2n) \times \Fl(n)$ of
\begin{equation*}
    \{(\graph(f), E_\bullet) : \text{$\rank(E_j \xrightarrow{f} E_i^*) \leq \rank y_{[i][j]}$ for $(i,j) \in \Ess(\iDSp(y))$, $f$ skew-symmetric}\}.
\end{equation*}

\begin{lem} \label{lem:dearcL-ess} Let $y \in \I_n$, and let $(a,b)$ be a cycle in $y$ with $a < b$ and such that $a < i < b$ implies $y(i) < a$. Set $y' = y(a,b)$. Then $\incSSGX_y = \incSSGX_{y'}$. 
\end{lem}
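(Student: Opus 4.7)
The plan is to reduce the lemma to two combinatorial claims: $\iDSp(y) = \iDSp(y')$, and $\rank y_{[i][j]} = \rank y'_{[i][j]}$ for every $(i,j) \in \Ess(\iDSp(y))$. Since the definition of $\incSSGX_y$ depends only on the essential set of $\iDSp(y)$ and on the values of $\rank y_{[i][j]}$ at those cells, these two claims give $\incSSGX_y = \incSSGX_{y'}$ directly from the definition.

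First I would establish $\iDSp(y) = \iDSp(y')$. Since $y$ and $y'$ agree outside $\{a,b\}$, the two diagrams can only differ in rows or columns indexed by $a$ or $b$, and there are four such cases to inspect. For example, in column $a$ below the diagonal, $(i,a) \in \iDSp(y)$ requires $y(i) > a$ and $y(a) = b > i$; the hypothesis $y(i) < a$ for $a < i < b$ kills the first inequality while $i \geq b$ kills the second. On the $y'$ side, $(i,a) \in \iDSp(y')$ forces $y'(a) = a > i$, impossible since $i > a$. The analogous analyses for column $b$, row $a$, and row $b$ are parallel, the point being that the hypothesis keeps rows and columns between $a$ and $b$ from contributing anything to either diagram.

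Next I would compare the two rank functions at essential cells. A direct expansion of the difference (the only relevant values of $k$ in the formula $\rank w_{[i][j]} = |\{k \leq i : w(k) \leq j\}|$ are $k = a$ and $k = b$) gives
\[
\rank y_{[i][j]} - \rank y'_{[i][j]} = [a \leq i][b \leq j] + [b \leq i][a \leq j] - [a \leq i][a \leq j] - [b \leq i][b \leq j],
\]
and a short case check shows this vanishes unless $a \leq i < b$ and $a \leq j < b$. Using the strict inequality $i > j$ required for membership in $\iDSp$ together with the hypothesis, one verifies that no essential cell lies in this rectangle: the case $i=a$ is incompatible with $j \geq a$; the case $a < i < b$ forces $y(i) < a$, contradicting the requirement $y(i) > j \geq a$. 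Hence the rank functions agree on $\Ess(\iDSp(y)) = \Ess(\iDSp(y'))$.

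The main obstacle is not conceptual but organizational: the bookkeeping of verifying case by case that each cell that could be different is in fact absent from both diagrams, or is non-essential. The underlying phenomenon is that under the stated hypothesis, the cycle $(a,b)$ is invisible to the sub-diagonal rank data on which $\incSSGX$ depends, so deleting it produces an equivalent system of defining rank conditions.
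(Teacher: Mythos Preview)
Your proposal is correct and follows essentially the same route as the paper. The paper also proves the stronger facts $\iDSp(y)=\iDSp(y')$ and equality of ranks on all of $\iDSp(y)$ (not just essential cells), arguing via a picture of the configurations of $D(y)$ and $D(y')$ inside $[a,b]\times[a,b]$ rather than your explicit case analysis; your argument in fact establishes the same stronger statement, since your verification that no cell of $\iDSp(y)$ lies in $[a,b)\times[a,b)$ uses only membership in $\iDSp(y)$, not essentiality.
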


\begin{proof} We prove the stronger fact that $\iDSp(y') = \iDSp(y)$ and $\rank y'_{[i][j]} = \rank y_{[i][j]}$ for $(i,j) \in \iDSp(y)$. Observe that $D(y)$ and $D(y')$ agree outside of the square $[a,b] \times [a,b]$, and contain the following respective configurations:
        \begin{center}
        \begin{tikzpicture}
        \fill[gray!20!white] (0,0) -- (1.5,0) -- (1.5,-.5) -- (.5,-.5) -- (.5,-1.5) -- (0,-1.5) -- (0,0);
        \draw (1.5,0) -- (0,0) -- (0,-1.5);
        \draw (1.5,-.5) -- (.5,-.5) -- (.5,-1.5);
        \draw (-.2,.2) node {$\circ$};
        \draw (0.5, 0.2) node[right] {$\times$};
        \draw (-.2,-.5) node[below]  {$\times$};
        \draw (-.7, .2) node {$a$};
        \draw (-.7, -.5) node[below] {$b$};
        \draw (-.2, .6) node {$a$};
        \draw (.5, .65) node[right] {$b$};
        \end{tikzpicture} \hspace{1in} \begin{tikzpicture}
            \fill[gray!20!white] (0,0) -- (1.5,0) -- (1.5,-.5) -- (.5,-.5) -- (.5,-1.5) -- (0,-1.5) -- (0,0);
            \draw (1.5,0) -- (0,0) -- (0,-1.5);
            \draw (1.5,-.5) -- (.5,-.5) -- (.5,-1.5);
            \draw (0.5, -.5) node[below right] {$\times$};
            \draw (-.2,.2) node  {$\times$};
            \draw (-.7, .2) node {$a$};
            \draw (-.7, -.5) node[below] {$b$};
            \draw (-.2, .6) node {$a$};
            \draw (.5, .65) node[right] {$b$};
            \end{tikzpicture}
        \end{center}
        Here $\times$ denotes a point $(i, w(i))$ and $\circ$ a point in $D(w)$, where $w$ is $y$ or $y'$ as appropriate. The shaded regions contain no $\times$ and hence no $\circ$, from which it follows that $D(y') = D(y) \setminus \{(a,a)\}$, so $\iDSp(y') = \iDSp(y)$. Also, if $(i,j) \in \iDSp(y)$ then $(i,j) \notin [a,b) \times [a,b)$, which in turn implies $\rank y_{[i][j]} = \rank y'_{[i][j]}$.

    \end{proof}

\begin{lem} \label{lem:dearcR-ess} Let $y \in \I_n$, and let $(a,b)$ be a cycle in $y$ with $a < b$. Assume that $a < i < b$ implies $y(i) > b$ and that $\rank y_{[b][b]}$ is even. Set $y' = y(a,b)$. Then
    \begin{enumerate}[(a)]
        \item The sets $\Ess(\iDSp(y'))$ and $\Ess(\iDSp(y))$ agree outside $[a,b] \times [a,b]$.
        \item $\rank y_{[i][j]} = \rank y'_{[i][j]}$ if $(i,j) \in \Ess(\iDSp(y'))$ or $(i,j) \notin [a,b) \times [a,b)$.
        \item $\incSSGX_{y'}^{\leq} \subseteq \incSSGX_{y}^{\leq}$.
    \end{enumerate}
\end{lem}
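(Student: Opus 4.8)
The plan is to first determine exactly how the Rothe diagrams $D(y)$ and $D(y')$ differ. Since $y$ and $y'$ agree away from $\{a,b\}$, with $y(a)=b,\ y'(a)=a,\ y(b)=a,\ y'(b)=b$, the two diagrams coincide outside the square $[a,b]\times[a,b]$. Inside the square, the hypothesis ``$a<i<b\Rightarrow y(i)>b$'' (together with its consequence that $y(k)\notin[a,b]$, hence $y(k)<a$ or $y(k)>b$, for every $k<a$) forces, after a short check of the two active rows,
\[
D(y)\cap([a,b]^2)=[a,b{-}1]\times[a,b{-}1],\qquad
D(y')\cap([a,b]^2)=\big([a{+}1,b]\times[a{+}1,b]\big)\setminus\{(b,b)\}.
\]
I would record this by a picture analogous to the one in the proof of Lemma~\ref{lem:dearcL-ess}. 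Intersecting with $\{i>j\}$ shows that $\iDSp(y)$ and $\iDSp(y')$ agree outside $[a,b]^2$, and that inside they differ only by the column segment $C=\{(i,a):a{<}i{<}b\}$, lying in $\iDSp(y)$ only, and the row segment $R=\{(b,j):a{<}j{<}b\}$, lying in $\iDSp(y')$ only.

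For (b) I would compute ranks directly: passing from $y$ to $y'$ moves the $1$ in row $a$ from column $b$ to column $a$ and the $1$ in row $b$ from column $a$ to column $b$, and tallying these contributions gives $\rank y'_{[i][j]}=\rank y_{[i][j]}+1$ when $(i,j)\in[a,b)\times[a,b)$ and $\rank y'_{[i][j]}=\rank y_{[i][j]}$ otherwise; this is the second alternative of (b). For the first, it suffices to note $\Ess(\iDSp(y'))\cap\big([a,b)\times[a,b)\big)=\emptyset$: by the description above the cells of $\iDSp(y')$ in $[a,b)\times[a,b)$ are exactly those strictly below the diagonal of $[a{+}1,b{-}1]^2$, and each has a cell of $\iDSp(y')$ directly to its east or (thanks to the row segment $R$) directly to its south, so none is a southeast corner. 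For (a), I would argue that the discrepancy $C\cup R$ inside the square cannot change whether a cell \emph{outside} the square is a southeast corner. A cell outside $[a,b]^2$ with eastern neighbor inside must be some $(i,a{-}1)$ with $(i,a)\in C$, and the case analysis on $y(a{-}1)<a$ versus $y(a{-}1)>b$ shows that $(i,a{-}1)$ is either not in $\iDSp$ or fails to be a southeast corner of \emph{both} diagrams; a cell outside $[a,b]^2$ with southern neighbor inside must be $(a{-}1,j)$ with neighbor $(a,j)$, but $(a,j)$ is never in $\iDSp$ (being weakly above the diagonal) and is not in $C\cup R$. Hence $\Ess(\iDSp(y))$ and $\Ess(\iDSp(y'))$ agree off $[a,b]^2$.

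The crux is (c). By (a) and (b) the rank conditions defining $\incSSGX_{y'}^{\leq}$ at cells of $\Ess(\iDSp(y'))$ outside $[a,b]^2$ coincide with the corresponding conditions of $\incSSGX_y^{\leq}$, so it remains to show that every $(\graph(f),E_\bullet)$ in the locus defining $\incSSGX_{y'}^{\leq}$ also satisfies the conditions $\rank(E_j\xrightarrow{f}E_i^*)\le\rank y_{[i][j]}$ for $(i,j)\in\Ess(\iDSp(y))\cap[a,b]^2$. All such cells lie in the triangle $T=\{(i,j):a\le j<i\le b{-}1\}$, and one computes that $\rank y_{[i][j]}$ is constant on $T$, equal to $r:=\rank y_{[a-1][a-1]}$; moreover for $(i,j)\in T$ the map $E_j\xrightarrow{f}E_i^*$ factors through $E_{b-1}\xrightarrow{f}E_{b-1}^*$, so it suffices to prove $\rank(E_{b-1}\xrightarrow{f}E_{b-1}^*)\le r$. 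Here I would use the two parity facts: $E_{b-1}\xrightarrow{f}E_{b-1}^*$ is skew-symmetric (since $f$ is), hence has even rank; and $r=\rank y_{[b][b]}-2$, which is even by hypothesis. Thus it is enough to establish the weaker bound $\rank(E_{b-1}\xrightarrow{f}E_{b-1}^*)\le r+1$. For this I would take $p$ to be the largest row for which $(b,b{-}1),(b{+}1,b{-}1),\dots,(p,b{-}1)$ all lie in $\iDSp(y')$; then $(p,b{-}1)$ is a southeast corner of $\iDSp(y')$ with $\rank y'_{[p][b-1]}=r+1$, and since this cell is either $(b,b{-}1)$ itself or lies outside $[a,b]^2$ (shared with $\iDSp(y)$) the condition $\rank(E_{b-1}\xrightarrow{f}E_p^*)\le r+1$ is among those defining $\incSSGX_{y'}^{\leq}$; as $E_{b-1}\xrightarrow{f}E_{b-1}^*$ factors through $E_{b-1}\xrightarrow{f}E_p^*$, the bound follows. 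The main difficulty in (c) is the bookkeeping of which rank inequalities are logically available inside the square together with making the parity rounding go through — the hypothesis that $\rank y_{[b][b]}$ is even is precisely what turns ``$\le r+1$ and even'' into ``$\le r$'' (the degenerate case $b=a+1$, where $T$ is empty, being trivial).
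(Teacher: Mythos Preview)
Your proof is correct and follows essentially the same approach as the paper's. For parts (a) and (b) you give explicit descriptions of $D(y)\cap[a,b]^2$ and $D(y')\cap[a,b]^2$ where the paper relies on a picture, but the content is identical. The one notable difference is in part (c): the paper splits into three cases according to whether $b=a+1$ and whether $(b{+}1,b{-}1)\in\iDSp(y)$, and in the latter two cases derives the rank condition at $(b{-}1,b{-}2)\in\Ess(\iDSp(y))$ from the condition at $(b,b{-}1)$ for $y'$ (which is essential in case~(ii) and merely a cell of $\iDSp(y')$ in case~(iii), requiring an extra propagation step). You avoid this second case distinction by directly locating the essential cell $(p,b{-}1)\in\Ess(\iDSp(y'))$, which equals $(b,b{-}1)$ in the paper's case~(ii) and lies strictly below row $b$ in case~(iii). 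This is a modest streamlining, but the underlying parity argument---bounding $\rank(E_{b-1}\to E_{b-1}^*)$ by an odd number and then rounding down because skew-symmetric maps have even rank---is the same in both proofs.
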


    \begin{proof} $D(y)$ and $D(y')$ agree outside of the square $[a,b] \times [a,b]$, and contain the following respective configurations:
\begin{equation} \label{eq:diagram-change} 
    \raisebox{-1cm}{\begin{tikzpicture}
    \draw (-.2,.2) node {$\circ$};
    \draw (.3,.2) node {$\cdots$};
    \draw (.7,.2) node {$\circ$};
    \draw (1.1,.2) node {$\circ$};
    \draw (1.5,.2) node {$\times$};
    \draw (-.2,-.1) node {$\vdots$};
    \draw (-.2,-.6) node {$\circ$};
    \draw (-.2,-1) node {$\circ$};
    \draw (-.2,-1.4) node {$\times$};
    \draw (1.1,-.6) node {$\circ$};
    \draw (.7,-1) node {$\circ$};
    \draw (.7,-.6) node {$\circ$};
    \draw (1.1,-1) node {$\circ$};
    \draw (.3,-.6) node {$\cdots$};
    \draw (.3,-1) node {$\cdots$};
    \draw (.7,-.1) node {$\vdots$};
    \draw (1.1,-.1) node {$\vdots$};
    \draw (-1, .2) node {$a$};
    \draw (-1, -1.4) node {$b$};
    \draw (-.2, .7) node {$a$};
    \draw (1.5, .75) node {$b$};
    \end{tikzpicture} \hspace{1in} \begin{tikzpicture}
        \draw (-.2,.2) node {$\times$};
        \draw (.7,-.1) node {$\cdots$};
        \draw (1.1,-.1) node {$\circ$};
        \draw (1.5,-.1) node {$\circ$};
        \draw (.2,-.1) node {$\circ$};
        \draw (.2,-.4) node {$\vdots$};
        \draw (.2,-.9) node {$\circ$};
        \draw (.2,-1.3) node {$\circ$};
        \draw (1.5,-1.3) node {$\times$};
        \draw (.7,-1.3) node {$\cdots$};
        \draw (.7,-.9) node {$\cdots$};
        \draw (1.1,-1.3) node {$\circ$};
        \draw (1.1,-.9) node {$\circ$};
        \draw (1.5,-.4) node {$\vdots$};
        \draw (1.1,-.4) node {$\vdots$};
        \draw (1.5,-.9) node {$\circ$};
        \draw (-1, .2) node {$a$};
        \draw (-1, -1.3) node {$b$};
        \draw (-.2, .7) node {$a$};
        \draw (1.5, .75) node {$b$};
    \end{tikzpicture}}
\end{equation}

From \eqref{eq:diagram-change} one can see that if $(i,j) \notin [a,b) \times [a,b)$, then $\rank y'_{[i][j]} = \rank y_{[i][j]}$. Also, the only cell $(i,j) \in \Ess(\iDSp(y'))$ which could also be in $[a,b] \times [a,b]$ is $(b,b{-}1)$, and \eqref{eq:diagram-change} again shows that $\rank y'_{[b][b-1]} = \rank y_{[b][b-1]}$. This proves parts (a) and (b) of the lemma. As for part (c), we consider three cases:
\begin{enumerate}[(i)]
    \item Suppose $b = a+1$. Then $\Ess(\iDSp(y')) = \Ess(\iDSp(y))$, and $\incSSGX^{\leq}_y = \incSSGX^{\leq}_{y'}$ since these two varieties are defined by the same rank conditions.
    \item Suppose $b > a+1$ and $(b{+}1,b{-}1) \notin \iDSp(y)$. Then $(b{-}1,b{-}2) \in \Ess(\iDSp(y))$ and $\Ess(\iDSp(y')) = \Ess(\iDSp(y)) \setminus \{(b{-}1,b{-}2)\} \cup \{(b,b{-}1)\}$. Moreover, $\Ess(\iDSp(y)) \cap [a,b] \times [a,b] = \{(b{-}1,b{-}2)\}$ and $\Ess(\iDSp(y')) \cap [a,b] \times [a,b] = \{(b,b{-}1)\}$. Thus, $\incSSGX^{\leq}_{y'}$ is defined by the same rank conditions as $\incSSGX^{\leq}_y$ except that the condition
    \begin{equation} \label{eq:rank-condition-1}
        \rank(f : E_{b-2} \to E_{b-1}^*) \leq \rank y_{[b-1][b-2]}
    \end{equation}
    is replaced by
    \begin{equation} \label{eq:rank-condition-2}
        \rank(f : E_{b-1} \to E_b^*) \leq \rank y'_{[b][b-1]}.
    \end{equation}

    We must show that \eqref{eq:rank-condition-2} implies \eqref{eq:rank-condition-1}. Suppose that \eqref{eq:rank-condition-2} holds. Then
    \begin{align} 
        \rank(f : E_{b-2} \to E_{b-1}^*) &\leq \rank(f : E_{b-1} \to E_{b-1}^*) \nonumber \\
        & \leq \rank(f : E_{b-1} \to E_{b}^*) \label{eq:rank-inequalities-1}\\
        & \leq \rank y'_{[b][b-1]} \label{eq:rank-inequalities-2}\\
        & = 1+\rank y_{[b-1][b-1]} = 1+\rank y_{[b-1][b-2]}. \nonumber 
    \end{align}
    By assumption, $1+\rank y_{[b-1][b-1]} = -1+\rank y_{[b][b]}$ is odd. On the other hand, $\rank(f : E_{b-1} \to E_{b-1}^*)$ is even because $f$ is skew-symmetric. It follows that one of the inequalities \eqref{eq:rank-inequalities-1} and \eqref{eq:rank-inequalities-2} is strict, so that \eqref{eq:rank-condition-1} holds.

    \item Finally, suppose $b > a+1$ and $(b{+}1,b{-}1) \in \iDSp(y)$. Now $\Ess(\iDSp(y')) = \Ess(\iDSp(y)) \setminus \{(b{-}1,b{-}2)\}$, and no element of $\Ess(\iDSp(y'))$ lies in $[a,b) \times [a,b)$, so $\incSSGX^{\leq}_y$ is defined by the same rank conditions as $\incSSGX^{\leq}_{y'}$ together with the extra condition \eqref{eq:rank-condition-1}. We must see that this extra condition is actually implied by those defining $\incSSGX^{\leq}_{y'}$. 
    
    If $(i,j), (i',j') \in D(y)$ satisfy $(i',j') \in \{(i{+}1,j), (i,j{+}1)\}$, then $\rank y_{[i][j]} = \rank y_{[i'][j']}$, so $\rank(f : E_{j'} \to E_{i'}^*) \leq \rank y_{[i'][j']}$ implies $\rank(f : E_{j} \to E_{i}^*) \leq \rank y_{[i][j]}$. Thus, if the rank condition $\rank(f : E_{j} \to E_{i}^*) \leq \rank y_{[i][j]}$ holds for every $(i,j) \in \Ess(\iDSp(y))$, then it holds for every $(i,j) \in \iDSp(y)$ by induction. In particular, $(b,b-1) \in \iDSp(y')$, so \eqref{eq:rank-condition-2} holds on $\incSSGX^{\leq}_{y'}$, which implies \eqref{eq:rank-condition-1} by case (ii). \qedhere
\end{enumerate}
    \end{proof}

\begin{thm} \label{thm:dearc-equality}
    Suppose $z \in \Ifpf_n$. Then $\incSSGX_z = \incSSGX_{\dearcL(z)} = \incSSGX_{\dearcR(z)}$.
\end{thm}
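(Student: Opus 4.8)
The plan is to realize both $\dearcL(z)$ and $\dearcR(z)$ as the result of deleting arcs of $z$ one at a time, applying Lemma~\ref{lem:dearcL-ess} or Lemma~\ref{lem:dearcR-ess} at each deletion. For the $\dearcL$ part I would call an arc $(a,b)$ of $z$ (with $a<b$) \emph{left-deletable} if no arc of $z$ has its left endpoint $a_j$ with $a<a_j<b$, so that $\dearcL(z)$ is the product of the non-left-deletable arcs. First I would record three facts: if $(a,b)$ is left-deletable then every $i$ with $a<i<b$ is a right endpoint with $z(i)<a$ (it is not a left endpoint, and if its partner $a_j$ satisfied $a<a_j$ then $a<a_j<i<b$ would contradict left-deletability); distinct left-deletable arcs are pairwise disjoint (a crossing or nesting pair would put the left endpoint of one strictly inside the other); and the arc $(z(i),i)$ through any point $i$ of $(a,b)$ is itself not left-deletable, since it spans the left endpoint $a$, hence is never deleted. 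These let me delete the left-deletable arcs in any order, with the hypothesis of Lemma~\ref{lem:dearcL-ess} valid at each stage (the current involution still sends $(a,b)$ below $a$), so chaining the equalities $\incSSGX_y=\incSSGX_{y(a,b)}$ gives $\incSSGX_z=\incSSGX_{\dearcL(z)}$.

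For the $\dearcR$ part I would argue dually, calling $(a,b)$ \emph{right-deletable} if no arc of $z$ has its right endpoint $b_j$ with $a<b_j<b$, so $\dearcR(z)$ is the product of the non-right-deletable arcs; now every point of $(a,b)$ is a left endpoint with image $>b$, distinct right-deletable arcs are disjoint, and the arcs through interior points of $(a,b)$ survive. The extra hypothesis of Lemma~\ref{lem:dearcR-ess} that $\rank y_{[b][b]}$ be even holds throughout the process: it holds for $z$ because $z$ is fixed-point-free, and deleting an arc disjoint from the interval $[a,b]$ does not change the parity of $\rank y_{[b][b]}$. Iterating part~(c) of Lemma~\ref{lem:dearcR-ess} then gives $\incSSGX_{\dearcR(z)}^{\leq}\subseteq\incSSGX_z^{\leq}$, and since $z$ is fixed-point-free the proof of Lemma~\ref{lem:SGX-description}(b) also shows $\incSSGX_z^{\leq}=\incSSGX_z$; combining, $\incSSGX_{\dearcR(z)}\subseteq\incSSGX_{\dearcR(z)}^{\leq}\subseteq\incSSGX_z$.

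For the reverse inclusion $\incSSGX_z\subseteq\incSSGX_{\dearcR(z)}$ I would use that $\incSSGX_z=\overline{\openincSSGX_z}$ and that $\openincSSGX_z$ is a single $\GL(n)$-orbit on which $\rank(E_j\xrightarrow{f}E_i^*)=\rank z_{[i][j]}$ holds for all $i,j$; so it suffices to show $\rank z_{[i][j]}=\rank\dearcR(z)_{[i][j]}$ for every $(i,j)\in\Ess(\iDSp(\dearcR(z)))$. Writing the deletions as $z=y_0,y_1,\ldots,y_m=\dearcR(z)$ with pairwise disjoint boxes $[a_k,b_k]^2$, each deletion changes $\iDSp$ and the ranks only inside its own box and outside all boxes; the case analysis in the proof of Lemma~\ref{lem:dearcR-ess}(a) shows that the only essential cell of $\iDSp(y_k)$ that can lie in $[a_k,b_k]^2$ is $(b_k,b_k{-}1)$, and by disjointness this persists, so every essential cell of $\iDSp(\dearcR(z))$ lies outside all of the half-open boxes $[a_l,b_l)^2$ (the cell $(b_k,b_k{-}1)$ because $b_k\notin[a_k,b_k)$ and $b_k\notin[a_l,b_l]$ for $l\neq k$). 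Lemma~\ref{lem:dearcR-ess}(b) then says the rank at such a cell is unchanged at every step, giving $\rank z_{[i][j]}=\rank\dearcR(z)_{[i][j]}$. Hence $\incSSGX_z\subseteq\incSSGX_{\dearcR(z)}$, and with the previous paragraph $\incSSGX_z=\incSSGX_{\dearcR(z)}$.

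I expect this last step to be the main obstacle: the combinatorial bookkeeping that the essential cells of $\iDSp(\dearcR(z))$ avoid the deletion boxes and retain their rank statistics, which leans on the precise description of how $\Ess(\iDSp)$ and the ranks change under a single arc deletion (Lemma~\ref{lem:dearcR-ess}(a),(b)) together with the disjointness of right-deletable arcs in a fixed-point-free involution. A minor secondary point is the identification $\incSSGX_z^{\leq}=\incSSGX_z$ for fixed-point-free $z$, which is implicit in the proof of Lemma~\ref{lem:SGX-description}(b).
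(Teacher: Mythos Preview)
Your proposal is correct and follows essentially the same approach as the paper: iterate Lemma~\ref{lem:dearcL-ess} for the $\dearcL$ equality, and for $\dearcR$ obtain one inclusion from iterating Lemma~\ref{lem:dearcR-ess}(c), the reverse inclusion by checking that ranks at essential cells of $\iDSp(\dearcR(z))$ agree with those of $z$ via Lemma~\ref{lem:dearcR-ess}(a),(b), and close up using $\incSSGX_z^{\leq}=\incSSGX_z$ from the proof of Lemma~\ref{lem:SGX-description}(b). The only cosmetic difference is that the paper orders the right-deletable arcs as $a_k<b_k<a_{k-1}<b_{k-1}<\cdots<a_1<b_1$ and uses this to verify directly that $z^{p-1}_{[b_p][b_p]}=z_{[b_p][b_p]}$ (hence even), whereas you argue parity is preserved under deleting a disjoint arc; both are fine.
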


\begin{proof}
    Since $\dearcL(z)$ is obtained from $z$ by a sequence of transformations $y \leadsto y'$ as in the statement of Lemma~\ref{lem:dearcL-ess}, that lemma implies that $\incSSGX_z = \incSSGX_{\dearcL(z)}$.

    As for $\dearcR(z)$, let $C$ be the set of cycles $(a,b)$ of $z$ such that $a < b$ and $a < i < b$ implies $z(i) > b$. The cycles in $C$ are necessarily non-nesting and non-crossing in the sense that $C = \{(a_1, b_1), \ldots, (a_k, b_k)\}$ where
    \begin{equation} \label{eq:NNNC}
        a_k < b_k < a_{k-1} < b_{k-1} < \cdots < a_1 < b_1.
    \end{equation}
    Set $z^p = z(a_1, b_1)\cdots (a_{p}, b_{p})$ for $0 \leq p \leq k$, so $z^0 = z$ and $z^k = \dearcR(z)$. The permutation matrices $z_{[b_p][b_p]}$ and $z^{p-1}_{[b_p][b_p]}$ are the same given that $a_1, b_1, \ldots, a_{p-1}, b_{p-1} > b_p$; and, since $z$ is fixed-point-free, $\rank z^{p-1}_{[b_p][b_p]} = \rank z_{[b_p][b_p]}$ is even. Each transformation $z^{p-1} \leadsto z^p$ is therefore of the form considered in Lemma~\ref{lem:dearcR-ess}(c), so that lemma implies $\incSSGX^{\leq}_{z^{p}} \subseteq \incSSGX^{\leq}_{z^{p-1}}$ for $p = 1, \ldots, k$, hence  $\incSSGX^{\leq}_{\dearcR(z)} \subseteq \incSSGX^{\leq}_z$.

    Now we claim that if $(i,j) \in \Ess(\iDSp(\dearcR(z)))$, then $\rank \dearcR(z)_{[i][j]} = \rank z_{[i][j]}$. If $(i,j)$ is not in any square $[a_p,b_p) \times [a_p,b_p)$, this is clear from Lemma~\ref{lem:dearcR-ess}(b). Suppose $(i,j) \in [a_p,b_p) \times [a_p,b_p)$. The squares $[a_p,b_p) \times [a_p,b_p)$ are disjoint by \eqref{eq:NNNC}, so if $q \neq p$ then $(i,j) \notin [a_q,b_q) \times [a_q,b_q)$, hence $\rank z^{q-1}_{[i][j]} = \rank z^{q}_{[i][j]}$ by Lemma~\ref{lem:dearcR-ess}(b). But given that $(i,j) \notin [a_q,b_q) \times [a_q,b_q)$ for $q > p$, Lemma~\ref{lem:dearcR-ess}(a) shows that $(i,j) \in \Ess(\iDSp(z^{p-1}))$. Thus $\rank z^{p-1}_{[i][j]} = \rank z^p_{[i][j]}$ by Lemma~\ref{lem:dearcR-ess}(b).

    Suppose $(\graph(f),E_\bullet) \in \openincSSGX_z$, so $\rank(E_j \xrightarrow{f} E_i^*) = \rank z_{[i][j]}$ for $i,j \in [n]$. The previous paragraph then implies that $\rank(E_j \xrightarrow{f} E_i^*) = \rank \dearcR(z)_{[i][j]}$ for $(i,j) \in \Ess(\iDSp(\dearcR(z)))$, so $(\graph(f), E_\bullet) \in \incSSGX_{\dearcR(z)}$. We now see
    \begin{equation*}
        \openincSSGX_z \subseteq \incSSGX_{\dearcR(z)} \subseteq \incSSGX^{\leq}_{\dearcR(z)} \subseteq \incSSGX^{\leq}_z.
    \end{equation*}
    As $\incSSGX_z = \incSSGX^{\leq}_z$ by the proof of Lemma~\ref{lem:SGX-description}(b), taking closures proves the theorem.

\end{proof}

\begin{lem} \label{lem:dearc-shapes} If $z \in \Ifpf_n$, then $\ishSp(z) = \ishSp(\dearcR(z)) = \ishSp(\dearcL(z))$. \end{lem}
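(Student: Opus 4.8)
The plan is to track the transformation $z \leadsto \dearcR(z)$ (resp.\ $z \leadsto \dearcL(z)$) at the level of the diagram $\iDSp$. Since $\ishSp(y)^t$ is, by definition, the partition obtained by sorting the nonzero row lengths of $\iDSp(y)$, it suffices to show that the multiset of row lengths of $\iDSp$ is unchanged. As in the proof of Theorem~\ref{thm:dearc-equality}, both $\dearcL(z)$ and $\dearcR(z)$ arise from $z$ by a sequence of elementary moves $y \leadsto y' = y(a,b)$, so I would reduce to analyzing a single such move.

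For $\dearcL$ the claim is immediate: the hypothesis of Lemma~\ref{lem:dearcL-ess} shows $D(y') = D(y) \setminus \{(a,a)\}$, and since $(a,a)$ lies on the main diagonal, $\iDSp(y') = \iDSp(y)$; hence $\ishSp(\dearcL(z)) = \ishSp(z)$.

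For $\dearcR$, I would consider a single move $y \leadsto y' = y(a,b)$ of the type in Lemma~\ref{lem:dearcR-ess}, where $(a,b)$ is a cycle of $y$, every $i$ with $a < i < b$ has $y(i) > b$, and $\rank y_{[b][b]}$ is even. (All of these hold along the sequence producing $\dearcR(z)$, by the non-nesting/non-crossing structure~\eqref{eq:NNNC} and fixed-point-freeness, exactly as argued in the proof of Theorem~\ref{thm:dearc-equality}.) The key observation, and the input that makes the counts collapse, is that \emph{no $j < a$ has $a < y(j) \le b$}: if $a < y(j) < b$, then applying the hypothesis to $i = y(j)$ gives $y(i) = j < a$, a contradiction; and $y(j) = b$ would force $j = y(b) = a$. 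Writing $c = |\{j < a : y(j) > b\}|$, the observation gives $|\{j < a : y(j) > i\}| = c$ for all $i$ with $a \le i < b$. From here a direct count --- decomposing each row into the contributions of $j < a$, of $j = a$, and of $a < j < i$, and using $y'(a) = a$, $y'(b) = b$, $y'(j) = y(j)$ otherwise --- shows that $\iDSp(y)$ and $\iDSp(y')$ have equal row lengths in all rows outside $[a,b]$, while the row lengths in rows $a, a+1, \ldots, b$ are
\[ (c,\ c+1,\ c+2,\ \ldots,\ c+(b{-}a{-}1),\ c) \ \text{ for } \iDSp(y), \qquad (c,\ c,\ c+1,\ \ldots,\ c+(b{-}a{-}1)) \ \text{ for } \iDSp(y'). \]
These two lists have the same multiset of entries, so the move preserves the multiset of row lengths of $\iDSp$, hence $\ishSp$. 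Iterating over all the moves producing $\dearcR(z)$ yields $\ishSp(\dearcR(z)) = \ishSp(z)$.

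I expect the main obstacle to be precisely the row-length bookkeeping in the $\dearcR$ case: one must carry out the decomposition above carefully for the generic rows $a < i < b$ and then treat the two boundary rows $i = a$ and $i = b$ separately. This is entirely routine once the key observation is in hand, but it is where the content lies; the $\dearcL$ case and the reduction to elementary moves are formal consequences of the cited lemmas. (As a sanity check, the example $z = (1,3)(2,5)(4,6)$ with $\dearcR(z) = (2,5)(4,6)$ has $\ishSp(z) = \ishSp(\dearcR(z)) = (2)$, and the single move removing the arc $(1,3)$ changes the row-length list on rows $1,2,3$ from $(0,1,0)$ to $(0,0,1)$, matching $c = 0$ above.)
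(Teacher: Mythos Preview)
Your proposal is correct and follows essentially the same route as the paper: reduce to a single elementary move $y \leadsto y' = y(a,b)$, note that $\iDSp$ is literally unchanged in the $\dearcL$ case, and in the $\dearcR$ case compute that the row lengths of $\iDSp$ in rows $a,\ldots,b$ change from $(c,\,c{+}1,\,\ldots,\,c{+}b{-}a{-}1,\,c)$ to $(c,\,c,\,c{+}1,\,\ldots,\,c{+}b{-}a{-}1)$ while the others are untouched. The paper reads these row lengths off directly from the local picture~\eqref{eq:diagram-change}; your ``key observation'' that no $j<a$ has $a<y(j)\le b$ is exactly what justifies that picture and makes your more explicit count go through.
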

\begin{proof}
    Since $\iDSp(z) = \iDSp(\dearcL(z))$ as per the proof of Lemma~\ref{lem:dearcL-ess}, we get $\ishSp(z) = \ishSp(\dearcL(z))$. As for $\dearcR(z)$, suppose $y$ and $y'$ are related as in Lemma~\ref{lem:dearcR-ess}. The diagrams \eqref{eq:diagram-change} show that $\icSp_i(y) = \icSp_i(y')$ if $i \notin [a,b]$, while for some $r$,
    \begin{gather*}
        \icSp_a(y), \icSp_{a+1}(y), \ldots, \icSp_{b-1}(y), \icSp_b(y) = r, r+1, \ldots, r{+}b{-}a{-}1, r\\
        \text{and}\\
        \icSp_a(y'), \icSp_{a+1}(y'), \ldots, \icSp_{b-1}(y'), \icSp_b(y') = r, r, r+1, \ldots, r{+}b{-}a{-}1.
    \end{gather*}
    So, $\ishSp(y) = \ishSp(y')$, which proves the lemma because $\dearcR(z)$ is obtained from $z$ by a sequence of transformations of the form $y \leadsto y'$, as per the proof of Lemma~\ref{thm:dearc-equality}.
\end{proof}

We can now state the Pfaffian formulas for $\iSfpf_z$ and $\ibSfpf_z$ promised in \S \ref{subsec:inv-back-stable-reps}. Recall that $y \in \I_n$ if it is vexillary and $\Ess(\iDSp(z))$ is a chain under $\SWNE$.
\begin{thm} \label{thm:fpf-vex-formula} Let $z\in \Ifpf_n$. Suppose $y \in \{z, \dearcR(z), \dearcL(z)\}$ is Sp-vexillary. Let $\Ess(\iDSp(y)) = \{(i_1,j_1) \SWNEneq \cdots \SWNEneq (i_s,j_s)\}$, and let $k_p = j_p - \rank y_{[i_p][j_p]}$ for $p \in [s]$ and $k_0 = 0$. Set $\lambda = \ishSp(z)$. For $k \in [k_s]$, set
    \begin{align*}
        c(k)_d &= h_d(x_{-\infty \cdots i_p} \superslash x_{-\infty \cdots j_p}) + (-1)^{j_p}x_{j_p+1}\cdots x_{i_p}\\
            &= (-1)^{j_p}x_{j_p+1}\cdots x_{i_p} + \sum_{a+b+c=d} Q_a(x_-) e_b(x_{1\cdots i_p})h_c(x_{1\cdots j_p})
    \end{align*}
    where $k_{p-1} < k \leq k_p$, and $c'(k) = c(k)\ev_{x_-\to 0}$. Then $\iSfpf_{z} = P_{\lambda}(c'(1), \ldots, c'(k_s))$ and $\ibSfpf_{z} = P_{\lambda}(c(1), \ldots, c(k_s))$.
\end{thm}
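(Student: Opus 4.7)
The plan is to combine Theorem~\ref{thm:dearc-equality}, which reduces the problem to the Sp-vexillary representative $y$, with the orthogonal Grassmannian degeneracy locus formula of Theorem~\ref{thm:orthogonal-locus}. This generalizes the argument of Lemma~\ref{lem:vex-double-schubert-OG}, which assumed the more restrictive form $y = y' \times \idfpf_{2r}$ or $y = \idfpf_{2r} \times y'$. By Theorem~\ref{thm:dearc-equality} we have $\incSSGX_z = \incSSGX_y$, and by Lemma~\ref{lem:dearc-shapes}, $\ishSp(z) = \ishSp(y) = \lambda$. Since $y$ is Sp-vexillary, $\Ess(\iDSp(y)) = \{(i_1,j_1) \SWNEneq \cdots \SWNEneq (i_s,j_s)\}$ is a chain, and Lemma~\ref{lem:grassmannian-locus-LG-OG} realizes $\incSSGX_y$ as an orthogonal Grassmannian degeneracy locus in the sense of~\S\ref{subsec:degeneracy-loci-LG-OG}: on $X = \OG(2n) \times \Fl(n)$, the tautological rank~$n$ bundle $\vb G$ meets the isotropic subbundles $\vb H_{\mu_p} = \vb E_{j_p} \oplus \ann{\vb E_{i_p}}$, of corank $\mu_p = i_p - j_p$, in dimension $k_p$.

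To apply Theorem~\ref{thm:orthogonal-locus}, I must verify the expected codimension: $\codim \incSSGX_y = \codim \incSSGX_z = \iellSp(z) = |\iDSp(z)| = |\lambda|$ by Lemma~\ref{lem:SGX-description}(a). The indexing partition $\lambda_k = \mu_p + k_p - k$ coincides with $\ishSp(y)_k$ by Lemma~\ref{lem:fpf-inv-shape} applied with $r = 0$ (so that $y' \times \idfpf_0 = y'$ is simply $y$). Equation~\eqref{eq:SSGX-chern-class}, combined with the identification $1/c(\vb G) \leftrightarrow \sum_d Q_d(x_-)$ on $H^*(\OG(2n))$, produces exactly the modified Chern classes $c(k)$ appearing in the theorem. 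Thus Theorem~\ref{thm:orthogonal-locus} gives $[\incSSGX_y] = P_\lambda(c(1), \ldots, c(k_s))$, while Theorem~\ref{thm:back-stable-rep-OG} independently gives $[\incSSGX_z] = \ibSfpf_z$, so both power series represent the same cohomology class.

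Finally, I would upgrade this cohomological equality to the polynomial and power-series identities claimed, following the template of Theorem~\ref{thm:vex-double-schubert-polys} and Theorem~\ref{thm:vex-inv-schubert-LG-polys}. For $\iSfpf_z = P_\lambda(c'(1), \ldots, c'(k_s))$, restrict to a fiber over $\graph(f) \in \grOG$: Lemma~\ref{lem:SGX-description}(d) provides the transverse intersection, $c(k)$ restricts to $c'(k)$, and Theorem~\ref{thm:inv-schubert-classes} exhibits $\iSfpf_z$ as another polynomial representative of $[\iXSp_z]$. Polynomial equality then follows from Lemma~\ref{lem:unique-stable-rep} combined with the $\idfpf$-stability of both sides under $z \mapsto z \times \idfpf_{2r}$. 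For the back-stable identity, apply the Pfaffian formula to $\idfpf_{2m} \times z$ and let $m \to \infty$ under the relabeling $x_i \mapsto x_{i-2m}$: the positive-variable part of $c(k)_d$, namely $h_d(x_{1 \cdots i_p+2m} \superslash x_{1 \cdots j_p+2m})$, tends to $h_d(x_{-\infty \cdots i_p} \superslash x_{-\infty \cdots j_p})$, while the orthogonal correction $(-1)^{j_p + 2m} x_{j_p+2m+1}\cdots x_{i_p+2m}$ becomes $(-1)^{j_p} x_{j_p+1}\cdots x_{i_p}$ in the limit. The main subtlety is precisely the stability of this correction term: it is crucial that the shift $2m$ be even so that the sign $(-1)^{j_p}$ is preserved, matching the convention (Proposition~\ref{prop:inv-schubert-recurrence}) that one prepends $\idfpf_{2m}$ rather than $1_m$ when stabilizing fpf involution Schubert polynomials.
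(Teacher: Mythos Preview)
Your proposal is correct and follows essentially the same route as the paper: reduce to the Sp-vexillary representative $y$ via Theorem~\ref{thm:dearc-equality} and Lemma~\ref{lem:dearc-shapes}, verify the codimension hypothesis, apply the orthogonal degeneracy locus formula (which the paper packages as Lemma~\ref{lem:vex-double-schubert-OG} with $r=0$), and then upgrade from cohomological to polynomial equality via right stabilization $z\mapsto z\times\idfpf_{2r}$ together with Lemma~\ref{lem:unique-stable-rep}, finally passing to the back-stable identity by the limiting argument of Theorem~\ref{thm:vex-double-schubert-polys}. Your second paragraph, which invokes Theorem~\ref{thm:back-stable-rep-OG} to match $\ibSfpf_z$ with the Pfaffian at the level of $H^*(\OG(2n)\times\Fl(n))$, is a harmless redundancy: the paper bypasses it and goes straight to the fiber, since the polynomial equality in paragraph three already implies the back-stable one after taking the limit.
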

    
\begin{proof} Fix $\graph(f) \in \OG(2n)$. By Theorem~\ref{thm:dearc-equality} we have $\incSSGX_y = \incSSGX_z$. Also, $\ishSp(y) = \ishSp(z)$ by Lemma~\ref{lem:dearc-shapes}. Thus, $\codim \incSSGX_y = \iellSp(z) = |\ishSp(z)| = |\ishSp(y)|$, so Lemma~\ref{lem:vex-double-schubert-OG} shows that $P_{\ishSp(y)}(c'(1), \ldots, c'(k_s))$ represents the class $[\incSSGX_y(\graph(f))] = [\incSSGX_z(\graph(f))]$. By Theorem~\ref{thm:inv-schubert-classes},  $\iSfpf_{z}$ also represents $[\incSSGX_z(\graph(f))]$.

All of these statements still hold when $z$ is replaced by $z \times \idfpf_{2r}$ for any $r$ and $y$ is replaced by $y \times 1_{2r}$ or $y \times \idfpf_{2r}$ accordingly. These replacements do not change $P_{\ishSp(y)}(c'(1), \ldots, c'(k_s))$ or $\iSfpf_z$, so both of these polynomials represent the classes $[\incSSGX_{z \times \idfpf_{2r}}(\graph(f))]$ for all $r$, hence are equal by Lemma~\ref{lem:unique-stable-rep}. To conclude that $\ibSfpf_{z} = P_{\lambda}(c(1), \ldots, c(k_s))$, apply a limiting argument as in the proof of Theorem~\ref{thm:vex-double-schubert-polys}. \end{proof}

    The next definition gives an important class of fixed-point-free involutions to which Theorem~\ref{thm:fpf-vex-formula} applies.
    \begin{defn}
        A fixed-point-free involution $z$ is \emph{m-fpf-I-Grassmannian} if $\dearcR(z)$ is m-I-Grassmannian, or simply \emph{fpf-I-Grassmannian}. 
    \end{defn}
    
    \begin{ex}
        Let $z = (1,3)(2,5)(4,6)$. Then $\dearcR(z) = (2,5)(4,6)$, which is $4$-I-Grassmannian with shifted shape $(3,1)$, so $z$ is fpf-I-Grassmannian with fpf shifted shape $(2)$. Note that $z$ itself is \emph{not} I-Grassmannian, nor is $\dearcL(z) = (1,3)(2,5)$.
    \end{ex}

\begin{thm} \label{thm:fpf-i-grassmannians}
    Fix $E_\bullet \in \Fl(n)$. If $z \in \Ifpf_n$ is fpf-I-Grassmannian, then $\incSSGX_z(E_\bullet) \subseteq \OG(2n)$ is a Schubert variety whose class in $H^*(\OG(2n))$ is represented by $P_{\ishSp(z)}$.
\end{thm}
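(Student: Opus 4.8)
The plan is to adapt the proof of Theorem~\ref{thm:i-grassmannians} to the fixed-point-free, orthogonal setting, making three changes: passing from $z$ to $\dearcR(z)$ via Theorem~\ref{thm:dearc-equality}, replacing $\iDO$ by $\iDSp$ throughout, and working in $\OG(2n)$ with Pragacz's $P$-function description of Schubert classes in place of $\LG(2n)$ and $Q$-functions. First I would set $y \eqdef \dearcR(z)$, which is $m$-I-Grassmannian by hypothesis; write $y = (\phi_1,m+1)\cdots(\phi_k,m+k)$ with $\phi_1 < \cdots < \phi_k \leq m$. By Theorem~\ref{thm:dearc-equality} we have $\incSSGX_z = \incSSGX_y$, hence $\incSSGX_z(E_\bullet) = \incSSGX_y(E_\bullet)$, and by Lemma~\ref{lem:dearc-shapes} we have $\ishSp(z) = \ishSp(y)$. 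So it is enough to show $\incSSGX_y(E_\bullet)$ is a Schubert variety of $\OG(2n)$ with class represented by $P_{\ishSp(y)}$.

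Next I would read off the rank conditions from $\iDSp(y)$. This diagram is obtained from $\iDO(y)$ by deleting the diagonal cells $(\phi_1,\phi_1),\ldots,(\phi_k,\phi_k)$; its nonempty columns are the $\phi_p$ with $\phi_p < m$, column $\phi_p$ occupying precisely the rows $\phi_p+1,\ldots,m$. Consequently every connected component of $\iDSp(y)$ has its southeast corner in row $m$, so $\Ess(\iDSp(y)) \subseteq \{m\}\times\NN$ is a chain under $\SWNE$ and $y$ is Sp-vexillary (it is already vexillary by Lemma~\ref{lem:inv-vex-chain}), and a direct count gives $\rank y_{[m][\phi_p]} = \phi_p - p$. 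By Lemma~\ref{lem:grassmannian-locus-LG-OG} (now applicable), $\incSSGX_y$ is the closure of the locus where $\dim(U \cap (E_{\phi_p}\oplus\ann{E_m})) = \phi_p - \rank y_{[m][\phi_p]} = p$ for $(m,\phi_p)\in\Ess(\iDSp(y))$; passing to the fiber over $E_\bullet$ via Lemma~\ref{lem:SGX-description}(c) (whose proof applies to $\incSSGX_y$ for any $y \in \I_n$) shows that $\incSSGX_y(E_\bullet)$ is the closure in $\OG(2n)$ of $\{U \in \grOG : \dim(U\cap(E_{\phi_p}\oplus\ann{E_m})) = p \text{ for } (m,\phi_p)\in\Ess(\iDSp(y))\}$.

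Finally I would identify this closure with a Schubert variety. Complete $0\oplus\ann{E_n} \subseteq \cdots \subseteq 0\oplus\ann{E_m}$ to an isotropic flag by appending $E_1\oplus\ann{E_m} \subseteq \cdots \subseteq E_m\oplus\ann{E_m}$, so that $E_{\phi_p}\oplus\ann{E_m}$ has dimension $\phi_p+(n-m)$. The key point is that whenever $\phi_q$ and $\phi_p$ lie in the same component of $\iDSp(y)$ one has $\phi_p-\phi_q = p-q$, so the closed condition $\dim(U\cap(E_{\phi_p}\oplus\ann{E_m}))\geq p$ at a southeast corner forces the inequalities at all earlier $\phi_q$ of that run; hence the locus above contains the open Schubert cell attached to the strict partition $(m-\phi_1,\ldots,m-\phi_{k'})$, where $k' \eqdef \#\{p : \phi_p < m\}$, and lies inside the corresponding Schubert variety. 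Taking closures (and using that $\grOG$ is dense in $\OG(2n)$) identifies $\incSSGX_y(E_\bullet)$ with that Schubert variety, whose class is represented by $P_{(m-\phi_1,\ldots,m-\phi_{k'})}$ by Pragacz \cite[\S 6]{pragacz-LG-OG}; and $(m-\phi_1,\ldots,m-\phi_{k'}) = \ishSp(y)$, e.g.\ because $\ishO(y)_p = m-\phi_p+1$ and deleting the diagonal cells decrements every column length of the diagram by one. I expect the main obstacle to be the middle step—correctly determining $\Ess(\iDSp(y))$, the values $\rank y_{[m][\phi_p]}$, and the Sp-vexillarity of $y$ once the diagonal cells are gone—together with the routine but fiddly verification in the last step that the essential-cell conditions already cut out the full Schubert cell up to closure, and that the maximal isotropic is chosen so as to lie in the component of $\OG(2n)$ containing $\CC^n$; both are handled exactly as in Pragacz's work and in the proof of Theorem~\ref{thm:i-grassmannians}.
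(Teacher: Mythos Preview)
Your proposal is correct and follows essentially the same approach as the paper: reduce to $y=\dearcR(z)$ via Theorem~\ref{thm:dearc-equality} and Lemma~\ref{lem:dearc-shapes}, read off the rank conditions from $\iDSp(y)$ using Lemma~\ref{lem:grassmannian-locus-LG-OG}, and identify the fiber over $E_\bullet$ with a Schubert variety in $\OG(2n)$ via Pragacz's description. You are, if anything, more explicit than the paper in checking Sp-vexillarity of $y$, invoking Lemma~\ref{lem:dearc-shapes}, and computing the partition as $(m-\phi_1,\ldots,m-\phi_{k'})$; the only spot where you could tighten things is the fiber-of-closure step, where instead of asserting that the proof of Lemma~\ref{lem:SGX-description}(c) applies to arbitrary $y\in\I_n$, you can simply use $\incSSGX_y=\incSSGX_z$ and invoke the lemma for the fixed-point-free $z$ directly.
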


\begin{proof} 
    Write $\dearcR(z) = (\phi_1,m+1)\cdots (\phi_k,m+k)$ where $\phi_1 < \cdots < \phi_k \leq m \leq m+k \leq n$. As in the proof of Theorem~\ref{thm:i-grassmannians}, we compute $\Ess(\iDSp(\dearcR(z)))$ and find that $\incSSGX_z(E_\bullet)$ is the closure of
\begin{equation} \label{eq:OG-schubert-equations} \{U \in \grOG : \text{$\dim(U \cap (E_{\phi_p} \oplus \ann{E_m})) = p$ for $p \in [k]$}\},  \end{equation}
    now applying Theorem~\ref{thm:dearc-equality} as well.  This is a Schubert variety in $\OG(2n)$ indexed by the strict partition $(n-\phi_1, \ldots, n-\phi_k) = \shfpf(z)$, and its class is represented by $P_{\shfpf(z)}$ \cite[\S 6]{pragacz-LG-OG}.
\end{proof}

\begin{cor} \label{cor:fpf-I-Grassmannian-formula} If $z$ is fpf-I-Grassmannian, then $\iFfpf_z = P_{\shfpf(z)}$. \end{cor}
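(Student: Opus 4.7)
The plan mirrors the proof of Corollary~\ref{cor:I-Grassmannian-formula}, with $\LG(2n)$ replaced by $\OG(2n)$, Schur $Q$-functions by Schur $P$-functions, and the identity permutations used to stabilize in the I-Grassmannian case replaced by the fixed-point-free involutions $\idfpf_{2m}$. Fix an $r$-fpf-I-Grassmannian $z \in \Ifpf_n$, and for each $m \geq 0$ set $z^{(m)} = \idfpf_{2m} \times z \in \Ifpf_{n+2m}$.

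The bulk of the work consists of three auxiliary verifications. First, $z^{(m)}$ remains fpf-I-Grassmannian: tracing the definition of $\dearcR$, the extra cycles $(2i{-}1, 2i)$ for $i \in [m]$ contain no integer strictly between their endpoints and are therefore discarded, so $\dearcR(z^{(m)}) = 1_{2m} \times \dearcR(z)$, which is still I-Grassmannian (on the larger underlying set). Second, $\iFfpf_{z^{(m)}} = \iFfpf_z$ as symmetric functions, by the bijection $\iRfpf(z) \to \iRfpf(z^{(m)})$ shifting entries up by $2m$ (the same identity invoked in the proof of Proposition~\ref{prop:inv-schubert-recurrence}): once the compatible sequences are restricted to have all entries $\leq -1$, replacing each $a_j$ by $a_j + 2m$ does not tighten the constraint $i_j \leq a_j$, so the two sums defining $\iFfpf$ agree term-by-term. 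Third, $\shfpf(z^{(m)}) = \shfpf(z)$, because $\iDSp(z^{(m)})$ is just $\iDSp(z)$ translated by $(2m,2m)$ inside the grid, so the sorted sequence of nonzero row lengths is unchanged.

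Given these, Theorem~\ref{thm:fpf-i-grassmannians} applied to $z^{(m)}$ identifies $[\incSSGX_{z^{(m)}}(E_\bullet)] \in H^*(\OG(2(n+2m)))$ with $P_{\shfpf(z^{(m)})} = P_{\shfpf(z)}$, while Theorem~\ref{thm:inv-stanley-interpretation} identifies that same class with $\iFfpf_{z^{(m)}} = \iFfpf_z$. Under the Pragacz presentation $\Gamma_P/(P_{N+1}, P_{N+2}, \ldots) \simeq H^*(\OG(2N))$, this yields
\begin{equation*}
\iFfpf_z \equiv P_{\shfpf(z)} \pmod{(P_{n+2m+1}, P_{n+2m+2}, \ldots)}
\end{equation*}
in $\Gamma_P$ for every $m \geq 0$. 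Since any fixed element of $\Gamma_P$ has a finite Schur $P$-expansion and each $P_\lambda$ survives in the quotient whenever $\lambda_1 \leq N$, the intersection $\bigcap_m (P_{n+2m+1}, P_{n+2m+2}, \ldots)$ is zero, so letting $m \to \infty$ upgrades the congruence to the claimed equality $\iFfpf_z = P_{\shfpf(z)}$.

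The main (and essentially only) obstacle is the bookkeeping in the first auxiliary verification: one must be careful to distinguish between $\idfpf_{2m} \times z$ and $z \times \idfpf_{2m}$ and to check that prepending $\idfpf_{2m}$ really does extract the I-Grassmannian cycles of $\dearcR(z)$ cleanly. Once that is in hand, the remainder is a direct translation of the I-Grassmannian argument.
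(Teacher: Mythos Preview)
Your proof is correct and follows essentially the same approach as the paper's, which simply says the result follows from Theorem~\ref{thm:fpf-i-grassmannians} and Theorem~\ref{thm:inv-stanley-interpretation} ``as in the proof of Corollary~\ref{cor:I-Grassmannian-formula}.'' You have spelled out the needed stabilization via $z^{(m)} = \idfpf_{2m} \times z$ and correctly verified the three auxiliary facts (that $\dearcR(\idfpf_{2m} \times z) = 1_{2m} \times \dearcR(z)$, that $\iFfpf$ is unchanged, and that $\shfpf$ is unchanged), which the paper leaves implicit.
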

    \begin{proof}
        Follows from Theorem~\ref{thm:fpf-i-grassmannians} and Theorem~\ref{thm:inv-schubert-classes} as in the proof of Corollary~\ref{cor:I-Grassmannian-formula}.
    \end{proof}

The formula of Corollary~\ref{cor:fpf-I-Grassmannian-formula} was obtained earlier in \cite{HMP5}, where it was used as a base case for a recurrence of the form $\iFfpf_z = \sum_{z'} \iFfpf_{z'}$; via this recurrence one can compute the Schur P expansion of $\iFfpf_z$, and in particular deduce that it is Schur P positive. Theorems~\ref{thm:fpf-i-grassmannians} and \ref{thm:dearc-equality} provide natural geometric reasons to consider fpf-I-Grassmannian involutions and the $\dearc$ operations.

\begin{thm} \label{thm:fpf-vexillary} If $z$, $\dearcL(z)$, or $\dearcR(z)$ is Sp-vexillary, then $\iFSp_z = P_{\ishSp(z)}$. \end{thm}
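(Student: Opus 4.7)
The plan is to deduce Theorem~\ref{thm:fpf-vexillary} as a direct specialization of Theorem~\ref{thm:fpf-vex-formula}. By Definition~\ref{defn:inv-schubert}, $\iFSp_z = \ibSfpf_z\ev_{x_+\to 0}$, so it suffices to evaluate the Pfaffian formula $\ibSfpf_z = P_\lambda(c(1), \ldots, c(k_s))$ at $x_+ \to 0$, where $\lambda = \ishSp(z)$ and the $c(k)$ are given by the explicit expression from Theorem~\ref{thm:fpf-vex-formula}.

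The main computation is that $c(k)\ev_{x_+\to 0}$ is independent of $k$ and equals $\sum_d Q_d(x_-)$. Using the formula
\[
  c(k)_d = (-1)^{j_p}x_{j_p+1}\cdots x_{i_p} + \sum_{a+b+c=d} Q_a(x_-)\, e_b(x_{1\cdots i_p})\, h_c(x_{1\cdots j_p}),
\]
the correction $(-1)^{j_p}x_{j_p+1}\cdots x_{i_p}$ vanishes after setting $x_+ \to 0$: since $\iDSp(y) \subseteq \{(i,j) : i > j\}$ we have $i_p > j_p$, so this is a nonempty product of positive-indexed variables. In the sum over $a+b+c=d$, only the term with $b = c = 0$ survives (as $e_b$ and $h_c$ in finitely many positive variables vanish when those variables are set to zero, for $b,c > 0$), leaving $Q_d(x_-)$. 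Thus $c(k)\ev_{x_+ \to 0} = \sum_d Q_d(x_-)$ for every $k \in [k_s]$.

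It remains to observe that when $c(i)_d = c(j)_d = Q_d$ for all $i, j, d$, the expression $Q_{(a,b)}(c(i), c(j))$ of \S\ref{subsec:degeneracy-loci-LG-OG}(iii) reduces to
\[
  Q_a Q_b + 2\sum_{p=1}^{b} (-1)^p Q_{a+p} Q_{b-p},
\]
which is exactly the two-row Schur $Q$-function $Q_{(a,b)}$ of Definition~\ref{defn:PQ}(iii). Comparing the Pfaffian definitions of $Q_\lambda(c(1),\ldots,c(k_s))$ and of $Q_\lambda$ then gives $Q_\lambda(c(1),\ldots,c(k_s))\ev_{x_+\to 0} = Q_\lambda$, and hence $P_\lambda(c(1),\ldots,c(k_s))\ev_{x_+\to 0} = P_\lambda = P_{\ishSp(z)}$, completing the proof.

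No step presents a real obstacle: the argument is purely a specialization, relying only on the hypothesis $i_p > j_p$ built into the definition of $\iDSp$ to kill the correction term, and on the fact that the Pfaffian construction for $Q_\lambda(c(1),\ldots,c(k_s))$ was modeled on that of $Q_\lambda$ in the first place.
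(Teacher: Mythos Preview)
Your proof is correct and follows exactly the approach the paper takes: the paper's proof is the single sentence ``Apply Theorem~\ref{thm:fpf-vex-formula}, setting the variables in $x_+$ to zero,'' and you have simply spelled out the details of that specialization. The key observation that $i_p > j_p$ (since $\Ess(\iDSp(y))$ lies strictly below the diagonal) kills the correction term, and the reduction of $P_\lambda(c(1),\ldots,c(k_s))$ to $P_\lambda$ when every $c(k)_d = Q_d$, are exactly what is needed.
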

    \begin{proof}
        Apply Theorem~\ref{thm:fpf-vex-formula}, setting the variables in $x_+$ to zero.
    \end{proof}

The converse of Theorem~\ref{thm:fpf-vexillary} is false: $z = (1,3)(2,4)(5,7)(6,8)$ has $\iFfpf_z = P_2$, but none of $z$, $\dearcL(z)$, or $\dearcR(z)$ are Sp-vexillary or even vexillary. In \cite{HMP5}, the fixed-point-free involutions $z$ such that $\iFfpf_z = P_{\mu}$ for some strict partition $\mu$ were characterized as those avoiding a finite list of patterns.

\section{Tableau formulas}
\label{sec:tableaux}
In this section we use Proposition~\ref{prop:degeneracy-locus-ivanov-PQ} and Theorem~\ref{thm:vex-inv-schubert-LG-polys} to give tableau formulas for $\ibS_y$ when $y \in \I_n$ is vexillary.
\begin{defn}
    An \emph{essential path} for a vexillary involution $y \in \I_n$ is a sequence of points $(a_1, b_1), (a_2, b_2), \ldots, (a_{n+1}, b_{n+1})$ in $[0,n] \times [0,n]$ such that 
    \begin{itemize}
        \item $(a_{r+1}, b_{r+1}) \SWNEneq (a_r, b_r)$ for $1 \leq r \leq n$;
        \item $a_1 = b_1$;
        \item Each $(i,j) \in \Ess(\iDO(y))$ occurs as some $(a_r, b_r)$.
    \end{itemize}
\end{defn}
Note that an essential path necessarily ends at $(n,0)$. Thinking of $[0,n] \times [0,n]$ as a graph with edges between horizontally or vertically adjacent lattice points, an essential path is a directed path from the diagonal to the southwest corner $(n,0)$ which moves south or west at each step, and hits every point in $\Ess(\iDO(y))$.

\begin{ex} \label{ex:essential-paths} Here are two possible essential paths for $y = (1,5)(2,4)$. Cells in $\iDO(y)$ are marked by circles and cells in $\Ess(\iDO(y))$ by shaded circles, while $\times$ indicates a point $(i,y(i))$; the lower left corner is $(5,0)$:
    \begin{center}
        \begin{tikzpicture}[scale=0.6]
            \draw (-1,0) grid (5,5);
            \draw (0.5,0.5) node {$\times$};
            \draw (1.5,1.5) node {$\times$};
            \draw (2.5,2.5) node {$\times$};
            \draw (3.5,3.5) node {$\times$};
            \draw (4.5,4.5) node {$\times$};
            \draw[fill=black!30!white] (0.5,1.5) circle[radius=0.3cm];
            \draw (0.5,2.5) circle[radius=0.3cm];
            \draw (0.5,3.5) circle[radius=0.3cm];
            \draw (0.5,4.5) circle[radius=0.3cm];
            \draw[fill=black!30!white] (1.5,2.5) circle[radius=0.3cm];
            \draw (1.5,3.5) circle[radius=0.3cm];
            \draw[red,thick] (1.4,3.5) -- (1.4,2.5) -- (0.5,2.5) -- (0.5,1.5) -- (0.5,0.5) -- (-0.4,0.5);
            \draw[blue,thick,dashed] (2.5,2.5) -- (1.6,2.5) -- (1.6,1.5) -- (0.5,1.5) -- (-0.6,1.5) -- (-0.6,0.5);
        \end{tikzpicture}
    \end{center}
\end{ex}

Suppose $\epath$ is an essential path for some $y \in \I_n$. Define an $(n+1)$-tuple $x^{\epath}$ by $x^{\epath}_1 = 0$ and 
\begin{equation*}
    x^{\epath}_{r+1} = \begin{cases}
        x_{i+1} & \text{if the $r$\textsuperscript{th} step of $\epath$ moves from row $i$ to row $i+1$}\\
        -x_{j} & \text{if the $r$\textsuperscript{th} step of $\epath$ moves from column $j$ to column $j-1$}
    \end{cases}
\end{equation*}
If $\epath$ is the solid red path starting at $(2,2)$ in Example~\ref{ex:essential-paths}, $x^{\epath}$ is $(0, x_3, -x_2, x_4, x_5, -x_1)$. If $\epath$ is the dashed blue path starting at $(3,3)$, then $x^{\epath}$ is $(0, -x_3, x_4, -x_2, -x_1, x_5)$.

Recall the multiparameter Schur $Q$-function $Q_{\lambda}(x; t)$ from Definition~\ref{defn:multiparameter-PQ}.
\begin{thm} \label{thm:tableau-formula-LG}
    Suppose $y \in \I_n$ is vexillary with involution shape $\ishO(y) = \lambda$, and $\epath$ is an essential path for $y$ starting at diagonal position $(j,j)$. Then $2^{\twocyc(y)}\ibS_y = Q_{\lambda}(x_{-\infty\cdots j}; -x^{\epath})$.   
\end{thm}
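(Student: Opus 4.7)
The strategy is to combine Theorem~\ref{thm:vex-inv-schubert-LG-polys}, which identifies $2^{\twocyc(y)}\ibS_y$ with the Kazarian Pfaffian $Q_\lambda(c(1), \ldots, c(k_s))$ built from the isotropic flag $\vb H_{\mu_p} = \vb E_{j_p} \oplus \ann{\vb E}_{i_p}$, with Proposition~\ref{prop:degeneracy-locus-ivanov-PQ}, which rewrites any such Pfaffian as an Ivanov multiparameter $Q$-function $Q_\lambda(x;-t')$ once a complete isotropic extension of the partial flag is chosen. The role of the essential path is precisely to prescribe that completion.

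My plan is to define $\vb H_r \eqdef \vb E_{b_r} \oplus \ann{\vb E}_{a_r}$ for $r = 1, \ldots, n+1$ along the essential path $\epath$. Since each step either increments $a_r$ by $1$ or decrements $b_r$ by $1$, the rank of $\vb H_r$ drops by exactly $1$ at each step, giving $\rank \vb H_r = n - a_r + b_r$, and therefore $\mu_r \eqdef n - \rank \vb H_r + 1 = a_r - b_r + 1$ runs from $1$ at $(j,j)$ to $n+1$ at $(n,0)$. Because $b_r \leq a_r$ all along the path, each $\vb H_r$ is isotropic for $(-,-)^-$: the pairing $((v_1, \omega_1), (v_2, \omega_2))^-$ vanishes on $\vb E_{b_r} \oplus \ann{\vb E}_{a_r}$. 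Since $\epath$ visits every essential set cell, this complete isotropic flag contains all of the $\vb H_{\mu_p}$ from Section~\ref{subsec:inv-back-stable-reps}, so Proposition~\ref{prop:degeneracy-locus-ivanov-PQ} applies.

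Next I will compute the Chern-class parameters $t'_{r+1} = c_1(\vb H_r/\vb H_{r+1})$. If the $r$\th step is south, then $\vb H_r/\vb H_{r+1} \simeq \ann{\vb E}_{a_r}/\ann{\vb E}_{a_r+1}$; using $\ann{\vb E}_i \simeq (\CC^n/\vb E_i)^*$ and the short exact sequence $0 \to (\CC^n/\vb E_{a_r+1})^* \to (\CC^n/\vb E_{a_r})^* \to (\vb E_{a_r+1}/\vb E_{a_r})^* \to 0$, this quotient is $(\vb E_{a_r+1}/\vb E_{a_r})^*$, whose first Chern class is $x_{a_r+1}$. If the step is west, then $\vb H_r/\vb H_{r+1} \simeq \vb E_{b_r}/\vb E_{b_r-1}$ with first Chern class $-x_{b_r}$. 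Together with $t'_1 = 0$, these match the entries of $x^{\epath}$ by construction. Finally, I need to verify that the identification in Proposition~\ref{prop:degeneracy-locus-ivanov-PQ}, which sends $Q_d(x)$ to $(c(\vb V)/(c(\vb G)c(\vb H_1)))_d$, reduces to $Q_d(x_{-\infty\cdots j})$; this is a direct computation since $\vb V$ is trivial, $c(\vb H_1) = c(\vb E_j)c(\ann{\vb E}_j) = \prod_{i=1}^j (1-x_i)/(1+x_i)$, and $1/c(\vb G) = \sum_d Q_d(x_-)$ under the conventions of Section~\ref{subsec:LG-OG-cohom}, so that $(c(\vb V)/(c(\vb G)c(\vb H_1)))_d$ is indeed the degree-$d$ component of $\prod_{i \in -\infty \cdots j, i \neq 0}(1+x_i)/(1-x_i) = \sum_d Q_d(x_{-\infty \cdots j})$.

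Assembling these pieces: Theorem~\ref{thm:vex-inv-schubert-LG-polys} gives $2^{\twocyc(y)}\ibS_y = Q_\lambda(c(1),\ldots,c(k_s))$; Proposition~\ref{prop:degeneracy-locus-ivanov-PQ} rewrites this Pfaffian as the image of $Q_\lambda(x;-t')$; and the two checks above identify that image with $Q_\lambda(x_{-\infty\cdots j}; -x^{\epath})$. The main obstacle I anticipate is purely bookkeeping: keeping track of signs and of which alphabet the $x$-variables in $Q_\lambda(x;t)$ are specialized to, since Proposition~\ref{prop:degeneracy-locus-ivanov-PQ} was stated relative to a \emph{specific} identification of $Q_d$ with a particular Chern class. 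I will need to be careful that extending the isotropic flag all the way down to $\vb H_1$, rather than only to the smallest $\vb H_{\mu_p}$ corresponding to an essential cell, shifts the ``base'' Chern class in the identification from $c(\vb V)/(c(\vb G)c(\vb H_{\mu_1}))$ to $c(\vb V)/(c(\vb G)c(\vb H_1))$; this is why the first $\mu_1 - 1$ parameters $t'_1,\ldots,t'_{\mu_1-1}$ need to be recorded even though they arise from the ``invisible'' initial segment of the path that precedes any essential cell.
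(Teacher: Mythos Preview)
Your construction of the complete isotropic flag $\vb H_r = \vb E_{b_r} \oplus \ann{\vb E}_{a_r}$ along the path, the computation $c_1(\vb H_r/\vb H_{r+1}) = x^{\epath}_{r+1}$, and the identification of $(c(\vb V)/(c(\vb G)c(\vb H_1)))_d$ with $Q_d(x_{-\infty\cdots j})$ are exactly what the paper does. The one substantive difference is in how you finish.

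You invoke Theorem~\ref{thm:vex-inv-schubert-LG-polys} to obtain $2^{\twocyc(y)}\ibS_y = Q_\lambda(c(1),\ldots,c(k_s))$ as an equality of power series, and then apply Proposition~\ref{prop:degeneracy-locus-ivanov-PQ} to convert the right side to $Q_\lambda(x_{-\infty\cdots j}; -x^{\epath})$. The issue is that Proposition~\ref{prop:degeneracy-locus-ivanov-PQ}, as stated in the paper, is an identity in $H^*(X)$ after a ring map $\Gamma_Q[t_1,t_2,\ldots] \to H^*(X)$, not a formal identity of power series. So as written your chain of equalities only shows that $2^{\twocyc(y)}\ibS_y$ and $Q_\lambda(x_{-\infty\cdots j}; -x^{\epath})$ have the same image in $H^*(\LG(2n)\times\Fl(n))$. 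The paper closes this gap differently: it never invokes Theorem~\ref{thm:vex-inv-schubert-LG-polys}, but instead observes that the same polynomial $Q_\lambda(x_{-\infty\cdots j}; -x^{\epath})$ represents $[\incSGX_{y\times 1_m}]$ for every $m$ (append $m$ south steps to $\epath$; this does not change $Q_\lambda(x;t)$ since $\lambda_1 \leq n$), and then appeals to Theorem~\ref{thm:back-stable-rep-LG} together with Lemma~\ref{lem:unique-stable-rep} to upgrade to an equality of power series.

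Your shortcut is fixable: the identity behind Proposition~\ref{prop:degeneracy-locus-ivanov-PQ} in \cite{anderson-fulton-vex} is in fact a formal identity in $\Gamma_Q[t_1,t_2,\ldots]$, and your computations already verify the needed relation $c(k) = \left(\sum_d Q_d(x_{-\infty\cdots j})\right)\prod_{r=2}^{\mu_p}(1+x^{\epath}_r)$ at the power-series level. If you make that explicit, your argument goes through and is slightly cleaner than the paper's; otherwise, add the stability step.
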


\begin{proof} Let $\epath = \{(a_{n+1}, b_{n+1}) \SWNEneq \cdots \SWNEneq (a_1,  b_1)\}$, and set $\vb E(r) = \vb E_{b_r} \oplus \ann {\vb{E}}_{a_r}$ for each $r \in [n+1]$, where $\vb E_\bullet$ is the flag of tautological bundles over $\Fl(n)$. The chain
\begin{equation*}
0 = \vb E(n+1) \subseteq \vb E(n) \subseteq \cdots \subseteq \vb E(1)
\end{equation*}
is then a complete isotropic flag. As per Lemma~\ref{lem:grassmannian-locus-LG-OG}, $\incSGX_y$ is a Lagrangian Grassmannian degeneracy locus defined by conditions on intersections with certain of the bundles $\vb E(r)$, namely those for which $(a_r, b_r) \in \Ess(\iDO(y))$. Accordingly, by Proposition~\ref{prop:degeneracy-locus-ivanov-PQ} $[\incSGX_y]$ is represented by $Q_{\lambda}(x; t')$ where $t'_1 = 0$ and $t'_{r+1} = c_1(\vb E(r) / \vb E(r+1))$, and $Q_d(x)$ is identified with $\left(\frac{1}{c(\vb G)c(\vb \E(1))}\right)_d$.
 
Under our identifications from \S \ref{subsec:LG-OG-cohom}, $1/c(\vb G)$ is represented by $\sum_{d\geq 0} Q_d(x_{-}) = \prod_{i=1}^{\infty} \frac{1+x_{-i}}{1-x_{-i}}$ while
\begin{equation*}
\frac{1}{c(\vb E(1))} = \frac{1}{c(\vb E_j)c(\ann{\vb E}_j)}  = \frac{1}{c(\vb E_j)c((\CC^{2n} / \vb E_j)^*)} = \frac{c(\vb E_j^*)}{c(\vb E_j)} = \prod_{i=1}^j \frac{1+x_i}{1-x_i}
\end{equation*}
where $j = a_1 = b_1$. Thus, $\left(\frac{1}{c(\vb G)c(\vb \E(1))}\right)_d$ is identified with $Q_d(x_{-\infty \cdots j})$. Also,
\begin{align*}
t_{r+1}' &= c_1(\vb E(r) / \vb E(r+1)) = c_1\left(\frac{\vb E_{b_r}}{\vb E_{b_{r+1}}} \oplus \left(\frac{\vb E_{a_{r+1}}}{\vb E_{a_r}}\right)^*\right)\\
&= \begin{cases}
c_1((\vb E_{a_{r+1}}/\vb E_{a_r})^*) = x_{a_r+1} & \text{if $b_{r+1} = b_r$ and $a_{r+1} = a_r+1$}\\
c_1(\vb E_{b_{r}}/\vb E_{b_{r+1}}) = -x_{b_r} & \text{if $a_{r+1} = a_r$ and $b_{r+1} = b_r-1$}
\end{cases}\\
&= x^{\epath}_{r+1}.
\end{align*} 
 
It follows that $Q_{\lambda}(x_{-\infty \cdots j}; -x^{\epath})$ represents the class $[\incSGX_y]$. We claim that it represents the classes $[\incSGX_{y \times 1_m}]$ for all $m$. Let $\epath^+$ be the path $\epath$ with a step from $(n,0)$ to $(n{+}1,0)$ appended at the end. Then $\epath^+$ is an essential path for $y \times 1$ since $\Ess(\iDO(y \times 1)) = \Ess(\iDO(y))$, so $Q_{\lambda}(x_{-\infty \cdots j}; -x^{\epath^+})$ represents $[\incSGX_{y \times 1}]$ by what we just showed. Definition~\ref{defn:multiparameter-PQ} makes clear that $Q_{\lambda}(x; t)$ is independent of $t_r$ for $r > \lambda_1$; since $\lambda_1 < n$, we see that $Q_{\lambda}(x_{-\infty \cdots j}; -x^{\epath}) = Q_{\lambda}(x_{-\infty \cdots j}; -x^{\epath^+})$ also represents $[\incSGX_{y \times 1}]$, and by induction every class $[\incSGX_{y \times 1_m}]$. Since $2^{\twocyc(y)}\ibS_y$ also represents these classes by Theorem~\ref{thm:back-stable-rep-LG}, Lemma~\ref{lem:unique-stable-rep} forces $Q_{\lambda}(x_{-\infty \cdots j}; -x^{\epath}) = 2^{\twocyc(y)}\ibS_y$.
\end{proof}  

Theorem~\ref{thm:tableau-formula-LG} writes $2^{\twocyc(y)}\ibS_y$ as a sum of products of expressions $x_i \pm x_j$, each product indexed by a tableau. It would also be interesting to express $2^{\twocyc(y)}\ibS_y$ as a sum of honest monomials associated to tableaux. In the simple case that $\Ess(\iDO(y))$ has one element, which implies $y = (a,m+1)(a+1,m+2)\cdots (a+k-1,m+k)$ where $a+k-1 \leq m$, one can show that
\begin{equation*}
    2^{\twocyc(y)}\ibS_y = \sum_{T} \prod_{(i,j) \in D_{\ishO(y)}'} x_{\lceil T(i,j) \rceil} 
\end{equation*}
where $T$ runs over marked shifted semistandard tableaux on $(-\infty,m]$ in which every entry exceeding $k$ is primed.

\bibliographystyle{plain}
\bibliography{../../bib/algcomb}

\end{document}